\documentclass[11pt]{amsart}
\usepackage{amssymb}
\usepackage{mathtools}
\usepackage{color}

\setlength{\marginparwidth}{.8in}
\setlength{\textheight}{21cm} \setlength{\oddsidemargin}{0.35in}
\setlength{\evensidemargin}{0.35in} \setlength{\textwidth}{16cm}
\setlength{\topmargin}{0.18in} \setlength{\headheight}{0.18in}
\setlength{\marginparwidth}{1.0in}

\newtheorem{theo}{Theorem} 


\newtheorem{lemma}{Lemma}[section]
\newtheorem{prop}[lemma]{Proposition}
\newtheorem{corol}[lemma]{Corollary}

\newtheorem{claim}[lemma]{Claim}

\theoremstyle{remark}
\newtheorem{remark}[lemma]{Remark}

\newtheorem*{notations}{Notations}

\theoremstyle{definition}
\newtheorem{defi}[lemma]{Definition}

\newcommand{\la}{\left\langle}
\newcommand{\ra}{\right\rangle}



\newcommand{\NN}{\mathbb{N}}
\newcommand{\RR}{\mathbb{R}}

\newcommand{\eps}{\varepsilon}

\newcommand{\DDD}{\mathcal{D}}

\newcommand{\MMM}{\mathcal{M}}

\newcommand{\OOO}{\mathcal{O}}
\newcommand{\PPP}{\mathcal{P}}
\newcommand{\SSS}{\mathcal{S}}
\newcommand{\TTT}{\mathcal{T}}
\newcommand{\UUU}{\mathcal{U}}
\newcommand{\VVV}{\mathcal{V}}

\newcommand{\ZZZ}{\mathcal{Z}}
\newcommand{\tZZZ}{\widetilde{\mathcal{Z}}}

\newcommand{\tPsi}{\widetilde{\Psi}}

\newcommand{\vA}{A}

\newcommand{\ve}{\vec{e}}
\newcommand{\vell}{\boldsymbol{\ell}}

\newcommand{\tc}{\tilde{c}}

\newcommand{\ty}{\tilde{y}}

\newcommand{\tPhi}{\widetilde{\Phi}}

\newcommand{\tu}{\widetilde{u}}

\newcommand{\unu}{\underline{u}}




\newcommand{\epsb}{\overline{\eps}}

\newcommand{\tQ}{\widetilde{Q}}


\newcommand{\tK}{\widetilde{K}}


\newcommand{\loc}{\rm loc}
\newcommand{\hdot}{\dot{H}^1}

\newcommand{\EMPH}[1]{\medskip\noindent\textit{#1}.}


\DeclareMathOperator{\supp}{supp}


\DeclareMathOperator{\Imax}{I_{\max}}

\newcommand{\ds}{\displaystyle}

\numberwithin{equation}{section} 

\setcounter{tocdepth}{1}

\title[Nonradial waves]{Solutions of the focusing nonradial critical wave equation with the compactness property}
\author[T.~Duyckaerts]{Thomas Duyckaerts$^1$}
\author[C.~Kenig]{Carlos Kenig$^2$}
\author[F.~Merle]{Frank Merle$^3$}
\thanks{$^1$LAGA, Universit\'e Paris 13 (UMR 7539). Partially supported by ANR Grant SchEq, ERC Grant Dispeq and ERC Advanced Grant  no. 291214, BLOWDISOL}
\thanks{$^2$University of Chicago. Partially supported by NSF Grant DMS-0968472 and by NSF Grant DMS-1265249}
\thanks{$^3$Cergy-Pontoise (UMR 8088), IHES. Partially supported by ERC Advanced Grant  no. 291214, BLOWDISOL}

\date{\today}

\begin{document}

\subjclass[2000]{Primary: 35L05. Secondary: 35L71, 35B99, 35J61}
%

\begin{abstract}
 Consider the focusing energy-critical wave equation in space dimension $3$, $4$ or $5$. In a previous paper, we proved that any solution which is bounded in the energy space converges, along a sequence of times and in some weak sense, to a solution \emph{with the compactness property}, that is a solution whose trajectory stays in a compact subset of the energy space up to space translation and scaling. It is conjectured that the only solutions with the compactness property are stationary solutions and solitary waves that are Lorentz transforms of the former. In this note we prove this conjecture with an additional non-degeneracy assumption related to the invariances of the elliptic equation satisfied by stationary solutions. The proof uses a standard monotonicity formula, modulation theory, and a new channel of energy argument which is independent of the space dimension.
\end{abstract}

\maketitle

\tableofcontents

\section{Introduction}
In this work we consider the energy-critical focusing nonlinear wave equation in space dimension $N=3,4,5$:
\begin{equation}
\label{CP}
\left\{ 
\begin{gathered}
\partial_t^2 u -\Delta u-|u|^{\frac{4}{N-2}}u=0,\quad (t,x)\in I\times \RR^N\\
u_{\restriction t=0}=u_0\in \hdot,\quad \partial_t u_{\restriction t=0}=u_1\in L^2,
\end{gathered}\right.
\end{equation}
where $I$ is an interval ($0\in I$), $u$ is real-valued, $\hdot:=\hdot(\RR^N)$, and $L^2:=L^2(\RR^N)$.

The equation is locally well-posed in $\hdot\times L^2$. If $u$ is a solution, we will denote by $(T_-(u),T_+(u))$ its maximal interval of existence. On $(T_-(u),T_+(u))$, the following two quantities are conserved:
$$ E[u]=E(u(t),\partial_tu(t))=\frac 12\int |\nabla u|^2dx+\frac{1}{2}\int (\partial_tu)^2dx-\frac{N-2}{2N}\int |u|^{\frac{2N}{N-2}}dx$$
(the energy) and 
$$ P[u]=P(u,\partial_t u(t))=\int u\nabla udx$$
(the momentum). 

Denote by $\Sigma$ the set of non-zero stationary solutions of \eqref{CP}:
\begin{equation}
 \label{eq07}
\Sigma:=\Big\{Q\in \hdot(\RR^N)\setminus\{0\}\quad\text{s.t.}\quad-\Delta Q=|Q|^{\frac{4}{N-2}}Q\Big\}.
\end{equation} 
The only radial elements of $\Sigma$ are $\pm\lambda^{\frac{N-2}{2}}W(\lambda x)$, $\lambda>0$ where the ground state $W$ is given by
\begin{equation}
 \label{defW}
W(x)=\frac{1}{\left(1+\frac{|x|^2}{N(N-2)}\right)^{\frac{N-2}{2}}}.
\end{equation} 
In \cite{DuKeMe13Pa}, the authors have proved the soliton resolution for spherically symmetric solutions of equation \eqref{CP} in the case $N=3$. Namely, any bounded radial solution $u$ of \eqref{CP} has an asymptotic expansion of the following form
$$ u(t,x)=\sum_{j=1}^J \frac{\iota_j}{\lambda_j(t)^{\frac{1}{2}}} W\left(\frac{x}{\lambda_j(t)}\right)+v_L(t,x)+\eps(t,x),$$
where $v_L$ is a solution to the linear wave equation, $J$ is a natural number ($J\geq 1$ if $T_+(u)$ is finite), $\iota_j\in \{-1,+1\}$, and, as $t\to T_+(u)$, 
$$0<\lambda_1(t)\ll \ldots \ll\lambda_J(t),\quad (\eps(t),\partial_t\eps(t))\longrightarrow 0 \text{ in }\hdot\times L^2.$$
The proof is based on the classification of radial solutions of \eqref{CP} that do not satisfy an exterior energy estimate, and the ``channel of energy'' method, which consists, in a contradiction argument, in bounding from below the $\hdot\times L^2$ norm of the solution outside a well-chosen light cone. This work uses several properties that are specific to the radial case (in particular, it relies heavily on the fact that $W$ is the only non-zero radial stationary solution of \eqref{CP} up to scaling), and to space dimension $3$, where the exterior energy estimates for the free wave equation are the strongest.

Much less is known in higher dimensions, and in the nonradial case. The existence of elements of $\Sigma$ that are not spherically symmetric, and with arbitrary large energy was proved by Ding  \cite{Ding86} using a variational argument. More explicit constructions of such solutions are available in \cite{dPMPP11,dPMPP13}. However, only existence results are available, and the elements of $\Sigma$ are not classified. 

Other particular solutions of \eqref{CP} are solitary waves given by Lorentz transform of stationary solutions: if $Q\in \Sigma$ and $\vell\in \RR^N$ satisfies $|\vell|<1$, then
$$Q_{\vell}(t,x)=Q\left(\left(-\frac{t}{\sqrt{1-|\vell|^2}}+\frac{1}{|\vell|^2} \left(\frac{1}{\sqrt{1-|\vell|^2}}-1\right)\vell\cdot x\right)\vell+x\right)=Q_{\vell}(0,x-t\vell)$$
is a global, non-scattering, bounded solution of \eqref{CP}, travelling in the direction $\vell$ (here and in the sequel $|\cdot|$ is the Euclidean norm on $\RR^N$).

We expect that the soliton resolution for \eqref{CP} is still valid without the radiality assumption, namely that any solution $u$ that is bounded for positive time can be written, as $t\to T_+(u)$, as a finite sum of solitary waves modulated by space translations and scaling, a linear solution, and a remainder that goes to $0$ in $\hdot\times L^2$. One major difficulty in the proof of this conjecture is the lack of classification of solutions of the stationary equation.

The main result of our previous paper, \cite[Theorem 1]{DuKeMe13Pa}, is a first step in the classification of arbitrary large, bounded, nonradial solutions of \eqref{CP}. It implies that for any bounded solution of \eqref{CP}, there exists a sequence of time $t_n\to T^+(u)$ such that $u(t_n)$ converges, in some weak sense and up to scaling and space-translation, to the initial data $Q_{\vell}(0)$ of a solitary wave. The proof of \cite{DuKeMe13Pa} is based on the notion of solutions of \eqref{CP} \emph{with the compactness property}, which first appears in \cite{GlMe95P} and plays an important role in the compactness/rigidity method initiated in \cite{KeMe06}  (See also \cite{Keraani06}, \cite{KeMe08}, \cite{TaViZh08}).
\begin{defi}
\label{D:compactness}
We say that a solution $u$ of \eqref{CP} has the \emph{compactness property} when there exists $\lambda(t)>0$, $x(t)\in \RR^N$, defined for $t\in (T_-(u),T_+(u))$ such that:
$$K=\left\{ \left( \lambda^{\frac{N}{2}-1}(t)u\left(t,\lambda(t)\cdot+x(t)\right), \lambda^{\frac N2}(t)\partial_t u\left(t,\lambda(t)\cdot+x(t)\right)\right),\; t\in (T_-(u),T_+(u))\right\}$$
has compact closure in $\hdot\times L^2$.
 \end{defi}
The null solution, as well as the solitary waves $Q_{\vell}$, with $Q\in \Sigma$, $|\vell|<1$ have the compactness property. We conjecture (\emph{rigidity conjecture for solutions with the compactness property}) that these are the only solutions of \eqref{CP} with the compactness property.  

This conjecture was settled in \cite[Theorem 2]{DuKeMe11a} for radial solutions. Again, the uniqueness of the radial stationary solution $W$ plays an important role in the proof. Without the radial assumption, one has the following weaker result from \cite{DuKeMe13Pa}:
\begin{prop}
\label{P:maintheo1}
 Let $u$ be a nonzero solution with the compactness property, with maximal time of existence $(T_-,T_+)$. Then
\begin{enumerate}
\item \label{I:def_vell}$E[u]>0$ and $|\vell|<1$, where
$\vell=-\frac{P[u]}{E[u]}.
$
 \item \label{I:half_global} $T_-=-\infty$ or $T_+=+\infty$.
\item \label{I:theo_virial} there exist two sequences $\{t_n^{\pm}\}_n$, two elements $Q^{\pm}$ of $\Sigma$ such that $\lim_{n\to +\infty}t_n^{\pm}=T_{\pm}$ and
\begin{multline}
 \label{theo_CV}
\lim_{n\to\infty}\left\|\lambda^{\frac{N}{2}-1}\left(t_n^{\pm}\right)u\left(t_n^{\pm},\lambda\left(t_n^{\pm}\right)\cdot +x\left(t_n^{\pm}\right)\right)-Q_{\vell}^{\pm}\left(t_n^{\pm}\right)\right\|_{\hdot}\\
+\left\|\lambda^{\frac{N}{2}}\left(t_n^{\pm}\right)\partial_t u\left(t_n^{\pm},\lambda\left(t_n^{\pm}\right)\cdot +x\left(t_n^{\pm}\right)\right)-\partial_tQ_{\vell}^{\pm}\left(t_n^{\pm}\right)\right\|_{L^2}=0.
\end{multline}
\end{enumerate}
\end{prop}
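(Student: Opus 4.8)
\smallskip
\noindent\emph{Sketch of the strategy I would follow.} The plan is to combine a localized virial/monotonicity identity, the conservation of energy and momentum, and a channel-of-energy estimate. Two preliminary observations: since the $\hdot\times L^2$ norm is invariant under the rescaling in Definition~\ref{D:compactness}, the compactness property gives $\sup_t(\|\nabla u(t)\|_{L^2}+\|\partial_t u(t)\|_{L^2})<\infty$ and the uniform tightness
\[
\forall\,\eta>0\ \exists R_\eta:\ \ \int_{|x-x(t)|>R_\eta\lambda(t)}\!\Big(|\nabla u|^2+(\partial_t u)^2+\tfrac{|u|^2}{|x-x(t)|^2}\Big)dx+\Big(\int_{|x-x(t)|>R_\eta\lambda(t)}\!|u|^{\frac{2N}{N-2}}dx\Big)^{\frac{N-2}{N}}\le\eta
\]
uniformly in $t$; and, if $u\neq0$, then $u$ does not scatter (a scattering solution has no precompact rescaled trajectory), so $\inf_t(\|\nabla u(t)\|_{L^2}+\|\partial_t u(t)\|_{L^2})>0$ by small-data theory. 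One may also take $\lambda,x$ with $|\lambda'|\lesssim1$, $|x'|\lesssim\lambda$, $\lambda(t)\to0$ at a finite endpoint, and one has the boost law $\frac{d}{dt}\int x\,e(u(t))\,dx=-P[u]$ ($e(u)=$ energy density). The key tool is the formal identity $\frac{d}{dt}\int\partial_t u\big((x-x(t))\cdot\nabla u+\tfrac{N-2}{2}u\big)dx=-\int(\partial_t u)^2dx-x'(t)\cdot P[u]$, which together with $\frac{d}{dt}\int\partial_t u\,u=\int(\partial_t u)^2-\int|\nabla u|^2+\int|u|^{\frac{2N}{N-2}}$ and conservation of $E[u]$ yields, after truncating by $\chi\big((x-x(t))/(R\lambda(t))\big)$ and absorbing the commutator and $\lambda',x'$ errors into the tightness estimate (with the scale $R\to\infty$), functions $Z_R,Y_R$ with $|Z_R(t)|+|Y_R(t)|\lesssim R\lambda(t)$ and
\[
Z_R'(t)=-\|\partial_t u(t)\|_{L^2}^2-x'(t)\cdot P[u]+o_R(1),\quad Y_R'(t)=\tfrac{2(N-1)}{N-2}\|\partial_t u(t)\|_{L^2}^2+\tfrac{2}{N-2}\|\nabla u(t)\|_{L^2}^2-\tfrac{2N}{N-2}E[u]+o_R(1).
\]

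For part (b), suppose $-\infty<T_-<T_+<+\infty$; then (rescaling $T_\mp=\mp1$) $\lambda(t)\to0$, $x(t)\to x_\pm$ as $t\to T_\pm$. Localizing the monotonicity identity on the backward light cone from $(T_+,x_+)$, the boundary flux has a favourable sign, so the truncated virial is monotone and, as $\lambda(t)\to0$, tends to $0$ at $T_+$; a channel-of-energy estimate then forces $u$, renormalized around $(T_+,x_+)$, to converge in the self-similar region to a nontrivial solution of an elliptic problem on a truncated cone, which is incompatible with the absence of energy flux (compactness), the simultaneous singularity at $(T_-,x_-)$, and finite speed of propagation — so $T_-=-\infty$ or $T_+=+\infty$. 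For part (a): if $E[u]\le0$, then $\|\nabla u\|_{L^2}^2+\|\partial_t u\|_{L^2}^2\le\tfrac{N-2}{N}\|u\|_{L^{2N/(N-2)}}^{2N/(N-2)}$ forces, via Sobolev, $\inf_t\|\nabla u(t)\|_{L^2}>0$, so for $R$ large $Y_R'(t)\ge c_1>0$, whence $Y_R(t)\to\pm\infty$ at $T_\pm$, contradicting $|Y_R(t)|\lesssim R\lambda(t)$ (bounded near the infinite endpoint furnished by (b), and $Y_R\to0$ at a finite one). Thus $E[u]>0$ and $\vell=-P[u]/E[u]$ is defined. Assuming (after time reversal, using (b)) $T_+=+\infty$, and averaging the two identities and the pointwise energy relation over $[0,T]$, dividing by $T$, and letting $T=T_k\to\infty$ along a sequence realizing all the relevant limits ($\overline K,\overline G,\overline U,\vec v_\infty$) and then $R\to\infty$, one gets $\overline K=-\vec v_\infty\cdot P[u]$, $(N-1)\overline K+\overline G=NE[u]$, $\overline K=E[u]-\tfrac1N\overline U\le E[u]$, where $\overline K,\overline G,\overline U\ge0$ are the Cesàro means of $\|\partial_t u\|_{L^2}^2$, $\|\nabla u\|_{L^2}^2$, $\|u\|_{L^{2N/(N-2)}}^{2N/(N-2)}$ and $\vec v_\infty=\lim x(T)/T$. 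The boost law and the uniform concentration of $u$ around $x(t)$ (a truncated center of energy handling the borderline case $N=3$) give $\vec v_\infty=\vell$, hence $\overline K=|P[u]|^2/E[u]\le E[u]$, i.e.\ $|P[u]|\le E[u]$; the inequality is strict, for $\overline U=0$ would give $\|u(t_n)\|_{L^{2N/(N-2)}}\to0$ with $\|\nabla u(t_n)\|_{L^2}+\|\partial_t u(t_n)\|_{L^2}$ bounded below, against compactness. Thus $|\vell|<1$.

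For part (c), treat $T_+$ (the endpoint $T_-$ being symmetric). If $T_+=+\infty$: apply the Lorentz transform of velocity $\vell$ (legitimate since $|\vell|<1$) to obtain a solution $\tu$ with $P[\tu]=0$, still with the compactness property; then $x'\cdot P$ disappears from the $Z_R$ identity and one gets $\tfrac1T\int_0^T\|\partial_t\tu(t)\|_{L^2}^2\,dt\to0$, so along some $\tilde t_n$, after rescaling and extraction (precompactness), $\partial_t\tu\to0$ in $L^2$ and $\tu\to v_0$ in $\hdot$. By \eqref{CP}, $v_0$ solves $-\Delta v_0=|v_0|^{4/(N-2)}v_0$ distributionally, hence classically, and $v_0\neq0$ (else $\tu$ has small data at $\tilde t_n$ and scatters). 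So $v_0=:Q\in\Sigma$, and undoing the Lorentz transform yields \eqref{theo_CV} along a corresponding $t_n^+\to+\infty$. If instead $T_+<+\infty$, then $T_-=-\infty$ by (b), and \eqref{theo_CV} at $T_+$ is the statement that a compact solution bubbles off a Lorentz transform of a stationary solution at a finite-time singularity, which again comes out of the channel-of-energy analysis in (b). (Alternatively, \eqref{theo_CV} may be obtained by upgrading to \emph{strong} convergence — using precompactness of the trajectory — the \emph{weak} convergence to $Q_\vell(0)$ implied by the main theorem of \cite{DuKeMe13Pa}.)

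The genuinely hard part will be (b), and with it the $T_\pm$-finite case of (c): in the nonradial, higher-dimensional setting there is no classification of stationary — or self-similar — solutions of \eqref{CP}, so the only tool available is a channel-of-energy estimate, and squeezing the required rigidity near a finite-time singularity out of it, uniformly in $N$, is delicate. The secondary technical points — making the Lorentz reduction in (c) rigorous when $u$ is only half-global, and controlling the modulation and commutator errors in the localized identities (in particular ruling out $\lambda(t)\to+\infty$) by the tightness of the trajectory — are by now fairly standard.
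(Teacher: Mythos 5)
You should note at the outset that the paper does not prove Proposition~\ref{P:maintheo1}: it is imported from~\cite{DuKeMe13Pa}, so there is no ``paper's own proof'' to compare against. Your overall framework --- truncated virial and boost identities at scale $R\lambda(t)$, Ces\`aro averaging, Lorentz reduction to $P[u]=0$ --- is the right one, and the computation $\overline{K}=|P[u]|^2/E[u]\le E[u]$, with strictness from $\overline{U}>0$, is a clean way to organize part~(a) once part~(b) is available.

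The genuine gap is your treatment of~(b), and by extension the finite-$T_+$ case of~(c). You propose to rule out both endpoints finite via a channel-of-energy / self-similar-profile argument, but the classical exterior-energy estimates are dimension-restricted --- exactly the obstruction the present paper announces it circumvents with its new linearized channel --- and invoking them for nonradial data in $N=4,5$ is unsubstantiated. More importantly, (b) does not require channel of energy. If $T_\pm$ are both finite, the compactness property (Lemma~4.8 of~\cite{KeMe08}, applied forward and backward) forces $u$ to be supported in the intersection of two light cones, hence compactly supported in space--time. Then $\int u^2$, $\int u\,\partial_t u$ and $G(t)=\int\big((x-x_+)\cdot\nabla u+\tfrac{N-2}{2}u\big)\partial_t u\,dx$ are finite and vanish at both $T_\pm$, while the boost law gives $(x_+-x_-)E[u]=-(T_+-T_-)P[u]$. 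Combining $G'=-\|\partial_t u\|_{L^2}^2$, $\tfrac{d}{dt}\int u\partial_t u=2\|\partial_t u\|_{L^2}^2-2E[u]+\tfrac{2}{N}\|u\|_{L^{2N/(N-2)}}^{2N/(N-2)}$, the boost law, and the convexity of $\int u^2$ when $E[u]\le0$ yields $E[u]>0$ and $|P[u]|<E[u]$, i.e.\ $|\vell|<1$. A Lorentz transform to $P=0$ preserves the compact double-cone support, and then $\int_{T_-}^{T_+}\|\partial_t u\|_{L^2}^2\,dt=G(T_-)-G(T_+)=0$, so $u$ is a compactly supported, time-independent solution of $-\Delta u=|u|^{\frac{4}{N-2}}u$, hence $u\equiv0$ by unique continuation --- a contradiction. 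This is elementary and uniform in $N$. The finite-$T_+$ case of~(c) likewise needs its own renormalized virial argument rather than a channel estimate, and your sketch does not supply it.
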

It is essential, in order to prove the soliton resolution conjecture, to improve the classification of solutions with the compactness property. In this paper, we prove the rigidity conjecture for solutions with the compactness property, under an additional nondegeneracy assumption on the energy functional at the stationary solution $Q^+$ given by Proposition \ref{P:maintheo1}. This condition is related to the invariances of $\Sigma$. 

If $Q\in \Sigma$, then $x\mapsto Q(x+b)$, where $b\in \RR^N$, $x\mapsto Q(Px)$, where $P\in \OOO_N$, $x\mapsto \lambda^{N/2-1}Q(\lambda x)$, where $\lambda>0$ and $x\mapsto \frac{1}{|x|^{N-2}}Q\left(\frac{x}{|x|^2}\right)$ are also in $\Sigma$ ($\OOO_N$ is the classical orthogonal group). We will denote by $\MMM$ the group of isometries of $L^{\frac{2N}{N-2}}$ (and $\hdot$) generated by the preceding transformations. We will see that $\MMM$ defines a $N'$-parameter family of transformations in a neighborhood of the identity, where $$N'=2N+1+\frac{N(N-1)}{2}.$$ 
If $Q\in \Sigma$ we let 
\begin{equation}
 \label{eq03}
L_{Q}=-\Delta -\frac{N+2}{N-2}|Q|^{\frac{4}{N-2}}
\end{equation} 
be the linearized operator around $Q$. Let
\begin{equation}
 \label{eq04} \ZZZ_{Q}=\left\{ f\in \hdot(\RR^N)\text{ s.t. }L_{Q}f=0\right\}
\end{equation} 
and 
\begin{multline}
 \label{eq05}
\widetilde{\ZZZ}_{Q}=\mathrm{span}\Big\{(2-N)x_jQ+|x|^2\partial_{x_j}Q-2x_jx\cdot\nabla Q,\partial_{x_j}Q,\; 1\leq j \leq N,\\
   (x_j\partial_{x_k}-x_k\partial_{x_j})Q, \; 1\leq j<k\leq N,\;\frac{N-2}{2}Q+x\cdot\nabla Q\Big\}.
\end{multline} 
The vector space $\widetilde{\ZZZ}_{Q}$ is the null space of $L_{Q}$ generated by the family of transformations $\MMM$ defined above, so that $\widetilde{\ZZZ}_{Q}\subset\ZZZ_{Q}$ (see Lemma \ref{L:A12} for a rigorous proof). We note that $\tZZZ_Q$ is of dimension at most $N'$, but might have strictly lower dimension if $Q$ has symmetries. For example, 
$$\tZZZ_W=\Big\{ \frac {N-2}{2}W+x\cdot \nabla W,\; \partial_{x_j}W,\;1\leq j\leq N\Big\}$$
is of dimension $N+1$. We will make the following non-degeneracy assumption
\begin{equation}
 \label{ND}
\ZZZ_{Q}=\widetilde{\ZZZ}_{Q}.
\end{equation} 
If $Q$ satisfies \eqref{ND} and $\theta\in \MMM$ then $\theta(Q)$ also satisfies \eqref{ND}. Furthermore, $W$ satisfies \eqref{ND} (see \cite[Remark 5.6]{DuMe08}).

The main result of this paper is the following:
\begin{theo}
\label{T:maintheo2}
 Let $u$ be a non-zero solution with the compactness property. Assume that $Q^+$ or $Q^-$ (given by Proposition \ref{P:maintheo1}) satisfies the non-degeneracy assumption \eqref{ND}. Then there exists $Q\in \Sigma$ such that $u =Q_{\vell}$, where 
  $\vell=-P[u]/E[u]$ satisfies $|\vell|<1$ by Proposition \ref{P:maintheo1}.
 \end{theo}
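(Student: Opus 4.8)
The plan is to run a contradiction argument based on the channel-of-energy method together with modulation theory, exploiting the convergence along the sequence $t_n^+$ provided by Proposition~\ref{P:maintheo1}. First I would reduce to the case $\vell=0$: by Proposition~\ref{P:maintheo1}(\ref{I:def_vell}) we have $|\vell|<1$, and applying the Lorentz transform with velocity $-\vell$ turns $u$ into a new solution $\tilde u$ with compactness property and zero momentum, for which the target statement becomes $\tilde u\in\Sigma$ (a stationary solution). So assume $P[u]=0$; then $Q^+_\vell(t)=Q^+$ is a genuine stationary solution, and \eqref{theo_CV} says that, after applying the appropriate time-dependent scaling $\lambda(t_n)$ and translation $x(t_n)$, $(u(t_n^+),\partial_t u(t_n^+))$ converges in $\hdot\times L^2$ to $(Q^+,0)$, with $Q^+$ satisfying \eqref{ND}. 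The goal is to upgrade this convergence along a sequence to the statement $u\equiv Q^+$ (up to the fixed modulation parameters), which then gives $u=Q_\vell$ after undoing the Lorentz transform.

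The core is a modulation-plus-monotonicity argument near $Q^+$. Because of \eqref{theo_CV}, for $t$ near $T_+$ (at least along the sequence, and then by a continuity/connectedness argument on an interval) one can write, using the implicit function theorem and the nondegeneracy \eqref{ND},
\[
u(t)=\sum_{\iota}\,\theta_{\alpha(t)}(Q^+)+g(t),\qquad g(t)\perp \widetilde{\ZZZ}_{Q^+}\ \text{(in a suitable sense)},
\]
where $\alpha(t)$ is the $N'$-parameter modulation vector in $\MMM$ and $\|g(t)\|_{\hdot\times L^2}$ is small along $t_n^+$. Here \eqref{ND} is exactly what guarantees that the orthogonality conditions defining $g$ are transverse to the full kernel of $L_{Q^+}$, so that the linearized flow restricted to the $g$-component has a coercivity property modulo the (finite-dimensional) negative/null directions controlled by the conservation of $E$ and $P$. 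One then derives the modulation equations: $\dot\alpha(t)$ is quadratically small in $\|g(t)\|$, and a virial-type / localized monotonicity functional $\mathcal J(t)$ — built from $\int \chi_R\, \partial_t u\,(x\cdot\nabla u + \tfrac{N-2}{2}u)$ truncated at an appropriate scale, as in the standard Kenig--Merle rigidity scheme — is monotone and controls $\int_{\text{time}}\|g(t)\|^2\,dt$. Boundedness of $\mathcal J$ (from compactness) forces $\int \|g\|^2\,dt<\infty$ on a half-line, hence $g(t)\to0$ along \emph{some} sequence, and combined with the ODE for $\alpha$ one gets that $u(t)$, renormalized, converges to an element of the $\MMM$-orbit of $Q^+$; a second monotonicity argument (or a rigidity-of-the-linearized-flow argument) then kills $g$ entirely and freezes $\alpha$, so $u$ is exactly a stationary solution on its interval of existence.

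The new ingredient — and the step I expect to be the main obstacle — is the channel-of-energy argument used to rule out the ``bad'' scenario where the monotonicity functional does not give strong enough decay, i.e. where $g(t)$ persists. The abstract is explicit that this channel-of-energy step is designed to be \emph{dimension-independent}, unlike the exterior-energy estimates of \cite{DuKeMe13Pa} which were strongest in $N=3$; so rather than a sharp exterior energy lower bound for the free wave equation, one should use a weaker but universally valid dispersive lower bound on $\|(u,\partial_t u)\|_{\hdot\times L^2(|x|>t)}$ for solutions emanating from data not in the (now larger, because of Lorentz boosts and the full $\MMM$-invariance) set of profiles, and show it contradicts the compactness property of $u$, which confines the renormalized trajectory to a compact set and thus forbids a fixed amount of energy escaping to spatial infinity. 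Making this channel estimate interact correctly with the $N'$-parameter modulation — in particular ensuring the orthogonality conditions from \eqref{ND} are compatible with the light-cone truncation and that the error terms from the truncation are lower order — is the delicate technical heart; the coercivity of $L_{Q^+}$ modulo $\widetilde{\ZZZ}_{Q^+}$, which \eqref{ND} provides, is what makes it go through, and this is precisely why the nondegeneracy hypothesis cannot be removed with the present method.
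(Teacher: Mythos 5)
Your reduction to $P[u]=0$ via the Lorentz transform matches the paper (though note that even this step requires proving that the Lorentz transform of a solution with the compactness property is a well-defined solution with the compactness property, which occupies a full section of the paper). After that the proposal has genuine gaps. First, $u$ only approaches $(Q,0)$ along the sequence $t_n^+$; between these times it may leave any neighborhood of the $\MMM$-orbit of $Q$, so the modulation decomposition cannot be set up ``on an interval near $T_+$'' by a continuity/connectedness argument applied to $u$ itself. The paper argues by contradiction and extracts, at the exit times from a $\delta_1$-neighborhood of $\MMM(Q)$, a \emph{new} solution $w$ with the compactness property that stays $\delta_1$-close to $\MMM(Q)$ for all $t\geq 0$ but satisfies $d_Q(w_0,w_1)=\delta_1>0$; all subsequent analysis is performed on $w$. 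Second, your plan to ``kill $g$ entirely'' by a virial/monotonicity argument cannot succeed: at the energy level $E(Q,0)$ there exist genuine non-stationary solutions trapped near $Q$ for all $t\geq0$ (stable-manifold solutions, e.g.\ $W^-$ of \cite{DuMe08} when $Q=W$), so no forward-in-time argument can conclude $g\equiv0$. The correct output of the modulation analysis (Proposition \ref{P:00}, where \eqref{ND} enters) is a dichotomy: either $w\equiv S\in\Sigma$, or $w(t)=S+e^{-\omega t}Y+O(e^{-\omega^+t})$ with $Y$ a nonzero eigenfunction of $L_S$ for a negative eigenvalue $-\omega^2$ and $\omega^+>\omega$.

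Third, and most importantly, the channel-of-energy step you describe (a ``universally valid dispersive lower bound \dots for data not in the set of profiles'') is not what is used and is not available in all dimensions. The paper's new argument exploits the second branch of the dichotomy together with Meshkov's sharp asymptotics $\int_{S^{N-1}}|Y(r,\theta)|^2\,d\theta\geq C^{-1}e^{-2\omega r}r^{-(N-1)}$: on the annulus $\{r_0+|t-t_0|\leq|x|\leq r_0+|t-t_0|+1\}$ the mode $-\omega e^{-\omega t}Y$ contributes at least $C^{-1}e^{-\omega(t_0+r_0)}$ to $\|\partial_t w(t)\|_{L^2}$ \emph{uniformly for all} $t\leq t_0$, because for negative $t$ the temporal growth $e^{-\omega t}$ exactly compensates the spatial decay of $Y$ along the backward light cone; meanwhile a bootstrap on the equation for $\eps=w-S-e^{-\omega t}Y$, with the nonlinearity truncated to the exterior of the cone, shows the remainder is $O(e^{-\omega^+t_0})$ there. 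Choosing $t_0$ large yields $\liminf_{t\to T_-(w)}\|\partial_t w(t)\|_{L^2}>0$, contradicting Proposition \ref{P:maintheo1} applied to $w$ in the backward time direction (since $P[w]=0$ forces $\vell=0$, hence $\partial_t w(t_n^-)\to0$ along a sequence). This decisive use of backward-in-time compactness and of the precise decay of the eigenfunction is absent from your outline, so the contradiction is never actually reached.
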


The nondegeneracy assumption \eqref{ND} is classical in spectral theory and geometrical analysis. We have learned from M. Del Pino \cite{delPinoPC} that C. Musso and J. Wei \cite{MuWe14} have recently established the nondegeneracy of the solutions constructed in \cite{dPMPP11,dPMPP13}. There is no known example of a stationary solutions of \eqref{CP} that does not verify \eqref{ND}.

The main new ingredient in the proof of Theorem \ref{T:maintheo2} is an exterior energy argument. Unlike our previous papers on equation \eqref{CP} using a similar method, the space dimension is not restricted to $N=3$ (see \cite{DuKeMe11a,DuKeMe12b,DuKeMe12c,DuKeMe13}) or to odd space dimension \cite{DuKeMe12}, but works the same way in any low space dimension. We refer to the sketch of proof below for more details. As usual, the restriction $N\leq 5$ is merely to avoid a nonlinearity with a low regularity, and it is very likely that the proof adapts to higher dimensions using the results of \cite{BuCzLiPaZh13} and additional technicalities to deal with the fact that the potential $|Q|^{\frac{4}{N-2}}$ is not $C^1$ if $N\geq 6$.

With an additional a priori bound on the $\dot{H}^1\times L^2$ norm of the solution $u$, the stationary solution $Q$ in the conclusion of Theorem \ref{T:maintheo2} is equal (up to scaling, space-translation and sign change), to the ground state $W$: see Corollary \ref{C:W} in Section \ref{S:proof} below. This implies the rigidity theorem \cite[Theorem 2]{DuKeMe12}. We take this opportunity to mention that there is a mistake in the statement of \cite[Theorem 2]{DuKeMe12}. We refer to Corollary \ref{C:W} for a corrected version of this result. See also the corrected arXiv version.

\medskip

We next sketch the proof of Theorem \ref{T:maintheo2}, which is given in Section \ref{S:proof}. 

The first step (see \S \ref{SS:reduction}) is to use the Lorentz transformation to reduce to the case of a zero momentum solution. For this we need to know that the Lorentz transform of a solution of \eqref{CP} with the compactness property is a solution of \eqref{CP} with the compactness property. This fact, proved in Section \ref{S:Lorentz} is not obvious since the Lorentz transformation mixes space and time variables. In this section, we also clarify a few facts about Lorentz transformation of solutions of \eqref{CP}. This also uses precise properties of the Cauchy problem for \eqref{CP} (proved in Section \ref{S:prelim}).

We next apply Proposition \ref{P:maintheo1} to $u$ (see \S \ref{SS:stat}). Since by the first step $\vell=-P[u]/E[u]=0$, this yields a stationary solution $Q\in \Sigma$ and a sequence $t_n\to T_+(u)$ such that $\big\{(u(t_n),\partial_tu(t_n))\big\}_n$ converges to $(Q,0)$ up to space translation and scaling. Reversing time if necessary we can assume that $Q$ satisfies the nondegeneracy assumption \eqref{ND}. We must prove that $u$ equals $Q$ (after a fixed translation and scaling). We argue by contradiction: if it is not the case, we construct in \S \ref{SS:compact}, using a continuity argument, a solution $w$ of \eqref{CP} with the compactness property which has the energy of $Q$, is close to $Q$ for positive times, but is not a stationary solution.

We next use the main result of Section \ref{S:modulation} that states that any nonstationary solution $w$ which has the energy of a stationary solution $Q$ and remains close to $Q$ satisfies $T_+(w)=+\infty$ and has an asymptotic expansion of the form 
\begin{equation}
 \label{asym_exp}
w(t)=S+e^{-\omega t}Y+O(e^{-\omega^+t}), \quad t\to +\infty,
\end{equation} 
where $S\in \Sigma$, $Y$ is an eigenfunction of the linearized operator $L_S$, $-\omega^2$ is the corresponding negative eigenvalue (with $\omega>0$) and $\omega^+>\omega$. It is in this part of the proof that we use the nondegeneracy assumption \eqref{ND}. The proof uses modulation theory in the spirit of \cite{DuMe08}, where this result is proved for $Q=W$. However new technical difficulties arise because $L_S$ might have more than one negative eigenvalue and more invariances must be taken into account. 

We finally reach a contradiction (see \S \ref{SS:end_of_proof}) by proving that there is no solution $w$ with the compactness property and the expansion \eqref{asym_exp}. This is the core of the proof of Theorem \ref{T:maintheo2}. The idea is to use a channel of energy argument which is based on exterior energy estimates for the linearized equation $\partial_t^2h+L_Sh=0$ instead of the free wave equation $\partial_t^2u-\Delta u=0$. This argument, which is the main novelty of the paper, has the advantage of working in any space dimension $N\geq 3$, whereas the usual channel of energy method depends very strongly on the dimension $N$.

Apart from the sections mentioned above, Section \ref{S:stationary} is dedicated to preliminaries on stationary solutions. We recall there a result of Mehskov \cite{Meshkov89} on the decay of eigenfunction of the linearized operator at a stationary solution which is crucial in our proof. We thank T. Cazenave for helpful discussions on this topic and for mentioning Meshkov's work to us.

\medskip

We conclude this introduction by giving some references on related works. The defocusing energy-critical wave equation was treated in many papers, including \cite{Grillakis90b,Grillakis92,ShSt93,ShSt94,Kapitanski94,BaSh98,Nakanishi99b,BaGe99,Tao05NY}. The works \cite{KeMe08,DuMe08} classify the dynamics of the focusing equation below and at the energy threshold $E[W]$ (see also \cite{DuMe09b}).  For the classification of the dynamics of solutions with energy $E[u]<E[W]+\eps$, see \cite{KrSc07,DuKeMe11a,DuKeMe12,KrNaSc13a,KrNaSc13b,KrNaSc13P}. For examples of nonscattering bounded solutions of \eqref{CP} in this energy range, see e.g \cite{KrScTa08, HiRa12, DoKr13, KrSc12P} and references therein. The works \cite{DuKeMe12b,DuKeMe13,DuKeMe13Pa} classify the dynamics of large energy solutions. Finally, we would like to point out that the exterior energy estimates and the channel of energy argument were also used in the context of wave maps \cite{KeLaSc13P,CoKeLaSc12Pa,CoKeLaSc12Pb,Cote13P} and subcritical or 
supercritical wave equations \cite{
DuKeMe12c,Shen12P}.

\begin{notations}
If $N$ is an integer and $R>0$, we will denote by $B^N(R)$ be the ball of $\RR^{N}$, centered at the origin with radius $R$. 

We let $\OOO_N=\OOO_N(\RR)$ be the orthogonal group, which is the group of $N\times N$ real orthogonal matrices such that $A^{T}A$ is the identity matrix, and by $\SSS\OOO_N$ the special orthogonal group, i.e. the subset of $A\in \OOO_N$ such that $\det A=1$. 

We denote by $\SSS=\SSS(\RR^N)$ the space of Schwartz functions on $\RR^N$.
\end{notations}

\section{Preliminaries on well-posedness}
\label{S:prelim}
In this subsection, we recall the exact definition of a \emph{solution} of \eqref{CP} and give some sufficient conditions for a function $u$ to be a solution. These conditions will be used essentially in Section \ref{S:Lorentz} on the Lorentz transformation. The classical Cauchy theory for \eqref{CP} uses the space $L^{\frac{N+2}{N-2}}(\RR,L^{\frac{2(N+2)}{N-2}}(\RR^N))$. We will rather use the Cauchy theory developped in \cite{KeMe08}, based on the space $L^{\frac{2(N+1)}{N-2}}(\RR^{N+1})$ which is invariant by Lorentz transform. Let us emphasize that both Cauchy theories give the same definition of solution of \eqref{CP} (see Claim \ref{C:L5L10} below).

We first recall the Strichartz estimates \cite{GiVe95}. 
Let $I$ be an open interval containing $0$ and $(w_0,w_1)\in \hdot\times L^2$. Let
$$ w(t)=\cos(t\sqrt{-\Delta})w_0+\frac{\sin(t\sqrt{-\Delta})}{\sqrt{-\Delta}}w_1+\int_0^t\frac{\sin\left( (t-s)\sqrt{-\Delta} \right)}{\sqrt{-\Delta}}h(s)\,ds,\quad t\in I.$$
Then, if $D_x^{1/2}h\in L^{\frac{2(N+1)}{N+3}}(I\times \RR^N)$, we have
\begin{multline}
 \label{Strichartz1}
 \sup_{t\in I} \left\|(w,\partial_tw)(t)\right\|_{\hdot\times L^2}+\left\|D_x^{1/2} w\right\|_{L^{\frac{2(N+1)}{N-1}}(I\times \RR^N)}+\|w\|_{L^{\frac{2(N+1)}{N-2}}(I\times \RR^N)}+\|w\|_{L^{\frac{N+2}{N-2}}(I,L^{\frac{2(N+2)}{N-2}}(\RR^N))}\\
 \leq C\left(  \left\|(w_0,w_1)\right\|_{\hdot\times L^2}+\left\|D_x^{1/2} h\right\|_{L^{\frac{2(N+1)}{N+3}}(I\times \RR^N)}\right),
\end{multline} 
and if $h\in L^{1}(I,L^2(\RR^N))$
\begin{multline}
 \label{Strichartz2}
 \sup_{t\in I} \left\|(w,\partial_tw)(t)\right\|_{\hdot\times L^2}+\left\|D_x^{1/2} w\right\|_{L^{\frac{2(N+1)}{N-1}}(I\times \RR^N)}+\|w\|_{L^{\frac{2(N+1)}{N-2}}(I\times \RR^N)}+\|w\|_{L^{\frac{N+2}{N-2}}(I,L^{\frac{2(N+2)}{N-2}}(\RR^N))}\\
 \leq C\left(  \left\|(w_0,w_1)\right\|_{\hdot\times L^2}+\left\|h\right\|_{L^{1}\left(I,L^2(\RR^N)\right)}\right).
\end{multline} 
\begin{defi}
\label{D:solution}
 Let $I$ be an open interval containing $0$, and $(u_0,u_1)\in \hdot\times L^2$. We say that $u$ is a solution of \eqref{CP} in $I$ if 
 $$(u,\partial_tu) \in C^0(I,\hdot\times L^2),\; u\in L^{\frac{2(N+1)}{N-2}}_{\loc}\left( I,L^{\frac{2(N+1)}{N-2}}\left( \RR^N \right) \right),\; D_x^{1/2}u\in L^{\frac{2(N+1)}{N-1}}_{\loc}\left( I,L^{\frac{2(N+1)}{N-1}}\left( \RR^N \right) \right)$$
 and 
 \begin{equation}
  \label{Duhamel}
  u(t)=\cos\left( t\sqrt{-\Delta} \right)u_0+\frac{\sin\left( t\sqrt{-\Delta} \right)}{\sqrt{-\Delta}}u_1+\int_0^t\frac{\sin\left( (t-s)\sqrt{-\Delta} \right)}{\sqrt{-\Delta}}|u|^{\frac{4}{N-2}}u(s)\,ds.
 \end{equation} 
\end{defi}
Recall from \cite{KeMe08} that for any initial data $(u_0,u_1)\in \hdot\times L^2$, there is a unique solution $u$ of \eqref{CP} defined on a maximal interval of definition $I_{\max}(u)=(T_-(u),T_+(u))\subset \RR$, that satisfies the blow-up criterion 
\begin{equation}
\label{Bup_criterion}
T_+(u)<\infty\Longrightarrow \|u\|_{L^{\frac{2(N+1)}{N-2}}\left((0,T_+(u))\times \RR^N\right)}=+\infty.
\end{equation} 
More precisely, if $u\in L^{\frac{2(N+1)}{N-2}}\left((0,T_+(u))\times \RR^N\right)$, then $T_+(u)=+\infty$ and $u$ scatters, in $\hdot\times L^2$, to a linear solution as $t\to +\infty$. 

Note that by the Strichartz estimate \eqref{Strichartz1}, any solution $u$ belongs to the space 
$$L^{\frac{N+2}{N-2}}_{\loc}\left(I_{\max}(u),L^{\frac{2(N+2)}{N-2}}(\RR^N)\right).$$ 
Since by H\"older and Sobolev inequalities
$$\|u\|_{L^{\frac{2(N+1)}{N-2}}(I\times\RR^N)}^{\frac{2(N+1)}{N-2}}\leq C\sup_{t\in I}\|(u(t),\partial_t u(t))\|_{\hdot\times L^2}^{\frac{N}{N-2}}\|u\|_{L^{\frac{N+2}{N-2}}(I,L^{\frac{2(N+2)}{N-2}})}^{\frac{N+2}{N-2}},$$
we also have, in view of the Strichartz estimate \eqref{Strichartz2}, the following variants of the blow-up and scatttering criteria:
\begin{equation}
\label{Bup_criterion2}
T_+(u)<\infty\Longrightarrow \|u\|_{L^{\frac{N+2}{N-2}}\left((0,T_+(u)),L^{\frac{2(N+2)}{N-2}}\left(\RR^N\right)\right)}=+\infty,
\end{equation} 
and if $u\in L^{\frac{N+2}{N-2}}\left((0,T_+(u)), L^{\frac{2(N+2)}{N-2}}\left(\RR^N\right)\right)$, then $T_+(u)=+\infty$ and $u$ scatters to a linear solution as $t\to +\infty$. 
\begin{remark}
 \label{R:distrib}
 Let $u$ be a solution of \eqref{CP} in the sense of Definition \ref{D:solution}. If $(u_0,u_1)\in C_0^{\infty}(\RR^N)$, then $u\in C^{\infty}(I_{\max}\times \RR^N)$ and is a classical solution of \eqref{CP}. By the Strichartz estimates and a density argument, one can check that if $(u_0,u_1)$ is a general element of $\hdot\times L^2$, the corresponding solution $u$ satisfies $(\partial_t^2-\Delta)u=|u|^{\frac{4}{N-2}}u$ in the sense of distributions.
\end{remark}

We next give three sufficient conditions for a function $u$ to be a solution.
The first one is \cite[Remark 2.14]{KeMe08} and we omit the proof.
\begin{claim}
 \label{C:solution_sequence} Let $u\in L^{\frac{2(N+1)}{N-2}}(I\times \RR^N)$ be such that $(u,\partial_tu)\in C^0(\hdot\times L^2)$. Assume that there exists a sequence $(u_k)$ of solutions of \eqref{CP} such that
 \begin{gather*}
 \sup_{t\in I}\|(u-u_k,\partial_tu-\partial_tu_k)(t)\|_{\hdot\times L^2}\underset{k\to\infty}{\longrightarrow}0\\
  \sup_{k}\|u_k\|_{L^{\frac{2(N+1)}{N-2}}(I\times \RR^N)}<\infty.
 \end{gather*} 
 Then $u$ is a solution of \eqref{CP}.
\end{claim}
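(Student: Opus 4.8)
The plan is to pass to the limit in the Duhamel formula satisfied by the $u_k$, using Strichartz estimates to control the nonlinearity. First I would reduce to proving that $u$ is a solution in the sense of Definition~\ref{D:solution} on every compact subinterval $J\subset I$ with $0\in J$: since $I$ is an open interval containing $0$, every point of $I$ lies in such a $J$, so the $L^p_{\loc}$-conditions and \eqref{Duhamel} on all these $J$'s give the full statement on $I$. Fix such a $J$ and set $M:=\sup_k\|u_k\|_{L^{\frac{2(N+1)}{N-2}}(I\times\RR^N)}<\infty$ and $M':=\sup_{t\in J}\|(u,\partial_tu)(t)\|_{\hdot\times L^2}+\sup_k\sup_{t\in J}\|(u-u_k,\partial_tu-\partial_tu_k)(t)\|_{\hdot\times L^2}<\infty$. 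The first and main step is to produce a bound on $\|D_x^{1/2}u_k\|_{L^{\frac{2(N+1)}{N-1}}(J\times\RR^N)}$ that is uniform in $k$. To this end fix $\eta>0$ small, depending only on $N$. For each fixed $k$, since $\|u_k\|_{L^{\frac{2(N+1)}{N-2}}(J\times\RR^N)}\le M$, partition $J$ into consecutive closed intervals $J^k_1,\dots,J^k_{m_k}$, $J^k_\ell=[b^k_{\ell-1},b^k_\ell]$, on each of which $\|u_k\|_{L^{\frac{2(N+1)}{N-2}}(J^k_\ell\times\RR^N)}\le\eta$, with $m_k\le N_0:=(M/\eta)^{\frac{2(N+1)}{N-2}}+1$; the point is that $N_0$ is independent of $k$, only the partition depends on $k$. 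On $J^k_\ell$, $u_k$ obeys \eqref{Duhamel} with base point $b^k_{\ell-1}$, so by \eqref{Strichartz1} together with the fractional Leibniz inequality $\|D_x^{1/2}(|v|^{\frac4{N-2}}v)\|_{L^{\frac{2(N+1)}{N+3}}}\le C\|v\|_{L^{\frac{2(N+1)}{N-2}}}^{\frac4{N-2}}\|D_x^{1/2}v\|_{L^{\frac{2(N+1)}{N-1}}}$ (available since $N\le5$), the smallness of $\|u_k\|_{L^{\frac{2(N+1)}{N-2}}(J^k_\ell\times\RR^N)}$ lets me absorb the nonlinear term and conclude $\|D_x^{1/2}u_k\|_{L^{\frac{2(N+1)}{N-1}}(J^k_\ell\times\RR^N)}\le 2CM'$. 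Summing the $\tfrac{2(N+1)}{N-1}$-th powers over $\ell\le N_0$ gives $\sup_k\|D_x^{1/2}u_k\|_{L^{\frac{2(N+1)}{N-1}}(J\times\RR^N)}=:M''<\infty$, and likewise $\sup_k\|D_x^{1/2}(|u_k|^{\frac4{N-2}}u_k)\|_{L^{\frac{2(N+1)}{N+3}}(J\times\RR^N)}<\infty$.

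Next I would transfer this regularity to $u$. Since $1<\tfrac{2(N+1)}{N-1}<\infty$, the space $L^{\frac{2(N+1)}{N-1}}(J\times\RR^N)$ is reflexive, so after passing to a subsequence $D_x^{1/2}u_k\rightharpoonup g$ weakly there; as $u_k\to u$ in $C^0(J,\hdot)$ we have $u_k\to u$, hence $D_x^{1/2}u_k\to D_x^{1/2}u$, in $\mathcal{D}'(J\times\RR^N)$, so $g=D_x^{1/2}u\in L^{\frac{2(N+1)}{N-1}}(J\times\RR^N)$. Together with the hypotheses $u\in L^{\frac{2(N+1)}{N-2}}(J\times\RR^N)$ and $(u,\partial_tu)\in C^0(\hdot\times L^2)$, this is exactly the regularity required in Definition~\ref{D:solution}; moreover, the Leibniz inequality and \eqref{Strichartz1} show that $t\mapsto\int_0^t\frac{\sin((t-s)\sqrt{-\Delta})}{\sqrt{-\Delta}}|u|^{\frac4{N-2}}u(s)\,ds$ belongs to $C^0(J,\hdot)$.

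It then remains to pass to the limit, at a fixed $t\in J$, in
$$u_k(t)=\cos(t\sqrt{-\Delta})u_k(0)+\frac{\sin(t\sqrt{-\Delta})}{\sqrt{-\Delta}}\partial_tu_k(0)+\int_0^t\frac{\sin((t-s)\sqrt{-\Delta})}{\sqrt{-\Delta}}|u_k|^{\frac4{N-2}}u_k(s)\,ds.$$
The linear part converges in $\hdot$ to the analogous expression for $u$, since $\cos(t\sqrt{-\Delta})$ is bounded on $\hdot$, $\frac{\sin(t\sqrt{-\Delta})}{\sqrt{-\Delta}}$ is bounded from $L^2$ to $\hdot$, and $(u_k(0),\partial_tu_k(0))\to(u(0),\partial_tu(0))$ in $\hdot\times L^2$. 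For the Duhamel integral I would use the elementary bound $\big\|\frac{\sin(\tau\sqrt{-\Delta})}{\sqrt{-\Delta}}f\big\|_{L^2}\le\|f\|_{\dot H^{-1}}\le C\|f\|_{L^{\frac{2N}{N+2}}}$, valid for all $\tau$, which reduces matters to showing $\sup_{s\in J}\big\||u_k|^{\frac4{N-2}}u_k(s)-|u|^{\frac4{N-2}}u(s)\big\|_{L^{\frac{2N}{N+2}}}\to0$; this holds because $u_k(s)\to u(s)$ in $L^{\frac{2N}{N-2}}$ uniformly in $s\in J$ (Sobolev embedding applied to the $C^0(J,\hdot)$-convergence) and $v\mapsto|v|^{\frac4{N-2}}v$ is locally Lipschitz from $L^{\frac{2N}{N-2}}$ to $L^{\frac{2N}{N+2}}$ (using $N\le5$, the real-valuedness of $u$, and the pointwise bound $\big| |a|^{\frac4{N-2}}a-|b|^{\frac4{N-2}}b\big|\le C(|a|^{\frac4{N-2}}+|b|^{\frac4{N-2}})|a-b|$). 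Thus both sides converge in $\mathcal{D}'(\RR^N)$, the left-hand side to $u(t)$, whence $u(t)$ equals the corresponding Duhamel expression for $u$ as a distribution; as all three terms lie in $\hdot$, the identity \eqref{Duhamel} holds in $\hdot$ for every $t\in J$, so $u$ is a solution on $J$, and hence on $I$.

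I expect the first step to be the main obstacle: Strichartz and the fractional Leibniz rule only make the nonlinearity $|u_k|^{\frac4{N-2}}u_k$ manageable once one already controls $D_x^{1/2}u_k$ in $L^{\frac{2(N+1)}{N-1}}$, so that bound must be built up by a short-time bootstrap, and the subdivision of $J$ into intervals where $\|u_k\|_{L^{\frac{2(N+1)}{N-2}}}$ is small is unavoidably $k$-dependent; what rescues the argument is that only the \emph{number} of such intervals has to be controlled uniformly in $k$, which it is, thanks to the hypothesis $\sup_k\|u_k\|_{L^{\frac{2(N+1)}{N-2}}(I\times\RR^N)}<\infty$. Once the uniform bound is in hand, its descent to $u$ by weak compactness, the convergence of the linear flow, and the continuity of the Nemytskii operator are all routine.
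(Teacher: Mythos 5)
Your proof is correct. Note that the paper itself omits the proof of this claim, referring to \cite[Remark 2.14]{KeMe08}, where the statement is obtained as a byproduct of the long-time perturbation (stability) theorem; your argument is a self-contained rendering of the same mechanism. The two essential points are exactly the ones you isolate: (i) the subdivision of a compact $J\subset I$ into at most $N_0$ intervals on which $\|u_k\|_{L^{\frac{2(N+1)}{N-2}}}\leq\eta$, with $N_0$ independent of $k$, which converts the hypothesis $\sup_k\|u_k\|_{L^{\frac{2(N+1)}{N-2}}(I\times\RR^N)}<\infty$ into a uniform bound on $\|D_x^{1/2}u_k\|_{L^{\frac{2(N+1)}{N-1}}(J\times\RR^N)}$; and (ii) the passage to the limit in the Duhamel formula, for which your use of the crude bound $\bigl\|\frac{\sin(\tau\sqrt{-\Delta})}{\sqrt{-\Delta}}f\bigr\|_{L^2}\leq C\|f\|_{L^{\frac{2N}{N+2}}}$ together with the uniform $L^{\frac{2N}{N-2}}$ convergence of $u_k(s)$ is a perfectly adequate (and slightly more elementary) substitute for running the perturbation theorem. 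Two small points worth making explicit: the absorption on each $J^k_\ell$ is legitimate only because $\|D_x^{1/2}u_k\|_{L^{\frac{2(N+1)}{N-1}}(J^k_\ell\times\RR^N)}$ is finite a priori, which is part of Definition \ref{D:solution}; and the Duhamel identity on $J^k_\ell$ based at $b^k_{\ell-1}$ follows from \eqref{Duhamel} and the group property of the linear propagator. Neither affects the validity of the argument.
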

\begin{claim}
\label{C:L5L10}
 Let $I$ be an open interval containing $0$, and $(u_0,u_1)\in \hdot\times L^2$. Assume that $u\in L^{\frac{N+2}{N-2}}_{\loc}(I,L^{\frac{2(N+2)}{N-2}}(\RR^N))$ satisfies the integral equation \eqref{Duhamel}. Then $u$ is a solution of \eqref{CP}. 
\end{claim}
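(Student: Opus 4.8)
\textbf{Proof proposal for Claim \ref{C:L5L10}.}

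The plan is to show that the hypothesis $u\in L^{\frac{N+2}{N-2}}_{\loc}(I,L^{\frac{2(N+2)}{N-2}}(\RR^N))$ together with the integral equation \eqref{Duhamel} already forces $u$ to lie in the Strichartz spaces required by Definition \ref{D:solution}, and then invoke uniqueness of the Cauchy problem. First I would fix an arbitrary compact subinterval $J\subset I$ with $0\in \bar J$ (by time-translation and the local nature of all the statements one may reduce to intervals containing $0$, or simply re-center). The key analytic input is that the nonlinearity maps the $L^{\frac{N+2}{N-2}}_tL^{\frac{2(N+2)}{N-2}}_x$ scale into the dual Strichartz space: by Hölder in $x$ and $t$,
\[
\big\| |u|^{\frac{4}{N-2}}u \big\|_{L^{1}(J,L^2(\RR^N))}\leq C\,|J|^{\frac{6-N}{N+2}}\,\|u\|_{L^{\frac{N+2}{N-2}}(J,L^{\frac{2(N+2)}{N-2}}(\RR^N))}^{\frac{N+2}{N-2}},
\]
where the exponent $\frac{6-N}{N+2}$ is nonnegative precisely because $N\leq 5$ (for $N=6$ one gets $|J|^0$ and the argument still runs, but this is where the restriction $N\leq 5$ is comfortable). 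Hence the Duhamel term on the right of \eqref{Duhamel} is, by the inhomogeneous Strichartz estimate \eqref{Strichartz2}, an element of $C^0(J,\hdot\times L^2)\cap L^{\frac{2(N+1)}{N-2}}(J\times\RR^N)$ with $D_x^{1/2}$ of it in $L^{\frac{2(N+1)}{N-1}}(J\times\RR^N)$; the linear evolution of $(u_0,u_1)$ has the same properties by \eqref{Strichartz2} (or \eqref{Strichartz1}). Adding, $u$ itself satisfies all the regularity requirements of Definition \ref{D:solution} on $J$, and since $J$ was an arbitrary compact subinterval, on all of $I$ after the usual localization; together with \eqref{Duhamel} this is exactly the definition of a solution.

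One subtlety is that a priori $u$ is only assumed to lie in $L^{\frac{N+2}{N-2}}_{\loc}$, so the bound above is finite on each compact $J$ but may blow up as $J$ exhausts $I$; this is harmless since Definition \ref{D:solution} also only asks for the $\loc$ versions of the Strichartz norms. A second point worth a line: once $u$ is known to be a solution in the sense of Definition \ref{D:solution} on a small interval $J_0\ni 0$, the construction of \cite{KeMe08} gives a maximal solution $\tilde u$ with data $(u_0,u_1)$, and the fixed-point uniqueness in the space $L^{\frac{2(N+1)}{N-2}}$ (now known to contain $u$ locally) forces $u=\tilde u$ on $I\cap I_{\max}$; but in fact we do not even need this, since the statement only claims that $u$ \emph{is} a solution, which we have just verified directly from the definition.

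The main obstacle, such as it is, is purely bookkeeping: checking that the product estimate lands in the right dual space with a time factor that is controlled on compact subintervals, i.e. verifying the Hölder exponents and the role of $N\leq 5$. There is no genuine difficulty here — this is the standard observation that the $L^{\frac{N+2}{N-2}}_tL^{\frac{2(N+2)}{N-2}}_x$ norm is strong enough to close the nonlinear estimate by itself — and the claim is essentially a regularity/bootstrap remark asserting that the two Cauchy theories (the classical one on $L^{\frac{N+2}{N-2}}_tL^{\frac{2(N+2)}{N-2}}_x$ and the Lorentz-invariant one on $L^{\frac{2(N+1)}{N-2}}_{t,x}$) define the same notion of solution, which is what the surrounding text advertises.
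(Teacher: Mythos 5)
Your proof takes the same route as the paper: observe that $|u|^{\frac{4}{N-2}}u\in L^1_{\loc}(I,L^2)$ follows from the hypothesis, then feed the Duhamel formula \eqref{Duhamel} into the Strichartz estimate \eqref{Strichartz2} to place $u$ in the spaces demanded by Definition \ref{D:solution}. That is exactly the paper's argument, and your conclusion is correct.

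One technical slip, not a gap: the displayed ``H\"older'' estimate is off. The exponents actually match exactly, so
\[
\big\||u|^{\frac{4}{N-2}}u\big\|_{L^{1}(J,L^2)}=\int_J\|u(t)\|_{L^{\frac{2(N+2)}{N-2}}}^{\frac{N+2}{N-2}}\,dt=\|u\|_{L^{\frac{N+2}{N-2}}(J,L^{\frac{2(N+2)}{N-2}})}^{\frac{N+2}{N-2}}
\]
is an identity, with no H\"older in time and no factor $|J|^{\frac{6-N}{N+2}}$. In particular the restriction $N\leq 5$ plays no role here (it matters elsewhere, for the regularity of the nonlinearity). Since the conclusion $|u|^{\frac{4}{N-2}}u\in L^1_{\loc}(I,L^2)$ still holds, the rest of the argument is unaffected. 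Your closing remark on uniqueness is, as you correctly note yourself, superfluous: the claim only asks that $u$ be a solution, which the direct verification already gives.
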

\begin{proof}
By the definition of a solution, it is sufficient to check:
$$ u\in L^{\frac{2(N+1)}{N-2}}_{\loc}\left(I,\RR^N\right),\quad D_x^{1/2}u\in L^{\frac{2(N+1)}{N-1}}_{\loc}\left(I,\RR^N\right).$$
Since by our assumptions on $u$, $|u|^{\frac{4}{N-2}}u\in L^1_{\loc}\left(I,L^2(\RR^N)\right)$, this follows immediately from the Strichartz estimate \eqref{Strichartz2}.
\end{proof}
We next prove that a solution of \eqref{CP} in the distributional sense, that satisfies an appropriate space-time bound, is also a solution of \eqref{CP} in the sense of Definition \ref{D:solution}.
\begin{lemma}
 \label{L:sol_distrib}
 Let $(u_0,u_1)\in \hdot\times L^2$, $I$ be an open interval such that $0\in I$, 
 $$u\in L^{\frac{N+2}{N-2}}_{\loc}\left(I,L^{\frac{2(N+2)}{N-2}}(\RR^N)\right)\text{ and }(u,\partial_tu)\in C^0\left(I,\hdot\times L^2\right).$$
 Assume furthermore $(u,\partial_tu)_{\restriction t=0}=(u_0,u_1)$ and 
 \begin{equation}
  \label{distrib_u}
  \partial_t^2u-\Delta u=|u|^{\frac{4}{N-2}}u\text{ in }\DDD'(I\times \RR^N).
 \end{equation} 
 Then $u$ is a solution of \eqref{CP}.
\end{lemma}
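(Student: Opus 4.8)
The statement to prove is Lemma~\ref{L:sol_distrib}: a distributional solution $u$ with the stated regularity, the correct initial data, and satisfying the nonlinear wave equation in $\mathcal D'$, is in fact a solution in the sense of Definition~\ref{D:solution}.

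The plan: by Claim~\ref{C:L5L10}, it suffices to show that $u$ satisfies the Duhamel integral equation \eqref{Duhamel}. So the whole task reduces to showing that a distributional solution of the linear equation $\partial_t^2 v - \Delta v = f$ with $v|_{t=0}=u_0$, $\partial_t v|_{t=0}=u_1$, where $f = |u|^{4/(N-2)}u \in L^1_{\loc}(I,L^2)$ and $(v,\partial_t v)\in C^0(I,\dot H^1\times L^2)$, is given by the Duhamel formula. This is a standard uniqueness statement for the inhomogeneous linear wave equation. I would prove it by the usual duality/energy argument: let $w$ denote the difference between $u$ and the right-hand side of \eqref{Duhamel}; then $w$ solves $\partial_t^2 w - \Delta w = 0$ in $\mathcal D'(I\times\RR^N)$ with zero Cauchy data and $(w,\partial_t w)\in C^0(I,\dot H^1\times L^2)$, and one wants to conclude $w\equiv 0$.

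To carry this out I would first regularize in space: convolving the equation with a mollifier $\chi_\delta$ in the $x$ variable, $w_\delta = w \ast_x \chi_\delta$ solves the same free wave equation with zero data, is smooth in $x$, and $(w_\delta,\partial_t w_\delta)\in C^0(I,\dot H^1\times L^2)$ as well (actually with all spatial derivatives). For such a function the energy identity is legitimate: since $\partial_t^2 w_\delta - \Delta w_\delta = 0$, one can pair with $\partial_t w_\delta$ and integrate, obtaining that $\tfrac12\int(|\nabla w_\delta|^2 + |\partial_t w_\delta|^2)\,dx$ is constant on $I$; at $t=0$ it vanishes because the Cauchy data of $w$, hence of $w_\delta$, are zero. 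Hence $w_\delta\equiv 0$ for each $\delta$, and letting $\delta\to 0$ gives $w\equiv 0$. One small technical point is justifying the energy computation even for the mollified function — but here $w_\delta(t)$ lies in $\dot H^1$, $\partial_t w_\delta(t)\in L^2$, $\partial_t^2 w_\delta = \Delta w_\delta \in C^0(I,\dot H^{-1})$, so $\partial_t\big(\tfrac12\|\partial_t w_\delta\|_{L^2}^2 + \tfrac12\|\nabla w_\delta\|_{L^2}^2\big) = \langle \partial_t^2 w_\delta,\partial_t w_\delta\rangle + \langle \nabla w_\delta,\nabla\partial_t w_\delta\rangle = \langle \Delta w_\delta + (-\Delta) w_\delta,\partial_t w_\delta\rangle = 0$ in the distributional sense in $t$, which with continuity in $t$ gives conservation. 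Alternatively, and perhaps more cleanly, one can avoid mollification by testing the free wave equation for $w$ directly against well-chosen smooth compactly supported test functions adapted to the backward light cone and running Duhamel's principle by hand; but the mollification argument is the shortest.

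The main obstacle — really the only nontrivial point — is the rigorous justification of the energy identity at the level of regularity available ($\dot H^1\times L^2$, which is exactly the scaling-critical regularity and admits no extra smoothness), i.e. making sure the formal integration by parts $\int \nabla w\cdot\nabla\partial_t w = \tfrac12\tfrac{d}{dt}\int|\nabla w|^2$ and $\int \partial_t^2 w\,\partial_t w = \tfrac12\tfrac{d}{dt}\int|\partial_t w|^2$ are valid; the mollification reduces this to the $\dot H^1$-valued calculus pairing $\langle \partial_t^2 w_\delta,\partial_t w_\delta\rangle$, which is legitimate since $\partial_t w_\delta\in C^0(I,L^2)\cap \dot H^1$-valued and $\partial_t^2 w_\delta\in C^0(I,\dot H^{-1})$. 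Everything else — that the Duhamel formula produces a pair in $C^0(I,\dot H^1\times L^2)$ with the right Cauchy data (Strichartz \eqref{Strichartz2}, using $f\in L^1_{\loc}(I,L^2)$), and that it solves the inhomogeneous equation in $\mathcal D'$ — is routine. Once $w\equiv 0$ we invoke Claim~\ref{C:L5L10} to upgrade $u$ to a genuine solution, completing the proof.
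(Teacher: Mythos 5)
Your proof is correct, but it takes a genuinely different route from the paper. You reduce, via Claim \ref{C:L5L10}, to verifying the Duhamel formula, and then subtract the Duhamel expression from $u$ to obtain a distributional solution $w$ of the \emph{free} wave equation with zero Cauchy data in $C^0(I,\hdot\times L^2)$, which you kill by spatial mollification and energy conservation. The paper instead never forms this difference: it sets $v(t)=u(t)-\cos(t\sqrt{-\Delta})u_0-\frac{\sin(t\sqrt{-\Delta})}{\sqrt{-\Delta}}u_1$ and runs a duality argument, testing the distributional equation for $v$ against $H^a(t,x)=\varphi(t/a)H(t,x)$, where $H$ solves the adjoint inhomogeneous problem $\partial_t^2H-\Delta H=h$ backward in time; the initial condition is absorbed by showing the cutoff terms vanish as $a\to 0$, using the quantitative vanishing $\|v(t)\|_{L^2}\leq \eps t$ near $t=0$. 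The trade-off: the paper's argument produces the Duhamel identity directly and never needs to know in advance that the Duhamel integral is itself a distributional solution, whereas your argument requires that (routine, but nonzero) verification via density and the Strichartz/energy estimate \eqref{Strichartz2}; in exchange you isolate a clean, reusable uniqueness statement for the linear wave equation in the energy class, and your justification of the energy identity after mollification (using $\partial_t w_\delta\in C^1(I,L^2)$ and $\nabla\partial_t w_\delta=(\partial_t w)\ast\nabla\chi_\delta\in C^0(I,L^2)$) is sound. One small point worth making explicit at the end of your argument: from $\nabla w_\delta\equiv 0$ and $\partial_t w_\delta\equiv 0$ you should note that $w_\delta(t)$ is a constant lying in $L^{\frac{2N}{N-2}}$, hence zero, before letting $\delta\to 0$.
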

\begin{proof}
 In view of Claim \ref{C:L5L10}, it suffices to check that $u$ satisfies the integral equation \eqref{Duhamel}. Let $I_+=I\cap (0,+\infty)$.  We prove \eqref{Duhamel} for $t\in I_+$, the proof of \eqref{Duhamel} for $t\in I\cap (-\infty,0)$ is exactly the same. Let
 $$v(t)=u(t)-\cos(t\sqrt{-\Delta})u_0-\frac{\sin(t\sqrt{-\Delta})}{\sqrt{-\Delta}}u_1 .$$
 Then $(v,\partial_t v)\in C^0(I,\hdot\times L^2)$, $(v,\partial_tv)_{\restriction t=0}=(0,0)$, and
 \begin{equation}
  \label{distrib_v}
  \partial_t^2v-\Delta v=|u|^{\frac{4}{N-2}}u\text{ in }\DDD'(I\times \RR^N).
 \end{equation} 
 Let $h\in C^{\infty}_0(I_+\times \RR^N)$. Let, for $t\in \RR$, 
 $$ H(t)=-\int_t^{+\infty}\frac{\sin\left( (t-s)\sqrt{-\Delta} \right)}{\sqrt{-\Delta}}h(s)\,ds,$$
 so that $H\in C^{\infty}\left(\RR^{N+1}\right)$ (with compact support in $x$), $H(t)=0$ for large $t$ and $\partial_t^2H-\Delta H=h$.
 Let $\varphi\in C^{\infty}(\RR)$ such that $\varphi(\sigma)=1$ if $\sigma\geq 1$, and $\varphi(\sigma)=0$ if $\sigma \leq \frac{1}{2}$.  If $a\in (0,1]$, we let
 $$H^a(t,x)=\varphi\left( \frac{t}{a} \right)H(t,x).$$
 Note that $H^a\in C^{\infty}_0\left(\RR^{N+1}\right)$. By \eqref{distrib_v},
 \begin{equation}
  \label{P6}
  \iint_{\RR^{N+1}} v(t,x)(\partial_t^2-\Delta)H^a(t,x)\,dtdx=\iint_{\RR^{N+1}} |u|^{\frac{4}{N-2}}u(t,x)H^a(t,x)\,dtdx.
 \end{equation} 
 By dominated convergence and Fubini's Theorem, 
 \begin{multline*}
  \lim_{a\to 0} \iint_{\RR^{N+1}} |u|^{\frac{4}{N-2}}u(t,x)H^a(t,x)\,dtdx=\int_{\RR^N}\int_0^{+\infty} |u|^{\frac{4}{N-2}}u(t,x)H(t,x)\,dtdx\\
 \qquad\qquad\qquad\qquad\qquad\quad=-\int_0^{+\infty} \int_0^s \int_{\RR^N} |u|^{\frac{4}{N-2}}u(t,x)\frac{\sin\left( (t-s)\sqrt{-\Delta} \right)}{\sqrt{-\Delta}}h(s,x)\,dx\,dt\,ds\\
 =-\int_{\RR^N} \int_0^{+\infty}\int_0^s \frac{\sin\left( (t-s)\sqrt{-\Delta} \right)}{\sqrt{-\Delta}}|u|^{\frac{4}{N-2}}u(t,x)\,dt\,h(s,x)\,ds\,dx,
 \end{multline*}
where at the last line we have also used the self-adjointness of $\frac{\sin\left( (t-s)\sqrt{-\Delta} \right)}{\sqrt{-\Delta}}$. As a conclusion, the right hand-side of \eqref{P6} satisfies:
\begin{multline}
 \label{P7}
  \lim_{a\to 0} \iint_{\RR^{N+1}} |u|^{\frac{4}{N-2}}u(t,x)H^a(t,x)\,dtdx\\
  =\int_{\RR^N} \int_0^{+\infty}\int_0^t \frac{\sin\left( (t-s)\sqrt{-\Delta} \right)}{\sqrt{-\Delta}}|u|^{\frac{4}{N-2}}u(s,x)\,ds\,h(t,x)\,dt\,dx.
\end{multline} 
We next consider the left-hand side of \eqref{P6}:
 \begin{multline}
  \label{P8}
   \iint_{\RR^{N+1}} v(t,x)(\partial_t^2-\Delta)H^a(t,x)\,dtdx\\
   =\iint_{\RR^{N+1}}v(t,x)\left( \frac{1}{a^2}\varphi''\left( \frac{t}{a} \right)H(t,x)+\frac{2}{a}\varphi'\left( \frac{t}{a} \right)\partial_tH(t,x)+\varphi\left( \frac{t}{a}\right)h(t,x)  \right)\,dtdx.
 \end{multline}
 Assume that we have proved:
 \begin{gather}
  \label{P9}
  \lim_{a\to 0}\iint_{\RR^{N+1}}v(t,x)\frac{1}{a^2}\varphi''\left( \frac{t}{a} \right)H(t,x)\,dtdx=0\\
 \label{P10}
  \lim_{a\to 0}\iint_{\RR^{N+1}}v(t,x)\frac{1}{a}\varphi'\left( \frac{t}{a} \right)\partial_tH(t,x)\,dtdx=0.
 \end{gather}
Then, by \eqref{P6}, \eqref{P7} and \eqref{P8}
\begin{equation}
 \label{P11}\int_{\RR^N} \int_0^{+\infty}\int_0^t \frac{\sin\left( (t-s)\sqrt{-\Delta} \right)}{\sqrt{-\Delta}}|u|^{\frac{4}{N-2}}u(s,x)\,ds\,h(t,x)\,dt\,dx=
 \int_{\RR^N} \int_0^{+\infty} v(t,x)h(t,x)\,dt\,dx.
\end{equation} 
Since $h$ is arbitrary in $C^{\infty}_0\left(  I_+\times \RR^N\right)$,  we deduce, in view of the definition of $v$, the desired integral formula \eqref{Duhamel}.

It remains to check \eqref{P9} and \eqref{P10}. We only prove \eqref{P9}, the proof of \eqref{P10} is similar. Using that $\partial_t v\in C^0(I,L^2)$ and $v_{\restriction t=0}=\partial_tv_{\restriction t=0}=0$ almost everywhere, we deduce 
$$\forall t\in I,\quad v(t)\in L^2(\RR^N)\text{ and }\lim_{t\to 0}\frac{1}{t}\|v(t)\|_{L^2}=0.$$
Let $\eps>0$ and $a_0$ such that $\|v(t)\|_{L^2}\leq \eps t$ for $t\in (0,a_0]$. Then (using that $\varphi''(t/a)=0$ for $t\geq a$ or $t \leq 0$), 
$$\left|\iint_{\RR^{N+1}}v(t,x)\frac{1}{a^2}\varphi''\left( \frac{t}{a} \right)H(t,x)\,dtdx\right|\leq C\int_0^a \frac{\eps t}{a^2}\,dt\leq C\eps,$$
which concludes the proof of \eqref{P9}, and thus of Lemma \ref{L:sol_distrib}.
\end{proof}

\section{Properties of stationary solutions}
\label{S:stationary}
This section concerns the set $\Sigma$ of non-zero stationary solutions of \eqref{CP}. More precisely, in \ref{SS:Kelvin_transfo}, we give the asymptotics, for large $x$, of an element $Q$ of $\Sigma$. We also study the set $\MMM$ of transformations, mentioned in the introduction, leaving $\Sigma$ invariant. Subsection \ref{SS:linearized} concerns the linearized operator $L_Q$. Finally, in \ref{SS:modulation}, under the nondegeneracy assumption \eqref{ND}, we choose modulation parameters in $\MMM$ in order to satisfy some orthogonality properties.
\subsection{Kelvin transformation and asymptotic behaviour}
\label{SS:Kelvin_transfo}
Recall that $\Sigma$ is the set of non-zero functions $Q$ in $\hdot(\RR^N)$ such that 
\begin{equation}
\label{eq02}
-\Delta Q=|Q|^{\frac{4}{N-2}}Q
 \end{equation} 
in the sense of distributions on $\RR^N$.

We fix an arbitrary one to one map $\zeta$ from $\{(i,j)\in \NN^2,\; 1\leq i<j\leq N\}$ to $\left\{1,2,\ldots, \frac{N(N-1)}{2}\right\}$. If $c=\left(c_1,\ldots,c_{\frac{N(N-1)}{2}}\right)\in \RR^{\frac{N(N-1)}{2}}$, we write
\begin{equation}
 \label{Pc}
P_c=\exp\left([p_{i,j}]_{1\leq i,j\leq N}\right) \in \SSS\OOO_N,
 \end{equation}
 where $p_{i,i}=0$, $p_{i,j}=c_{\zeta(i,j)}$ if $i<j$, $p_{i,j}=-c_{\zeta(j,i)}$ if $j<i$. This defines a parametrization of the special orthogonal group $\SSS\OOO_N$ by $\RR^{\frac{N(N-1)}{2}}$ in a neighborhood of the identity matrix.

Let $A=(s,a,b,c)\in \RR^{N'}=\RR\times \RR^N\times \RR^N\times \RR^{\frac{N(N-1)}{2}}$. We let, for $f\in \hdot$,
 \begin{equation}
 \label{def_thetaA}
 \theta_{A}(f)(x)=e^{\frac{(N-2)s}{2}} \left|\frac{x}{|x|} -a |x|\right|^{2-N}f\left( b+\frac{e^sP_c(x-a|x|^2)}{1-2\langle a,x\rangle +|a|^2|x|^2} \right).
\end{equation}
\begin{prop}
\label{P:A1}
Let $Q\in \Sigma$. Then
 \begin{enumerate}
  \item \label{I:Qsmooth}$Q\in C^{\infty}(\RR^N)$ if $N=3,4$ and $Q\in C^{4}(\RR^5)$ if $N=5$.
 \item\label{I:harmonic} 
We have:
 $$\forall \alpha\in \NN^N,\text{ s.t. }|\alpha|\leq 4,\;  \exists C_{\alpha}>0,\quad \left|\partial_{x}^{\alpha}Q(x)\right|\leq C_{\alpha}|x|^{-N+2-|\alpha|},\quad |x|\geq 1.$$
 \item \label{I:Kelvin}The function 
 $$\tQ: x\mapsto \frac{1}{|x|^{N-2}}Q\left(\frac{x}{|x|^2}\right)$$
 is also in $\Sigma$. Furthermore,
 $$\|\tQ\|_{\hdot}^2=\|\tQ\|_{L^{\frac{2N}{N-2}}}^{\frac{2N}{N-2}}=\|Q\|_{L^{\frac{2N}{N-2}}}^{\frac{2N}{N-2}}=\|Q\|_{\hdot}^2.$$
 \item \label{I:transformations}Let $A=(s,a,b,c)\in \RR^{N'}=\RR\times \RR^N\times \RR^N\times \RR^{\frac{N(N-1)}{2}}$. Then the function $\theta_A(Q)$ 
is in $\Sigma$.
\item \label{I:compo} If $A_1,A_2\in B^{N'}(\eps)$ ($\eps>0$ small), then 
$$ \theta_{A_1}\circ\theta_{A_2}=\theta_{A_3},\quad (\theta_{A_1})^{-1}=\theta_{A_4},$$
where $A_3,A_4\in \RR^{N'}$ 
and the maps $(A_1,A_2)\mapsto A_3$ and $A_1\mapsto A_4$ are $C^{\infty}$ from $\left(B^{N'}(\eps)\right)^2$ (respectively $B^{N'}(\eps)$) to a neighborhood of $0$ in $\RR^{N'}$.
 \end{enumerate}
\end{prop}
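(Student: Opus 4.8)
The plan is to prove the items in an order dictated by the conformal invariance of \eqref{eq02}: first the solution statement in \eqref{I:Kelvin} (that $\tQ\in\Sigma$), then \eqref{I:Qsmooth}, then \eqref{I:harmonic}, then the norm identities in \eqref{I:Kelvin}, then \eqref{I:transformations} and \eqref{I:compo}. The only analytic input is elliptic regularity. A direct computation gives the Kelvin intertwining identity $\Delta\tQ(x)=|x|^{-N-2}(\Delta Q)(x/|x|^2)$ on $\RR^N\setminus\{0\}$; since also $|\tQ(x)|^{\frac{4}{N-2}}\tQ(x)=|x|^{-N-2}\big(|Q|^{\frac{4}{N-2}}Q\big)(x/|x|^2)$, the function $\tQ$ solves \eqref{eq02} on $\RR^N\setminus\{0\}$. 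The change of variable $y=x/|x|^2$ (Jacobian $|x|^{-2N}$) gives $\|\tQ\|_{\hdot(\RR^N\setminus\{0\})}=\|Q\|_{\hdot}<\infty$, and, a point being $\hdot$-removable when $N\ge3$, we get $\tQ\in\hdot(\RR^N)$. Then $D:=-\Delta\tQ-|\tQ|^{\frac{4}{N-2}}\tQ$ belongs to $\dot H^{-1}(\RR^N)$ (using $-\Delta\tQ\in\dot H^{-1}$ and $L^{\frac{2N}{N+2}}\hookrightarrow\dot H^{-1}$), is supported at the origin, and hence is a finite combination of derivatives of $\delta_0$; as no nonzero such distribution lies in $\dot H^{-1}(\RR^N)$, we get $D=0$, i.e.\ $\tQ\in\Sigma$. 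Finally, pairing \eqref{eq02} with $Q$ (legitimate since $|Q|^{\frac{4}{N-2}}Q\in L^{\frac{2N}{N+2}}\hookrightarrow\dot H^{-1}$) gives $\|Q\|_{\hdot}^2=\|Q\|_{L^{2N/(N-2)}}^{\frac{2N}{N-2}}$, the analogue holds for $\tQ$, and $\|\tQ\|_{L^{2N/(N-2)}}=\|Q\|_{L^{2N/(N-2)}}$ by the change of variable, which gives the four equalities in \eqref{I:Kelvin}.

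For \eqref{I:Qsmooth}, write \eqref{eq02} as $-\Delta Q=VQ$ with $V=|Q|^{\frac{4}{N-2}}\in L^{N/2}(\RR^N)$. The Brezis--Kato argument gives $Q\in L^q_{\loc}$ for every $q<\infty$, hence $|Q|^{\frac{4}{N-2}}Q\in L^q_{\loc}$ for every $q$, and Calder\'on--Zygmund estimates give $Q\in W^{2,q}_{\loc}$, so $Q\in C^{1,\gamma}_{\loc}$ for every $\gamma<1$. When $N=3$ or $N=4$, $t\mapsto|t|^{\frac{4}{N-2}}t$ is the polynomial $t^5$, resp.\ $t^3$, and iterating Schauder estimates gives $Q\in C^\infty(\RR^N)$. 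When $N=5$, $t\mapsto|t|^{4/3}t$ is only of class $C^{2,1/3}$ near $0$, so Schauder upgrades $C^{1,\gamma}_{\loc}$ to $C^{3,\gamma}_{\loc}$ and then, since $|Q|^{4/3}Q\in C^{2,1/3}_{\loc}$ when $Q\in C^{3,\gamma}$, to $C^{4,1/3}_{\loc}$; the iteration stops there since the third derivative of $t\mapsto|t|^{4/3}t$ is unbounded, and in particular $Q\in C^4(\RR^5)$. The same reasoning applies to $\tQ\in\Sigma$, so $\tQ$ is $C^4$ on $\overline{B^N(1)}$. For \eqref{I:harmonic}, differentiate the identity $Q(y)=|y|^{2-N}\tQ(y/|y|^2)$, valid for $|y|\ge1$: each derivative falling on $|y|^{2-N}$ lowers the homogeneity by $1$, while each derivative falling on $\tQ(y/|y|^2)$ produces a factor $O(|y|^{-1})$ multiplying a bounded derivative of $\tQ$, so the term with all derivatives on $|y|^{2-N}$ dominates and $|\partial^\alpha Q(y)|\le C_\alpha|y|^{-N+2-|\alpha|}$ for $|\alpha|\le4$, $|y|\ge1$.

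For \eqref{I:transformations}, the inner map $x\mapsto b+e^sP_c\Phi_a(x)$ appearing in $\theta_A$, where $\Phi_a(x)=\frac{x-a|x|^2}{1-2\langle a,x\rangle+|a|^2|x|^2}$, is a composition of elementary M\"obius transformations of $\RR^N$: one checks $\Phi_a=\iota\circ\tau_{-a}\circ\iota$, where $\iota(x)=x/|x|^2$ is the inversion and $\tau_{-a}$ the translation by $-a$, and the scalar weight in \eqref{def_thetaA} is exactly the product of the corresponding conformal weights, using $\left|\frac{x}{|x|}-a|x|\right|^{2-N}=(1-2\langle a,x\rangle+|a|^2|x|^2)^{(2-N)/2}$. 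Since translations and rotations preserve $\Sigma$ trivially, dilations preserve it by the homogeneity of \eqref{eq02}, and the inversion preserves it by \eqref{I:Kelvin}, the composition $\theta_A$ preserves $\Sigma$. For \eqref{I:compo}, the map $A\mapsto\theta_A$ is, near $A=0$, a parametrization of the action on $\hdot$ of the conformal group of $S^N$: its differential at $0$ sends the coordinate directions to the operators appearing in \eqref{eq05} (with $Q$ replaced by a general function), which are linearly independent, and the number of parameters $N'=2N+1+\frac{N(N-1)}{2}=\frac{(N+1)(N+2)}{2}$ is exactly the dimension of that group, so $A\mapsto\theta_A$ is a local diffeomorphism onto a neighbourhood of the identity. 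Closure of $\{\theta_A\}$ under composition and inversion for small $A$, and the $C^\infty$ dependence of $A_3$ on $(A_1,A_2)$ and of $A_4$ on $A_1$, then follow from the smoothness of the group operations together with that of this parametrization and its inverse; alternatively one composes the elementary transformations directly.

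The argument is essentially a repackaging of the conformal invariance of \eqref{eq02}; the two steps needing real care are the removable-singularity argument promoting $\tQ$ from a solution on $\RR^N\setminus\{0\}$ to a member of $\Sigma$ — needed before the elliptic regularity theory, and hence the decay estimate \eqref{I:harmonic}, can be applied to $\tQ$ — and the bookkeeping for \eqref{I:compo}, namely recognizing $\{\theta_A\}$ as a local chart of the conformal group of $S^N$, checking the dimension count, and extracting the smooth dependence of composition and inversion. The weak regularity of the nonlinearity when $N=5$ is a minor but genuine constraint: it is precisely what prevents $Q$ from being better than $C^4$ there, and hence what limits \eqref{I:harmonic} to $|\alpha|\le4$.
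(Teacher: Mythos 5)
Your proposal is correct and reaches the same conclusions, but it departs from the paper's route in two substantive ways. First, for the removable--singularity step in \eqref{I:Kelvin}: the paper proves Lemma \ref{L:A3} by an explicit cutoff argument, testing \eqref{eq02} against $\psi(x/\eps)\varphi(x)$ and bounding the commutator terms; you instead observe that $D=-\Delta\tQ-|\tQ|^{4/(N-2)}\tQ$ is a distribution supported at the origin lying in $\dot H^{-1}$, and kill it via the Fourier-side fact that $\partial^\alpha\delta_0\notin\dot H^{-1}(\RR^N)$ (the integral $\int|\xi|^{2|\alpha|-2}\,d\xi$ diverges at high frequency for every $\alpha$ and $N\ge1$). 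That is a softer, arguably cleaner argument, and it buys you the freedom to establish $\tQ\in\Sigma$ \emph{before} any regularity theory, which you then exploit by deriving \eqref{I:Qsmooth} from scratch via Brezis--Kato and elliptic bootstrap rather than citing \cite{Trudinger68} as the paper does. Second, your discussion of the $N=5$ regularity cap ($g(t)=|t|^{4/3}t$ being only $C^{2,1/3}$, hence $Q$ only $C^{4,1/3}$) is more explicit than what appears in the paper and gives a genuine explanation for the restriction $|\alpha|\le4$ in \eqref{I:harmonic}, which the paper leaves implicit.

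Two small caveats. In \eqref{I:harmonic}, a derivative landing on $\tQ(y/|y|^2)$ actually produces a factor $O(|y|^{-2})$ (from $\nabla(y/|y|^2)$), not $O(|y|^{-1})$; your bound is the weaker one but the term with all derivatives on $|y|^{2-N}$ still dominates, so the conclusion is unaffected. For \eqref{I:compo}, the ``local chart of the conformal group'' picture is the right heuristic, but as stated it presupposes that the image of $A\mapsto\varphi_A$ exhausts a neighbourhood of the identity in the M\"obius group, which is not obvious without either the explicit composition formulas (which the paper derives, \eqref{formula1}--\eqref{formula5}) or a separate Lie-theoretic argument; you do flag ``alternatively one composes the elementary transformations directly'', and that alternative is in fact what the paper does and what makes the claim rigorous. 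Finally, when you invoke the Kelvin intertwining identity $\Delta\tQ=|x|^{-N-2}(\Delta Q)(x/|x|^2)$ to show $\tQ$ solves \eqref{eq02} on $\RR^N\setminus\{0\}$, the identity must be read distributionally (test against $\varphi\in C_0^\infty(\RR^N\setminus\{0\})$ and transfer the Laplacian to $\TTT\varphi$, which is smooth), since at that stage you do not yet have \eqref{I:Qsmooth}; this works, but deserves a sentence.
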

\begin{remark}
 \label{R:A2}
In the cases $N=3,4$ when the nonlinearity is smooth,  the estimates of point \eqref{I:harmonic} holds for all multi-index $\alpha$. Furthermore, one can adapt the proof of this estimate to prove
\begin{equation}
\label{limite}
Q(x) =\frac{1}{|x|^{N-2}}P\left( \frac{x}{|x|^2} \right)+\OOO\left(\frac{1}{|x|^{k+N-1}}\right),\quad |x|\to \infty,
\end{equation} 
where $P$ is a homogeneous harmonic polynomial of degree $k\geq 0$. This polynomial can be a non-zero constant. In this case, $|x|^{N-2}Q(x)$ converges to some non-zero real number. This is the case of the explicit radial stationary solution $W$. If the degree of $P$ is positive, then $|x|^{N-2}Q(x)$ tends to $0$ as $|x|\to \infty$. The existence of solutions of \eqref{eq02} such that \eqref{limite} holds with nonconstant $P$ follows from the existence of changing sign solutions of \eqref{eq02}, proved in \cite{Ding86}, and the Kelvin transformation given  by \eqref{I:Kelvin}. To our knowledge, the existence of solutions of \eqref{eq02} such that  \eqref{limite} holds with $P$ of arbitrary degree is still open.
\end{remark}
\begin{remark}
 Point \eqref{I:transformations} of the proposition gives a parametrization of an open neighborhood of the identity in $\MMM$. Note that it includes space translations ($s=0$, $a=c=0$), scaling ($a=b=c=0$) and space rotations ($a=b=0$, $s=0$), as well as additional tranformations which can be constructed by conjugating space translations with the Kelvin transformation ($b=c=0$, $s=0$).  However, the Kelvin tranformation defined in \eqref{I:Kelvin} cannot be described by this parametrization. 
\end{remark}

By \cite{Trudinger68}, if $Q$ is an $\hdot$ solution of \eqref{CP}, then $Q$ is locally bounded. By Sobolev inequalities, point \eqref{I:Qsmooth} follows. The remainder of this subsection is devoted to points \eqref{I:harmonic}, \eqref{I:Kelvin}  and \eqref{I:transformations}.

\subsubsection{Kelvin transformation}
\label{SS:Kelvin}
We first prove:
\begin{lemma}
 \label{L:A3}
 Let $Q\in L^{\frac{2N}{N-2}}(\RR^N)\cap\hdot(\RR^N)$ such that \eqref{eq02} holds in the sense of distributions on $\RR^N\setminus \{0\}$. Then $Q\in (C^{\infty}\cap\hdot)(\RR^N)$ if $N=3,4$ and $Q\in  (C^{4}\cap\hdot)(\RR^5)$ if $N=5$. Furthermore $Q$ satisfies \eqref{eq02} in the classical sense on $\RR^N$.
\end{lemma}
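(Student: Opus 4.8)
The plan is to remove the singularity at the origin and then bootstrap regularity using standard elliptic estimates. The key point is that $\hdot(\RR^N)$ functions live in $L^{\frac{2N}{N-2}}$, so the nonlinearity $|Q|^{\frac{4}{N-2}}Q$ is a priori only in $L^{\frac{2N}{N+2}}_{\loc}$, but one gains integrability iteratively.

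\begin{proof}
\emph{Step 1: removing the singularity at the origin.} Since $Q\in \hdot(\RR^N)$ and $\{0\}$ has zero $\hdot$-capacity when $N\geq 3$, the equation $-\Delta Q=|Q|^{\frac{4}{N-2}}Q$, which holds in $\DDD'(\RR^N\setminus\{0\})$, actually holds in $\DDD'(\RR^N)$. Indeed, let $\chi\in \Cio(\RR^N)$ equal $1$ near $0$, and for $\delta>0$ set $\chi_\delta(x)=\chi(x/\delta)$. For $\psi\in \Cio(\RR^N)$ one writes $\psi=(1-\chi_\delta)\psi+\chi_\delta\psi$, tests the equation against $(1-\chi_\delta)\psi$ (legitimate since this is supported away from $0$), and checks that the error terms $\int Q\,\Delta(\chi_\delta\psi)$ and $\int |Q|^{\frac{4}{N-2}}Q\,\chi_\delta\psi$ tend to $0$ as $\delta\to 0$; the first because $\|\nabla\chi_\delta\|_{L^N}$ is bounded while $\nabla Q\in L^2$ and $\psi$ has compact support (using $\|Q\Delta\chi_\delta\|_{L^1}+\|\nabla Q\cdot\nabla\chi_\delta\|_{L^1}\to 0$ via H\"older with the scaling of $\chi_\delta$), the second by dominated convergence since $|Q|^{\frac{4}{N-2}}Q\in L^1_{\loc}$.

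\emph{Step 2: local boundedness and bootstrap.} Once \eqref{eq02} holds on all of $\RR^N$, the cited result of Trudinger \cite{Trudinger68} gives that $Q\in L^\infty_{\loc}(\RR^N)$. Then $f:=|Q|^{\frac{4}{N-2}}Q\in L^\infty_{\loc}(\RR^N)$ as well, so by interior elliptic estimates for $-\Delta Q=f$ one has $Q\in W^{2,p}_{\loc}$ for every $p<\infty$, hence $Q\in C^{1,\alpha}_{\loc}$ for every $\alpha\in(0,1)$ by Sobolev embedding. In particular $Q$ satisfies \eqref{eq02} in the classical (pointwise) sense. Finally one bootstraps in the H\"older scale: when $N=3,4$ the map $t\mapsto |t|^{\frac{4}{N-2}}t$ is $C^\infty$ (the exponent $\frac{4}{N-2}$ equals $4$ or $2$), so Schauder estimates applied repeatedly give $Q\in C^\infty$. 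When $N=5$ the nonlinearity $|t|^{4/3}t$ is $C^{2,1/3}$ but not $C^3$; starting from $Q\in C^{1,\alpha}$ one gets $f\in C^{1,\alpha}$, hence $Q\in C^{3,\alpha}$ by Schauder, and one more step gives $f\in C^{2,1/3}$ (since $|t|^{4/3}t$ composed with a $C^{3}$ function is $C^{2,1/3}$), hence $Q\in C^{4}$, but the argument stalls there, which is exactly the asserted regularity $C^4(\RR^5)$.

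The only delicate point is Step 1 — verifying that the point mass at the origin carries no contribution — but this is a standard removable-singularity argument for the Laplacian in dimension $N\geq 3$; everything after that is routine elliptic bootstrapping.
\end{proof}
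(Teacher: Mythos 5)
Your proof is correct and takes essentially the same route as the paper: the paper's proof of this lemma consists precisely of your Step 1 (removing the singularity at the origin by testing against a rescaled cutoff and killing the error terms via H\"older, using $Q\in L^{\frac{2N}{N-2}}$ and the scaling $\eps^{N/2-1}$ of the cutoff derivatives), after which it invokes point (a) of Proposition \ref{P:A1}. The regularity statement there is obtained exactly as in your Step 2, via Trudinger's local boundedness result followed by the elliptic/Schauder bootstrap (which stalls at $C^4$ when $N=5$ because $t\mapsto|t|^{4/3}t$ is only $C^{2,1/3}$).
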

\begin{proof}
By \eqref{I:Qsmooth}, it is sufficient to prove that $Q$ satisfies \eqref{eq02} in the sense of distributions on $\RR^N$. Let $\varphi\in C_0^{\infty}(\RR^N)$. 
 
 Let $\psi\in C_0^{\infty}(\RR^N)$ such that $\psi(x)=1$ is $|x|\leq 1$ and $\psi(x)=0$ is $|x|\geq 2$. Then
 \begin{multline}
 \label{divided}
 \int Q\Delta \varphi=\int Q\Delta\bigg[\left(\psi\left(\frac{x}{\eps}\right)+1-\psi\left( \frac{x}{\eps} \right)\right) \varphi(x) \bigg]\\
=-\int |Q|^{\frac{4}{N-2}}Q \left(1-\psi\left(\frac{x}{\eps}\right)\right) \varphi(x)\,dx+\int Q\Delta \left(\psi\left(\frac{x}{\eps}\right)\varphi(x)\right)\,dx
\end{multline}
where we have used, in the first integral of the last line, the fact that $Q$ satisfies \eqref{eq02} in the sense of distributions outside the origin. By the dominated convergence theorem,
$$\lim_{\eps\to 0^+}\int |Q|^{\frac{4}{N-2}}Q(x) \left(1-\psi\left(\frac{x}{\eps}\right)\right) \varphi(x)\,dx=\int |Q|^{\frac{4}{N-2}}Q(x) \varphi(x)\,dx.$$
Moreover
\begin{multline}
\label{Leibniz}
 \int Q(x)\Delta \left(\psi\left(\frac{x}{\eps}\right)\varphi(x)\right)\,dx=\int Q(x)\frac{1}{\eps^2}\Delta \psi\left(\frac{x}{\eps}\right)\varphi(x)\,dx\\
 +\frac{2}{\eps}\int Q(x)\nabla \psi\left( \frac{x}{\eps} \right)\cdot\nabla \varphi(x)\,dx+\int Q(x)\psi\left( \frac{x}{\eps} \right)\Delta\varphi(x)\,dx.
\end{multline}
We have
\begin{multline*}
 \left| \int Q(x)\frac{1}{\eps^2}\Delta\psi\left( \frac{x}{\eps} \right)\varphi(x)\,dx\right|\leq \frac{1}{\eps^2}\|Q\|_{L^{\frac{2N}{N-2}}}\left( \int  \left|\Delta \psi \left( \frac{x}{\eps} \right)\right|^{\frac{2N}{N+2}} |\varphi(x)|^{\frac{2N}{N+2}}\,dx\right)^{\frac{N+2}{2N}}\\
 \leq \|Q\|_{L^{\frac{2N}{N-2}}}\eps^{\frac{N}{2}-1}\left\|\Delta\psi\right\|_{L^{\frac{2N}{N+2}}}\|\varphi\|_{L^{\infty}}\underset{\eps\to 0}{\longrightarrow}0.
\end{multline*}
Bounding similarly the other terms in \eqref{Leibniz}, we get
$$\lim_{\eps\to 0}\int Q(x)\Delta\left( \psi\left( \frac{x}{\eps}\right)  \varphi(x)\right)\,dx=0,$$
and thus
$$ -\int Q\Delta\varphi=\int |Q|^{\frac{4}{N-2}}Q\varphi,$$
which shows as announced that $Q$ satisfies \eqref{eq02} in the sense of distributions on $\RR^N$. 
\end{proof}

Let us prove point \eqref{I:Kelvin} of Proposition \ref{P:A1}.

 We first note that the Kelvin transformation
 $$ \TTT: f\mapsto \frac{1}{|x|^{N-2}}f\left(\frac{x}{|x|^2}\right)$$
 is an isometry of $L^{\frac{2N}{N-2}}$ that satisfies, for any smooth function $f$,
 \begin{equation}
 \label{eq_Kelvin}
 \Delta (\TTT f)=  \frac{1}{|x|^{N+2}}(\Delta f) \left(\frac{x}{|x|^2}\right),\quad x\neq 0.
  \end{equation} 
  If $f\in C^{\infty}_0\left(\RR^N\setminus \{0\}\right)$, then $\TTT f\in  C^{\infty}_0\left(\RR^N\setminus \{0\}\right)$ and by integration by parts, 
$$\|\TTT f\|_{\hdot}^2=-\int \Delta(\TTT f) \TTT f=-\int \frac{1}{|x|^{2N}}(\Delta f) \left(\frac{x}{|x|^2}\right) f\left( \frac{x}{|x|^2}\right)\,dx =-\int \Delta f f=\|f\|^2_{\hdot},$$
where we have used that the Jacobian determinant of $x\mapsto \frac{x}{|x|^2}$ is $\frac{1}{|x|^{2N}}$.
Using the density of $C^{\infty}_0(\RR^N\setminus\{0\})$ in $\hdot$, we deduce that $\TTT$ is also an isometry of $\hdot$. 

Combining the preceding argument with Lemma \ref{L:A3}, we get that if $Q$ is a $\hdot$ solution of \eqref{eq02} on $\RR^N$, then $\tQ=\TTT Q$ is also a $\hdot$ solution of \eqref{eq02} on $\RR^N$. The equality $\int Q^{\frac{2N}{N-2}}=\int |\nabla Q|^2$ follows from a simple integration by parts, which concludes the proof of \eqref{I:Kelvin}.

 \subsubsection{Asymptotic behaviour}
 Let us prove point \eqref{I:harmonic} of Proposition \ref{P:A1}. 
%
Let $Q$ and $\tQ$ be as in the proposition. By \eqref{I:Qsmooth}  and \eqref{I:Kelvin}, $\tQ$ can be extended to a $C^{4}$ solution of \eqref{eq02}. As a consequence,
$$ |Q(x)|=\frac{1}{|x|^{N-2}}\left|\tQ\left(\frac{x}{|x|^2}\right)\right|\leq \frac{C}{|x|^{N-2}}.$$
More generally, writing for $|\alpha|\leq 4$
$$\partial_x^{\alpha}Q(x)=\sum_{\gamma+\beta=\alpha} \binom{\alpha}{\gamma}\partial_x^{\beta}\left( \frac{1}{|x|^{N-2}} \right)\partial_x^{\gamma} \left( \tQ\left(\frac{x}{|x|^2}\right) \right),$$
and using that $\partial_x^{\alpha}\tQ$ is locally bounded, we obtain the desired estimate.
\qed

\subsubsection{Transforms of stationary solutions}
It remains to prove point \eqref{I:transformations} of Proposition \ref{P:A1}.
Let $M$ be the group of one-to-one maps of $\RR^N\cup \{\infty\}$ generated by 
\begin{itemize}
 \item the translations $T_a:x\mapsto x+a$, where $a\in \RR^N$;
 \item the dilations $D_{\lambda}:x\mapsto \lambda x$, where $\lambda>0$;
 \item the linear isometries $P\in \OOO_N(\RR)$;
 \item the inversion $J: x\mapsto \frac{x}{|x|^2}$.
\end{itemize}
We adopt the conventions $T_a(\infty)=D_{\lambda}(\infty)=P(\infty)=J(0)=\infty$, $J(\infty)=0$.
If $\varphi\in M$ and $f\in \hdot$, we denote by 
$$\Theta_{\varphi}(f)=|\det \varphi'(x)|^{\frac{N-2}{2N}}f(\varphi(x)).$$
We note that $\Theta_{\varphi\circ\psi}=\Theta_{\psi}\circ\Theta_{\varphi}$ and
$$\Theta_{T_a}(f)(x)=f(x+a),\quad \Theta_{D_{\lambda}}(f)(x)=\lambda^{N/2-1}f(\lambda x),\quad \Theta_P(f)(x)=f(Px),$$
and that $\Theta_J(f)$ is the Kelvin transform of $f$. We deduce that $\left\{\Theta_{\varphi},\; \varphi\in M\right\}$ is exactly the group $\MMM$ of isometries of $\hdot$ generated by space translations, scaling, linear isometries and the Kelvin transform mentioned in the introduction. In view of point \eqref{I:Kelvin} of Proposition \ref{P:A1}, 
$$f\in \Sigma \Longrightarrow \Theta_\varphi(f)\in \Sigma.$$

We next prove that the transformations $\theta_A$ defined by \eqref{def_thetaA} are in $\MMM$. Letting
\begin{equation}
\label{def_vA}
\varphi_{A}(x)=b+\frac{e^sP_c(x-a|x|^2)}{1-2\langle a,x\rangle+|a|^2|x|^2},
\end{equation} 
we see that 
$
\Big|\det (\varphi_A'(x))\Big|=e^{Ns} \left|\frac{x}{|x|} -a |x|\right|^{-2N}.
$
As a consequence, for any $f\in \hdot$,
\begin{equation}
\label{theta_Theta}
 \theta_A(f)= |\det \varphi'_A(x)|^{\frac{N-2}{2N}}f(\varphi_A(x)) =\Theta_{\varphi_A}(f)
\end{equation} 
and thus that it is sufficient to show that $\varphi_A\in M$.  For this we notice that the function $\psi_a$ defined by 
$$\psi_a(x)=J \circ T_{-a}\circ J(x)=\frac{x-a|x|^2}{1-2\langle a,x\rangle+|a|^2|x|^2}$$
 is in $M$. Since 
\begin{equation}
\label{formula_phiA}
\varphi_A=T_b\circ P_c\circ D_{e^s}\circ \psi_a
\end{equation} 
 we obtain that $\varphi_A$ is an element of $M$, which  concludes the proof.
\qed
\subsubsection{Composition and inverse of the transformations}
It remains to prove point \eqref{I:compo} of Proposition \ref{P:A1}. We use the notations $T_a$, $D_{\lambda}$, $\psi_a$ of the preceding subsection. By direct computations, if $a,b\in \RR^N$, $P\in \OOO_N$, $\lambda>0$,
\begin{gather}
 \label{formula1}
 T_b\circ D_{\lambda}=D_{\lambda}\circ T_{\lambda^{-1}b},\quad T_b\circ P=P\circ T_{P^{-1}(b)},\quad P\circ D_{\lambda}=D_{\lambda}\circ P\\
 \label{formula2}
 \psi_a\circ D_{\lambda}=D_{\lambda}\circ \psi_{\lambda a},\quad \psi_a\circ P=P\circ \psi_{P^{-1}(a)}\\
 \label{formula3}
\psi_a\circ T_b(x)=T_{\beta}\circ M\circ D_{\mu}\circ \psi_{\alpha}, \text{ where } \\
\label{formula4}
\mu^{-1}=1+|a|^2|b|^2-2\langle a, b\rangle,\quad \alpha=\mu(a-|a|^2b),\\
\label{formula5}
\beta=\mu(b-|b|^2a),\quad M(x)=\mu^{-1}2 \langle \alpha,x\rangle \beta-2\langle b,x\rangle a+x.
 \end{gather}
 Note that $\mu$ is well-defined if $a\neq b/|b|^2$, which is the case if $|a|<1$ and $|b|<1$, and that $M\in \SSS\OOO_N$, as can be checked directly by computing $M^*M$. Moreover, it is easy to see that $(a,b)\mapsto (\alpha,\beta,\mu,M)$ is $C^{\infty}$ in a neighborhood of the origin of $\RR^{2N}$.
 
Let $A_j=(a_j,b_j,c_j,s_j)\in B^{N'}(\eps)$ ($j=1,2$), $A=(a,b,c,s)\in B^{N'}(\eps)$.
Then by \eqref{formula_phiA}, 
$$\varphi_{A_1}\circ\varphi_{A_2}=T_{b_1}\circ P_{c_1}\circ D_{e^{s_1}}\circ \psi_{a_1} \circ T_{b_2}\circ P_{c_2}\circ D_{e^{s_2}}\circ \psi_{a_2}$$
and
$$\varphi_{A}^{-1}= \psi_{-a}\circ D_{e^{-s}}\circ P_{-c}\circ T_{-b}.$$

Point \eqref{I:compo} then follows from formulas \eqref{formula1},\ldots, \eqref{formula5} and the fact that $c\mapsto P_c$ is a local diffeomorphism, in a neighborhood of the origin from $\RR^{\frac{N(N-1)}{2}}$ to $\OOO_N$.
\subsection{Properties of the linearized operator}
\label{SS:linearized}
This subsection concerns the linearized operator $L_Q$ around a non-zero stationary solution $Q$, and the quadratic form associated  to $L_Q$. In \ref{SS:coercivity}, we prove a coercivity property of this quadratic form and give some consequences. We then consider, in \ref{SSS:Null}, the vector space $\tZZZ_Q$ defined in the introduction. We finally give, in \ref{SSS:eigenfunctions} the precise asymptotics of an eigenfunction associated to a negative eigenvalue of $L_Q$. 
\subsubsection{Preliminaries and notations}
Let $Q\in \Sigma$. We denote by 
\begin{equation}
\label{defLQ}
L_Q=-\Delta -\frac{N+2}{N-2}|Q|^{\frac{4}{N-2}}
\end{equation} 
the linearized operator at $Q$, and
\begin{equation}
 \label{A2}
 \Phi_Q(f)=\frac{1}{2}\int |\nabla f|^2-\frac{N+2}{2(N-2)}\int |Q|^{\frac{4}{N-2}}f^2=\frac{1}{2}\int L_Qf\,f,
\end{equation} 
 the corresponding quadratic form, defined for $f\in \hdot\left( \RR^N \right)$.
 \begin{claim}
 \label{C:A6}
  Let $V$ be a subspace of $\hdot\left(\RR^N\right)$ such that 
  \begin{equation}
   \label{A3}
   \forall f\in V,\quad \Phi_Q(f)\leq 0.
  \end{equation} 
  Then $\dim V$ is finite.
 \end{claim}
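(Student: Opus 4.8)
The plan is to show that a subspace $V$ on which the quadratic form $\Phi_Q$ is nonpositive must be finite-dimensional, by exploiting the fact that $L_Q$ is a compact perturbation of $-\Delta$ in the appropriate sense. First I would observe that the potential term $|Q|^{\frac{4}{N-2}}$ decays like $|x|^{-4}$ at infinity by Proposition \ref{P:A1}\eqref{I:harmonic}, and is locally bounded by the regularity statement, so multiplication by $|Q|^{\frac{4}{N-2}}$ is a compact operator from $\hdot(\RR^N)$ to its dual $\dot H^{-1}(\RR^N)$. Concretely, the bilinear form $(f,g)\mapsto \int |Q|^{\frac{4}{N-2}} fg$ is compact on $\hdot \times \hdot$: writing $|Q|^{\frac{4}{N-2}} = V_1 + V_2$ where $V_1$ is compactly supported and $V_2$ is small in the relevant Lebesgue norm (using the decay), the $V_1$ part is compact by Rellich and the $V_2$ part has small operator norm, so the whole thing is a norm limit of finite-rank-like operators, hence compact.

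The key step is then the following abstract argument. Suppose for contradiction that $\dim V = \infty$. Pick an infinite orthonormal sequence $(f_k)$ in $V$ with respect to the $\hdot$ inner product; then $f_k \rightharpoonup 0$ weakly in $\hdot$. By compactness of the form associated to the potential, $\int |Q|^{\frac{4}{N-2}} f_k^2 \to 0$ as $k\to\infty$. But $\Phi_Q(f_k) \le 0$ for every $k$ since $f_k \in V$, which forces
\begin{equation*}
\frac{1}{2}\|f_k\|_{\hdot}^2 \le \frac{N+2}{2(N-2)}\int |Q|^{\frac{4}{N-2}} f_k^2 \to 0,
\end{equation*}
contradicting $\|f_k\|_{\hdot} = 1$. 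Hence $\dim V < \infty$.

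The main obstacle, and the only real content, is establishing the compactness of the quadratic form $f \mapsto \int |Q|^{\frac{4}{N-2}} f^2$ on $\hdot(\RR^N)$; once that is in hand the rest is the standard variational dichotomy (a subspace on which a compact perturbation of the identity is nonpositive is finite-dimensional). For $N=3,4,5$ the exponent $\frac{4}{N-2}$ lies in $(\tfrac{4}{3}, 4]$, and one checks via Hölder and Sobolev that for $\chi$ a cutoff, $\int \chi |Q|^{\frac{4}{N-2}} f^2 \le \|\chi |Q|^{\frac{4}{N-2}}\|_{L^{N/2}} \|f\|_{L^{\frac{2N}{N-2}}}^2$, and that $|Q|^{\frac{4}{N-2}} \in L^{N/2}(\RR^N)$ globally because near infinity it decays like $|x|^{-4}$ (integrable against $|x|^{N-1}$ precisely when $N<8$) while locally it is bounded; splitting $|Q|^{\frac{4}{N-2}} = \indic_{|x|\le R}|Q|^{\frac{4}{N-2}} + \indic_{|x|>R}|Q|^{\frac{4}{N-2}}$ and sending $R\to\infty$ makes the tail small in $L^{N/2}$, which combined with Rellich–Kondrachov on the ball $B^N(R)$ (valid since $\hdot(B^N(R)) \hookrightarrow L^2(B^N(R))$ compactly) yields the claimed compactness. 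I would also remark that $L_Q$ is then a well-defined self-adjoint operator with essential spectrum $[0,\infty)$ and finitely many negative eigenvalues, which is the spectral-theoretic content lurking behind the claim.
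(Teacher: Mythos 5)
Your proof is correct and follows essentially the same route as the paper: both arguments rest on the compactness of the form $f\mapsto\int|Q|^{\frac{4}{N-2}}f^2$ on $\hdot$, which comes from the $|x|^{-4}$ decay of the potential in Proposition~\ref{P:A1}~\eqref{I:harmonic}; the paper packages this as a compact embedding $\hdot\hookrightarrow L^2\big(\RR^N,\tfrac{dx}{1+|x|^4}\big)$ (Hardy plus Rellich--Kondrachov) and applies Riesz's theorem to the compact unit ball of $V$, whereas you establish the same compactness via a $V_1+V_2$ splitting and close with an orthonormal-sequence contradiction. One small arithmetic slip worth flagging: $|Q|^{\frac{4}{N-2}}\in L^{N/2}$ near infinity holds for every $N\ge 1$, since $\int_1^\infty r^{-2N}r^{N-1}\,dr=\int_1^\infty r^{-N-1}\,dr<\infty$ (and indeed it also follows directly from $Q\in\hdot\subset L^{\frac{2N}{N-2}}$), so the restriction ``$N<8$'' in your parenthetical is spurious, though harmless.
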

\begin{proof}
 Indeed, by Proposition \ref{P:A1}, if $f\in V$, then $\|f\|_{\hdot}^2\leq C\int \frac{1}{1+|x|^4}|f(x)|^2\,dx$. 
By Hardy's inequality and Rellich-Kondrachov Theorem, the injection $$\hdot \longrightarrow L^2\left(\RR^N, \frac{1}{1+|x|^4}\,dx\right)$$
 is compact. Thus the unit ball of $V$ is compact, which proves the result.
\end{proof}
Since by Proposition \ref{P:A1} $|Q|^{\frac{4}{N-2}}(x)\leq \frac{C}{1+|x|^4}$, it is classical (see \cite[Section 8]{Davies95BO}) that $L_Q$ is a self-adjoint operator with domain $H^2\left( \RR^N \right)$. By \cite[Theorem 8.5.1]{Davies95BO} and Claim \ref{C:A6}, the essential spectrum of $L_Q$ is $[0,+\infty)$, and $L_Q$ has no positive eigenvalue and a finite number of negative eigenvalues. We will denote this eigenvalues by $-\omega_1^2,\ldots,-\omega_p^2$, where
$$ 0<\omega_1\leq \ldots\leq \omega_p,$$
and the eigenvalues are counted with their order of multiplicity. Note that $p\geq 1$ because $L_QQ=-\frac{4}{N-2}|Q|^{\frac{4}{N-2}}Q$. The spectrum of $L_Q$ is exactly $[0,+\infty)\cup \{-\omega_j^2\}_{j=1\ldots p}$.
Let us consider an orthonormal family $(Y_j)_{j=1\ldots p}$ of eigenvectors of $L_Q$ corresponding to the eigenvalues $-\omega_j^2$:
\begin{equation}
 \label{A4}
L_QY_j=-\omega_j^2Y_j,\quad 
 \int_{\RR^N} Y_jY_k=\delta_{jk}=
 \begin{cases}
 0 &\text{ if }j\neq k\\
 1 &\text{ if }j=k.
 \end{cases}
 \qquad
\end{equation} 
By elliptic regularity, these functions are $C^3$ ($C^{\infty}$ if $N=3$ or $N=4$). It is well-known that they are  exponentially decreasing at infinity (see Proposition \ref{P:eigenfunction} below for their precise asymptotics).

The min-max principle implies
\begin{equation}
 \label{A5}
 \forall f\in H^1(\RR^N),\quad \int Y_1f=\int Y_2f=\ldots=\int Y_pf=0\Longrightarrow \Phi_Q(f)\geq 0.                                                                                                                                                                                                                    \end{equation} 
 Let 
 \begin{equation}
  \label{defZQ}
  \ZZZ_Q=\left\{ f\in \hdot(\RR^N),\text{ s.t. } L_Qf=0\right\}.
 \end{equation} 
Note that the elements of $\ZZZ_Q$ are not assumed to be in $L^2$. 
By Claim \ref{C:A6}, $\ZZZ_Q$ is finite dimensional. Let $(Z_j)_{j=1\ldots m}$ be a basis of $\ZZZ_Q$. We have
\begin{equation}
 \label{A6}
 \forall j=1\ldots m,\; \forall k=1\ldots p,\quad \int Z_jY_k=0.
\end{equation} 
Since the functions $Z_1,\ldots,Z_m,Y_1,\ldots,Y_p$ are linearly independent, one can find, by an elementary linear algebra argument, $E_1,\ldots,E_m\in C_0^{\infty}(\RR^N)$ such that
\begin{equation}
 \label{A7}
 \forall j=1\ldots m,\; \forall k=1\ldots p, \;\int E_jY_k=0,\qquad \forall j,k=1\ldots m,\; \int E_jZ_k=\delta_{jk}.
\end{equation} 
\subsubsection{A coercivity property}
\label{SS:coercivity}
In this part we prove  the following positivity property of $L_Q$:
\begin{prop}
\label{P:A7}
Let $(Y_k)_{k=1\ldots p}$, $(E_j)_{j=1\ldots m}$  be as above. There exists a constant $\tc>0$ with the following property. If $f\in \hdot(\RR^N)$ and 
\begin{equation}
 \label{A8}
 \forall k=1\ldots p,\;\int Y_kf=0\quad \text{and}\quad \forall j=1\ldots m,\; \int E_j f=0
\end{equation} 
then
\begin{equation}
 \label{A9}
 \Phi_Q(f)\geq \tc\|f\|_{\hdot}^2.
\end{equation} 
\end{prop}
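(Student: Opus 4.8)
The plan is to argue by contradiction and compactness, exploiting the fact that the conditions \eqref{A8} remove precisely the negative and null directions of $L_Q$, so that the positivity \eqref{A5} upgrades to a spectral gap.

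Suppose the conclusion fails. Then for every $n$ there is $f_n\in\hdot(\RR^N)$ satisfying \eqref{A8} with $\|f_n\|_{\hdot}=1$ and $\Phi_Q(f_n)<1/n$. Since $\hdot$ is a Hilbert space, after passing to a subsequence $f_n\rightharpoonup f$ weakly in $\hdot$. By Proposition \ref{P:A1} we have $|Q|^{\frac{4}{N-2}}(x)\le C(1+|x|^4)^{-1}$, and by (the proof of) Claim \ref{C:A6} the embedding $\hdot\hookrightarrow L^2(\RR^N,(1+|x|^4)^{-1}dx)$ is compact; hence $\int|Q|^{\frac{4}{N-2}}f_n^2\to\int|Q|^{\frac{4}{N-2}}f^2$. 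Writing $c_N=\frac{N+2}{2(N-2)}$ and, using $\|f_n\|_{\hdot}^2=1$, $\Phi_Q(f_n)=\frac12-c_N\int|Q|^{\frac{4}{N-2}}f_n^2$, we let $n\to\infty$ and use $\Phi_Q(f_n)<1/n$ to get $c_N\int|Q|^{\frac{4}{N-2}}f^2\ge\frac12$; in particular $f\neq 0$. On the other hand, $\|\cdot\|_{\hdot}$ is weakly lower semicontinuous, so $\Phi_Q(f)=\frac12\|f\|_{\hdot}^2-c_N\int|Q|^{\frac{4}{N-2}}f^2\le\liminf_n\Phi_Q(f_n)\le 0$.

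The constraints \eqref{A8} are continuity conditions: $Y_k$ decays exponentially, hence lies in $L^{\frac{2N}{N+2}}$, and $E_j\in C^\infty_0(\RR^N)$, so $f\mapsto\int Y_kf$ and $f\mapsto\int E_jf$ are bounded linear functionals on $\hdot$; thus $f$ again satisfies \eqref{A8}. Since in particular $\int Y_kf=0$ for all $k$, the min--max inequality \eqref{A5}, extended from $H^1$ to $\hdot$ by density of $C^\infty_0(\RR^N)$ in $\hdot$ (subtracting from an approximating sequence a fixed combination of finitely many $C^\infty_0$ functions to correct the $Y_k$--averages), gives $\Phi_Q(f)\ge 0$, whence $\Phi_Q(f)=0$. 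Thus $f$ minimizes the nonnegative quadratic form $\Phi_Q$ over the closed subspace $H_0=\{g\in\hdot:\int Y_kg=0,\ 1\le k\le p\}$, so the associated symmetric bilinear form $B(f,\cdot)$ vanishes on $H_0$. As $H_0$ is the intersection of the kernels of the $p$ bounded functionals $g\mapsto\int Y_kg$, a standard finite-codimension argument yields $B(f,\cdot)=\sum_{k=1}^p c_k\int Y_k(\cdot)$, i.e. $L_Qf=\sum_{k=1}^p d_kY_k$ in $\DDD'(\RR^N)$. Pairing with $Y_j$ (legitimate since $Y_j\in\hdot$ and decays rapidly), integrating by parts, and using $L_QY_j=-\omega_j^2Y_j$ together with $\int fY_j=0$, we get $d_j=0$ for all $j$; hence $L_Qf=0$, that is $f\in\ZZZ_Q$. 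Writing $f=\sum_l a_lZ_l$ and using the remaining relations $\int E_jf=0$ together with $\int E_jZ_l=\delta_{jl}$ from \eqref{A7} forces $a_j=0$ for all $j$, so $f=0$ — contradicting $f\neq 0$. This contradiction proves the proposition, with $\tc>0$ the constant implicit in the argument.

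I do not expect a genuine obstacle. The only points needing care are the compactness of the weighted embedding, which is already in hand from Claim \ref{C:A6}, and the passage of \eqref{A5} from $H^1$ to the full space $\hdot$, which is the routine density-and-correction argument sketched above; the ``Lagrange multiplier'' step identifying $L_Qf$ as a combination of the $Y_k$ is soft linear algebra in finite codimension. The conceptual content is merely that imposing \eqref{A8} annihilates both the $p$-dimensional negative subspace $\mathrm{span}(Y_1,\dots,Y_p)$ and the kernel $\ZZZ_Q$ of $L_Q$, after which the spectral gap of $L_Q$ at $0$ supplies the coercivity constant.
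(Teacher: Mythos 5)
Your argument is correct and follows essentially the same route as the paper's proof: a contradiction/compactness argument using the compact embedding of $\hdot$ into the weighted $L^2$ space, the extension of the min--max positivity \eqref{A5} from $H^1$ to $\hdot$, and the identification of the degenerate directions with $\ZZZ_Q$, which the conditions on the $E_j$ then eliminate. The only differences are cosmetic: you deduce $f\neq 0$ directly from the normalization $\|f_n\|_{\hdot}=1$ rather than first showing $f=0$ and contradicting the norm, you use a density-plus-correction argument in place of the paper's cutoff $\chi(\eps x)f$, and you phrase the kernel step as a finite-codimension Lagrange-multiplier argument instead of the explicit decomposition $g=h+\sum_k\beta_kY_k$ — all of which are sound.
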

We will also prove the following consequences of Proposition \ref{P:A7}:
\begin{corol}
 \label{C:A16}
 There are constants $\eps_0,C>0$ with the following property. Let $S\in \Sigma$ such that
 \begin{equation}
  \label{A33}
  \|S-Q\|_{\hdot}<\eps_0.
 \end{equation}
Then
\begin{equation}
 \label{A34}
  \|S-Q\|_{\hdot}\leq C \sum_{i=1}^m \left|\int (S-Q)E_i\right|.
\end{equation} 
Furthermore, if $A\in \RR^{N'}$ is small,
\begin{equation}
\label{thetaA_Q}
\|\theta_A(Q)-Q\|_{\hdot}\leq C|A|,
 \end{equation} 
 where the transformation $\theta_A$ is defined in \eqref{def_thetaA}.
\end{corol}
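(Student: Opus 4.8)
The plan is to deduce everything from the coercivity estimate \eqref{A9} of Proposition \ref{P:A7} together with a perturbative/Taylor argument. For the estimate \eqref{A34}, I would argue by contradiction: suppose there is a sequence $S_n\in\Sigma$ with $\|S_n-Q\|_{\hdot}\to 0$ but $\|S_n-Q\|_{\hdot}\geq n\sum_i|\int(S_n-Q)E_i|$. Write $f_n=(S_n-Q)/\|S_n-Q\|_{\hdot}$, so $\|f_n\|_{\hdot}=1$ and $\int f_n E_i\to 0$ for all $i$. Since both $-\Delta S_n=|S_n|^{\frac{4}{N-2}}S_n$ and $-\Delta Q=|Q|^{\frac4{N-2}}Q$, subtracting and linearizing the nonlinearity gives $L_Q f_n = \|S_n-Q\|_{\hdot}^{-1}\,R_n$, where $R_n$ is a quadratic-in-$(S_n-Q)$ remainder; using the pointwise bound $|Q|^{\frac4{N-2}}\lesssim (1+|x|^4)^{-1}$ from Proposition \ref{P:A1}, Hardy's inequality, and Sobolev embedding one checks $\Phi_Q(f_n)=\frac12\int L_Qf_nf_n\to 0$. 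Now decompose $f_n$ according to the negative eigenspace $\mathrm{span}(Y_k)$: the issue is that the hypothesis \eqref{A8} also requires $\int Y_k f_n=0$, which is not automatic. However, since the $S_n$ are solutions of the \emph{same} elliptic equation as $Q$, one should be able to project onto the orthogonal complement of the $Y_k$ (which are related to the strictly negative spectrum) and argue that the component of $f_n$ along the $Y_k$ is forced to zero in the limit — here one uses that $\Phi_Q$ restricted to $\mathrm{span}(Y_k)$ is negative definite while $\Phi_Q(f_n)\to0$, together with the fact that along a weakly convergent subsequence $f_n\rightharpoonup f_\infty$ with $L_Qf_\infty=0$, i.e. $f_\infty\in\ZZZ_Q$, and $\int f_\infty E_i=0$ for all $i$ forces $f_\infty=0$ by \eqref{A7}. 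Then \eqref{A9} (applied after subtracting the vanishing $Y_k$-components) gives $\liminf\|f_n\|_{\hdot}^2\lesssim\liminf\Phi_Q(f_n)\to0$, contradicting $\|f_n\|_{\hdot}=1$.

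For the second estimate \eqref{thetaA_Q}, I would simply use the smoothness of the map $A\mapsto\theta_A(Q)$ established in Proposition \ref{P:A1}: by point \eqref{I:transformations} each $\theta_A(Q)\in\Sigma$, and from the explicit formula \eqref{def_thetaA} together with the decay estimates of Proposition \ref{P:A1}\eqref{I:harmonic} (which control $Q$ and its derivatives), the map $A\mapsto\theta_A(Q)$ is $C^1$ from a neighborhood of $0$ in $\RR^{N'}$ into $\hdot$, with $\theta_0(Q)=Q$. A first-order Taylor expansion then gives $\|\theta_A(Q)-Q\|_{\hdot}\leq C|A|$ for $|A|$ small, where $C$ bounds the derivative; the derivative at $A=0$ in the various directions produces precisely the generators of $\tZZZ_Q$ listed in \eqref{eq05}, which are in $\hdot$ by the decay estimates.

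The main obstacle I anticipate is the first part, specifically handling the negative-eigenspace components $\int Y_k f_n$ in the contradiction argument: unlike the standard setting where one has orthogonality to \emph{all} of $\mathrm{span}(Y_k)\oplus\ZZZ_Q$ built into the hypothesis, here only orthogonality to the $E_i$ (hence, asymptotically, to $\ZZZ_Q$) is available, so one must genuinely exploit that $S_n$ solves the elliptic equation to control, or show negligible, the projection of $f_n$ onto the negative modes. One clean way is a direct (non-contradiction) argument: decompose $S-Q=g+\sum_k\alpha_k Y_k+\sum_j\beta_j Z_j$ with $g$ satisfying \eqref{A8}; since $S,Q\in\Sigma$, the nonlinear equation forces the $\alpha_k$ to be quadratically small, $|\alpha_k|\lesssim\|S-Q\|_{\hdot}^2$, so they are absorbed; then \eqref{A34} follows from \eqref{A9} applied to $g$ and the change of basis from $(Z_j)$ to the dual basis encoded in \eqref{A7}. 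Either route reduces to the same technical core: controlling the quadratic remainder in the linearization of $|u|^{\frac4{N-2}}u$ uniformly in $\hdot$, which is routine given the decay of $Q$ and Sobolev/Hardy, but must be done with some care when $N=5$ where the nonlinearity is only $C^1$.
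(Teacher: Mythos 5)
For the estimate \eqref{A34}, your ``clean way'' at the end is exactly the paper's proof: write $g=S-Q$, set $h=g-\sum_i\alpha_iY_i-\sum_j\beta_jZ_j$ so that $h$ satisfies the orthogonality conditions \eqref{A8}, use $L_Qg=R_Q(g)$ with $\|R_Q(g)\|_{L^{2N/(N+2)}}\leq C\|g\|_{\hdot}^2$ to get $|\alpha_i|\leq C\|g\|_{\hdot}^2$ (pair with $Y_i$) and $\|h\|_{\hdot}\leq C\|g\|_{\hdot}^2$ (pair with $h$ and invoke Proposition \ref{P:A7}), then absorb. Your primary route by contradiction is weaker: as you suspect, $\Phi_Q(f_n)\to0$ does \emph{not} by itself kill the components of $f_n$ along the $Y_k$ (negative and positive contributions to $\Phi_Q$ can cancel), and the weak limit argument only identifies $f_\infty=0$, which does not preclude $\|f_n\|_{\hdot}=1$ concentrating on the negative modes. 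So you should present the direct decomposition as the proof, not as a fallback.

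For \eqref{thetaA_Q} there is a genuine gap in your argument. You assert that $A\mapsto\theta_A(Q)$ is $C^1$ from a neighborhood of $0$ into $\hdot$ ``from the explicit formula together with the decay estimates of Proposition \ref{P:A1}\eqref{I:harmonic}''. This is not routine: the formal derivatives at $A=0$ are the generators of $\tZZZ_Q$, and terms such as $|x|^2\partial_{x_j}Q-2x_jx\cdot\nabla Q$ have gradients of size $|x|^{-(N-2)}$ at infinity, which fail to be square integrable for $N\leq4$; membership in $\hdot$ only holds because of a cancellation that the paper extracts via the Kelvin transform (Lemma \ref{L:A12}), and a uniform-in-$A$ version of this would be needed for a Taylor expansion in $\hdot$. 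The paper never proves such regularity for a general $Q\in\Sigma$ (Corollary \ref{C:C1function} gives $C^1$ dependence only for $(\theta_A^{-1})^*\psi$ with $\psi$ Schwartz). Instead it deduces \eqref{thetaA_Q} from the already-proved \eqref{A34} applied to $S=\theta_A(Q)\in\Sigma$, writing $\int(\theta_A(Q)-Q)E_i=\int Q\left((\theta_A)^*(E_i)-E_i\right)$ and using Lemma \ref{L:C2}, which is easy precisely because the $E_i$ are smooth and compactly supported. You should either adopt this duality argument or supply the missing uniform $C^1$ estimate in $\hdot$.
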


\begin{proof}[Proof of Proposition \ref{P:A7}]
 The proof is quite standard, we give it for the sake of completeness.
 
 \EMPH{Step 1} We show that for all $f\in \hdot(\RR^N)$,
 \begin{equation}
  \label{A10}
 \int Y_1f=\ldots=\int Y_pf=0\Longrightarrow \Phi_Q(f)\geq 0.
 \end{equation} 
 Indeed, by \eqref{A5}, \eqref{A10} holds if $f\in H^1(\RR^N)$. Assume that $f$ is in $\hdot$ but not in $L^2$, and that the orthogonality conditions in the left-hand side of \eqref{A10} hold. Let $\chi \in C_0^{\infty}(\RR^N)$ such that $\chi(x)=1$ if $|x|\leq 1$ and $\chi(x)=0$ if $|x|\geq 2$. Let 
 $$f_{\eps}(x)=\chi(\eps x)f(x),$$
 so that $f_{\eps}\in H^1(\RR^N)$. Then
 $$ f_{\eps}=g_{\eps}+\sum_{k=1}^p\alpha_{k\eps}Y_k,\quad\text{where for all }k, \; \alpha_{k\eps}=\int f_{\eps}Y_k,\quad \int g_{\eps}Y_k=0.$$
 We have
 $$ |\alpha_{k\eps}|=\left|\int f_{\eps}Y_k\right|=\left|\int \big(\chi(\eps x)-1\big)f(x)Y_k(x)\,dx\right|\leq C\int_{|x|\geq 1/\eps}  \left|f(x)Y_k(x)\right|\,dx\underset{\eps\to 0}{\longrightarrow} 0.$$
By the definition of $g_{\eps}$ and the fact that \eqref{A10} holds in $H^1$, we have $\Phi_Q(g_{\eps})\geq 0$. Thus
\begin{equation*}
 \Phi_Q(f_{\eps})=\Phi_Q(g_{\eps})+\sum_{k=1}^p \alpha_{k\eps}^2 \Phi_Q(Y_k)\geq \sum_{k=1}^p \alpha_{k\eps}^2 \Phi_Q(Y_k)\underset{\eps\to 0}{\longrightarrow} 0.
\end{equation*} 
Since 
$$ \Phi_Q(f_{\eps})=\frac{1}{2}\int \left|\nabla\left( \chi(\eps x)f(x) \right) \right|^2\,dx-\frac{N+2}{2(N-2)} \int \left( \chi(\eps x) \right)^2f^2(x)|Q|^{\frac{4}{N-2}}(x)\,dx \underset{\eps\to 0}{\longrightarrow} \Phi_Q(f),$$
we obtain as announced $\Phi_Q(f)\geq 0$.

\EMPH{Step 2} We show that for all $f\in \hdot(\RR^N)$,
\begin{equation}
 \label{A11}
 \int fY_1=\ldots=\int fY_p=\int fE_1=\ldots=\int fE_m=0\Longrightarrow \left( f=0\text{ or }\Phi_Q(f)>0\right).
\end{equation} 
Indeed, let 
$$ H=\Big\{g\in \hdot\text{ s.t. } \int gY_1=\ldots=\int gY_p=0\Big\}.$$
We first prove
\begin{equation}
\label{pourA11}
\Big(f\in H\text{ and } \Phi_Q(f)=0\Big)\Longrightarrow f\in \ZZZ_Q.
\end{equation} 

Let $f\in H$ such that $\Phi_Q(f)=0$.
Denoting also by $\Phi_Q$ the bilinear form 
$$\Phi_Q(f,g)=\frac 12 \int \nabla f \nabla g -\frac {N+2}{2(N-2)}\int |Q|^{\frac{4}{N-2}} fg,$$
we get by Cauchy-Schwarz for $\Phi_Q$ (using that by Step 1, $\Phi_Q$ is nonnegative on $H$),
\begin{equation}
 \label{A13}
 \forall h\in H,\quad \Phi_Q(f,h)=0.
\end{equation} 
Let $g\in \hdot$, and write $g=h+\sum_{k=1}^p\beta_k Y_k$, with $h\in H$ and $\beta_k=\int gY_k$. Then
$$\Phi_Q(f,g)=\underbrace{\Phi_Q(f,h)}_{0\text{ by }\eqref{A13}}+\sum_{k=1}^p \beta_k\underbrace{\Phi_Q(f,Y_k)}_{0\text{ since }f\in H}.$$
In particular,
\begin{equation}
 \label{A14}
 \forall g\in C_0^{\infty}(\RR^N),\quad \int fL_Qg=0,
\end{equation} 
i.e. $L_Qf=0$ in the sense of distribution. Thus $f\in \ZZZ_Q$. Hence \eqref{pourA11}.

Combining \eqref{pourA11} with the definition of $E_1$,\ldots,$E_m$, we obtain
\begin{equation*}
\left(f\in H,\;\Phi_Q(f)=0\text{ and } \int fE_1=\ldots=\int fE_m=0 \right)\Longrightarrow f=0
\end{equation*} 
and \eqref{A11} follows (using again that $\Phi_Q$ is nonnegative on $H$).

\EMPH{Step 3} We conclude the proof of Proposition \ref{P:A7}, arguing by contradiction and using a standard compactness argument. If the conclusion of the proposition does not hold, there exists a sequence $\{f_n\}_n$ in $\hdot$ such that
\begin{equation}
\label{prop_fn}
\left\{
\begin{gathered}
 \forall n,\; \forall k\in \{1,\ldots,p\},\; \forall j\in \{1,\ldots,m\},\quad \int f_nY_k=\int f_nE_j=0\\
 \forall n,\quad 0<\Phi_Q(f_n)\leq \frac{1}{n}\text{ and }\|f_n\|_{\hdot}=1.
\end{gathered}\right.
\end{equation}
Extracting a subsequence, we can assume
\begin{equation}
 \label{A19}
 f_n\xrightharpoonup[n\to \infty] f\text{  weakly in }\hdot.
\end{equation} 
In particular, $\int |\nabla f|^2\leq\limsup_{n\to\infty} \int |\nabla f_n|^2$. Furthermore, using that by Proposition \ref{P:A1}, $\lim_{|x|\to\infty}|x|^2|Q|^{\frac{4}{N-2}}(x)=0$ we get by Hardy's inequality and Rellich-Kondrachov Theorem: 
$$\lim_{n\to\infty}\int |Q|^{\frac{4}{N-2}}f_n^2=\int |Q|^{\frac{4}{N-2}} f^2.$$
Combining with \eqref{prop_fn}, we obtain
\begin{equation}
 \label{A20}
 \Phi_Q(f)\leq 0.
\end{equation} 
Since by \eqref{prop_fn} and \eqref{A19} 
$$\int fY_1=\ldots=\int fY_p=\int f E_1=\ldots =\int f E_m=0,$$
we deduce by Step 2 that $f=0$. As a consequence, $\lim_{n\to\infty}\int |Q|^{\frac{4}{N-2}}f_n^2=0$. Since $0<\Phi_Q(f_n)\leq 1/n$, we obtain $\lim_{n\to \infty}\int |\nabla f_n|^2=0$ which contradicts the equality $\|f_n\|_{\hdot}=1$ in \eqref{prop_fn}. The proof is complete.
\end{proof}

\begin{proof}[Proof of Corollary \ref{C:A16}]
 In all the proof, $C>0$ is a large, positive constant, depending only on $Q$ and the choice of $Z_1,\ldots,Z_m,E_1,\ldots,E_m$ and that may change from line to line. Let
 \begin{gather}
  \label{A35}
  g=S-Q,\quad \alpha_i=\int g\,Y_i,\quad  \beta_j =\int g\,E_j,\quad i=1,\ldots, p,\; j=1,\ldots, m\\
 \label{A36}
 h=g-\sum_{i=1}^p\alpha_j Y_j-\sum_{j=1}^m \beta_jZ_j.
 \end{gather}
Note that
\begin{gather}
 \label{A37}
 \int hY_i=0,\quad \int h E_j=0,\quad i=1,\ldots,p,\; j=1,\ldots,m.\\
\label{A38}
 \|g\|_{\hdot}\leq \|h\|_{\hdot}+C\sum_{i=1}^p |\alpha_i|+C\sum_{j=1}^m |\beta_j|.
 \end{gather} 
 Furthermore,
 \begin{equation}
  \label{A38'}
  -\Delta g=|S|^{\frac{4}{N-2}}S-|Q|^{\frac{4}{N-2}}Q=|Q+g|^{\frac{4}{N-2}}(Q+g)-|Q|^{\frac{4}{N-2}}Q=\frac{N+2}{N-2}|Q|^{\frac{4}{N-2}}g+R_{Q}(g),
 \end{equation} 
 where 
 \begin{equation}
  \label{defRQ0}
 R_{Q}(g)=|Q+g|^{\frac{4}{N-2}}(Q+g)-|Q|^{\frac{4}{N-2}}Q-\frac{N+2}{N-2}|Q|^{\frac{4}{N-2}}g
 \end{equation} 
satisfies the pointwise bound
\begin{equation}
 \label{pointwise_RQ}
\left|R_Q(g)\right|\leq C\left(|Q|^{\frac{6-N}{N-2}}|g|^2+|g|^{\frac{N+2}{N-2}}\right).
\end{equation} 
By \eqref{pointwise_RQ}, if $\|g\|_{L^{\frac{2N}{N-2}}}\leq 1$ (which holds by \eqref{A33} if $\eps_0$ is small enough),
\begin{equation}
  \label{A39}
  \|R_{Q}(g)\|_{L^{\frac{2N}{N+2}}}\leq C\|g\|_{L^{\frac{2N}{N-2}}}^2\leq C\|g\|^2_{\hdot}.
 \end{equation} 
 By \eqref{A38'}, $L_{Q}(g)=R_{Q}(g)$, and thus, by the definition \eqref{A36} of $h$,
 \begin{equation}
  \label{A40}
  L_{Q}h+\sum_{j=1}^p \alpha_j\omega_j^2Y_j=R_{Q}g.
 \end{equation} 
 Multiplying \eqref{A40} by $Y_j$ and integrating over $\RR^N$, we get, using also \eqref{A37} and \eqref{A39},
 \begin{equation}
  \label{A41}
  \forall j=1,\ldots,p,\quad |\alpha_j|\leq C\|g\|^2_{\hdot}.
 \end{equation} 
 Multiplying \eqref{A40} by $h$ and integrating over $\RR^N$, we obtain, using \eqref{A37}, \eqref{A39} and Proposition \ref{P:A7}, $\|h\|^2_{\hdot}\leq C\|g\|^2_{\hdot}\|h\|_{L^{\frac{2N}{N-2}}}$ and thus
 \begin{equation}
  \label{A42}
  \|h\|_{\hdot}\leq C\|g\|_{\hdot}^2.
 \end{equation} 
 Combining \eqref{A41} and \eqref{A42} with \eqref{A38}, we deduce
 $$ \|g\|_{\hdot}\leq C\|g\|_{\hdot}^2+C\sum_{i=1}^m |\beta_i|\leq C\eps_0\|g\|_{\hdot}+C\sum_{i=1}^m |\beta_i|,$$
 and thus, if $\eps_0>0$ is chosen small enough, the conclusion \eqref{A34} of the corollary. 
 
 It remains to prove \eqref{thetaA_Q}.  By \eqref{A34},
 $$\|\theta_{\vA}(Q)-Q\|_{\hdot}\leq C\sum_{i=1}^m \left|\int \left(\theta_{\vA}(Q)-Q\right)E_i\right|\leq C\sum_{i=1}^m \left|\int Q\left( \left(\theta_{\vA}\right)^*(E_i)-E_i\right)\right|,$$
 and the conclusion follows from Lemma \ref{L:C2} in the appendix.
\end{proof}

\subsubsection{Null directions}
\label{SSS:Null}
We next check that the vector space $\tZZZ_Q$ defined in the introduction is included in $Z_Q$.
\begin{lemma}
 \label{L:A12}
 Let $Q\in \Sigma$. Then the following functions $g$ are in $\hdot\cap C^{\infty}(\RR^N)$ and satisfy $L_Qg=0$:
 \begin{gather}
  \label{A26}
  \frac{N-2}{2}Q+x\cdot\nabla Q\\
 \label{A28}
 (2-N)x_jQ+|x|^2\partial_{x_j}Q-2x_jx\cdot\nabla Q,\; k\in \{1,\ldots ,N\}\\
 \label{A25}
  \partial_{x_j}Q,\; j=1,\ldots ,N,\;\\
  \label{A27}
 (x_j\partial_{x_k}-x_k\partial_{x_j})Q,\; 1\leq j<k\leq N.
 \end{gather}
\end{lemma}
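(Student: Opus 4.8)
The statement is that the four families of functions in \eqref{A26}--\eqref{A27} lie in $\hdot\cap C^\infty$ and are annihilated by $L_Q$. The strategy is purely computational: each function is the generator of a one-parameter subgroup of the invariance group $\MMM$ acting on $\Sigma$, so differentiating the identity ``$\theta(Q)$ solves \eqref{eq02}'' at the identity produces an element of $\ZZZ_Q$. Concretely, for a smooth one-parameter family $Q_\tau\in\Sigma$ with $Q_0=Q$, differentiating $-\Delta Q_\tau=|Q_\tau|^{4/(N-2)}Q_\tau$ at $\tau=0$ gives $-\Delta \dot Q_0=\frac{N+2}{N-2}|Q|^{4/(N-2)}\dot Q_0$, i.e. $L_Q\dot Q_0=0$. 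I would set this up once as a lemma: if $\tau\mapsto Q_\tau$ is $C^1$ into $\hdot$ with values in $\Sigma$, then $\frac{d}{d\tau}\big|_{\tau=0}Q_\tau\in\ZZZ_Q$.

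**Identifying the generators.** I would then apply this to the four explicit subgroups, using Proposition \ref{P:A1}:
\begin{itemize}
\item Scaling $Q_\tau=e^{\tau(N/2-1)}Q(e^\tau x)$ (the case $a=b=c=0$, $s=\tau$ of $\theta_A$); its derivative at $\tau=0$ is $\frac{N-2}{2}Q+x\cdot\nabla Q$, giving \eqref{A26}.
\item Translations $Q_\tau=Q(x+\tau e_j)$ ($s=c=0$, $a=0$, $b=\tau e_j$); derivative $\partial_{x_j}Q$, giving \eqref{A25}.
\item Rotations $Q_\tau=Q(e^{\tau(E_{jk}-E_{kj})}x)$ ($s=0$, $a=b=0$, $c$ along one coordinate of the $\mathfrak{so}_N$ parametrization \eqref{Pc}); derivative $(x_j\partial_{x_k}-x_k\partial_{x_j})Q$, giving \eqref{A27}.
\item The ``special conformal'' direction: $Q_\tau=\theta_{A(\tau)}(Q)$ with $a=\tau e_j$, $s=b=c=0$. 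Using the explicit formula \eqref{def_thetaA}, one differentiates $e^{0}\,|x/|x|-\tau e_j|x||^{2-N}Q\!\big(\frac{x-\tau e_j|x|^2}{1-2\tau\langle e_j,x\rangle+\tau^2|e_j|^2|x|^2}\big)$ at $\tau=0$. The $|x/|x|-\tau e_j|x||^{2-N}$ factor contributes $(2-N)x_j Q$ (since $\frac{d}{d\tau}\big|_0|x/|x|-\tau e_j|x||^2=-2\langle x/|x|,e_j\rangle|x|=-2x_j$, times $\frac{2-N}{2}\cdot|x|^{-2}\cdot|x|^{2}\cdot$ etc.), the argument's derivative at $\tau=0$ is $-e_j|x|^2+2x_j\,x$, so the chain rule gives $|x|^2\partial_{x_j}Q-2x_j\,x\cdot\nabla Q$; summing yields \eqref{A28}.
\end{itemize}

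**Regularity and $\hdot$ membership.** Smoothness of all four functions follows from Proposition \ref{P:A1}\eqref{I:Qsmooth} (with the caveat that for $N=5$ one only has $C^4$, which is enough since the functions involve at most one derivative of $Q$, so they are $C^3$). For the $\hdot$ bound, I would check the decay estimates: by Proposition \ref{P:A1}\eqref{I:harmonic}, $|\partial^\alpha Q(x)|\lesssim |x|^{-N+2-|\alpha|}$ for $|x|\ge 1$, so the gradient of \eqref{A26}, \eqref{A25}, \eqref{A27} decays like $|x|^{-N}$ and is square-integrable at infinity; near the origin these functions are smooth, hence locally in $\hdot$. The delicate case is \eqref{A28}: the factor $|x|^2$ multiplies $\partial_{x_j}Q$ and the factor $x_j x$ multiplies $\nabla Q$, so the gradient behaves like $|x|\cdot|x|^{-N-1}=|x|^{-N}$ at infinity — still fine — but one must verify there is no growth; the cleanest route is to observe that \eqref{A28} is, up to a constant, the Kelvin transform (Proposition \ref{P:A1}\eqref{I:Kelvin}) of $-\partial_{x_j}\tilde Q$ composed appropriately, so its $\hdot$ norm is controlled by that of a translation generator of $\tilde Q$, which we have already handled. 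Alternatively one differentiates $\|\theta_{A}(Q)\|_{\hdot}$, which is constant in $A$ since $\theta_A$ is an isometry of $\hdot$, so the derivative lies in $\hdot$ automatically once differentiability of $A\mapsto\theta_A(Q)$ into $\hdot$ is established — this is essentially \eqref{thetaA_Q} of Corollary \ref{C:A16} together with a Fréchet-differentiability upgrade.

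**Main obstacle.** The genuinely technical point is justifying that $\tau\mapsto Q_\tau$ is differentiable \emph{as a map into $\hdot$} (so that $L_Q$ may be applied to the derivative and the distributional computation $L_Q\dot Q_0=0$ is legitimate), and — for \eqref{A28} especially — controlling the behavior at the origin and at infinity of the Kelvin-type transformation, where the weight $|x/|x|-a|x||^{2-N}$ is singular. I would handle this by working first with $Q$ replaced by a smooth compactly-supported-away-from-a-neighborhood approximation only where needed, or more simply by noting that the pointwise identity $L_Q g=0$ (for $g$ one of the four functions) can be verified \emph{directly} by a brute-force computation using $-\Delta Q=|Q|^{4/(N-2)}Q$ and the commutator identities $[\Delta, x\cdot\nabla]=2\Delta$, $[\Delta,\partial_{x_j}]=0$, $[\Delta, x_j\partial_{x_k}-x_k\partial_{x_j}]=0$, and the conformal commutator for the inversion generator; this sidesteps the functional-analytic subtlety entirely and reduces the whole lemma to elementary (if tedious) differential calculus plus the decay estimates above for $\hdot$ membership.
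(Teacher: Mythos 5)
Your proposal follows the paper's route: the functions \eqref{A26}, \eqref{A25}, \eqref{A27} are obtained by differentiating the stationary equation along the scaling, translation and rotation subgroups, and \eqref{A28} is handled through the inversion, exactly as in the paper (which conjugates the translation generator of $\tQ$ by the Kelvin transformation rather than differentiating $\theta_{A}$ in $a$ directly, but this is the same computation up to an overall sign, which is irrelevant since $\ZZZ_Q$ is a vector space). Your worry about Fr\'echet differentiability in $\hdot$ is heavier than necessary: since $Q$ is smooth, one differentiates the \emph{classical} pointwise identity $-\Delta Q_\tau=|Q_\tau|^{4/(N-2)}Q_\tau$ in the parameter, which is what the paper does and what your ``brute-force commutator'' fallback amounts to.

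One concrete error to flag: your decay-estimate argument for the $\hdot$ membership of \eqref{A28} does not work. With only the crude bounds $|\partial_x^\alpha Q(x)|\lesssim |x|^{2-N-|\alpha|}$ of Proposition \ref{P:A1}\eqref{I:harmonic}, the function $g=(2-N)x_jQ+|x|^2\partial_{x_j}Q-2x_jx\cdot\nabla Q$ is only $O(|x|^{3-N})$ and its gradient only $O(|x|^{2-N})$ (the term $2x\,\partial_{x_j}Q$ alone is of that size), not $O(|x|^{-N})$ as you claim; and $\int_{|x|\ge 1}|x|^{2(2-N)}\,dx$ diverges for $N=3,4$. The leading $|x|^{3-N}$ behaviour actually cancels, but this cancellation is invisible at the level of the crude bounds: it is precisely the content of writing $g$ as the Kelvin transform of $\partial_{y_j}\tQ$ (so that $g\sim c|x|^{2-N}$, $\nabla g\sim c|x|^{1-N}$), or equivalently of the refined expansion \eqref{limite} in Remark \ref{R:A2}. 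So of the two routes you offer for \eqref{A28}, only the Kelvin-isometry one is valid, and it is the one the paper uses; you should delete the decay-estimate alternative or upgrade it with the refined asymptotics.
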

\begin{proof}
 The fact that the functions defined in \eqref{A26}\ldots \eqref{A27} are smooth follows immediately from the fact that $Q$ is smooth (see Proposition \ref{P:A1}). Furthermore, by Proposition \ref{P:A1} again, all the functions \eqref{A26}, \eqref{A25} and \eqref{A27} are in $\hdot$.
We have 
\begin{gather*}
 \forall s\in \RR,\quad -\Delta \left( e^{\frac{(N-2)}{2}s}Q\left( e^sx \right) \right)=\left|e^{\frac{N-2}{2}s}Q\left(e^sx\right)\right|^{\frac{4}{N-2}}e^{\frac{N-2}{2}s}Q\left( e^sx \right)\\
 \forall b\in \RR^N,\quad -\Delta Q(x+b)=|Q(x+b)|^{\frac{4}{N-2}}Q(x+b)\\
 \forall c\in  \RR^N,\quad -\Delta Q(P_c x)=|Q(P_c x)|^{\frac{4}{N-2}}Q(P_c x).
\end{gather*}
Differentiating these equalities with respect to $s$, $b$ or $c$, and taking the resulting equality at $0$, we get \eqref{A26}, \eqref{A25} and \eqref{A27}.

To get \eqref{A28}, let $h=\frac{\partial}{\partial y_j} \left( \frac{1}{|y|^{N-2}}Q\left( \frac{y}{|y|^2} \right)\right)$, and observe that by points \eqref{I:Qsmooth} and \eqref{I:Kelvin} of Proposition \ref{P:A1} and \eqref{A25}, $h$ is in $\hdot\cap C^{4}$ and satisfies:
\begin{equation*}
 \left( \Delta+\frac{N+2}{(N-2)|y|^4}|Q|^{\frac{4}{N-2}}\left( \frac{y}{|y|^2} \right) \right)h=0,
\end{equation*} 
at least away from the origin.
Let $g=\frac{1}{|x|^{N-2}}h\left( \frac{x}{|x|^2} \right)$. Since the Kelvin transformation is an isometry of $\hdot$, we get that $g$ is in $\hdot$. Using that $\Delta g=\frac{1}{|x|^{N+2}}(\Delta h)\left( \frac{x}{|x|^2} \right)$, we obtain
\begin{equation}
\label{A29}
\Delta g+\frac{N+2}{N-2}|Q|^{\frac{4}{N-2}} g=0
\end{equation} 
outside $x=0$. An explicit computation gives $g(x)=-(N-2)x_jQ+|x|^2\partial_{x_j}Q-2x_jx\cdot\nabla Q$. Thus $g$ is smooth and must satisfy \eqref{A29} also at $x=0$, which concludes the proof.
\end{proof}
\subsubsection{Estimates on the eigenfunctions}
\label{SSS:eigenfunctions}
Consider the radial coordinates:
\begin{equation*}
 r=|x|,\quad \theta=\frac{x}{|x|}\in S^{N-1}.
\end{equation*}
In this section we recall the following result of V.~Z.~Meshkov \cite{Meshkov89}:
\begin{prop}
\label{P:eigenfunction}
 Let $Q\in \Sigma$ and $Y\in L^2(\RR^N)$ such $Y\neq 0$ and
 \begin{equation}
  \label{G1}
  L_QY=-\omega^2Y,
 \end{equation} 
 with $\omega>0$. Then
 $$Y(x)=\frac{e^{-\omega |x|}}{|x|^{\frac{N-1}{2}}}\left(V\left(\frac{x}{|x|}\right)+\Phi(x)\right),$$
 where $V\in L^2(S^{N-1})$ is not identically $0$, and
 \begin{equation}
  \label{G2}
\int_{S^{N-1}} |\Phi(r,\theta)|^2\,d\theta\leq Cr^{-1/2}.
\end{equation} 
\end{prop}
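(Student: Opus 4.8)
The plan is to analyze the ODE obtained by separating the radial variable, using the fact that $Y$ is an $L^2$ eigenfunction of $L_Q$ with negative eigenvalue $-\omega^2$ and that the potential $|Q|^{4/(N-2)}$ decays like $|x|^{-4}$ by Proposition~\ref{P:A1}. Writing $Y(x)=r^{-(N-1)/2}g(r,\theta)$ with $r=|x|$, $\theta=x/|x|$, the equation $-\Delta Y - \tfrac{N+2}{N-2}|Q|^{4/(N-2)}Y = -\omega^2 Y$ becomes, after the standard change of variables that removes the first-order term,
\begin{equation*}
-\partial_r^2 g + \omega^2 g = \frac{1}{r^2}\Delta_{S^{N-1}}g + \frac{(N-1)(N-3)}{4r^2}g + \frac{N+2}{N-2}|Q|^{\frac{4}{N-2}}g =: \frac{1}{r^2}\mathcal{R}g,
\end{equation*}
where $\mathcal{R}g$ collects all the terms that are $O(r^{-2})$ relative to $g$ uniformly in $\theta$ (here I use that $|Q|^{4/(N-2)}(x)\le C(1+|x|^4)^{-1}$). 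The first step is to establish a crude a~priori decay: since $-\omega^2$ is below the essential spectrum $[0,\infty)$, an Agmon-type estimate (or a direct sub/supersolution comparison with $e^{-(\omega-\delta)r}$) gives that $Y$ and its derivatives up to the regularity allowed by Proposition~\ref{P:A1} decay at least like $e^{-(\omega-\delta)|x|}$ for every $\delta>0$; combined with elliptic regularity this upgrades to pointwise bounds on $g$ and $\partial_r g$.

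The heart of the argument is then a Volterra/fixed-point analysis of the $r$-ODE. Treating $\theta$ as a parameter and $\tfrac{1}{r^2}\mathcal{R}g$ as an inhomogeneous term, one writes $g$ using the variation-of-constants formula based on the solutions $e^{\pm\omega r}$ of $-g''+\omega^2 g=0$. The decaying-at-infinity branch forces
\begin{equation*}
g(r,\theta) = e^{-\omega r}V(\theta) - \frac{1}{2\omega}\int_r^\infty \left(e^{-\omega(r-s)} - e^{-\omega(s-r)}\right)\frac{1}{s^2}(\mathcal{R}g)(s,\theta)\,ds,
\end{equation*}
for some function $V$ on $S^{N-1}$, provided one knows $e^{\omega r}g$ stays bounded; the a~priori decay from Step~1 gives exactly this. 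Substituting the bound $|g|\le Ce^{-\omega r}$ into the integral and using $\int_r^\infty s^{-2}\,ds = r^{-1}$ together with the $L^2(S^{N-1})$-boundedness of $\mathcal{R}$ (the angular Laplacian acts on $Y$, which is smooth away from a compact set and $L^2$ in $\theta$ uniformly — this is where one needs $Y\in L^2$ and the spectral equation to control $\Delta_{S^{N-1}}g$ in $L^2(d\theta)$), one shows the integral term is $O(r^{-1/2}e^{-\omega r})$ in $L^2(d\theta)$, hence $g(r,\cdot)\to e^{-\omega r}V$ with error controlled as in \eqref{G2} after setting $\Phi(x)=e^{\omega r}r^{(N-1)/2}Y(x)-V(\theta)$. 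That $V\not\equiv 0$ follows because otherwise the Volterra estimate would force $g$, and hence $Y$, to decay faster than any $e^{-\omega' r}$ with $\omega'>\omega$, contradicting that $-\omega^2$ is a genuine eigenvalue (one can make this precise by a unique-continuation / Carleman argument, or by noting that such superexponential decay combined with $L_QY=-\omega^2 Y$ is incompatible with $Y\not\equiv 0$, again appealing to the $r^{-4}$ decay of the potential).

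The main obstacle is the careful bookkeeping in the Volterra step: one must show the iteration closes in the right weighted $L^2(S^{N-1})$ norm, i.e. that the operator $g\mapsto \int_r^\infty(\cdots)s^{-2}(\mathcal{R}g)(s,\cdot)\,ds$ is a contraction on the space of functions with $\sup_r e^{\omega r}\|g(r,\cdot)\|_{L^2(S^{N-1})}<\infty$, which requires controlling the angular derivatives uniformly and handling the fact that for $N=5$ the potential $|Q|^{4/(N-2)}=|Q|^{4/3}$ is only Hölder, not $C^1$, so one cannot differentiate the equation freely in $r$ — instead one works with the integrated (Duhamel) form throughout and uses only the pointwise decay $|Q|^{4/(N-2)}\lesssim r^{-4}$. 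Since this is a known result of Meshkov \cite{Meshkov89}, I would in practice cite it and only sketch the reduction to his setting, checking that the hypotheses (an $L^2$ eigenfunction of a Schrödinger operator with $O(|x|^{-4})$ potential and eigenvalue strictly below the continuous spectrum) are met by $L_Q$ and $Y$.
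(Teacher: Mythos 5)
Your fallback — cite Meshkov's Theorem 4.3 after checking that $L_Q$ is a Schr\"odinger operator with an $O(|x|^{-4})$ potential and that $-\omega^2$ lies below the essential spectrum — is exactly what the paper does: it refers to \cite{Meshkov89} for the full asymptotic expansion and only supplies a self-contained proof of the auxiliary upper bound \eqref{bound_above}, obtained from the differential inequality $2G(R)+G'(R)\le CR^{-4}G(R)$ for the exterior energy $G(R)=\int_R^\infty\int_{S^{N-1}}\bigl(|\nabla Y|^2+Y^2\bigr)\,d\theta\,r^{N-1}dr$.

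The self-contained Volterra argument you sketch, however, has a genuine gap at its central step. You place $r^{-2}\Delta_{S^{N-1}}g$ inside the ``perturbation'' $r^{-2}\mathcal{R}g$ and assert that $\mathcal{R}g$ is controlled by $g$ in $L^2(S^{N-1})$ uniformly in $r$, so that the Duhamel integral $\int_r^\infty s^{-2}(\mathcal{R}g)(s,\cdot)\,ds$ gains a factor $r^{-1}$. But $\Delta_{S^{N-1}}$ is unbounded on $L^2(S^{N-1})$, and the best control available from elliptic regularity and the equation is $\|\Delta_{S^{N-1}}g(r,\cdot)\|_{L^2(S^{N-1})}\approx r^{2}\|\nabla_T^2Y\|\lesssim r^{2}\cdot(\text{local size of }Y)$, so that $r^{-2}\Delta_{S^{N-1}}g$ is of the \emph{same} order as $g$, not $O(r^{-2}g)$; the integral $\int_r^\infty s^{-2}(\mathcal{R}g)\,ds$ does not gain any decay and the fixed point does not close. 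Equivalently, the $k$-th spherical-harmonic mode of $g$ carries a centrifugal potential $k(k+N-2)/r^{2}$ that is not a small perturbation for $r\lesssim k$: the decaying homogeneous solutions are Bessel functions $K_\nu(\omega r)$ with $\nu\sim k$, which only look like $e^{-\omega r}$ for $r\gg k$, and a general $L^2$ eigenfunction has infinitely many nonzero modes. Handling their collective contribution is precisely the content of Meshkov's theorem and cannot be dispatched by a contraction in $\sup_r e^{\omega r}\|g(r,\cdot)\|_{L^2(S^{N-1})}$. A symptom of the problem: your scheme would yield $\|\Phi(r,\cdot)\|_{L^2(S^{N-1})}=O(r^{-1/2})$, strictly better than the $O(r^{-1/4})$ encoded in \eqref{G2}; the fractional power is a trace of the genuine argument. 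Finally, the nonvanishing of $V$ rests on a ``superexponential decay forces $Y\equiv0$'' statement, which is itself a Carleman-type result and not a consequence of the Volterra estimate you set up.
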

As an immediate consequence of Proposition \ref{P:eigenfunction}, we obtain:
\begin{corol}
\label{C:lower_estimates}
Let $Y$ be as in Proposition \ref{P:eigenfunction}. Then there exists a constant $C>0$ such that for large $r$,
$$\int_{S^{N-1}}|Y(r,\theta)|^2\,d\theta\geq \frac{e^{-2\omega r}}{C\,r^{N-1}}.$$
\end{corol}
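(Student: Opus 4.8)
The plan is to derive Corollary \ref{C:lower_estimates} directly from the asymptotic formula in Proposition \ref{P:eigenfunction} by a simple triangle-inequality argument in $L^2(S^{N-1})$. Writing $Y(r,\theta)=\frac{e^{-\omega r}}{r^{(N-1)/2}}\bigl(V(\theta)+\Phi(r,\theta)\bigr)$, I would first observe that the $L^2(S^{N-1})$ norm of $Y(r,\cdot)$ equals $\frac{e^{-\omega r}}{r^{(N-1)/2}}\,\|V+\Phi(r,\cdot)\|_{L^2(S^{N-1})}$, so the whole problem reduces to bounding $\|V+\Phi(r,\cdot)\|_{L^2(S^{N-1})}$ from below by a positive constant for large $r$.

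The key step is then the following: by the reverse triangle inequality,
\begin{equation*}
\|V+\Phi(r,\cdot)\|_{L^2(S^{N-1})}\geq \|V\|_{L^2(S^{N-1})}-\|\Phi(r,\cdot)\|_{L^2(S^{N-1})}.
\end{equation*}
Since $V$ is not identically zero, $\|V\|_{L^2(S^{N-1})}=:c_0>0$ is a fixed positive constant. By estimate \eqref{G2}, $\|\Phi(r,\cdot)\|_{L^2(S^{N-1})}\leq C^{1/2}r^{-1/4}$, which tends to $0$ as $r\to\infty$. Hence there is $r_0>0$ such that for all $r\geq r_0$ one has $\|\Phi(r,\cdot)\|_{L^2(S^{N-1})}\leq c_0/2$, and therefore $\|V+\Phi(r,\cdot)\|_{L^2(S^{N-1})}\geq c_0/2$ for $r\geq r_0$. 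Squaring gives
\begin{equation*}
\int_{S^{N-1}}|Y(r,\theta)|^2\,d\theta=\frac{e^{-2\omega r}}{r^{N-1}}\,\|V+\Phi(r,\cdot)\|_{L^2(S^{N-1})}^2\geq \frac{c_0^2}{4}\,\frac{e^{-2\omega r}}{r^{N-1}}
\end{equation*}
for $r\geq r_0$, which is the claimed bound with $C=4/c_0^2$.

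Honestly, there is no real obstacle here: the corollary is labelled ``immediate consequence'' and the only thing one needs is that a nonzero fixed element of $L^2(S^{N-1})$ has positive norm, together with the smallness of the error term from \eqref{G2}. The one point requiring a word of care is measurability/integrability — namely that $\theta\mapsto Y(r,\theta)$ genuinely lies in $L^2(S^{N-1})$ for each fixed large $r$, so that the coarea/polar-coordinates identity $\int_{S^{N-1}}|Y(r,\theta)|^2\,d\theta$ makes sense pointwise in $r$; this follows from the fact (used implicitly in Proposition \ref{P:eigenfunction}) that $Y$ is a continuous eigenfunction decaying exponentially, together with $V\in L^2(S^{N-1})$ and $\Phi(r,\cdot)\in L^2(S^{N-1})$ for a.e.\ (in fact every large) $r$. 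With that in hand the estimate is exactly the three-line computation above.
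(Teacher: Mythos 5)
Your argument is correct and is exactly what the paper means when it calls the corollary an ``immediate consequence'' of Proposition \ref{P:eigenfunction}: factor out $e^{-\omega r}r^{-(N-1)/2}$, apply the reverse triangle inequality in $L^2(S^{N-1})$, and use that $\|V\|_{L^2(S^{N-1})}>0$ while $\|\Phi(r,\cdot)\|_{L^2(S^{N-1})}\leq C^{1/2}r^{-1/4}\to 0$. The paper gives no further detail, so there is nothing to compare beyond this.
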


Proposition \ref{P:eigenfunction} is Theorem 4.3 of \cite{Meshkov89}. The proof of this result uses the following bound:
\begin{equation}
 \label{bound_above}
\forall r\geq 1,\quad \int_{S^{N-1}} |Y(r,\theta)|^2\,d\theta\leq C\frac{e^{-2\omega r}}{r^{N-1}},
\end{equation} 
which follows from estimates of S.~Agmon \cite{Agmon82BO}. We give a proof of \eqref{bound_above} for the sake of completeness, refering to \cite{Meshkov89} for the rest of the proof of Proposition \ref{P:eigenfunction}.
\begin{proof}[Proof of \eqref{bound_above}]
By scaling we can assume $\omega=1$.
By elliptic regularity (and since $|Q|^{\frac{4}{N-2}}\in C^{1}(\RR^N)$), we have $Y\in C^{3}(\RR^N)\cap H^2(\RR^N)$.
 Let 
 \begin{equation}
 \label{def_G}
 G(R)=\int_{R}^{+\infty} \int_{S^{N-1}} |\nabla Y(r,\theta)|^2+|Y(r,\theta)|^2\,d\theta r^{N-1}\,dr. 
 \end{equation} 
 \EMPH{Step 1. Bound on $G$} We show that there exists $C>0$ such that
 \begin{equation}
  \label{G6}
\forall R>0,\quad  G(R)\leq Ce^{-2R}.
 \end{equation} 
 By \eqref{G1},
 $$ \int_{R}^{+\infty} \int_{S^{N-1}} (\Delta Y\,Y+\frac{N+2}{N-2}|Q|^{\frac{4}{N-2}}Y^2-Y^2)\,d\theta\,r^{N-1}dr=0.$$
 Integrating by parts and using that $Y\in \hdot(\RR^N)$ to prove that the ``boundary term'' at infinity is zero, we obtain
 \begin{multline*}
 -\int_R^{+\infty} \int_{S^{N-1}} \left(|\nabla Y|^2+Y^2\right)\,d\theta\,r^{N-1}dr+\frac{N+2}{N-2} \int_R^{+\infty} \int_{S^{N-1}}|Q|^{\frac{4}{N-2}}Y^2\,d\theta\,r^{N-1}dr\\
-R^{N-1}\int_{S^{N-1}} (Y\partial_rY)(R,\theta)\,d\theta=0,
 \end{multline*} 
 and thus
 \begin{equation}
  \label{G7}
  G(R)=-R^{N-1}\int_{S^{N-1}} (Y\partial_r Y)(R,\theta)\,d\theta+\frac{N+2}{N-2}\int_R^{+\infty} \int_{S^{N-1}} |Q|^{\frac{4}{N-2}}Y^2\,d\theta\,r^{N-1}dr.
 \end{equation} 
 Furthermore, differentiating the definition \eqref{def_G} of $G$, we obtain 
$$G'(R)=-R^{N-1}\int_{S^{N-1}}|\nabla Y(R,\theta)|^2+(Y(R,\theta))^2\,d\theta.$$
 Therefore
 \begin{multline*}
  2G(R)+G'(R)\\
  =-R^{N-1}\int_{S^{N-1}} \left(|\nabla Y|^2+Y^2+2Y\partial_rY\right)(R,\theta)\,d\theta+\frac{2(N+2)}{N-2}\int_R^{+\infty} \int_{S^{N-1}}|Q|^{\frac{4}{N-2}}Y^2\,d\theta r^{N-1}\,dr.
 \end{multline*} 
By Proposition \ref{P:A1} \eqref{I:harmonic},
\begin{equation}
 \label{G9}
 2G(R)+G'(R) \leq \frac{C}{R^4} \int_{R}^{+\infty} \int_{S^{N-1}}Y^2(r,\theta)\,d\theta \,r^{N-1}dr\leq \frac{C}{R^4}G(R).
\end{equation} 
Thus
\begin{equation}
 \label{G10}
 \frac{d}{dR} \Big[\log G(R)+2 R\Big]\leq \frac{C}{R^4}.
\end{equation} 
Integrating, we obtain that $\log\left(e^{2R}G(R)\right)$ is bounded from above, which yields \eqref{G6}.

\EMPH{Step 2: end of the proof} 
Let for $r>0$. 
$$ b(r)=r^{N-1}\int_{S^{N-1}}|Y(r,\theta)|^2\,d\theta.$$
By Step 1, $b(r)$ and $b'(r)$ are integrable on $(1,+\infty)$. Thus $b(r)$ converges to $0$ as $r\to+\infty$. Hence, for $R\geq 1$,
\begin{multline*}
 |b(R)|=\left|\int_R^{\infty} b'(r)\,dr\right|\\
\leq \int_R^{\infty} \left((N-1)r^{N-2}\int_{S^{N-1}} |Y(r,\theta)|^2\,d\theta+ r^{N-1}\int_{S^{N-1}} |\partial_r Y(r,\theta)\,Y(r,\theta)|\,d\theta\right)\,dr\\
\leq CG(R)\leq Ce^{-2R},
\end{multline*}
which gives \eqref{bound_above}.
\end{proof}
We will also need the following estimate on the $L^{\frac{2(N+2)}{N-2}}$ norm of $Y$, which is essentially a corollary of Proposition \ref{P:eigenfunction} and its proof:
\begin{lemma}
\label{L:estim_Y}
There exists $C>0$ such that 
\begin{equation*}
 \forall R\geq 1,\quad \int_{|x|\geq R} |Y(x)|^{\frac{2(N+2)}{N-2}}\,dx\leq C\,\frac{e^{-\frac{2(N+2)\omega}{N-2}R}}{R^{q_N}},
\end{equation*} 
where $q_N=\frac{4(N-1)}{N-2}$ if $N=3,4$ and $q_5=\frac{32}{9}$.
\end{lemma}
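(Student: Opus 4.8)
The plan is to reduce matters to a pointwise decay estimate for $Y$ and to the spherical bound \eqref{bound_above} just proved. Put $p=\tfrac{2(N+2)}{N-2}$, so $p-2=\tfrac{8}{N-2}$, and split $|Y|^{p}=|Y|^{p-2}\,|Y|^{2}$. If one has a bound $|Y(x)|\le C(1+|x|)^{-a}e^{-\omega|x|}$ on $\RR^N$, then, bounding the first factor by its supremum on each sphere and the second by \eqref{bound_above},
\[
\int_{|x|\ge R}|Y|^{p}\,dx=\int_R^{+\infty}\Big(\int_{S^{N-1}}|Y(r,\theta)|^{p-2}|Y(r,\theta)|^{2}\,d\theta\Big)r^{N-1}\,dr\le C\int_R^{+\infty}\big(r^{-a}e^{-\omega r}\big)^{p-2}\frac{e^{-2\omega r}}{r^{N-1}}\,r^{N-1}\,dr,
\]
and the right-hand side equals $C\int_R^{+\infty}r^{-a(p-2)}e^{-p\omega r}\,dr\le C\,e^{-p\omega R}R^{-a(p-2)}$. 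Since $a(p-2)=q_N$ precisely when $a=\tfrac{N-1}{2}$ ($N=3,4$) or $a=\tfrac43$ ($N=5$), it remains to prove the pointwise bound with those values of $a$.

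\emph{The pointwise bound.} Set $V_Q=\tfrac{N+2}{N-2}|Q|^{\frac4{N-2}}$; by Proposition \ref{P:A1}\,\eqref{I:harmonic} we have $0\le V_Q(x)\le C(1+|x|)^{-4}$, and $(\omega^{2}-\Delta)Y=V_QY$ with $Y,V_QY\in L^{2}(\RR^N)$. One first records the crude bound $|Y(x)|\le Ce^{-\omega|x|}$: since $|\Delta Y|\le C|Y|$ for $|x|\ge1$, interior elliptic estimates on unit balls followed by a Sobolev embedding (bootstrapping to $W^{2,q}$ with $q>N/2$ when $N=4,5$) give $\|Y\|_{L^\infty(B(x_0,1))}\le C\|Y\|_{L^2(B(x_0,2))}$ uniformly for $|x_0|\ge2$, and \eqref{bound_above} gives $\int_{r\le|x|\le r+1}|Y|^2\le Ce^{-2\omega r}$. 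To gain the polynomial factor I would use the resolvent representation $Y=\Gamma_\omega\ast(V_QY)$, where $\Gamma_\omega>0$ is the kernel of $(\omega^{2}-\Delta)^{-1}$ on $\RR^N$, with $\Gamma_\omega(z)\le C|z|^{2-N}$ for $|z|\le1$ and $\Gamma_\omega(z)\le Ce^{-\omega|z|}|z|^{-(N-1)/2}$ for $|z|\ge1$. Starting from $|Y(x)|\le\int\Gamma_\omega(x-y)V_Q(y)|Y(y)|\,dy$, I would split the integral into $\{|y|\le|x|/2\}$, $\{|x|/2<|y|<2|x|\}$ and $\{|y|\ge2|x|\}$: on the first region $|x-y|\ge|x|/2$ and $e^{-\omega|x-y|}\le e^{-\omega|x|}e^{\omega|y|}$, so this piece is $\le Ce^{-\omega|x|}|x|^{-(N-1)/2}\int_{|y|\le|x|/2}V_Q(y)|Y(y)|e^{\omega|y|}\,dy$, and inserting the crude bound (or, to remove the $N=4$ logarithm, the spherical $L^2$-bound together with Cauchy--Schwarz in the angular variable) bounds the integral by $C\int_{|y|\le|x|/2}(1+|y|)^{-4}\,dy$; on the middle region one treats separately the local singularity of $\Gamma_\omega$ (there $V_Q|Y|\le C|x|^{-4}e^{-\omega|x|}$, contributing $\le C|x|^{-4}e^{-\omega|x|}$) and its bounded tail (a Gaussian-type integral along the segment $[0,x]$, bounded by $C|x|^{-3}e^{-\omega|x|}$); the last region is exponentially smaller than $e^{-\omega|x|}$. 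In dimensions $3$ and $4$ the first integral is bounded and one gets $a=\tfrac{N-1}{2}$; in dimension $5$ it grows linearly in $|x|$, and a careful version of that estimate yields $a=\tfrac43$ — this is exactly the dimension-$5$ loss. (An alternative is to start from the expansion of Proposition \ref{P:eigenfunction} and use the elliptic equation satisfied by $r^{(N-1)/2}e^{\omega r}Y$ to upgrade $L^2(S^{N-1})$-control of $V+\Phi$ to $L^p(S^{N-1})$-control.)

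\emph{Main obstacle.} The hard part is the pointwise estimate. No estimate derived from the shell-$L^2$ norm of $Y$ alone can produce a negative power of $|x|$ — a bump concentrated on a single unit ball of the shell $\{r\le|x|\le r+1\}$ is consistent with \eqref{bound_above} — so the extra decay has to come from the equation, via the smoothing of the resolvent kernel and the decay $V_Q\le C|x|^{-4}$. Handling the convolution on the bulk region $\{|x|/2<|y|<2|x|\}$, where $\Gamma_\omega(x-y)$ has its singularity at $y=x$, and controlling the (divergent in dimension $5$) angular integral over $\{|y|\le|x|/2\}$, are the technical sticking points; the latter is why the exponent in dimension $5$ is $q_5=\tfrac{32}{9}$ and not $\tfrac{4(N-1)}{N-2}$.
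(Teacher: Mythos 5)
Your route is genuinely different from the paper's. The paper never proves a pointwise bound on $Y$: it differentiates the eigenvalue equation with the tangential vector fields $\partial_{\theta_J}=x_j\partial_{x_k}-x_k\partial_{x_j}$, runs the same differential inequality as for \eqref{bound_above} to get $\int_{S^{N-1}}(|Y|^2+|\partial_\theta Y|^2)\,d\theta\le Ce^{-2\omega r}r^{-(N-1)}$, and then uses the Sobolev embedding $L^{\frac{2(N+2)}{N-2}}(S^{N-1})\subset H^{\frac{2(N-1)}{N+2}}(S^{N-1})$ (one angular derivative suffices only for $N=3,4$); for $N=5$ it cannot differentiate twice because $|Q|^{4/3}$ is merely $C^1$, so it interpolates between the $L^4(S^4)$ bound and an $L^{10}(\RR^5)$ bound obtained from the $H^2$-critical Sobolev inequality, and $q_5=\frac{32}{9}$ is exactly the interpolation exponent. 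Your scheme — reduce to $|Y(x)|\le C(1+|x|)^{-a}e^{-\omega|x|}$ via the splitting $|Y|^p=|Y|^{p-2}|Y|^2$, then prove the pointwise bound from $Y=\Gamma_\omega\ast(V_QY)$ — is sound: the arithmetic of the reduction is right, the crude bound $|Y|\le Ce^{-\omega|x|}$ follows as you say from interior elliptic estimates plus the shell bound, and the three-region decomposition of the convolution works. Indeed, your parenthetical Cauchy--Schwarz refinement on the far region gives $\int_0^{|x|/2}(1+\rho)^{-4}\rho^{(N-1)/2}\,d\rho<\infty$ for all $N\le 5$, so you actually obtain $a=\frac{N-1}{2}$ in dimension $5$ as well, hence $q_5=\frac{16}{3}>\frac{32}{9}$; your diagnosis that the divergence of the angular integral is ``why'' $q_5=\frac{32}{9}$ is therefore off — in the paper the loss at $N=5$ comes from the regularity of the nonlinearity, and your method, carried out fully, proves a strictly stronger estimate there.

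Two points need to be nailed down before this is a proof rather than a plan. First, the middle-region tail estimate is the genuinely delicate step: you are convolving $e^{-\omega|z|}|z|^{-(N-1)/2}$ against a function decaying at the \emph{same} exponential rate, which is a resonant convolution and loses a polynomial factor. The correct statement is $\int_{|x-y|\ge 1}e^{-\omega|x-y|}|x-y|^{-(N-1)/2}e^{-\omega|y|}\,dy\le C|x|\,e^{-\omega|x|}$ (the sublevel sets of $|x-y|+|y|-|x|$ are prolate spheroids of volume $\sim|x|(|x|s)^{(N-1)/2}$, and the cap computation on the spheres $|x-y|=\rho$ gives a contribution $\lesssim\rho$ per dyadic scale); combined with $V_Q\le C|x|^{-4}$ this yields the asserted $C|x|^{-3}e^{-\omega|x|}$, which dominates $|x|^{-(N-1)/2}$ only for $N\ge 7$, so it is harmless here — but this computation must be written out. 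Second, you should state explicitly that the far-region asymptotics show the angular profile $V$ of Proposition \ref{P:eigenfunction} is in fact bounded (it is essentially $\theta\mapsto c\int e^{\omega\langle\theta,y\rangle}V_Q(y)Y(y)\,dy$, absolutely convergent by your Cauchy--Schwarz argument); Meshkov's theorem only gives $V\in L^2(S^{N-1})$, so the pointwise bound is strictly more than what is quoted in the paper and cannot be read off from it.
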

\begin{proof}
We assume as before $\omega=1$.
Let for $J=(j,k)$, $1\leq j<k\leq N$,
$\partial_{\theta_J}=x_j\partial_{x_k}-x_{k}\partial_{x_j}$.
We notice that the derivatives $\partial_{\theta_J}$ are tangential to the spheres $rS^{N-1}$, and that the tangential component of the gradient, $\nabla_Tv$, satisfies $|\nabla_T v|\leq \frac{C}{r}\sum_{J}|\nabla_{\theta_J}v|$. Furthermore, each $\partial_{\theta_J}$ commutes with $\Delta$. 

\EMPH{Step 1. Estimate on $G_J$}

Fix $J=(j,k)$ with $1\leq j<k\leq N$. In this step we prove
\begin{equation}
 \label{est_tangential}
\forall R\geq 1,\quad \int_{S^{N-1}}\left(|Y(R,\theta)|^2+|\partial_{\theta_J}Y(R,\theta)|^2\right)\,d\theta\leq C\frac{e^{-2R}}{R^{N-1}}.
\end{equation} 
Let
$$ G_J(R)= \int_{R}^{+\infty} \int_{S^{N-1}} |\nabla \partial_{\theta_J}Y(r,\theta)|^2+|\partial_{\theta_J}Y(r,\theta)|^2\,d\theta r^{N-1}\,dr. $$
Using the argument of Step 2 of the proof of \eqref{bound_above}, we see that \eqref{est_tangential} will follow from:
 \begin{equation}
  \label{est_GJ}
\forall R\geq 1,\quad  G_J(R)\leq Ce^{-2R}.
 \end{equation} 
We next prove \eqref{est_GJ}. We have 
\begin{equation}
 \label{eq_dJ}
\Delta(\partial_{\theta_J} Y)-\partial_{\theta_J} Y+\frac{N+2}{N-2}|Q|^{\frac{4}{N-2}}\partial_{\theta_J}Y=-\partial_{\theta_J}\left(\frac{N+2}{N-2}|Q|^{\frac{4}{N-2}}\right)Y.
\end{equation} 
Proceding as in Step 1 of the proof of \eqref{bound_above}, we obtain
\begin{multline}
\label{eqGJ}
2G_J(R)+G_J'(R)\leq C\int_{R}^{+\infty} \int_{S^{N-1}}|Q|^{\frac{4}{N-2}}(\partial_{\theta_J}Y)^2d\theta\,r^{N-1}dr\\
+
C\int_{R}^{+\infty} \int_{S^{N-1}}\left|\partial_{\theta_J}\left(|Q|^{\frac{4}{N-2}}\right)Y\partial_{\theta_J}Y\right|d\theta\,r^{N-1}dr. 
\end{multline}
By Proposition \ref{P:A1} \eqref{I:harmonic}, $|Q|^{\frac{4}{N-2}}+\left|\partial_{\theta_J}\left(|Q|^{\frac{4}{N-2}}\right)\right|\leq C/R^{4}$. In view of \eqref{G6}, we obtain
\begin{equation*}
 \int_{R}^{+\infty} \int_{S^{N-1}}|Q|^{\frac{4}{N-2}}(\partial_{\theta_J}Y)^2d\theta\,r^{N-1}dr\leq \frac{C}{R^2}\int_{R}^{+\infty} \int_{S^{N-1}}|\nabla Y|^2d\theta\,r^{N-1}dr\leq \frac{Ce^{-2R}}{R^2}.
\end{equation*}
and
\begin{equation*}
 \int_{R}^{+\infty} \int_{S^{N-1}}\left|\partial_{\theta_J}\left(|Q|^{\frac{4}{N-2}}\right)Y\partial_{\theta_J}Y\right|d\theta\,r^{N-1}dr\leq \frac{C}{R^3}\int_{R}^{+\infty} \int_{S^{N-1}}|\nabla Y|^2+|Y|^2d\theta\,r^{N-1}dr\leq \frac{Ce^{-2R}}{R^3}.
\end{equation*} 
By \eqref{eqGJ}, we deduce
\begin{equation*}
 2G_J(r)+G_J'(R)\leq \frac{Ce^{-2R}}{R^2},
\end{equation*} 
and thus
\begin{equation*}
 \frac{d}{dR}\left(e^{2R}G_J(R)\right) \leq \frac{C}{R^2}.
\end{equation*} 
Integrating between $1$ and $R>1$, we obtain \eqref{est_GJ}.

\EMPH{Step 2} We prove the conclusion of the lemma in the case $N\in \{3,4\}$. By Sobolev embedding on the sphere $S^{N-1}$,
$$\|f\|_{L^{\frac{2(N+2)}{N-2}}(S^{N-1})}\leq C\|f\|_{H^{\frac{2(N-1)}{N+2}}(S^{N-1})}.  $$ 
If $N=3$ or $N=4$, $\frac{2(N-1)}{N+2}\leq 1$. By \eqref{est_tangential},
\begin{multline*}
 \int_{S^{N-1}} |Y(r,\theta)|^{\frac{2(N+2)}{N-2}}d\theta\leq C\left(\int_{S^{N-1}} |Y(r,\theta)|^2+|\partial_{\theta}Y(r,\theta)|^2\,d\theta\right)^{\frac{N+2}{N-2}}\leq \frac{C}{r^{\frac{(N-1)(N+2)}{N-2}}}e^{-\frac{2(N+2)}{N-2}r}
\end{multline*}
where we have denoted $|\partial_{\theta}Y|^2=\sum_J |\partial_{\theta_J}Y|^2$.
Multiplying by $r^{N-1}$ and integrating between $R$ and $\infty$, we obtain the desired estimate when $N=3$ or $N=4$.

\EMPH{Step 3} We next treat the case $N=5$. Note that $\frac{2(N+2)}{N-2}=\frac{14}{3}$.
Since $\frac{2(N-1)}{N+2}=\frac{8}{7}>1$, it is tempting to differentiate a second time the equation \eqref{G1} on $Y$ to obtain a $L^2$ estimates on $\partial_{\theta}^2Y$ and use a Sobolev inequality  on the sphere $S^4$ to bound the $L^{14/3}$ norm. This is not possible because of the low regularity of $|Q|^{\frac{4}{3}}$, and we will rather use directly the equation \eqref{G1} to get a bound on $\Delta Y$, then the $H^2$-critical Sobolev inequality on $\RR^5$.

Using the Sobolev inequality $\|f\|_{L^4(S^4)}\leq C\|f\|_{H^1(S^4)}$ we obtain, by Step 1, and the same proof as in Step 2,
\begin{equation}
\label{Step3a}
 \int_{R}^{+\infty} \int_{S^4} |Y(r,\theta)|^4d\theta r^4dr\leq \int_{R}^{+\infty} \left(\int_{S^4} |Y(r,\theta)|^2+|\partial_{\theta} Y(r,\theta)|^2d\theta \right)^2r^4dr\leq \frac{Ce^{-4R}}{R^4}.
\end{equation}
Let $\varphi \in C^{\infty}(\RR)$ such that $\varphi(r)=0$ if $r\leq 0$ and $\varphi(r)=1$ if $r\geq 1$. Let 
$$Y_R(x)=\varphi(|x|-R)Y(x).$$
By \eqref{G6}, and equation \eqref{G1} (noting that all derivatives of $x\mapsto \varphi(|x|-R)$ are uniformly bounded for $R\geq 1$),
\begin{equation*}
 \forall R\geq 1,\quad \int_{\RR^5} \left(Y_R^2+|\nabla Y_R|^2+|\Delta Y_R|^2\right)dx\leq Ce^{-2R}.
\end{equation*} 
Using the $H^2$-critical Sobolev inequality in $\RR^5$, we deduce 
\begin{equation}
\label{Step3b}
 \int_{|x|\geq R} Y^{10}(x)dx\leq \int_{\RR^5} Y_R^{10}(x)dx\leq Ce^{-10R}.
\end{equation}
The conclusion of the lemma follows from \eqref{Step3a}, \eqref{Step3b} and the interpolation inequality
$$\|f\|_{L^{\frac{14}{3}}}\leq \|f\|_{L^4}^{\frac{16}{21}}\|f\|_{L^{10}}^{\frac{5}{21}}.$$
\end{proof}

\subsection{Choice of the modulation parameters}
\label{SS:modulation}

Recall from the introduction the definition of $\tZZZ_Q$. By Lemma \ref{L:A12}, $\tZZZ_Q\subset \ZZZ_Q$. The nondegeneracy assumption \eqref{ND} means that these two vector spaces are identical.

As before, we denote by $Z_1$,\ldots,$Z_m$ a basis of $\ZZZ_Q$ and $E_1$,\ldots,$E_m$ elements of $C^{\infty}_0$ such that \eqref{A7} holds. 

 Let $A\in \RR^{N'}$. Recall from \eqref{def_thetaA} the definition of $\theta_A$. We will denote by $\theta^{-1}_A$ the inverse of $\theta_A$, and $\left(\theta_A^{-1}\right)^*$ its adjoint. An explicit computation shows that 
 \begin{multline}
  \label{A31}
  \left(\theta^{-1}_{\vA}\right)^*(g)(x)=\left|\det \varphi_{\vA}'(x)\right|^{\frac{N+2}{2N}} g\left(\varphi_{\vA}(x)\right)\\
=e^{\frac{(N+2)s}{2}} \left|\frac{x}{|x|} -|x|a\right|^{-(N+2)}g\left(b+\frac{e^sP_c(x-|x|^2a)}{1-2\langle a,x\rangle +|a|^2|x|^2}\right).
 \end{multline} 
 In this subsection we show the following:
\begin{lemma}
 \label{L:A13}
 Assume \eqref{ND}. There exists a neighborhood $\UUU$ of $0$ in $\RR^{N'}$, a neighborhood $\VVV$ of $Q$ in $H^{-1}$, and a $C^1$, Lipschitz continuous map $\Psi:\VVV\to \UUU$ such that, 
 \begin{equation}
  \label{A30}
  \forall f\in \VVV,\; \forall i=1\ldots m,\quad \left\langle f,\left(\theta_{\Psi(f)}^{-1}\right)^{*}(E_i)\right\rangle_{H^{-1},H^1}-\int Q E_i=0.
 \end{equation} 
\end{lemma}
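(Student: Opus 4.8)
The plan is to apply the implicit function theorem to a suitable map on $\RR^{N'}\times H^{-1}$, using the nondegeneracy assumption \eqref{ND} to guarantee invertibility of the relevant partial differential. Concretely, I would define $F:\RR^{N'}\times H^{-1}\to \RR^m$ by
\[
F_i(A,f)=\left\langle f,\left(\theta_A^{-1}\right)^{*}(E_i)\right\rangle_{H^{-1},H^1}-\int QE_i,\qquad i=1,\ldots,m.
\]
One checks first that $F$ is well-defined and $C^1$ near $(0,Q)$: by \eqref{A31} the function $\left(\theta_A^{-1}\right)^{*}(E_i)$ is, for $A$ small, a smooth compactly-supported function depending smoothly on $A$ in $H^1$ (this is where the smoothness statements in Proposition \ref{P:A1}\eqref{I:compo}, or a direct computation from \eqref{A31}, are used), so the pairing is $C^1$ jointly and linear in $f$. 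At the base point we have $\theta_0=\mathrm{Id}$, hence $\left(\theta_0^{-1}\right)^*(E_i)=E_i$ and $F_i(0,Q)=\int QE_i-\int QE_i=0$.

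Next I would compute the partial differential $D_AF(0,Q):\RR^{N'}\to\RR^m$. Differentiating $F_i(A,Q)$ with respect to $A$ at $A=0$ gives $\big\langle Q,\,\partial_A\big[(\theta_A^{-1})^*(E_i)\big]_{A=0}\big\rangle$, which by moving the adjoint back equals (up to sign) $\int \big(\partial_A[\theta_A(Q)]_{A=0}\big)E_i$. The derivatives $\partial_A[\theta_A(Q)]_{A=0}$, as $A$ ranges over the coordinate directions of $\RR^{N'}=\RR\times\RR^N\times\RR^N\times\RR^{N(N-1)/2}$, are exactly the generators of $\tZZZ_Q$ listed in \eqref{eq05} (this is essentially the content of Lemma \ref{L:A12}, read off the parametrization \eqref{def_thetaA}–\eqref{formula_phiA}). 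Under the nondegeneracy assumption \eqref{ND}, $\tZZZ_Q=\ZZZ_Q=\mathrm{span}(Z_1,\dots,Z_m)$, and moreover the parametrization $\theta_A$ is chosen so that these $N'$ derivative vectors span precisely $\tZZZ_Q$; after discarding the directions producing linear dependence one obtains that $D_AF(0,Q)$, viewed as the matrix $\big(\int Z_j E_i\big)_{i,j}$ composed with a surjection $\RR^{N'}\to\RR^m$, is surjective with a fixed right inverse, because $\big(\int Z_j E_i\big)_{i,j}=\mathrm{Id}_m$ by the normalization \eqref{A7}. Restricting $A$ to an $m$-dimensional complement of the kernel, $D_AF(0,Q)$ becomes an isomorphism onto $\RR^m$.

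The implicit function theorem then yields neighborhoods $\VVV$ of $Q$ in $H^{-1}$ and $\UUU$ of $0$ in $\RR^{N'}$ and a $C^1$ map $\Psi:\VVV\to\UUU$ with $\Psi(Q)=0$ and $F(\Psi(f),f)=0$ for all $f\in\VVV$, which is exactly \eqref{A30}. Lipschitz continuity of $\Psi$ is automatic from the $C^1$ regularity after shrinking $\VVV$. I expect the main obstacle to be the bookkeeping around the fact that $\dim\tZZZ_Q=m$ may be strictly smaller than $N'$ when $Q$ has symmetries: one must be careful to restrict $\theta_A$ to an appropriately chosen $m$-dimensional subfamily (or equivalently quotient by the stabilizer directions) so that the linearized map is genuinely invertible rather than merely surjective, and one must verify that the $C^1$ dependence in $A$ of $(\theta_A^{-1})^*(E_i)\in H^1$ is legitimate given the explicit but somewhat involved formula \eqref{A31}. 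Both points are technical; the conceptual core is just the identification of $\partial_A[\theta_A(Q)]_{A=0}$ with the generators of $\tZZZ_Q$ together with \eqref{ND} and the duality normalization \eqref{A7}.
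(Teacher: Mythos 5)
Your proposal is correct and follows essentially the same route as the paper: the same implicit-function-theorem setup for $\Phi(A,f)=\big(\langle f,(\theta_A^{-1})^*(E_j)\rangle-\int QE_j\big)_j$, the same identification of $d_A\Phi(0,Q)$ with the pairings of the generators of $\tZZZ_Q$ against the $E_j$, the same use of \eqref{ND} and \eqref{A7} to get surjectivity, and the same reduction via a right inverse to an $m$-dimensional parameter. The paper makes the Lipschitz claim explicit by differentiating $\tPhi(\tPsi(f),f)=0$ to bound $d\tPsi$ uniformly on $\VVV$, which is the rigorous version of your "automatic after shrinking $\VVV$" remark.
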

\begin{remark}
 If $f\in \hdot$, then \eqref{A30} is equivalent to
 \begin{equation}
 \label{A30'}
  \forall i=1\ldots m,\quad \int \left(\theta^{-1}_{\Psi(f)}(f)-Q\right)E_i=0.
 \end{equation}
 We will often use \eqref{A30'} instead of \eqref{A30}, but will also need \eqref{A30} which has the advantage of making sense for any $f\in H^{-1}$.
 \end{remark}
\begin{proof}[Proof of Lemma \ref{L:A13}]
 By Corollary \ref{C:C1function} (with $\psi=E_j$ which is an element of $\SSS$) and \eqref{A31},
 \begin{equation*}
\Phi: 
(\vA,f)\in B^{N'}(\eps)\times H^{-1}(\RR^N) \longmapsto \bigg(\la f, \left(\theta_{\vA}^{-1}\right)^* (E_j)\ra_{H^{-1},H^1}-\int QE_j\bigg)_{j=1\ldots m}\in \RR^m
\end{equation*}
is well defined and $C^1$. Using Corollary \ref{C:C1function} again, we can differentiate 
$$ \Phi_j(\vA,Q)=\int Q\left(\theta_{\vA}^{-1}\right)^*(E_j)-\int QE_j $$
below the integral sign, which yields:
\begin{align*}
 \frac{\partial\Phi_j}{\partial s}(0,Q)&=-\int \left(x\cdot \nabla Q+\frac{N-2}{2} Q\right) E_j\\
 \frac{\partial \Phi_j}{\partial a_i} (0,Q)&=-\int \left( (N-2)x_iQ-|x|^2\partial_{x_i}Q+2x_i x\cdot\nabla Q \right)E_j\\
 \frac{\partial \Phi_j}{\partial b_i}(0,Q)&=-\int \partial_{x_i}QE_j\\
 \frac{\partial \Phi_j}{\partial c_i}(0,Q)&=\int (x_{\ell}\partial_{x_{k}} -x_{k}\partial_{x_{\ell}}) Q E_j,
\end{align*}
with $\zeta(k,\ell)=i$, where $\zeta$ is the map appearing in the definition \eqref{Pc} of $P_c$.
By the nondegeneracy assumption \eqref{ND}, we deduce that the differential map $d_{1}\Phi(0,Q)$ with respect to the first variable $A$ is onto from $\RR^{N'}$ to $\RR^m$. If $m=N'$, this map is an isomporhism of $\RR^{N'}$ and we can directly apply the implicit function theorem. In the general case, Let $L:\RR^m\to \RR^{N'}$ be a right inverse of $d_{1}\Phi(0,Q)$ (so that $ d_{1}\Phi(0,Q) L$ is the identity of $\RR^m$), and $\tPhi(B,f)=\Phi(L(B),f)$. Then (taking a smaller $\eps>0$ if necessary),
$$ \tPhi: B^m(\eps)\times H^{-1}\to \RR^m$$
satisfies the assumptions of the implicit function theorem. We deduce that there exists a neighborhood $\UUU$ of $0$ in $\RR^m$, a neighborhood $\VVV$ of $Q$ in $H^{-1}$, and a $C^1$ map $\tPsi:\VVV\to \UUU$ such that
$$\forall f\in \VVV,\quad \tPhi\left(\tPsi(f),f\right)= \Phi\left(L(\tPsi(f)),f\right)=0.$$
Letting $\Psi(f)=L\circ \tPsi$, we obtain a $C^1$ map that satisfies \eqref{A30}. It remains to prove that $\Psi$ is Lipschitz-continuous on $\VVV$. For this it is sufficient to prove that the differential of $\tPsi$ is bounded on $\VVV$. Differentiating the relation $\tPhi\left(\tPsi(f),f\right)=0$ with respect to $f$, we obtain:
$$(\partial_1\tPhi)\left(\tPsi(f),f\right)\circ \left( d\tPsi(f) \right)+(\partial_2\tPhi)\left(\tPsi(f),f\right)=0.$$
Next, note that $(\partial_2\tPhi)\left(\tPsi(f),f\right)$ is the map $g\mapsto \bigg(\la g, \left(\theta_{\tPsi(f)}^{-1}\right)^* (E_j)\ra_{H^{-1},H^1}\bigg)_{j=1\ldots m}$, and that $(\partial_1\tPhi)(\tPsi(f),f)$ is an isomorphism of $\RR^m$, uniformly bounded (as well as its inverse) on $\VVV$. Finally, we obtain that $d\tPsi(f)$, as a bounded linear operator from $H^{-1}$ to $\RR^m$, is uniformly bounded for $f\in \VVV$, which yields the result.
\end{proof}
We conclude this section by a technical estimate, which says that under the nondegeneracy assumption \eqref{ND}, in a neighborhood of a a non-zero stationary solution $Q$, the distance of a $\hdot$ function $f$ to $\Sigma$ is well estimated by the distance of $f$ to $\theta_{\Psi(f)}(Q)$, where $\Psi$ is the map constructed in Lemma \ref{L:A13}.
\begin{lemma}
 \label{L:A18}
 Let $Q\in \Sigma$ such that the nondegeneracy assumption \eqref{ND} holds. There exists a small constant $\eps_2>0$ with the following property. Let $f\in \hdot$ and $S\in \Sigma$ such that
 \begin{equation}
 \label{bounds_f}
  \|f-Q\|_{\hdot}+\|f-S\|_{\hdot}<\eps_2.
 \end{equation}
 Then, if $\vA=\Psi(f)$ is given by Lemma \ref{L:A13},
 \begin{equation}
  \label{A49}
  \left\|f-\theta_{\vA}(Q)\right\|_{\hdot}\leq C\|f-S\|_{\hdot}.
 \end{equation}
\end{lemma}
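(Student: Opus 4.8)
The plan is to ``undo'' the modulation $\theta_{\vA}$ and then invoke the rigidity estimate \eqref{A34} of Corollary~\ref{C:A16}. Set $\vA=\Psi(f)$, $g=\theta_{\vA}^{-1}(f)$ and $S'=\theta_{\vA}^{-1}(S)$. By Proposition~\ref{P:A1}~\eqref{I:compo} the inverse $\theta_{\vA}^{-1}$ is again of the form $\theta_{A_4}$ with $A_4$ small, and every $\theta_A\in\MMM$ is a \emph{linear} isometry of $\hdot$; hence $\|f-\theta_{\vA}(Q)\|_{\hdot}=\|g-Q\|_{\hdot}$, $\|g-S'\|_{\hdot}=\|f-S\|_{\hdot}$, and $S'\in\Sigma$ by Proposition~\ref{P:A1}~\eqref{I:transformations}. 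Thus it suffices to prove $\|g-Q\|_{\hdot}\le C\|g-S'\|_{\hdot}$.

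First I would record the smallness needed to apply Corollary~\ref{C:A16} to $S'$. The map $\Psi$ is continuous with $\Psi(Q)=0$ (the latter because $\Phi(0,Q)=0$ in the implicit-function-theorem construction of $\Psi$), so $|\vA|=|\Psi(f)|$ can be made as small as we wish by shrinking $\eps_2$; using that $\theta_{\vA}^{-1}$ is an isometry together with \eqref{thetaA_Q} gives
$$\|g-Q\|_{\hdot}\le\|f-Q\|_{\hdot}+\|\theta_{\vA}^{-1}(Q)-Q\|_{\hdot}\le\|f-Q\|_{\hdot}+C|\vA|<\eps_0/2$$
for $\eps_2$ small. Hence $\|S'-Q\|_{\hdot}\le\|S'-g\|_{\hdot}+\|g-Q\|_{\hdot}<\eps_2+\eps_0/2<\eps_0$ after a further shrinking of $\eps_2$, so $S'$ lies in the range of validity of \eqref{A34}.

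The heart of the argument is then a single computation. By Lemma~\ref{L:A13} in the form \eqref{A30'}, $\int(g-Q)E_i=0$ for every $i=1,\dots,m$, so for each $i$
$$\Big|\int(S'-Q)E_i\Big|=\Big|\int(S'-g)E_i\Big|\le\|S'-g\|_{L^{\frac{2N}{N-2}}}\,\|E_i\|_{L^{\frac{2N}{N+2}}}\le C\|S'-g\|_{\hdot},$$
using $E_i\in C^\infty_0(\RR^N)$ and the Sobolev embedding $\hdot\hookrightarrow L^{\frac{2N}{N-2}}$. Feeding this into \eqref{A34} applied to $S'$ yields $\|S'-Q\|_{\hdot}\le C\|S'-g\|_{\hdot}=C\|f-S\|_{\hdot}$, whence
$$\|f-\theta_{\vA}(Q)\|_{\hdot}=\|g-Q\|_{\hdot}\le\|g-S'\|_{\hdot}+\|S'-Q\|_{\hdot}\le(1+C)\|f-S\|_{\hdot}.$$
I do not expect a genuine obstacle here: the mechanism is short, and the only care required is bookkeeping --- checking that $\theta_{\vA}^{-1}$ is a near-identity element of $\MMM$ so that Proposition~\ref{P:A1} and \eqref{thetaA_Q} apply, that $\Psi(Q)=0$ so that $\vA$ is small, and that the successive choices of $\eps_2$ keep $\|S'-Q\|_{\hdot}<\eps_0$.
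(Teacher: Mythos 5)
Your proof is correct and follows essentially the same route as the paper's: apply the rigidity estimate \eqref{A34} of Corollary \ref{C:A16} to $\theta_{\vA}^{-1}(S)$, use the orthogonality conditions $\int(\theta_{\vA}^{-1}(f)-Q)E_i=0$ coming from the definition of $\vA=\Psi(f)$ to replace $\theta_{\vA}^{-1}(S)$ by $\theta_{\vA}^{-1}(S)-\theta_{\vA}^{-1}(f)$ in the integrals, and conclude by the isometry property of $\theta_{\vA}$. The only cosmetic difference is that the paper states the intermediate bound as $\|S-\theta_{\vA}(Q)\|_{\hdot}\le C\|f-S\|_{\hdot}$ rather than conjugating everything by $\theta_{\vA}^{-1}$ first, which amounts to the same computation.
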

\begin{proof}
 We will prove
 \begin{equation}
  \label{A50}
  \left\|S-\theta_{\vA}(Q)\right\|_{\hdot}\leq C\|f-S\|_{\hdot},
 \end{equation} 
 which obviously implies \eqref{A49} (with a larger constant $C$).
 
 Since the map $\Psi$ of Lemma \ref{L:A13} is Lipschitz-continuous, we have $|\vA|\leq C\eps_2$. Taking $\eps_2>0$ small, we can use Corollary \ref{C:A16}. By \eqref{thetaA_Q} and assumption \eqref{bounds_f},
 \begin{multline}
  \label{A51}
  \left\|\theta^{-1}_{\vA}(S)-Q\right\|_{\hdot}=\|S-\theta_{\vA}(Q)\|_{\hdot}\\
  \leq \|f-S\|_{\hdot}+\|Q-\theta_A(Q)\|_{\hdot}+\|f-Q\|_{\hdot}\leq C\eps_2.
 \end{multline}
By \eqref{A34} in Corollary \ref{C:A16}
\begin{equation}
 \label{A52}
\left\|\theta_{\vA}^{-1}(S)-Q\right\|_{\hdot}\leq C\sum_{i=1}^m \left|\int (\theta_{\vA}^{-1}(S)-Q)E_i\right|.
\end{equation} 
By the definition of $\vA=\Psi(f)$, we have $\int \left(\theta^{-1}_{\vA}(f)-Q\right)E_i=0$ for all $i$, and \eqref{A52} implies
\begin{multline*}
 \left\|\theta^{-1}_{\vA}(S)-Q\right\|_{\hdot}\leq C\sum_{i=1}^m \left|\int \left( \theta_{\vA}^{-1}(S)-\theta_{\vA}^{-1}(f) \right)E_i\right|\\ \leq C\|\theta_{\vA}^{-1}(S)-\theta_{\vA}^{-1}(f)\|_{\hdot}=C\|S-f\|_{\hdot},
\end{multline*}
which concludes the proof.
\end{proof}

\section{Proof of the main result}
\label{S:proof}
In this section we prove Theorem \ref{T:maintheo2}.
Let, for $f\in \hdot$
\begin{equation}
 \label{eq08}
d(\Sigma,f):=\inf\left\{ \|f-Q\|_{\hdot},\quad Q\in \Sigma\right\}.
\end{equation}
We will use the following proposition, proved in Section \ref{S:modulation} below:
\begin{prop}
\label{P:00}
 Let $Q\in \Sigma$ satisfying the nondegeneracy property \eqref{ND}. Then there exists $\delta_0=\delta_0(Q)$ with the following property.
If $u$ is a solution of \eqref{CP} with maximal time of existence $T_+$ and such that
\begin{gather}
 \label{eq09} 
E(u,\partial_tu)=E(Q,0)\\
\label{eq010}
\|u_0-Q\|_{\hdot}+\|u_1\|_{L^2}<\delta_0\\
\label{eq011}
\sup_{t\in [0,T_+)}d(\Sigma, u(t))+\|\partial_tu(t)\|_{L^2}<\delta_0.
\end{gather}
Then $T_+=+\infty$. Furthermore, there exists $S\in \Sigma$, of the form $S=\theta_A(Q)$ with $A\in \RR^{N'}$ close to $0$, such that one of the following holds:
$$u\equiv S$$
\textbf{ or }there exists a (non-zero) eigenfunction $Y$ of $L_S$, with eigenvalue $-\omega^2$ such that, for some $\omega^+>\omega$,
\begin{equation}
 \label{eq012}
\left\|(u(t),\partial_t u(t))-\left(S+e^{-\omega t}Y,-\omega e^{-\omega t}Y\right)\right\|_{\hdot\times L^2}\leq Ce^{-\omega^+ t}.
\end{equation} 
\end{prop}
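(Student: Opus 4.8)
The plan is to extend the modulation-theoretic analysis of \cite{DuMe08} (carried out there for $Q=W$) to a general nondegenerate stationary solution; the two new features are that the linearized operator $L_S$ may have several (possibly repeated) negative eigenvalues, and that the family of invariances encoded by $\MMM$ is larger, so the modulation vector lives in $\RR^{N'}$. \emph{Step 1 (modulation).} Shrinking $\delta_0$ and using the nondegeneracy assumption \eqref{ND}, an implicit function theorem argument (Corollary \ref{C:A16} and Lemma \ref{L:A18}) shows that $\Sigma$ coincides, near $Q$, with the orbit $\{\theta_A(Q):A\in\RR^{N'}\}$; hence by continuity $u(t)$ remains, for all $t\in[0,T_+)$, in a small neighborhood of this orbit, and Lemma \ref{L:A13} applies. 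Set $A(t)=\Psi(u(t))$, $S(t)=\theta_{A(t)}(Q)\in\Sigma$ and $g(t)=u(t)-S(t)$, a $C^1$ curve, small in $\hdot$ by Lemma \ref{L:A18} ($\|g(t)\|_{\hdot}\lesssim d(\Sigma,u(t))<\delta_0$), satisfying the orthogonality conditions of Lemma \ref{L:A13} relative to $S(t)$. Since $-\Delta S(t)=|S(t)|^{\frac{4}{N-2}}S(t)$ for each $t$,
\begin{equation*}
\partial_t^2 g+L_{S(t)}g=-\partial_t^2 S(t)-R_{S(t)}(g),
\end{equation*}
with $R_S$ the remainder of \eqref{defRQ0} (with $Q$ replaced by $S$), $\|R_{S(t)}(g)\|_{L^{\frac{2N}{N+2}}}\lesssim\|g\|_{\hdot}^2$ by \eqref{pointwise_RQ}, and $\partial_t^2 S(t)$ a combination of $A''(t)$ and $A'(t)\otimes A'(t)$.

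\emph{Step 2 (coercivity).} Let $-\omega_1^2\le\cdots\le-\omega_p^2$ be the negative eigenvalues of $L_{S(t)}$ with $L^2$-orthonormal eigenfunctions $Y_1,\dots,Y_p$ (smooth in $A(t)$), and decompose $g=\sum_k a_kY_k+g^\perp$ with $g^\perp$ orthogonal to the $Y_k$'s and the $E_i$'s. Using conservation of $E$, $E(S(t),0)=E(Q,0)$ ($E$ is $\MMM$-invariant), and $\langle-\Delta S-|S|^{\frac{4}{N-2}}S,g\rangle=0$, the expansion of the energy gives $\tfrac12\|\partial_t u(t)\|_{L^2}^2+\Phi_{S(t)}(g(t))=O(\|g(t)\|_{\hdot}^3)$; since $\Phi_S(g)=-\tfrac12\sum_k\omega_k^2a_k^2+\Phi_S(g^\perp)$ with $\Phi_S(g^\perp)\gtrsim\|g^\perp\|_{\hdot}^2$ by Proposition \ref{P:A7} (which persists for $S$ close to $Q$), we obtain, with $b(t)^2:=\sum_k a_k(t)^2$,
\begin{equation*}
\|g(t)\|_{\hdot}^2+\|\partial_t u(t)\|_{L^2}^2\lesssim b(t)^2 .
\end{equation*}
Differentiating the orthogonality conditions and inserting the equation yields $|A'(t)|+|A''(t)|\lesssim b(t)$, hence $\|\partial_t g(t)\|_{L^2}\lesssim b(t)$; projecting the equation for $g$ onto the negative eigenfunctions, and using that $\ker L_S=\tZZZ_S$ is $L^2$-orthogonal to the $Y_k$ (so the $A''$ terms drop out), one gets
\begin{equation*}
\ddot a_k(t)=\omega_k^2\,a_k(t)+O\big(b(t)^2\big),\qquad 1\le k\le p .
\end{equation*}

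\emph{Step 3 (exponential decay and $T_+=+\infty$).} Let $N(t)=b(t)^2$. From the system above, $\tfrac12 N''=\sum_k(\dot a_k^2+\omega_k^2a_k^2)+O(N^{3/2})\ge\omega_1^2N-CN^{3/2}\ge\tfrac12\omega_1^2N$ as long as $N$ is small, i.e. $N''\ge\omega_1^2N$ on $[0,T_+)$; since $N$ is bounded, $N'+\omega_1N$ cannot be positive at any time (otherwise it would grow like $e^{\omega_1 t}$, forcing $N\to+\infty$), so $(e^{\omega_1 t}N)'\le0$ and $b(t)\le b(0)e^{-\omega_1 t/2}$. In particular $b\in L^1([0,T_+))$, so $A(t)$ and $g(t)$ converge as $t\to T_+$; the trajectory then stays in a compact subset of $\hdot\times L^2$, and the local Cauchy theory of Section \ref{S:prelim} forces $T_+=+\infty$. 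Write $A_\infty=\lim_t A(t)$, $S=\theta_{A_\infty}(Q)\in\Sigma$, which also satisfies \eqref{ND}; replacing $S(t)$ by $S$ changes $g$ by $O(\int_t^{\infty}b)=O(e^{-\omega_1 t/2})$, so from now on we use the decomposition attached to the fixed $S$. Setting $a_k^\pm:=a_k\mp\dot a_k/\omega_k$, one has $\dot a_k^+=-\omega_ka_k^++O(b^2)$ and $\dot a_k^-=\omega_ka_k^-+O(b^2)$; boundedness of $a_k^-$ forces $a_k^-(t)=-\int_t^{+\infty}e^{-\omega_k(s-t)}O(b(s)^2)\,ds=O(e^{-2\omega_1 t})$, while integrating the equation for $a_k^+$ gives $a_k^+(t)=c_ke^{-\omega_k t}+O((1+t)e^{-\omega_1 t})$ for constants $c_k$, so $a_k(t)=\tfrac12c_ke^{-\omega_k t}+(\text{strictly faster})$. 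A now-standard bootstrap (as in \cite{DuMe08}) then yields a dichotomy: if all $c_k$ vanish, reinjecting the integral representations of $a_k^\pm$ shows $b$ decays faster than any exponential and then $b(t)\lesssim\eps_0\sup_{s\ge0}b(s)$, hence $b\equiv0$, i.e. $g\equiv0$, $u\equiv S(t)$, and (as $\partial_t u\equiv0$) $S(t)$ is constant, $u\equiv S$; otherwise, letting $k_0$ be minimal with $c_{k_0}\ne0$, $\omega=\omega_{k_0}$ and $Y=\sum_{k:\,\omega_k=\omega}c_kY_k$ (a nonzero eigenfunction of $L_S$ for the eigenvalue $-\omega^2$), one collects the above estimates into $g(t)=e^{-\omega t}Y+O(e^{-\omega^+t})$ with $\omega^+=\min(2\omega,\omega')>\omega$ ($\omega'$ the next exponent occurring), and differentiating the expansion (its remainder solving an ODE with exponentially decaying forcing) together with $|A'|=O(e^{-2\omega t})$ gives $\partial_t u(t)=-\omega e^{-\omega t}Y+O(e^{-\omega^+t})$, which is \eqref{eq012}.

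The heart of the argument — and the main difference with \cite{DuMe08} — is the ODE analysis of the \emph{vector} $(a_1,\dots,a_p)$ when $L_S$ has several, possibly repeated, negative eigenvalues, together with the bookkeeping of Steps 1--2 needed to control the enlarged modulation vector $A(t)\in\RR^{N'}$ (and $A''$) by $b(t)$: the key inputs are the coercivity estimate of Proposition \ref{P:A7} and the convexity argument of Step 3, which replaces the scalar ODE of the one-eigenvalue case. The precise asymptotics of the eigenfunctions (Proposition \ref{P:eigenfunction}) are not needed here but will feed the channel-of-energy argument used to eliminate the expansion \eqref{eq012}.
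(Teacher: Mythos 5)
Your overall strategy (modulation near the orbit of $Q$, coercivity of the linearized energy on the orthogonal complement, a second-order ODE system for the unstable components, then a dichotomy between $u\equiv S$ and a leading eigenmode) is the same as the paper's, and your treatment of the final expansion once exponential convergence to a fixed $S$ is in hand essentially reproduces Proposition \ref{P:expansion}, including the rigidity statement (Claim \ref{C:uniq_expo}) needed when all leading coefficients vanish. However, there are two genuine gaps in the first half of your argument.

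First, you linearize around the moving profile $S(t)=\theta_{A(t)}(Q)$ and treat the negative eigenvalues $\omega_k$ and eigenfunctions $Y_k$ of $L_{S(t)}$ as if they were constant, or at least smoothly and harmlessly varying. They are not: $\theta_A$ is an isometry of $\hdot$ and $L^{\frac{2N}{N-2}}$ but \emph{not} of $L^2$, so $L_{\theta_A(Q)}$ is not unitarily equivalent to $L_Q$ as an $L^2$ operator; its negative eigenvalues genuinely move with $A$ (already under scaling they are multiplied by $e^{2s}$), and a smooth selection of individual eigenfunctions is problematic when eigenvalues are repeated or cross. Your scalar equations $\ddot a_k=\omega_k^2a_k+O(b^2)$ with fixed $\omega_k$ are therefore not justified as written. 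The paper avoids this entirely by pulling $u$ back to the fixed $Q$, setting $h=\theta_{A(t)}^{-1}(u)-Q$ and working with the fixed spectral data of $L_Q$ through the first-order quantities $\alpha_j,\beta_j$ of Lemma \ref{L:A21}; this also eliminates the need for $A''$, hence for $\Psi$ to be $C^2$ with bounded second differential, which is more than Lemma \ref{L:A13} provides.

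Second, your convexity argument ("since $N$ is bounded, $N'+\omega_1N$ cannot be positive at any time, otherwise $N\to+\infty$") only yields a contradiction on an infinite time interval, but at that stage $T_+$ is not known to be infinite --- proving $T_+=+\infty$ is part of the statement. On a finite interval the growth forced by $N'+\omega_1 N>0$ at some $t_0$ need not exceed the bound $N\le C\delta_0^2$ before $T_+$. Relatedly, the modulation is only valid while $u(t)$ stays in the fixed chart around $Q$, so one must first control the drift of $A(t)$ on the maximal such interval $[0,T_0)$ with an estimate valid for finite $T_0$, conclude $T_0=T_+$, and only then rule out finite $T_+$. This is precisely what the paper's Lemma \ref{L:A22} supplies: the integrated bound $\int_0^{T_0}\gamma\le C\|\gamma\|_\infty$, obtained from the two-regime analysis of $E_\pm$, which gives $|A(t)|\le C\delta_0$ and $\|u(t)-Q\|_{\hdot}\le C\delta_0<r_0$ on all of $[0,T_0)$; the pointwise decay $\gamma=o(e^{-\omega_1t/2})$ is deduced only afterwards, once $T_0=+\infty$ is known. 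Your proof needs this intermediate integrated estimate (or a substitute) to close.
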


\begin{remark}
 Proposition \ref{P:00} remains valid if $Q=0$. In this case, $S=0$ and the linearized operator $L_S=-\Delta$ has no eigenvalue, so that \eqref{eq012} is impossible. The conclusion of the proposition means that the only small solution of  \eqref{CP} such that $E[u]=0$ is $0$, which is an immediate consequence of the critical Sobolev inequality. 
\end{remark}
We will also need the fact, proved in Section \ref{S:Lorentz} that the Lorentz transform of a solution with the compactness property is also a solution with the compactness property (see Proposition \ref{P:lorentz} for a precise statement).

We let $u$ be a non-zero solution with the compactness property, so that there exists $\lambda(t)$, $x(t)$ such that
$$K=\left\{ \left( \lambda^{\frac{N}{2}-1}(t)u\left(t,\lambda(t)\cdot+x(t)\right), \lambda^{\frac N2}(t)\partial_t u\left(t,\lambda(t)\cdot+x(t)\right)\right),\; t\in (T_-(u),T_+(u))\right\}$$
has compact closure in $\hdot\times L^2$

\subsection{Reduction to the case of zero momentum}
\label{SS:reduction}
By Proposition \ref{P:maintheo1}, there exists $Q\in \Sigma$, $\vell\in B^N(1)$ and $s_n\to T_+(u)$ such that
\begin{multline}
\lim_{n\to\infty}\left\|\lambda^{N/2-1}\left(s_n\right)u\left(s_n,\lambda\left(s_n\right)\cdot +x\left(s_n\right)\right)-Q_{\vell}\left(0\right)\right\|_{\hdot}\\
+\left\|\lambda^{N/2}\left(s_n\right)\partial_t u\left(s_n,\lambda\left(s_n\right)\cdot +x\left(s_n\right)\right)-\partial_tQ_{\vell}\left(0\right)\right\|_{\hdot}=0.
\end{multline}
By Proposition \ref{P:lorentz}, we get that $u_{-\vell}$ is well defined and has the compactness property. Furthermore, in view of the proof of Lemma \ref{L:Lorentz_compactness} (see Remark \ref{R:Lorentz}), there exists a sequence of times $\{t_n\}$ such that $(u_{-\vell}(t_n),\partial_tu_{-\vell}(t_n))$ converges to $(Q,0)$ (up to scaling and space translation). We deduce $P[u_{-\vell}]=0$. As a consequence, it is sufficient to prove Theorem \ref{T:maintheo2} assuming
$$ P[u]=0,$$
which we will do in the sequel.

\subsection{Existence of the stationary profile}
\label{SS:stat}
By Proposition \ref{P:maintheo1}, \eqref{I:theo_virial}, and since $P[u]=0$, there exists a sequence $t_n^+\to T_+(u)$  and $Q\in \Sigma$ such that
\begin{equation}
 \label{F1}
\lim_{n\to \infty}\left\|\lambda^{\frac{N-2}{2}}(t_n^+)u\left(t_n^+,\lambda(t_n^+)\cdot +x(t_n^+)\right)-Q\right\|_{\hdot}+\left\| \partial_tu(t_n^+)\right\|_{L^2}=0, \text{ where }Q\text{ verifies }\eqref{ND}.
\end{equation}  
Let $\MMM(Q)=\left\{\theta(Q),\; \theta\in \MMM\right\}$. If $(f,g)\in \hdot\times L^2$, we let $d_Q$ be defined by
\begin{equation}
 \label{F2}
d_Q(f,g)=\inf\left\{\|f-\theta(Q)\|_{\hdot}+\|g\|_{L^2},\quad \theta \in \MMM\right\}
\end{equation} 
We claim:
\begin{claim}
 \label{C:MdQ}
 \begin{enumerate}
  \item \label{I:dQ_cont} $d_Q$ is continuous on $\hdot\times L^2$.
  \item \label{I:MQclosed} $\MMM_Q$ is closed in $\hdot$.
  \item \label{I:dQ0} $\ds \forall (f,g)\in \hdot\times L^2,\quad d_Q(f,g)=0\iff (f,g)\in \MMM_Q\times \{0\}$.
 \end{enumerate}
\end{claim}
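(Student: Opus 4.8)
The plan is to dispatch (a) and (c) quickly, since the real content is (b), the closedness of $\MMM_Q$ in $\hdot$.

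For (a): for each fixed $\theta\in\MMM$ the map $(f,g)\mapsto \|f-\theta(Q)\|_{\hdot}+\|g\|_{L^2}$ is $1$-Lipschitz on $\hdot\times L^2$, with Lipschitz constant independent of $\theta$; hence so is the infimum $d_Q$, which is therefore continuous. For (c): the inclusion $\MMM_Q\times\{0\}\subseteq d_Q^{-1}(0)$ is immediate. Conversely, if $d_Q(f,g)=0$ then $\|g\|_{L^2}\le d_Q(f,g)=0$ (since $d_Q(f,g)\ge\|g\|_{L^2}$), and $\inf_{\theta\in\MMM}\|f-\theta(Q)\|_{\hdot}=0$, so there is $\theta_n\in\MMM$ with $\theta_n(Q)\to f$ in $\hdot$; invoking (b) gives $f\in\MMM_Q$, hence $(f,g)\in\MMM_Q\times\{0\}$.

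So everything reduces to (b). Let $\theta_n\in\MMM$ with $\theta_n(Q)\to f$ in $\hdot$. I would first note that $f$ is a nonzero stationary solution: since the $\theta_n$ are $\hdot$-isometries, $\|f\|_{\hdot}=\|Q\|_{\hdot}>0$; and since each $\theta_n(Q)\in\Sigma$ and $\theta_n(Q)\to f$ in $L^{\frac{2N}{N-2}}$ as well, letting $n\to\infty$ in $-\Delta\theta_n(Q)=|\theta_n(Q)|^{\frac{4}{N-2}}\theta_n(Q)$ in $\DDD'(\RR^N)$ (using that $g\mapsto|g|^{\frac{4}{N-2}}g$ is continuous from $L^{\frac{2N}{N-2}}$ to $L^{\frac{2N}{N+2}}$) yields $f\in\Sigma$. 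Now for $n$ large, $\|\theta_n^{-1}(f)-Q\|_{\hdot}=\|f-\theta_n(Q)\|_{\hdot}<\eps_2$, where $\eps_2$ is the constant of Lemma \ref{L:A18}. Applying Lemma \ref{L:A18} to the function $\theta_n^{-1}(f)$ (which lies in $\Sigma$ since $\theta_n^{-1}\in\MMM$ preserves $\Sigma$) with the choice $S=\theta_n^{-1}(f)$ itself — so that the right-hand side $C\|\theta_n^{-1}(f)-S\|_{\hdot}$ vanishes — gives $\theta_n^{-1}(f)=\theta_{\Psi(\theta_n^{-1}(f))}(Q)\in\MMM_Q$. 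Finally, $\MMM_Q$ is an orbit of the group $\MMM$, hence $\MMM$-invariant, so $f=\theta_n\big(\theta_n^{-1}(f)\big)\in\theta_n(\MMM_Q)=\MMM_Q$, as desired.

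The one genuine obstacle is (b), and I expect its proof to hinge on exactly two ingredients already in hand: that $\MMM$ acts by $\hdot$-isometries and preserves $\Sigma$ (so that strong limits of orbit points are again nonzero stationary solutions), and the quantitative rigidity estimate of Lemma \ref{L:A18}, which itself rests on the nondegeneracy assumption \eqref{ND} and the coercivity of $\Phi_Q$. An alternative, more laborious route would analyze the degeneration of a divergent sequence of conformal maps $\varphi_n$ underlying the $\theta_n$, showing that any such degeneration forces $\theta_n(Q)\rightharpoonup 0$ in $\hdot$ and hence contradicts $f\ne0$; but the argument through Lemma \ref{L:A18} avoids this case analysis of conformal degeneration entirely.
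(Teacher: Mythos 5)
Your proofs of (a) and (c) match the paper's (terse) treatment: (a) follows because the infimum of a uniformly Lipschitz family is Lipschitz, and (c) is an immediate consequence of (b) once one notices $d_Q(f,g)\ge\|g\|_{L^2}$.

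For (b), your argument is correct and rests on the same key ingredient as the paper's — Lemma~\ref{L:A18} applied in the degenerate mode $f=S$, which turns the estimate $\|f-\theta_{\Psi(f)}(Q)\|_{\hdot}\le C\|f-S\|_{\hdot}$ into the rigidity statement ``any $\Sigma$-element close to $Q$ lies in the orbit $\theta_{\Psi(\cdot)}(Q)$'' — but the two proofs organize the limiting argument differently. The paper picks a base point $Q_{n_0}=\theta(Q)$ far out in the Cauchy sequence, applies Lemma~\ref{L:A18} (around $Q_{n_0}$, which still satisfies \eqref{ND} since \eqref{ND} is $\MMM$-invariant) to each $Q_n$ to get the parametrization $Q_n=\theta_{A_n}(Q_{n_0})$, shows $\{A_n\}$ is bounded, extracts a convergent subsequence $A_n\to A$, and uses continuity of $A\mapsto\theta_A(Q_{n_0})$ (via \eqref{thetaA_Q}) to identify the limit as $\theta_A(Q_{n_0})\in\MMM_Q$. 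You instead first show the limit $f$ itself lies in $\Sigma$ (strong $\hdot$-convergence preserves the norm and passes to the limit in the elliptic PDE through the $L^{\frac{2N}{N-2}}\to L^{\frac{2N}{N+2}}$ continuity of the nonlinearity), then apply Lemma~\ref{L:A18} a single time to the pulled-back function $\theta_n^{-1}(f)$ with $S=\theta_n^{-1}(f)$ for any one large $n$, obtaining $\theta_n^{-1}(f)\in\MMM_Q$ and hence $f\in\MMM_Q$ by group invariance of the orbit. Your version avoids the parametric compactness/subsequence step entirely, at the small cost of the preliminary observation that $\Sigma\cup\{0\}$ is closed under strong $\hdot$-limits; both hypotheses of Lemma~\ref{L:A18} are verified in your application ($Q$ satisfies \eqref{ND} in the context of this claim, $\theta_n^{-1}(f)\in\Sigma$ because $\MMM$ preserves $\Sigma$, and $\|\theta_n^{-1}(f)-Q\|_{\hdot}=\|f-\theta_n(Q)\|_{\hdot}<\eps_2$ for $n$ large by the isometry property). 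This is a clean alternative; it buys a shorter argument and avoids extracting a convergent subsequence of parameters.
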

\begin{proof}
Point \eqref{I:dQ_cont} is elementary, and \eqref{I:dQ0} follows immediately from \eqref{I:MQclosed}. Let us prove \eqref{I:MQclosed}.

Let $\{Q_n\}_n$ be a sequence in $\MMM_Q$, $S\in \hdot$ such that $\lim_n\|Q_n-S\|_{\hdot}=0$. We fix a small constant $\eps>0$ and choose $n_0$ such that
$$\forall n,p\geq n_0,\quad \|Q_n-Q_p\|_{\hdot}<\eps.$$
For $n\geq n_0$, we let $A_n=\Psi(Q_n)\in \RR^{N'}$, where $\Psi$ is given by Lemma \ref{L:A13} with $Q=Q_{n_0}$. Note that the sequence $\{A_n\}_{n\geq n_0}$ is bounded in $\RR^{N'}$ (by $C\eps$). Extracting, we can assume
$$\lim_{n\to\infty} A_n=A\in \RR^{N'}\text{ with }|A|\leq C\eps.$$
By Lemma \ref{L:A18} with $f=S=Q_n$, $Q=Q_{n_0}$, and taking $\eps$ smaller than the constant $\eps_2$ of Lemma \ref{L:A18}, we have
$$Q_n=\theta_{A_n}(Q_{n_0})\underset{n\to\infty}{\longrightarrow} \theta_{A}(Q_{n_0}).$$
Thus $S=\theta_A(Q_{n_0})\in \MMM_{Q}$ since $Q_{n_0}\in \MMM_{Q}$. 
\end{proof}
\subsection{Existence and properties of an asymptotic compact solution}
\label{SS:compact}
We first prove two lemmas. We must show that $(u_0,u_1)\in \MMM(Q)\times\{0\}$. We argue by contradiction.
\begin{lemma}
 \label{L:limit_el}
Let $u$ be as above, and assume that $(u_0,u_1)\notin \MMM(Q)\times\{0\}$. Let $\delta_1>0$ be a small parameter. Then there exists a solution $w$ of \eqref{CP} such that $w$ has the compactness property and
\begin{gather}
\label{F7}
d_Q(w_0,w_1)=\delta_1\\
 \label{F8}
\forall t\in [0,T_+(w)),\quad d_Q(w(t),\partial_tw(t))\leq \delta_1.
\end{gather} 
\end{lemma}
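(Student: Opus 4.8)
The plan is to run the standard ``extraction of a minimal element by a continuity argument'', tracking the modulation distance $d_Q$; the delicate point will be to make that distance saturate the value $\delta_1$ at a time from which, after rescaling, there is enough room left on the time axis. Set $\rho(t):=d_Q(u(t),\partial_tu(t))$. It is continuous on $[0,T_+(u))$ by Claim~\ref{C:MdQ}(a) together with $(u,\partial_tu)\in C^0([0,T_+(u)),\hdot\times L^2)$; and since $\MMM$ is a group of isometries of $\hdot$ containing all scalings and translations, $d_Q$ (hence $\rho$) is invariant under the compactness-type rescaling $(f,g)\mapsto(\lambda^{N/2-1}f(\lambda\cdot+b),\lambda^{N/2}g(\lambda\cdot+b))$. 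By the hypothesis $(u_0,u_1)\notin\MMM(Q)\times\{0\}$ and Claim~\ref{C:MdQ}(c), $\rho(0)>0$; by \eqref{F1} and this invariance, $\rho(t_n^+)=d_Q(u(t_n^+),\partial_tu(t_n^+))\to d_Q(Q,0)=0$. I then fix $\delta_1>0$ smaller than $\rho(0)$ and than $\|Q\|_{\hdot}$ (and than whatever thresholds Corollary~\ref{C:A16}, Lemma~\ref{L:A18} and Proposition~\ref{P:00} will later require), and for $n$ large, so that $\rho(t_n^+)<\delta_1<\rho(0)$, I set $\sigma_n:=\sup\{t\in[0,t_n^+]:\rho(t)=\delta_1\}$. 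By continuity $\sigma_n\in(0,t_n^+)$, $\rho(\sigma_n)=\delta_1$, and $\rho\le\delta_1$ on $[\sigma_n,t_n^+]$.

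Next I pass to the limit of the rescaled solutions. By the scaling and time-translation invariance of \eqref{CP},
\[
u^n(t,x):=\lambda(\sigma_n)^{N/2-1}\,u\big(\sigma_n+\lambda(\sigma_n)t,\ \lambda(\sigma_n)x+x(\sigma_n)\big)
\]
is a solution of \eqref{CP} with the compactness property, and $(u^n(0),\partial_tu^n(0))$ is exactly the value at time $\sigma_n$ of the (rescaled) compact trajectory of $u$, hence lies in $\overline K$. Extracting a subsequence, $(u^n(0),\partial_tu^n(0))\to(w_0,w_1)$ in $\hdot\times L^2$; let $w$ be the associated solution of \eqref{CP}. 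Then, by continuity and invariance of $d_Q$, $d_Q(w_0,w_1)=\lim_n d_Q(u^n(0),\partial_tu^n(0))=\lim_n\rho(\sigma_n)=\delta_1$, which is \eqref{F7}; in particular $(w_0,w_1)\neq0$ and $(w_0,w_1)\notin\MMM(Q)\times\{0\}$. That $w$ again has the compactness property is the by-now standard limiting argument: all $u^n$ have rescaled trajectories in the fixed compact set $\overline K$ and $w(t)=\lim_n u^n(t)\neq0$ in $\hdot$ for each $t$, so the rescaling parameters of $u^n$ at each fixed time cannot degenerate and, after a diagonal extraction over a countable dense set of times, converge to parameters placing the trajectory of $w$ in $\overline K$ (this is where I would appeal to the analysis of Section~\ref{S:Lorentz}, cf.\ \cite{DuKeMe13Pa}).

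It remains to prove \eqref{F8}. Fix $t\in[0,T_+(w))$. By the local Cauchy theory and stability of \eqref{CP} (using that $w$ has finite Strichartz norm on $[0,t]$), for $n$ large $u^n$ exists on $[0,t]$ and $u^n\to w$ in $C([0,t],\hdot\times L^2)$. The key claim is that $t\le T_n^+:=(t_n^+-\sigma_n)/\lambda(\sigma_n)$ for $n$ large. Granting it, $\sigma_n+\lambda(\sigma_n)t\le t_n^+$, so by invariance $d_Q(u^n(t),\partial_tu^n(t))=\rho\big(\sigma_n+\lambda(\sigma_n)t\big)\le\delta_1$, and letting $n\to\infty$ gives $d_Q(w(t),\partial_tw(t))\le\delta_1$. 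To prove the claim, suppose instead $T_n^+<t$ along a subsequence, with $T_n^+\to\tau_*\in[0,t]$. Since $(u^n(T_n^+),\partial_tu^n(T_n^+))$ is the rescaling of $(u(t_n^+),\partial_tu(t_n^+))$ by $(\lambda(\sigma_n),x(\sigma_n))$, invariance and $\rho(t_n^+)\to0$ give $d_Q(u^n(T_n^+),\partial_tu^n(T_n^+))\to0$, while uniform convergence $u^n\to w$ on $[0,t]$ combined with $T_n^+\to\tau_*$ gives $(u^n(T_n^+),\partial_tu^n(T_n^+))\to(w(\tau_*),\partial_tw(\tau_*))$; hence $d_Q(w(\tau_*),\partial_tw(\tau_*))=0$, so $w(\tau_*)\in\MMM(Q)$ and $\partial_tw(\tau_*)=0$ by Claim~\ref{C:MdQ}(c), and then $w$ coincides for all time with the stationary solution $w(\tau_*)\in\MMM(Q)\subset\Sigma$ by uniqueness for \eqref{CP} — contradicting $d_Q(w_0,w_1)=\delta_1>0$.

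The only genuine obstacle is this last step: a priori the windows $[\sigma_n,t_n^+]$ on which $\rho\le\delta_1$ could, after rescaling, shrink, so that the bound would be inherited by $w$ only on a bounded interval. It is overcome, as above, by the rigidity fact that a solution of \eqref{CP} agreeing at one instant with an element of $\MMM(Q)$ and having vanishing time-derivative there is globally stationary, which $d_Q(w_0,w_1)=\delta_1>0$ forbids. Everything else — continuity and invariance of $d_Q$, invariances of the equation, compactness of $\overline K$, and long-time stability of the Cauchy problem — is routine.
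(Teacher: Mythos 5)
Your proposal is correct and follows essentially the same route as the paper: pick the last time $\sigma_n$ before $t_n^+$ at which $d_Q$ equals $\delta_1$, rescale, extract the limit $w$ (whose compactness property the paper gets from Lemma \ref{L:Lorentz_continuity}), and rule out the shrinking of the windows $[\sigma_n,t_n^+]$ after rescaling by the rigidity argument that a solution agreeing at some instant with an element of $\MMM(Q)\times\{0\}$ is stationary, contradicting \eqref{F7}. The only differences are cosmetic (you make explicit the scaling/translation invariance of $d_Q$ and the uniqueness step behind the final contradiction, which the paper leaves implicit).
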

\begin{proof}
 \EMPH{Step 1: construction of  $w$}
By Claim \ref{C:MdQ}, \eqref{I:dQ0}, and since $(u_0,u_1)\notin \MMM(Q)\times\{0\}$ we have $d_Q(u_0,u_1)>0$. Choose $\delta_1$ small, so that $\delta_1<d_Q(u_0,u_1)$. We have
\begin{equation}
\label{dQ0}
\lim_{n\to \infty}d_Q(u(t_n^+),\partial_tu(t_n^+))=0. 
\end{equation} 
By continuity of $d_Q$, if $n$ is large, there exists $t_n$ such that
\begin{gather}
 \label{F3}
0<t_n<t_n^+\\
\label{F4}
\forall t\in (t_n,t_n^+], \quad d_Q\big(u(t),\partial_tu(t)\big)<\delta_1\\
\label{F5}
d_Q(u(t_n),\partial_tu(t_n))=\delta_1.
\end{gather} 
We let
$$ w_n(s,y)=\lambda^{\frac{N-2}{2}}(t_n)u\left(t_n+\lambda(t_n)s,x(t_n)+\lambda(t_n)y\right),$$
where $\lambda(t)$, $x(t)$ are given by Definition \ref{D:compactness}. By the compactness property, extracting subsequences in $n$ if necessary, there exist $(w_0,w_1)\in \hdot\times L^2$ such that
\begin{equation}
 \label{F6}
\lim_{n\to\infty} \left\|(w_n(0)-w_0,\partial_sw_n(0)-w_1)\right\|_{\hdot\times L^2}=0.
\end{equation} 
By Lemma \ref{L:Lorentz_continuity} \eqref{I:compact}, $w$ has the compactness property.

\EMPH{Step 2. Proof of \eqref{F7} and \eqref{F8}}

The equality \eqref{F7} follows directly from \eqref{F5} and \eqref{F6} by the continuity of $d_Q$.

To prove \eqref{F8}, it is sufficient to prove that for all $s$ in $[0,T_+(w))$, there exists $N(s)$ such that
\begin{equation}
\label{F11}
\forall n\geq N(s),\quad 0\leq t_n+\lambda(t_n)s<t_n^+.
\end{equation}
The desired property \eqref{F8} will then follow from \eqref{F4}, \eqref{F6} and the continuity of $d_Q$. 

We prove \eqref{F11} by contradiction. Assume that there exists $s\in [0,T_+(w))$ such that \eqref{F11} does not hold. Then there exists a sequence $s_n$ such that
\begin{equation}
 \label{F12}
\forall n,\; s_n\in [0,s]\text{ and } t_n+\lambda(t_n)s_n=t_n^+.
\end{equation} 
Extracting subsequences, we assume
$$\lim_{n\to\infty}s_n=s_{\infty}\in [0,s].$$
By long time perturbation theory, 
\begin{equation*}
\lim_{n\to \infty} \left\|(w_n(s_n),\partial_sw_n(s_n))-(w(s_{\infty}),\partial_sw(s_{\infty}))\right\|_{\hdot\times L^2}=0,
\end{equation*} 
that is
\begin{equation*}
 \lim_{n\to\infty}\Big(\lambda^{N/2-1}(t_n)u\left(t_n^+,\lambda(t_n)\cdot+x(t_n)\right),\lambda^{N/2}(t_n)\partial_t u\left(t_n^+,\lambda(t_n)\cdot+x(t_n)\right)\Big)=(w(s_{\infty}),\partial_sw(s_{\infty}))
\end{equation*}
in $\hdot\times L^2$. By \eqref{dQ0} and the continuity of $d_Q$, we deduce $d_Q(w(s_{\infty}),\partial_sw(s_{\infty}))=0$. By Claim \ref{C:MdQ} \eqref{I:dQ0}, $w(s_{\infty})\in \MMM_Q$ and $\partial_sw(s_{\infty})=0$, which contradicts \eqref{F7}. The proof of the Lemma is complete.
\end{proof}
We can choose $\delta_1$ in Lemma \ref{L:limit_el} so small that $d_Q(w_0,w_1)=\delta_1<\delta_0$, where $\delta_0$ is given by Proposition \ref{P:00}. As a consequence, there exists $\tQ\in \MMM(Q)$ such that
$$ \|(w_0,w_1)-(\tQ,0)\|_{\hdot\times L^2}< \delta_0.$$
Since $Q$ satisfies the non-degeneracy assumption \eqref{ND}, it is also the case of $\tQ$. Combining with \eqref{F8}, we see that $w$ satisfies the assumptions of Proposition \ref{P:00}. By the conclusion of this proposition, $T_+(w)=+\infty$ and, since $(w_0,w_1)\notin \MMM(Q)$ (by \eqref{F7}), there exists $S\in \MMM(Q)$ and $Y$ such that
\begin{gather}
 \label{F15}
L_SY=-\omega^2Y,\quad Y\not \equiv 0,\quad \omega>0\\
\label{F16}
 \forall t\geq 0,\quad \|(\eps(t),\partial_t\eps(t))\|_{\hdot\times L^2}\leq Ce^{-\omega^+t},
\end{gather}
where $\omega_+>\omega$ is close to $\omega$ and
\begin{equation}
 \label{def_eps_t}
\eps(t)=w(t)-S-e^{-\omega t}Y.
\end{equation} 
We next prove:
\begin{lemma}
 \label{L:crucial}
There exists $R,T>0$ and a constant $C>0$ such that for all $t_0\geq T$, $r_0\geq R$, 
\begin{equation}
 \label{F17} 
\sup_{T_-(w)<t\leq t_0}\left\|(\nabla \eps(t),\partial_t\eps(t))\right\|_{L^2\left(\left\{|x|\geq r_0+|t_0-t|\right\}\right)}\leq Ce^{-\omega^+t_0}.
\end{equation} 
\end{lemma}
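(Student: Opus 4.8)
The plan is to propagate, backwards in time from $t_0$, a localized (finite speed of propagation) energy estimate for the wave equation satisfied by $\eps$, absorbing the linear potential and the $\eps$-dependent part of the nonlinear remainder into a negligible Grönwall term, and controlling what is left — a forcing driven only by $g:=e^{-\omega t}Y$ — by means of the exponential spatial decay of the eigenfunction $Y$.

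First I would write the equation for $\eps$. Since $L_SY=-\omega^2Y$, the function $g$ solves $\partial_t^2g-\Delta g=\frac{N+2}{N-2}|S|^{\frac4{N-2}}g$, and since also $-\Delta S=|S|^{\frac4{N-2}}S$, subtracting from the equation for $w=S+g+\eps$ gives
\[\partial_t^2\eps-\Delta\eps=V\eps+R_S(g+\eps),\qquad V:=\tfrac{N+2}{N-2}|S|^{\frac4{N-2}},\]
where $R_S(z):=|S+z|^{\frac4{N-2}}(S+z)-|S|^{\frac4{N-2}}S-\tfrac{N+2}{N-2}|S|^{\frac4{N-2}}z$ obeys \eqref{pointwise_RQ}, i.e.\ $|R_S(z)|\le C\big(|S|^{\frac{6-N}{N-2}}|z|^2+|z|^{\frac{N+2}{N-2}}\big)$. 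Set $\rho(t)=r_0+|t_0-t|$ and $\mathcal E(t)=\int_{|x|\ge\rho(t)}\big(|\nabla\eps|^2+|\partial_t\eps|^2\big)\,dx$. Differentiating and integrating by parts, the flux through the characteristic surface $|x|=\rho(t)$ equals $\int_{|x|=\rho(t)}\big(|\partial_r\eps-\partial_t\eps|^2+|\nabla_T\eps|^2\big)\,d\sigma\ge0$, so that for every $t\le t_0$
\[\mathcal E(t)\le\mathcal E(t_0)+2\int_t^{t_0}\Big|\int_{|x|\ge\rho(s)}\partial_s\eps\,\big(V\eps+R_S(g+\eps)\big)\,dx\Big|\,ds\]
(when $N=5$ this is run on the approximating Strichartz-class solutions of Section \ref{S:prelim}, the $|\,\cdot\,|^{\frac{N+2}{N-2}}$ part of $R_S$ being measured in $L^{\frac{N+2}{N-2}}_tL^{\frac{2(N+2)}{N-2}}_x$ rather than $L^1_tL^2_x$). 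By \eqref{F16}, $\mathcal E(t_0)\le\|(\eps,\partial_t\eps)(t_0)\|_{\hdot\times L^2}^2\le Ce^{-2\omega^+t_0}$.

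Next I would reduce to a single driving term. On $\{|x|\ge\rho(s)\}$, $V\le C|x|^{-4}$ by Proposition \ref{P:A1}, so by Hardy's inequality on the exterior of a ball $\|V\eps(s)\|_{L^2(|x|\ge\rho(s))}\le C\rho(s)^{-2}\|\nabla\eps(s)\|_{L^2(|x|\ge\rho(s))}$; likewise all terms of $R_S(g+\eps)$ containing at least one factor $\eps$ have, on that region, coefficients of size $\le C\rho(s)^{-1}$ (from $|S|^{\frac{6-N}{N-2}}$, from a factor $g$, or from $|\eps|^{\frac4{N-2}}$, the last one handled through the Strichartz norm of $\eps$). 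The crucial observation is that
\[\int_t^{t_0}\rho(s)^{-2}\,ds=\int_t^{t_0}\frac{ds}{(r_0+t_0-s)^2}\le\frac1{r_0}\le\frac1R,\]
so the total mass of the resulting Grönwall kernel is uniformly small, however far in the past $t$ is; hence, taking $R$ large, Grönwall's inequality yields $\mathcal E(t)\le2\big(\mathcal E(t_0)+\Phi(t)\big)$ with $\Phi(t):=\int_t^{t_0}\big|\int_{|x|\ge\rho(s)}\partial_s\eps\,R_S(g)\,dx\big|\,ds$, the contribution of the part of the forcing with no factor $\eps$.

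It then remains to estimate $\Phi$ and conclude. By Proposition \ref{P:eigenfunction}, $|Y(x)|\le C|x|^{-\frac{N-1}2}e^{-\omega|x|}$ (and Lemma \ref{L:estim_Y} for the $L^{\frac{2(N+2)}{N-2}}$ norm), so on $\{|x|\ge\rho(s)\}$, where $s+\rho(s)=r_0+t_0$,
\[\|R_S(g)(s)\|_{L^2(|x|\ge\rho(s))}\le Ce^{-2\omega(s+\rho(s))}\rho(s)^{-\kappa}=Ce^{-2\omega(r_0+t_0)}\rho(s)^{-\kappa}\]
for some $\kappa>1$ (checked separately in the cases $N=3,4,5$, using Lemma \ref{L:estim_Y} for the $|g|^{\frac{N+2}{N-2}}$ contribution), hence $\Phi(t)\le Ce^{-2\omega(r_0+t_0)}\sup_{T_-<s\le t_0}\mathcal E(s)^{1/2}$. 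That supremum is bounded independently of $r_0,t_0$, because $\|\nabla\eps(s)\|_{L^2(|x|\ge\rho(s))}\le\|(w,\partial_sw)(s)\|_{\hdot\times L^2}+\|\nabla S\|_{L^2}+e^{-\omega s}\|(\nabla Y,Y)\|_{L^2(|x|\ge\rho(s))}$, where the first term is uniformly bounded by the compactness property of $w$ (and $\to0$ as $s\to T_-$ when $T_-$ is finite, since then $\lambda(s)\to0$ with $x(s)$ bounded), the second is $\|Q\|_{\hdot}$, and the third is $\le Ce^{-\omega(s+\rho(s))}=Ce^{-\omega(r_0+t_0)}\le C$. Feeding this a priori bound into $\Phi$ gives $\mathcal E(t)\le C\big(e^{-2\omega^+t_0}+e^{-2\omega(r_0+t_0)}\big)$, and iterating — each pass uses the bound just obtained for $\sup\mathcal E^{1/2}$, so the second term gains a further $e^{-2\omega(r_0+t_0)}\le e^{-2\omega t_0}$, which because $\omega^+\le2\omega$ eventually becomes $\le e^{-2\omega^+t_0}$ — closes the estimate after finitely many steps as $\mathcal E(t)\le Ce^{-2\omega^+t_0}$ for all $t\le t_0$, which is \eqref{F17}. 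I expect the main difficulty to be exactly the handling of $R_S(g+\eps)$ over the whole exterior of the backwards light cone: cleanly separating the genuinely driving $g$-only part (controlled through the decay of $Y$ and the relation $s+\rho(s)=r_0+t_0$) from the $\eps$-dependent part absorbed by Grönwall, coping with the low regularity of $|S|^{\frac4{N-2}}$ and of the nonlinearity when $N=5$, and — the delicate point — keeping the a priori control of the exterior energy uniform all the way down to $T_-(w)$, which is precisely where the compactness property of $w$ enters; the remaining ingredients (the localized energy identity, exterior Hardy, the elementary Grönwall, and the finite bootstrap on the exponent) are routine.
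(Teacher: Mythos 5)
Your overall architecture (propagate an exterior estimate backwards from $t=t_0$, treat the $\eps$-dependent terms as absorbable, isolate the pure $g=e^{-\omega t}Y$ forcing and beat it with the exponential decay of $Y$ via $s+\rho(s)=r_0+t_0$, and use the compactness property to get the a priori bound down to $T_-(w)$) matches the strategy of the paper, and several of your ingredients are sound: the flux through $|x|=\rho(t)$ does have the favourable sign, the exterior Hardy inequality does kill the potential term $V\eps$ with a Gr\"onwall kernel of total mass $O(r_0^{-1})$, the estimate $\|R_S(g)\|_{L^2(|x|\ge\rho(s))}\le Ce^{-2\omega(r_0+t_0)}\rho(s)^{-\kappa}$ is correct, and your remark that $e^{-\omega s}\|Y\|_{\dot H^1(|x|\ge\rho(s))}\le Ce^{-\omega(r_0+t_0)}$ uniformly down to $T_-(w)$ is exactly the right observation.

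However, there is a genuine gap at the critical nonlinear term. To pair $R_S(g+\eps)$ with $\partial_s\eps$ in your energy identity you must place it in $L^2_x$ on the exterior region, and the term $|\eps|^{\frac{N+2}{N-2}}$ gives $\bigl\||\eps|^{\frac{N+2}{N-2}}\bigr\|_{L^2_x}=\|\eps\|_{L^{\frac{2(N+2)}{N-2}}_x}^{\frac{N+2}{N-2}}$, where $\frac{2(N+2)}{N-2}>\frac{2N}{N-2}$ is strictly above the Sobolev exponent; this quantity is \emph{not} controlled by the exterior energy $\mathcal E(s)$, so it cannot enter your Gr\"onwall scheme. (The same failure occurs for $|S|^{\frac{6-N}{N-2}}\eps^2$ when $N=5$, where $|S|^{\frac{1}{3}}$ supplies only $\rho^{-1/3}$ of decay.) Your parenthetical ``handled through the Strichartz norm of $\eps$'' is precisely the missing step, not a resolution of it: the space-time norm $\|\eps\|_{L^{\frac{N+2}{N-2}}_tL^{\frac{2(N+2)}{N-2}}_x}$ restricted to $\{|x|\ge r_0+|t-t_0|\}$ over the whole interval $(T_-(w),t_0]$ is neither finite a priori (the global Strichartz norm of the non-scattering solution $w$ is infinite) nor small, and producing the bound $\le Me^{-\omega^+t_0}$ for it is the actual content of the lemma. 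The paper's proof supplies exactly this device: it introduces $\epsb$ solving the free wave equation with the source multiplied by the indicator $\chi_{r_0,t_0}$ of the exterior region and data $(\eps,\partial_t\eps)(t_0)$, identifies $\epsb$ with $\eps$ there by finite speed of propagation, and then runs a global-in-time Strichartz bootstrap on $\|\epsb\|_{L^{\frac{N+2}{N-2}}_tL^{\frac{2(N+2)}{N-2}}_x}$, with Claim \ref{C:estimates} providing the smallness of the $S$- and $Y$-dependent coefficients. Without this truncation-plus-bootstrap (or an equivalent mechanism giving exterior space-time integrability of $\eps$), your energy/Gr\"onwall argument cannot close; this is the standard obstruction to pure energy methods for the energy-critical nonlinearity.
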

\begin{remark}
 In the supremum in \eqref{F17}, $t$ can be negative, and, thus if $|T_-(w)|$ is large, $e^{-\omega t}$ can be very large. However, in the region $\left\{|x|\geq r_0+|t_0-t|\right\}$, $e^{-\omega t}Y(x)$ is small (see Claim \ref{C:estimates} below).
\end{remark}
\begin{proof}
 We notice that $f(t)=w(t)-S$ satisfies $\partial_t^2 f+L_Sf=R_S(f)$, where $R_S(f)$ is defined in \eqref{defRQ0}.
Thus $\eps(t)=f(t)-e^{-\omega t}Y$ satisfies
\begin{equation}
 \label{F18}
\partial_t^2\eps-\Delta \eps=\frac{N+2}{N-2}|S|^\frac{4}{N-2}\eps+R_S\left(\eps+e^{-\omega t} Y\right).
\end{equation} 
In the sequel, we denote by $\chi_{r_0,t_0}$ the characteristic function of the set $\Big\{(t,x)\in \RR^{N+1},\;s.t.\;|x|\geq r_0+|t-t_0|\Big\}$. We will need the following bounds, proved in Appendix \ref{A:space-time}, which are consequences of the estimates obtained in \S \ref{SSS:eigenfunctions}.  
\begin{claim}
 \label{C:estimates}
\begin{gather*}
\chi_{r_0,t_0} S \in  L^{\frac{N+2}{N-2}}L^{\frac{2(N+2)}{N-2}}:=L^{\frac{N+2}{N-2}}\left(\RR,L^{\frac{2(N+2)}{N-2}}(\RR^N)\right),\quad \chi_{r_0,t_0} e^{-\omega t}Y \in  L^{\frac{N+2}{N-2}}L^{\frac{2(N+2)}{N-2}}\\
\left\|\chi_{r_0,t_0} S\right\|_{L^{\frac{N+2}{N-2}}L^{\frac{2(N+2)}{N-2}}}\leq \frac{C}{r_0^{N/2-1}},\quad
\left\|\chi_{r_0,t_0} e^{-\omega t}Y\right\|_{L^{\frac{N+2}{N-2}}L^{\frac{2(N+2)}{N-2}}} \leq Ce^{-\omega(t_0+r_0)}.
\end{gather*}
\end{claim}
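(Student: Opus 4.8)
The plan is to prove the two membership statements and the two quantitative bounds separately, treating the $S$-term and the $e^{-\omega t}Y$-term in parallel, since each reduces to a decay estimate on an annular region already established in \S\ref{SS:Kelvin_transfo} and \S\ref{SSS:eigenfunctions}. First I would set up coordinates: write the $L^{\frac{N+2}{N-2}}L^{\frac{2(N+2)}{N-2}}$ norm as an iterated integral, $\big\|\chi_{r_0,t_0}F\big\|_{L^{\frac{N+2}{N-2}}L^{\frac{2(N+2)}{N-2}}}^{\frac{N+2}{N-2}}=\int_{\RR}\Big(\int_{|x|\ge r_0+|t-t_0|}|F(x)|^{\frac{2(N+2)}{N-2}}\,dx\Big)^{\frac{N-2}{4}}dt$, and substitute $\rho=r_0+|t-t_0|\ge r_0$, so that the time integral becomes (twice) an integral over $\rho\in[r_0,\infty)$ of the spatial norm of $F$ on the exterior region $\{|x|\ge\rho\}$. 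Thus everything hinges on a bound of the form $\int_{|x|\ge\rho}|F(x)|^{\frac{2(N+2)}{N-2}}dx\le \Psi(\rho)$ with $\Psi$ integrable (after raising to the power $\frac{N-2}{4}$) on $[r_0,\infty)$, and on keeping track of the resulting power of $r_0$.

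For $F=S\in\Sigma$: by Proposition~\ref{P:A1}\eqref{I:harmonic} we have $|S(x)|\le C|x|^{-(N-2)}$ for $|x|\ge1$, hence $|S(x)|^{\frac{2(N+2)}{N-2}}\le C|x|^{-2(N+2)}$, and integrating in polar coordinates gives $\int_{|x|\ge\rho}|S|^{\frac{2(N+2)}{N-2}}\le C\rho^{N-2(N+2)}=C\rho^{-N-4}$ for $\rho\ge1$. Raising to the power $\frac{N-2}{4}$ yields an integrand bounded by $C\rho^{-\frac{(N+4)(N-2)}{4}}$; since $\frac{(N+4)(N-2)}{4}>1$ for $N\ge3$, this is integrable on $[r_0,\infty)$ and the integral is $\le C\,r_0^{1-\frac{(N+4)(N-2)}{4}}$. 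Taking the $\frac{N-2}{N+2}$ power and simplifying the exponent gives exactly $\|\chi_{r_0,t_0}S\|\le C\,r_0^{-(N/2-1)}$, which also proves $\chi_{r_0,t_0}S\in L^{\frac{N+2}{N-2}}L^{\frac{2(N+2)}{N-2}}$. (A routine exponent check: $\tfrac{N-2}{N+2}\big(1-\tfrac{(N+4)(N-2)}{4}\big)$ should equal $-(N/2-1)$; this is the one bookkeeping identity to verify carefully.)

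For $F=e^{-\omega t}Y$: here the time variable is not a dummy, so I would keep it. On the region $|x|\ge r_0+|t-t_0|$ we use Lemma~\ref{L:estim_Y}, which gives $\int_{|x|\ge\rho}|Y(x)|^{\frac{2(N+2)}{N-2}}dx\le C e^{-\frac{2(N+2)\omega}{N-2}\rho}\rho^{-q_N}$ for $\rho\ge1$; with $\rho=r_0+|t-t_0|$ and dropping the harmless polynomial factor, raising to the $\frac{N-2}{4}$ power produces a pointwise-in-$t$ bound $\big(\int_{|x|\ge r_0+|t-t_0|}|e^{-\omega t}Y|^{\frac{2(N+2)}{N-2}}dx\big)^{\frac{N-2}{4}}\le C e^{-\frac{N+2}{2}\omega t}\,e^{-\frac{N+2}{2}\omega(r_0+|t-t_0|)}$. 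Integrating in $t$: the factor $e^{-\frac{N+2}{2}\omega(|t-t_0|+t)}$ has total integral $\le C e^{-\frac{N+2}{2}\omega t_0}$ (split at $t=t_0$; for $t\ge t_0$ the exponent is $-\frac{N+2}{2}\omega(2t-t_0)$ and for $t\le t_0$ it is $-\frac{N+2}{2}\omega t_0$ times an integrable tail — actually one just bounds $|t-t_0|+t\ge t_0$ always, giving $e^{-\frac{N+2}{2}\omega t_0}$ times the integral of the remaining decaying exponential, which converges). Pulling out also the fixed factor $e^{-\frac{N+2}{2}\omega r_0}$ and taking the $\frac{N-2}{N+2}$-th root gives $\|\chi_{r_0,t_0}e^{-\omega t}Y\|\le C e^{-\omega(t_0+r_0)}$, and in particular finiteness.

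The main obstacle is not any single inequality but the careful tracking of exponents through the three successive operations (spatial integration, raising to the power $\frac{N-2}{4}$, then the $\frac{N-2}{N+2}$-th root at the end), together with verifying in the $S$-case that the resulting power of $\rho$ is genuinely integrable for every $N\in\{3,4,5\}$ — i.e.\ that $\frac{(N+4)(N-2)}{4}>1$, which holds since its minimum value (at $N=3$) is $\tfrac74$. One should also note that Lemma~\ref{L:estim_Y} and Proposition~\ref{P:A1}\eqref{I:harmonic} are stated only for $|x|\ge1$ (resp.\ $\rho\ge1$), so I would take $R\ge1$ in the statement of the lemma, which is harmless; the contribution of $\{r_0\le\rho\le\max(1,r_0)\}$ is vacuous once $r_0\ge1$.
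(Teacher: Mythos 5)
Your overall strategy is exactly the paper's (reduce to the exterior decay of $S$ from Proposition \ref{P:A1}~\eqref{I:harmonic} and of $Y$ from Lemma \ref{L:estim_Y}, then integrate in $t$), but two steps fail as written. The first is the setup of the mixed norm: for $q=\frac{N+2}{N-2}$ and $p=\frac{2(N+2)}{N-2}$ one has
$\|F\|_{L^q_tL^p_x}^{q}=\int_{\RR}\bigl(\int |F|^{p}\,dx\bigr)^{q/p}\,dt$ with $q/p=\tfrac12$, not $\tfrac{N-2}{4}$ (these agree only when $N=4$). Because of this, the ``routine exponent check'' you deferred actually fails: for $N=3$, $\tfrac{N-2}{N+2}\bigl(1-\tfrac{(N+4)(N-2)}{4}\bigr)=-\tfrac{3}{20}\neq-\tfrac12=-(N/2-1)$, and for $N=5$ it gives $-\tfrac{69}{28}\neq-\tfrac32$. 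Similarly your $Y$-computation would yield $e^{-\frac{N-2}{2}\omega(t_0+r_0)}$ after the final $\tfrac{N-2}{N+2}$-th root, which for $N=3$ is weaker than the claimed $e^{-\omega(t_0+r_0)}$. With the correct inner exponent $\tfrac12$ the arithmetic closes for all $N\in\{3,4,5\}$: $\bigl(\rho^{-(N+4)}\bigr)^{1/2}=\rho^{-(N/2+2)}$ integrates in $t$ to $Cr_0^{-(N/2+1)}$, and $\tfrac{N-2}{N+2}\cdot(N/2+1)=N/2-1$, which is the paper's computation.

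The second problem is in the $Y$-term: you cannot ``drop the harmless polynomial factor'' $\rho^{-q_N}$ from Lemma \ref{L:estim_Y} and still conclude that the $t$-integral converges. After bounding $e^{-c\omega(|t-t_0|+t)}\leq e^{-c\omega t_0}$ (which is an identity, not an inequality, on all of $\{t\leq t_0\}$, since there $|t-t_0|+t=t_0$), the exponential contributes no decay in $t$ on that half-line, so the remaining integrand must supply the integrability. This is exactly why the paper retains the factor $(r_0+|t-t_0|)^{-q_N/2}$ and uses $q_N>2$. Your parenthetical about ``an integrable tail'' for $t\leq t_0$ is therefore incorrect; keep the polynomial factor and the argument goes through.
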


By Strichartz estimates and the local well-posedness theory for \eqref{CP}, 
$$w\in L^{\frac{N+2}{N-2}}_{\loc}\left(\left(T_-(w),+\infty\right),L^{\frac{2(N+2)}{N-2}}\right).$$
Thus $\eps \in L^{\frac{N+2}{N-2}}_{\loc}\left(\left(T_-(w),+\infty\right),L^{\frac{2(N+2)}{N-2}}\right)$. Using Claim \ref{C:estimates}, we deduce
$$ \left(R_S\left(\eps+e^{-\omega t} Y\right)+\frac{N+2}{N-2}|S|^{\frac{4}{N-2}}\eps\right) \chi_{r_0,t_0}\in L^{1}_{\loc}\left((T_-(w),+\infty),L^2\right).$$
We define $\overline{\eps}$ as the solution, in the integral sense, of the following equation:
\begin{equation}
\label{F21} 
\left\{
\begin{aligned}
 \partial_t^2\epsb-\Delta \epsb&=\left(R_S\left(\eps+e^{-\omega t} Y\right)+\frac{N+2}{N-2}|S|^{\frac{4}{N-2}}\eps\right) \chi_{r_0,t_0}\\
\left(\epsb,\partial_t\epsb\right)_{\restriction t=t_0}&=(\eps,\partial_t\eps)_{\restriction t=t_0}.
\end{aligned}\right.
\end{equation}
By Strichartz estimates,
\begin{equation}
\label{epsb_loc}
(\epsb,\partial_t\epsb)\in C^{0}\left((T_-(w),+\infty),\hdot\times L^2\right)\text{ and }\epsb\in L^{\frac{N+2}{N-2}}_{\loc}\left((T_-(w),+\infty),L^{\frac{2(N+2)}{N-2}}\right).
\end{equation}
By finite speed of propagation and equations \eqref{F18} and \eqref{F21}, $\eps=\epsb$ almost everywhere for $(x,t)$ such that $|x|\geq r_0+|t-t_0|$, and we can rewrite \eqref{F21} as
\begin{equation}
\label{F21'}
\tag{\ref{F21}'} 
\left\{
\begin{aligned}
 \partial_t^2\epsb-\Delta \epsb&=\left(R_S\left(\epsb+e^{-\omega t} Y\right)+\frac{N+2}{N-2}|S|^{\frac{4}{N-2}}\epsb\right) \chi_{r_0,t_0}\\
\left(\epsb,\partial_t\epsb\right)_{\restriction t=t_0}&=(\eps,\partial_t\eps)_{\restriction t=t_0}.
\end{aligned}\right.
\end{equation}
We shall prove that there is a large constant $C>0$ such that if $r_0$ and $t_0$ are large, 
\begin{equation}
 \label{F22} \left\|\epsb\right\|_{L^{\frac{N+2}{N-2}}\big((T_-(w),+\infty);L^{\frac{2(N+2)}{N-2}}\big)}+\sup_{t>T_-(w)}\left\|(\epsb(t),\partial_t\epsb(t))\right\|_{\hdot\times L^2} \leq Ce^{-\omega^+t_0}.
\end{equation} 
We will use a bootstrap argument. Let $I\subset (T_-(w),+\infty)$ be an interval such that $t_0\in I$ and
\begin{equation}
 \label{F23}
\left\|\epsb\right\|_{L^{\frac{N+2}{N-2}}\big(I;L^{\frac{2(N+2)}{N-2}}\big)}\leq Me^{-\omega^+t_0}
\end{equation} 
($M$ will be specified later). We will write $\chi=\chi_{r_0,t_0}$, $L(I)= L^{\frac{N+2}{N-2}}\big(I,L^{\frac{2(N+2)}{N-2}}(\RR^N)\big)$ to lighten the notation. By Strichartz estimates, \eqref{F16} and equation \eqref{F21'},
\begin{multline}
\label{Jeudi1}
\left\|\epsb\right\|_{L(I)}+\sup_{t\in I}\|(\epsb(t),\partial_t\epsb(t))\|_{\hdot\times L^2}\leq Ce^{-\omega_+t_0}+C\left\|R_S(\epsb+e^{-\omega t})\chi\right\|_{L^1(I,L^2)}\\
+C\left\| |S|^{\frac{4}{N-2}}\epsb\chi\right\|_{L^1(I,L^2)}
\end{multline}
(here and until the end of the proof, $C$ denotes a large positive constant, that may change from line to line and \emph{is independent of $M$}).
By the pointwise bound \eqref{pointwise_RQ} on $R_S$  and H\"older's inequality
\begin{equation*}
 \left\|R_S(\epsb+e^{-\omega t})\chi\right\|_{L^1(I,L^2)}\leq C\bigg(\left\|S \chi\right\|_{L(I)}^{\frac{6-N}{N-2}}\left\| (\epsb+e^{-\omega t}Y)\chi\right\|_{L(I)}^2
+\left\| (\epsb+e^{-\omega t}Y)\chi\right\|_{L(I)}^{\frac{N+2}{N-2}}  \bigg).
\end{equation*} 
Combining with  Claim \ref{C:estimates} and the bootstrap assumption \eqref{F23}, we obtain
\begin{multline}
\label{Jeudi2}
 \left\|R_S(\epsb+e^{-\omega t})\chi\right\|_{L^1(I,L^2)}\\
\leq C\left(\frac{1}{r_0^{\frac{6-N}{2}}}\left(M^2e^{-2\omega^+t_0}+e^{-2\omega(t_0+r_0)}\right)+M^{\frac{N+2}{N-2}} e^{-\frac{N+2}{N-2}\omega^+t_0}+e^{-\frac{N+2}{N-2}\omega(t_0+r_0)}\right).
\end{multline} 
On the other hand, using again H\"older's inequality, Claim \ref{C:estimates} and the bootstrap assumption \eqref{F23},
\begin{equation}
\label{Jeudi3}
 \left\| |S|^{\frac{4}{N-2}}\epsb\chi\right\|_{L^1(I,L^2)}\leq C \|S\chi\|^{\frac{4}{N-2}}_{L(I)}\|\epsb\|_{L(I)}\leq \frac{C}{r_0^2}Me^{-\omega^+t_0}.
\end{equation}
Combining \eqref{Jeudi1}, \eqref{Jeudi2} and \eqref{Jeudi3}, we obtain that there exists a constant $C_0>0$ (independent of the choice of $M$) such that
\begin{multline}
\label{Jeudi4}
 \left\|\epsb\right\|_{L(I)}+\sup_{t\in I}\|(\epsb(t),\partial_t\epsb(t))\|_{\hdot\times L^2}
\\
\leq C_0\bigg(e^{-\omega^+t_0}+\frac{M}{r_0^2}e^{-\omega^+t_0}+\frac{M^2}{r_0^{\frac{6-N}{2}}}e^{-2\omega^+t_0}+M^{\frac{N+2}{N-2}} e^{-\frac{N+2}{N-2}\omega^+t_0}+\frac{1}{r_0^{\frac{6-N}{2}}}e^{-2\omega(t_0+r_0)}
+e^{-\frac{N+2}{N-2}\omega(t_0+r_0)}\bigg).
\end{multline} 
We let $M=C_0+3$, choose $R>0$ large so that 
$$\frac{C_0M}{R^2}\leq 1,$$
then choose $T>0$ large, so that
$$C_0\bigg(\frac{M^2}{R^{\frac{6-N}{2}}}e^{-2\omega^+T}+M^{\frac{N+2}{N-2}} e^{-\frac{N+2}{N-2}\omega^+T}+\frac{1}{R^{\frac{6-N}{2}}}e^{-2\omega(T+R)}+e^{-\frac{N+2}{N-2}\omega(T+R)}\bigg)e^{\omega_+T}\leq 1$$
Then, if $r_0\geq R$ and $t_0\geq T$, 
\begin{equation}
 \label{big_bootstrap} 
\eqref{F23}\Longrightarrow\sup_{t\in I}\left\|(\epsb(t),\partial_t\epsb(t))\right\|_{\hdot\times L^2}+\|\epsb\|_{L(I)}\leq (M-1)e^{-\omega^+{t_0}}.
 \end{equation}

By \eqref{epsb_loc}, 
$ \left\|\epsb\right\|_{L\left((t_0-\eta,t_0+\eta)\right)}\leq Me^{-\omega^+t_0}$
for small positive $\eta$. Letting 
$$\sigma=\sup\left\{t>t_0,\; \|\epsb\|_{L\left((t_0,t)\right)}\leq  Me^{-\omega^+t_0}\right\},$$
we obtain by \eqref{big_bootstrap} that $\|\epsb\|_{L\left((t_0,\sigma)\right)}\leq (M-1)e^{-\omega^+t_0}$, and thus $\sigma=+\infty$ and, by \eqref{big_bootstrap}, 
$$\sup_{t_0<t}    \left\|(\epsb(t),\partial_t\epsb(t))\right\|_{\hdot\times L^2} \leq (M-1)e^{-\omega^+{t_0}}.$$
 Using a similar argument for times $t\leq t_0$, we deduce \eqref{F22}.
Since $\eps=\epsb$ in the region $|x|\geq r_0+|t-t_0|$, we obtain the conclusion \eqref{F17} of the Lemma.
\end{proof}

\subsection{End of the proof}
\label{SS:end_of_proof}
We next close the proof of Theorem \ref{T:maintheo2} by an energy channel argument. Let $w$ be the asymptotic solution, satisfying the compactness property, defined in the preceding subsection. We will prove that 
$$\liminf_{t\to T_-(w)} \|\partial_tw(t)\|_{L^2}>0,$$
contradicting Proposition \ref{P:maintheo1}. Let $t\in (T_-(w),0]$. We have
$$\partial_t w(t)=-\omega e^{-\omega t}Y+\partial_t \eps(t).$$
Hence
\begin{multline*}
  \left( \int_{r_0+|t-t_0|\leq |x|\leq r_0+|t-t_0|+1} |\partial_t w(t,x)|^2\,dx \right)^{\frac 12}\geq \\ \frac{1}{C}\left(\int_{r_0+|t-t_0|}^{r_0+|t-t_0|+1} \int_{S^{N-1}} Y^2(r,\theta)e^{-2\omega t}\,d\theta\, r^{N-1}\,dr\right)^{\frac 12}
  -\left( \int_{r_0+|t-t_0|\leq |x|\leq r_0+|t-t_0|+1} |\partial_t \eps(t,x)|^2\,dx \right)^{\frac 12}
\end{multline*} 
By Corollary \ref{C:lower_estimates} and Lemma \ref{L:crucial} we deduce that if $t_0\geq T$, $r_0\geq R$,
\begin{equation*}
\|\partial_tw(t)\|_{L^2}\geq \frac{1}{C}\left(\int_{r_0+|t-t_0|}^{r_0+|t-t_0|+1} e^{-2\omega t}e^{-2\omega r}\,dr\right)^{1/2}-Ce^{-\omega^+t_0}\geq \frac{1}{C_1}e^{-\omega(t_0+r_0)}-C_2e^{-\omega^+t_0},
\end{equation*} 
for some constants $C_1,C_2$. We have used that $t$ is negative, so that $|t-t_0|=t_0-t$. We fix $r_0=R$ and $t_0\geq T$ such that $\frac{1}{C_1}e^{-\omega r_0}\geq 2C_2e^{-(\omega^+-\omega)t_0}$, and obtain
$$\forall t\in (T_-(w),t_0], \quad \|\partial_t w\|_{L^2}\geq \frac{1}{2C_1} e^{-\omega t_0-\omega r_0}.$$
Since $P[w]=0$, we must have $\vell=0$ in Proposition \ref{P:maintheo1}, which shows that
$$\lim_{n\to \infty} \|\partial_tw(t_n^-)\|_{L^2}=0,$$
a contradiction. The proof is complete.

\subsection{Rigidity result with an additional bound on the solution}
We prove here the following consequence of Theorem \ref{T:maintheo2}, which is a corrected version of \cite[Theorem 2]{DuKeMe12}. See also the corrected arXiv version \textsf{arXiv:1003.0625v5}, where a proof independent of Theorem \ref{T:maintheo2} is given.
\begin{corol}
 \label{C:W}
 Let $u$ be a solution of \eqref{CP} with the compactness property. Let $\vell=-P[u]/E[u]$. Assume that one of the following holds:
 \begin{equation}
  \label{a_priori_bound_1}
\limsup_{t\to T^+(u)} \|\nabla u(t)\|^2_{L^2}<\frac{2N-2(N-1)|\vell|^2}{N\sqrt{1-|\vell|^2}}\|\nabla W\|_{L^2}^2
 \end{equation}
or
\begin{equation}
  \label{a_priori_bound_2}
\limsup_{t\to T^+(u)} \|\nabla u(t)\|^2_{L^2}+(N-1)\|\partial_t u(t)\|_{L^2}^2<\frac{2}{\sqrt{1-|\vell|^2}}\|\nabla W\|^2_{L^2}.
 \end{equation}
Then there exists $x_0\in \RR^N$, $\iota_0\in \{-1,+1\}$ and $\lambda_0>0$ such that
$$u(t,x)=\iota_0 \lambda_0^{\frac{N}{2}-1} W_{\vell}(\lambda_0 t,\lambda_0 x),$$
where $\vell=-P[u]/E[u]$ is an element of   $B^N(1)$ by Proposition \ref{P:maintheo1}.
\end{corol}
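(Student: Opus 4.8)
The plan is to use Theorem \ref{T:maintheo2} after identifying the stationary profile produced by Proposition \ref{P:maintheo1} with (a sign times) a rescaled translate of the ground state $W$, which is nondegenerate.

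First I would apply Proposition \ref{P:maintheo1} to $u$, obtaining $t_n^+\to T_+(u)$ and $Q^+\in\Sigma$ such that, after the rescaling of Definition \ref{D:compactness}, $(u(t_n^+),\partial_tu(t_n^+))$ converges in $\hdot\times L^2$ to $(Q^+_{\vell}(t_n^+),\partial_tQ^+_{\vell}(t_n^+))$, where $\vell=-P[u]/E[u]$ satisfies $|\vell|<1$. That rescaling preserves the $\hdot$ and $L^2$ norms, and $Q^+_{\vell}(t,\cdot)$ is a space translate of $Q^+_{\vell}(0,\cdot)$, so $\|\nabla u(t_n^+)\|_{L^2}\to\|\nabla Q^+_{\vell}(0)\|_{L^2}$ and $\|\partial_tu(t_n^+)\|_{L^2}\to\|\partial_tQ^+_{\vell}(0)\|_{L^2}$. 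I would then compute these limits: from the formula recalled in the introduction, $Q^+_{\vell}(0,\cdot)=Q^+\circ L$ where $L$ dilates by $(1-|\vell|^2)^{-1/2}$ in the direction $\vell$ and is the identity on $\vell^\perp$, and $\partial_tQ^+_{\vell}(0,\cdot)=-\vell\cdot\nabla(Q^+\circ L)$. Using the identities $\int\partial_iQ^+\,\partial_jQ^+\,dx=\frac1N\|\nabla Q^+\|_{L^2}^2\,\delta_{ij}$, valid for any $Q^+\in\Sigma$ (pair \eqref{eq02} with $x_i\partial_jQ^+$ and integrate by parts), a change of variables yields $\|\nabla Q^+_{\vell}(0)\|_{L^2}^2=\frac{N-(N-1)|\vell|^2}{N\sqrt{1-|\vell|^2}}\|\nabla Q^+\|_{L^2}^2$ and $\|\partial_tQ^+_{\vell}(0)\|_{L^2}^2=\frac{|\vell|^2}{N\sqrt{1-|\vell|^2}}\|\nabla Q^+\|_{L^2}^2$. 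Inserting these into \eqref{a_priori_bound_1}, or into \eqref{a_priori_bound_2} after adding $N-1$ times the second identity to the first (the resulting combination is $\|\nabla Q^+\|_{L^2}^2/\sqrt{1-|\vell|^2}$), and using $N-(N-1)|\vell|^2>0$, I obtain in both cases $\|\nabla Q^+\|_{L^2}^2<2\,\|\nabla W\|_{L^2}^2$.

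The key step is then a variational dichotomy: any $Q\in\Sigma$ with $\|\nabla Q\|_{L^2}^2<2\|\nabla W\|_{L^2}^2$ equals $\iota\,\theta(W)$ for some $\iota\in\{-1,+1\}$ and $\theta\in\MMM$. Indeed, if $Q$ does not change sign then $\pm Q$ is a positive, smooth, decaying solution of \eqref{eq02} (Proposition \ref{P:A1}), hence a rescaled translate of $W$ by the classification of Caffarelli--Gidas--Spruck; if $Q$ changes sign, testing \eqref{eq02} against the positive and negative parts $Q_\pm\in\hdot$ gives $\|\nabla Q_\pm\|_{L^2}^2=\|Q_\pm\|_{L^{\frac{2N}{N-2}}}^{\frac{2N}{N-2}}$ with $Q_\pm\not\equiv0$, whence $\|\nabla Q_\pm\|_{L^2}^2\geq\|\nabla W\|_{L^2}^2$ by the sharp Sobolev inequality, and since $\nabla Q_+$ and $\nabla Q_-$ have disjoint supports $\|\nabla Q\|_{L^2}^2\geq2\|\nabla W\|_{L^2}^2$, a contradiction. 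Applied to $Q^+$ this gives $Q^+=\iota^+\theta^+(W)$; since $W$ satisfies \eqref{ND}, and \eqref{ND} is stable under $\MMM$ and under $Q\mapsto-Q$ (the operator $L_Q$ and the spaces $\ZZZ_Q$, $\tZZZ_Q$ depend on $Q$ only through $|Q|^{\frac{4}{N-2}}$ and a linear span), $Q^+$ also satisfies \eqref{ND}.

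Theorem \ref{T:maintheo2} now applies and gives $Q\in\Sigma$ with $u=Q_{\vell}$. Since $u(t,\cdot)$ is then at each time a space translate of $Q_{\vell}(0,\cdot)$, the map $t\mapsto\|\nabla u(t)\|_{L^2}$ is constant, equal to $\big(\tfrac{N-(N-1)|\vell|^2}{N\sqrt{1-|\vell|^2}}\big)^{1/2}\|\nabla Q\|_{L^2}$ by the computation above (and likewise for the combination in \eqref{a_priori_bound_2}), so the a priori bound forces $\|\nabla Q\|_{L^2}^2<2\|\nabla W\|_{L^2}^2$, hence $Q=\iota_0\theta(W)$ by the dichotomy again. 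Finally, the orbit of $W$ under $\MMM$ is precisely the family of positive rescaled translates of $W$ (rotations fix $W$, the Kelvin transform sends $W$ to a rescaling of it), so $\theta(W)=\lambda_0^{N/2-1}W(\lambda_0(\cdot-z))$ for some $\lambda_0>0$, $z\in\RR^N$; carrying the rescaling and translation through the Lorentz transform yields $u(t,x)=\iota_0\lambda_0^{N/2-1}W_{\vell}(\lambda_0 t,\lambda_0 x-x_0)$ for a suitable $x_0\in\RR^N$, as claimed. The only non-routine ingredient is the variational dichotomy of the third paragraph (which is nonetheless classical); the remaining work is bookkeeping around Proposition \ref{P:maintheo1}, Theorem \ref{T:maintheo2}, and the explicit Lorentz-transform identities.
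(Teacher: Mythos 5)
Your proof is correct and follows essentially the same route as the paper: compute $\|\nabla Q^+_{\vell}(0)\|_{L^2}^2$ and $\|\partial_tQ^+_{\vell}(0)\|_{L^2}^2$ via the Pohozaev-type identity $\|\partial_{x_j}Q^+\|_{L^2}^2=\tfrac1N\|\nabla Q^+\|_{L^2}^2$, deduce $\|\nabla Q^+\|_{L^2}^2<2\|\nabla W\|_{L^2}^2$ from the a priori bound, invoke the fact that such a $Q^+$ is a modulated $\pm W$ (hence satisfies \eqref{ND}), and apply Theorem \ref{T:maintheo2}. The only differences are that you reprove the variational dichotomy (positive case via Caffarelli--Gidas--Spruck, sign-changing case via sharp Sobolev on $Q_\pm$) where the paper simply cites \cite[Proof of Lemma 2.6]{DuKeMe12}, and you explicitly close the loop by verifying that the profile $Q$ delivered by Theorem \ref{T:maintheo2} itself satisfies the $\|\nabla Q\|_{L^2}^2<2\|\nabla W\|_{L^2}^2$ bound (since $\|\nabla Q_{\vell}(t)\|_{L^2}$ is constant), a step the paper leaves implicit.
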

\begin{remark}
 Note that  
 $$\inf_{0\leq \ell<1} \frac{2N-2(N-1)\ell^2}{N\sqrt{1-\ell^2}}=\frac{4\sqrt{N-1}}{N},$$
 which shows that Corollary \ref{C:W} implies Theorem 2 of the arXiv version of \cite{DuKeMe12}.
 \end{remark}
 \begin{proof}[Proof of Corollary \ref{C:W}]
  Let $u$ be as in Corollary \ref{C:W}, and $Q^+\in \Sigma$ be given by Proposition \ref{P:maintheo1}. According to Theorem \ref{T:maintheo2}, and since $W$ satisfies the nondegeneracy assumption \eqref{ND}, it is sufficient to prove that $Q^+$ is equal to $W$ up to sign change, space translation and scaling. We recall (see \cite[Proof of Lemma 2.6]{DuKeMe12}):
  \begin{multline}
   \label{uniqueness_W}
   Q\in \Sigma \text{ and }\|\nabla Q\|_{L^2}^2<2\|\nabla W\|^2_{L^2}\\
   \Longrightarrow \exists \iota_0\in \{-1,+1\},\; \lambda_0>0,\; x_0\in \RR^N\text{ s.t. }Q(x)=\iota_0 \lambda_0^{\frac{N}{2}-1} W(\lambda_0 x).
  \end{multline} 
  We are thus reduced to prove
  \begin{equation}
   \label{Q+small}
   \|\nabla Q^+\|_{L^2}^2<2\|\nabla W\|^2_{L^2}.
  \end{equation} 
  Recall that 
  $$\|\nabla Q^+\|_{L^2}^2=\|Q^+\|_{L^{\frac{2N}{N-2}}}^{\frac{2N}{N-2}}.$$
  Let $j=1\ldots N$. Multiplying the equation $-\Delta Q^+=|Q^+|^{\frac{4}{N-2}}Q^+$ by $x_j\partial_{x_j}Q^+$ and integrating by parts, we obtain
  $$\|\partial_{x_j}Q^+\|_{L^2}^2=\frac{1}{N}\|\nabla Q^+\|_{L^2}^2.$$
  We deduce by direct computations
\begin{align*}
  \|\nabla Q_{\vell}^+(0)\|_{L^2}^2&=\frac{N-(N-1)|\vell|^2}{N\sqrt{1-|\vell|^2}}\|\nabla Q^+\|^2_{L^2}\\
\|\partial_tQ_{\vell}^+(0)\|_{L^2}^2&=\frac{|\vell|^2}{N\sqrt{1-|\vell|^2}}\|\nabla Q^+\|_{L^2}^2.
  \end{align*} 
  Thus we see that \eqref{a_priori_bound_1} or \eqref{a_priori_bound_2} implies, together with the conclusion \eqref{theo_CV} of Proposition \ref{P:maintheo1}, that \eqref{Q+small} holds, which concludes the proof of Corollary \ref{C:W}.
 \end{proof}

\section{Convergence to a stationary solution by modulation theory}
\label{S:modulation}
This section is devoted to the proof of the Proposition \ref{P:00}.

We divide the proof into two steps: in \ref{SS:exp}, we prove that $u$ converges exponentially to a stationary solution; in \ref{SS:expansion},  we conclude the proof. The proof relies on modulation theory and precise asymptotics on approximate linear differential systems.
\subsection{Exponential convergence to the stationary solution}
\label{SS:exp}
In this subsection, we prove the following proposition, which is the first of two steps of the proof of Proposition \ref{P:00}.
\begin{prop}
 \label{P:expo}
 Let $u$ satisfy the assumptions of Proposition \ref{P:00}. Then $T_+=+\infty$. Furthermore, there exist $S\in \Sigma$, of the form $S=\theta_A(Q)$ with $A\in \RR^{N'}$ close to $0$, and $\omega,C>0$ such that 
$$ \forall t\geq 0,\quad  \left\|(u(t)-S,\partial_t u(t))\right\|_{\hdot\times L^2}\leq Ce^{-\omega t}.$$
\end{prop}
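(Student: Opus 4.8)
The proof follows the modulation-theoretic strategy of \cite{DuMe08}, where this statement is established for $Q=W$; the new features to accommodate are that $L_Q$ may have several negative eigenvalues and that the relevant symmetry group is the larger group $\MMM$. First I would set up the decomposition. Using the smallness hypotheses \eqref{eq010}--\eqref{eq011}, Lemma \ref{L:A13} and Lemma \ref{L:A18}, and shrinking $\delta_0=\delta_0(Q)$ if necessary, one writes on $[0,T_+)$
\[
u(t)=\theta_{\alpha(t)}\big(Q+v(t)\big),\qquad \int v(t)\,E_i=0\quad (i=1,\dots,m),
\]
where $\alpha(\cdot)$ is $C^1$ (the base transformation $\theta_{\alpha(t)}$ being updated continuously along the trajectory), so that, by Lemma \ref{L:A18}, $d(t):=\|v(t)\|_{\hdot}+\|\partial_tu(t)\|_{L^2}\lesssim d(\Sigma,u(t))+\|\partial_tu(t)\|_{L^2}\le C\delta_0$. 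Since $\Psi$ is Lipschitz on an $H^{-1}$-neighbourhood of $Q$, one has $|\dot\alpha(t)|\lesssim\|\partial_tu(t)\|_{L^2}\lesssim d(t)$. Setting $S(t):=\theta_{\alpha(t)}(Q)\in\Sigma$ and $R(t):=u(t)-S(t)$, and using only that $S(t)$ solves the stationary equation, the equation for $u$ rewrites as
\[
\partial_t^2R+L_{S(t)}R=R_{S(t)}(R)-\sum_l\ddot\alpha_l\,\partial_{\alpha_l}S(t)-\sum_{l,l'}\dot\alpha_l\dot\alpha_{l'}\,\partial^2_{\alpha_l\alpha_{l'}}S(t),
\]
with $R_{S}$ and its pointwise bound as in \eqref{defRQ0}--\eqref{pointwise_RQ}. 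That $T_+=+\infty$ will follow either from a perturbative argument around the bounded, slowly varying family $\{S(t)\}$, or \emph{a posteriori} from the exponential decay obtained below, which makes $(u,\partial_tu)(t)$ converge in $\hdot\times L^2$ as $t\to T_+$.

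Next I would show that $d(t)$ is controlled by the negative modes. Expanding the conserved quantity $E(u,\partial_tu)=E(Q,0)$ and using that the $\theta_\alpha$ are isometries of $\hdot$ and $L^{2N/(N-2)}$ (Proposition \ref{P:A1}) and that $Q$ is a critical point of $f\mapsto E(f,0)$, one gets $\Phi_{S(t)}(R(t))=-\tfrac12\|\partial_tu(t)\|_{L^2}^2+O(d(t)^3)$. Let $Y_1^{\alpha},\dots,Y_p^{\alpha}$ be $L^2$-normalised eigenfunctions of $L_{\theta_\alpha Q}$ for its negative eigenvalues $-\omega_1(\alpha)^2,\dots,-\omega_p(\alpha)^2$; for $\alpha$ near $0$ these depend continuously on $\alpha$, the frequencies stay bounded away from $0$, and the $Y_k^\alpha$ decay exponentially by Proposition \ref{P:eigenfunction}. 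Put $a_k(t)=\int R(t)\,Y_k^{\alpha(t)}$. The modulation decomposition requires the nondegeneracy \eqref{ND}, which forces the null space of $L_{S(t)}$ to be exactly the tangent space of the $\MMM$-orbit of $S(t)$; because of this, the orthogonality $\int v(t)\,E_i=0$ together with the relations $\int Y_k E_i=0$ places $R(t)-\sum_ka_k(t)Y_k^{\alpha(t)}$ in the subspace on which $\Phi_{S(t)}$ is coercive (Proposition \ref{P:A7} applied to $S(t)$, with a constant uniform near $Q$ by the $\MMM$-invariance of $\Phi_Q$). Combining this coercivity with the energy identity gives, for $\delta_0$ small,
\[
d(t)^2\ \lesssim\ \sum_{k=1}^p a_k(t)^2\ \le\ C\,d(t)^2 .
\]

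It then remains to analyse the negative modes. Differentiating $a_k$ and $b_k(t):=\int\partial_tu(t)\,Y_k^{\alpha(t)}$ in time, using the equation for $R$, the identity $\int Z\,Y_k^\alpha=0$ for $Z$ in the null space of $L_{\theta_\alpha Q}$ (which, to leading order, absorbs the $\ddot\alpha$ and $\dot\alpha^2$ terms), the bound $|\dot\alpha|\lesssim d$ and the pointwise estimate \eqref{pointwise_RQ}, one obtains $\dot a_k=b_k+O(d^2)$ and $\dot b_k=\omega_k(\alpha(t))^2a_k+O(d^2)$, hence $\pi_k^{\pm}:=b_k\pm\omega_k(\alpha)a_k$ satisfy
\[
\dot\pi_k^{\pm}=\pm\,\omega_k(\alpha(t))\,\pi_k^{\pm}+O(d^2),\qquad \omega_k(\alpha(t))\ge\mu>0,
\]
while the previous step gives $d(t)^2\sim\sum_k(|\pi_k^+|^2+|\pi_k^-|^2)\le C\delta_0^2$. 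This is a hyperbolic linear ODE system with a quadratically small perturbation whose solution remains trapped in a small ball on $[0,T_+)$; by the now-classical argument (cf.\ \cite{DuMe08}) — comparing the ``unstable'' quantity $\sum_k|\pi_k^+|^2$ with the ``stable'' quantity $\sum_k|\pi_k^-|^2$, whose difference is monotone, and noting that a nonzero exponentially growing component would drive the trajectory out of the small ball — the unstable part is dominated by the stable part, which decays exponentially. Hence $d(t)\le Ce^{-\omega t}$ on $[0,T_+)$ for some $\omega>0$, with $C\lesssim\delta_0$. Then $|\dot\alpha|\lesssim d$ is integrable, so $\alpha(t)\to\alpha_\infty$ with $|\alpha_\infty|\lesssim\delta_0$; setting $S:=\theta_{\alpha_\infty}(Q)\in\MMM(Q)\subset\Sigma$ and using the Lipschitz bound \eqref{thetaA_Q},
\[
\|u(t)-S\|_{\hdot}+\|\partial_tu(t)\|_{L^2}\ \le\ \|R(t)\|_{\hdot}+\|\partial_tu(t)\|_{L^2}+C|\alpha(t)-\alpha_\infty|\ \lesssim\ d(t)+\int_t^{\infty}d(s)\,ds\ \lesssim\ e^{-\omega t},
\]
and in particular $(u,\partial_tu)(t)$ has a limit in $\hdot\times L^2$ as $t\to T_+$, which forces $T_+=+\infty$.

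The main difficulty, and the step I expect to be the most demanding, is the combination of two things. First, the bookkeeping of the modulation equations over the non-compact, non-abelian group $\MMM$: its Kelvin-type generators do not commute with $\partial_t^2$, one must allow the base point $S(t)$ to move, and, since $S(t)$ and $R(t)$ need not belong to $L^2$ when $N=3,4$, the negative modes must be extracted via the pairings $\int\,\cdot\,Y_k^\alpha$ (legitimate because the $Y_k^\alpha$ decay exponentially) rather than genuine $L^2$ projections. Second, the hyperbolic-ODE analysis with \emph{several} negative eigenvalues, where one must exclude any growing direction using only that the full trajectory stays in a fixed small neighbourhood of $\Sigma$; this is also where the case $T_+<\infty$ must be ruled out. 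The low regularity of the potential $|S|^{4/(N-2)}$ when $N=5$ adds routine but non-trivial technicalities to the elliptic-regularity and Strichartz estimates used throughout.
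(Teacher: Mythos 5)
Your scheme is, at the level of strategy, the same modulation argument as the paper's, but your choice of frame creates a genuine gap. The paper never uses the moving operator $L_{S(t)}$ or its eigenfunctions. It pulls the solution back to the \emph{fixed} $Q$, sets $h(t)=\theta_{A(t)}^{-1}(u(t))-Q$, and tracks the scalars
$\alpha_j(t)=\int h(t)\,Y_j$ and $\beta_j(t)=\int\partial_tu(t)\,(\theta_{A(t)}^{-1})^*Y_j$
with the fixed eigenfunctions $Y_j$ and fixed eigenvalues $-\omega_j^2$ of $L_Q$ (Lemma~\ref{L:A21}), and then applies Lemma~\ref{L:A22} with constant $\omega_j$. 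Your modes $a_k(t)=\int R(t)\,Y_k^{\alpha(t)}$ use eigenfunctions of the moving $L_{S(t)}$, and the identities $\dot a_k=b_k+O(d^2)$, $\dot b_k=\omega_k(\alpha(t))^2a_k+O(d^2)$ and the $\pi^\pm$ decoupling all require $\alpha\mapsto Y_k^\alpha$ and $\alpha\mapsto\omega_k(\alpha)$ to be $C^1$. That can fail: Proposition~\ref{P:expo} is stated for any $Q\in\Sigma$ satisfying \eqref{ND}, with no simplicity hypothesis on the negative spectrum of $L_Q$, and when a negative eigenvalue has multiplicity greater than one the individual eigenpairs of $L_{\theta_\alpha Q}$ need not depend smoothly on $\alpha$ — only the full negative spectral projection does. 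This can be patched (run the $\pi^\pm$-argument with the projection and the symmetric positive matrix of $L_{S(t)}$ restricted to its range, differentiating $|\pi^{\pm}|^2$ rather than diagonalizing), but as written there is a hole that the fixed-frame pull-back of the paper avoids for free.

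Two secondary points. First, applying Proposition~\ref{P:A7} at $S(t)$ to $R-\sum_k a_kY_k^{\alpha}$ requires $\int Y_k^{\alpha}\,(\theta_\alpha^{-1})^*E_j=0$ so that the $E_j$-orthogonality is inherited from $\int v\,E_j=0$. This is only $O(|\alpha|)$, not zero: because $L_{\theta_\alpha Q}\circ\theta_\alpha=(\theta_\alpha^{-1})^*\circ L_Q$ (the conjugacy carries a Jacobian weight), $Y_k^{\alpha}$ is \emph{not} $\theta_\alpha Y_k$, whereas in the fixed frame $\int hE_j=0$ together with $\int Y_kE_j=0$ makes the orthogonality exact; the resulting $O(|\alpha|)\,d(t)^2$ errors need to be tracked (this is exactly the role of the $|\vA|\delta$ term in \eqref{A64}). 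Second, the bootstrap is compressed: the paper first uses the integral estimate \eqref{A71} of Lemma~\ref{L:A22} inside a continuity argument to prove $T_0=T_+$, then $T_+=+\infty$, and only afterwards invokes the pointwise decay \eqref{A72}; deducing exponential decay on all of $[0,T_+)$ in one step, as you propose, presupposes that the trajectory stays trapped, which is precisely what the bootstrap must establish.
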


In all \S \ref{SS:exp}, we consider a solution $u$ as in Propositions \ref{P:00} and \ref{P:expo}, and two small parameters, $\delta_0,r_0>0$ such that
$$ 0<\delta_0\ll r_0\ll 1.$$
\subsubsection{Modulation of the solution}
By our assumptions on $u$, 
\begin{equation}
 \label{A53}
 \|u_0-Q\|_{\hdot} <\delta_0.
\end{equation} 
Let $r_0>0$ be such that $B_{\hdot}(Q,r_0)\subset \VVV$, where $\VVV$ is the neighborhood of $Q$ in $H^{-1}$ given by Lemma \ref{L:A13}. Let
\begin{equation}
\label{A54}
 T_0=\inf\Big\{ t\in [0,T_+)\text{ s.t. } \|u(t)-Q\|_{\hdot}\geq r_0\Big\}.
\end{equation} 
If $\|u(t)-Q\|_{\hdot}<r_0$ for all $t\in [0,T_+)$, we let $T_0=T_+$. We can choose $\delta_0$ such that $0<\delta_0<r_0/2$, which implies by \eqref{eq010} that $T_0>0$. 

If $t\in [0,T_0)$, we let $\vA(t)=\Psi(u(t))$ ($\Psi$ given by Lemma \ref{L:A13}), so that
\begin{equation}
 \label{A55}
 \forall j=1,\ldots,m,\quad \int h(t,x)E_j(x)\,dx=0,
\end{equation} 
where
\begin{equation}
 \label{def_h}
 h=\theta_{\vA}^{-1}(u)-Q.
\end{equation} 
By Lemma \ref{L:A18} and  \eqref{eq011}
\begin{equation}
\label{A59}
\forall t\in  [0,T_0),\quad 
\|h(t)\|_{\hdot}=\left\|u(t)-\theta_{\vA(t)} (Q)\right\|_{\hdot}\leq C\delta_0. 
\end{equation} 
Since $u\in C^{1}\left([0,T_+), H^{-1}(\RR^N)\right)$, we know by Lemma \ref{L:A13} that $\vA\in C^{1}([0,T_0),\RR^{N'})$. Furthermore (using the Lipschitz continuity of the function $\Psi$ of Lemma \ref{L:A13}), $\|\vA(t)\|\leq Cr_0$.
Let
\begin{gather}
\label{A56}
 \alpha_j(t)=\int h(t)Y_j,\quad \beta_j(t)=\int \partial_t u(t) (\theta_{\vA(t)}^{-1})^{*} Y_j,\quad j=1,\ldots,p\\
 \label{A57}
 \delta(t)=\sqrt{\sum_{j=1}^p \alpha_j^2(t)}.
\end{gather}
\begin{lemma}
 \label{L:A19}
 There exists $C>0$ such that
 \begin{equation}
  \label{A60}
  \forall t\in [0,T_0),\quad \frac{1}{C}\left( \|\partial_t u(t)\|_{L^2}+\|h(t)\|_{\hdot} \right)\leq \delta(t)\leq C\|h(t)\|_{\hdot}.
 \end{equation} 
\end{lemma}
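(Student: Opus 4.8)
\emph{Upper bound.} The inequality $\delta(t)\le C\|h(t)\|_{\hdot}$ is immediate: for each $j$, $\big|\alpha_j(t)\big|=\big|\int h(t)Y_j\big|\le \|h(t)\|_{L^{\frac{2N}{N-2}}}\|Y_j\|_{L^{\frac{2N}{N+2}}}\le C\|h(t)\|_{\hdot}$ by H\"older and the critical Sobolev inequality, using that $Y_j$ is exponentially decaying (Proposition \ref{P:eigenfunction}), hence in $L^{\frac{2N}{N+2}}$. Summing over $j$ gives the bound.

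\emph{Lower bound.} The plan is to combine conservation of energy with the coercivity estimate of Proposition \ref{P:A7}. Write $E(u(t),\partial_tu(t))=\tfrac12\|\partial_tu(t)\|_{L^2}^2+J(u(t))$, where $J(f)=\tfrac12\int|\nabla f|^2-\tfrac{N-2}{2N}\int|f|^{\frac{2N}{N-2}}$ is the static energy. Since the elements of $\MMM$ are isometries of both $\hdot$ and $L^{\frac{2N}{N-2}}$ (see \S\ref{SS:Kelvin_transfo}), $J$ is $\MMM$-invariant, so $J(u(t))=J\big(\theta_{\vA(t)}^{-1}(u(t))\big)=J\big(Q+h(t)\big)$. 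Because $-\Delta Q=|Q|^{\frac4{N-2}}Q$, the first variation of $J$ at $Q$ vanishes, and integrating the second-order Taylor formula yields
$$ J(Q+h)=J(Q)+\Phi_Q(h)+r(h),\qquad r(h)=-\int_0^1\!\!\int_{\RR^N} R_Q(sh)\,h\,dx\,ds, $$
with $R_Q$ the remainder of \eqref{defRQ0}. By the pointwise bound \eqref{pointwise_RQ}, H\"older's inequality and the critical Sobolev embedding one gets $|r(h)|\le C\|h\|_{\hdot}^3$, hence $\big|r(h(t))\big|\le C\delta_0\|h(t)\|_{\hdot}^2$ on $[0,T_0)$ by \eqref{A59}. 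Since $E(u(t),\partial_tu(t))=E(Q,0)=J(Q)$, this gives
$$ \tfrac12\|\partial_tu(t)\|_{L^2}^2+\Phi_Q\big(h(t)\big)=-r\big(h(t)\big),\qquad \big|r(h(t))\big|\le C\delta_0\|h(t)\|_{\hdot}^2 . $$

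Next I would split $h=h^\perp+\sum_{k=1}^p\alpha_kY_k$ with $h^\perp$ $L^2$-orthogonal to every $Y_k$ (consistent with $\alpha_k=\int hY_k$ and $\int Y_jY_k=\delta_{jk}$). Since $\int hE_j=0$ by the modulation condition \eqref{A55} and $\int Y_kE_j=0$ by \eqref{A7}, the function $h^\perp$ also satisfies $\int h^\perp E_j=0$, so Proposition \ref{P:A7} applies: $\Phi_Q(h^\perp)\ge\tc\|h^\perp\|_{\hdot}^2$. Using $L_QY_k=-\omega_k^2Y_k$, the $L^2$-orthogonality of $h^\perp$ to the $Y_k$, and $\Phi_Q(Y_k)=-\tfrac12\omega_k^2$, the cross terms vanish and
$$ \Phi_Q(h)=\Phi_Q(h^\perp)-\tfrac12\sum_{k=1}^p\omega_k^2\alpha_k^2\ \ge\ \tc\|h^\perp\|_{\hdot}^2-C\delta(t)^2 . $$
Plugging this into the energy identity and using $\|h\|_{\hdot}^2\le 2\|h^\perp\|_{\hdot}^2+C\delta(t)^2$ (as $\big\|\sum_k\alpha_kY_k\big\|_{\hdot}\le C\delta(t)$), I would choose $\delta_0$ small enough that $C\delta_0\|h\|_{\hdot}^2$ is absorbed into $\tfrac12\tc\|h^\perp\|_{\hdot}^2$; this yields $\|\partial_tu(t)\|_{L^2}^2+\|h^\perp(t)\|_{\hdot}^2\le C\delta(t)^2$, whence $\|h(t)\|_{\hdot}^2\le C\delta(t)^2$. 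Together with the upper bound this is \eqref{A60}.

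This is a standard ``modulation $+$ coercivity $+$ conservation of energy'' argument, and I do not anticipate a genuine obstacle. The only points needing care are bookkeeping: checking that $h^\perp$ inherits orthogonality to the $E_j$ so that Proposition \ref{P:A7} is applicable (which is precisely why the $E_j$ were chosen to satisfy \eqref{A7}), and controlling the cubic remainder $r(h)$ uniformly for small $\|h\|_{\hdot}$, which follows from the pointwise estimate \eqref{pointwise_RQ} and the smallness \eqref{A59}.
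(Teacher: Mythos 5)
Your proof is correct and follows essentially the same route as the paper: conservation of energy, second-order expansion of the static energy at $Q$ with a cubic remainder controlled via \eqref{pointwise_RQ}, and coercivity of $\Phi_Q$ (Proposition \ref{P:A7}) applied to $h-\sum_j\alpha_jY_j$, whose orthogonality to the $E_j$ and $Y_k$ follows from \eqref{A55}, \eqref{A7} and \eqref{A4}, with the cubic term absorbed using the smallness \eqref{A59}. The only difference is cosmetic (you make the Taylor remainder $r(h)$ explicit, where the paper writes $\OOO(\|h\|_{L^{2N/(N-2)}}^3)$).
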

\begin{proof}
 The inequality at the right-hand side of \eqref{A60} follows immediately from the definition of $\delta(t)$. Let us show the other inequality.
 
 By conservation of the energy, we have, for $t\in [0,T_0)$, 
 $$E(Q,0)=E(u(t),\partial_tu(t))=E\left(\theta_{\vA(t)}^{-1}(u(t)),\partial_tu(t)\right)=E(Q+h(t),\partial_tu(t))$$
 and thus
 \begin{equation}
  \label{A60'}
  E(Q,0)=E(Q,0)+\frac{1}{2}\int (\partial_t u(t))^2+\Phi_{Q}(h(t))+\OOO\left(\|h(t)\|_{L^{\frac{2N}{N-2}}}^3\right).
 \end{equation} 
 Hence:
 \begin{equation}
  \label{A61}
  \int (\partial_tu(t))^2+\Phi_{Q}\left( h(t)-\sum_{j=1}^p\alpha_j(t)Y_j \right)\leq C\|h(t)\|_{\hdot}^3+C\delta^2(t).
 \end{equation} 
 By the definitions of $\vA$, $h$ and $\alpha_j$, we have
 \begin{align*}
  \int \Big( h(t)-\sum_{j=1}^p \alpha_j(t)Y_j \Big)E_k&=0, \qquad k=1,\ldots,m,\\
\int \Big( h(t)-\sum_{j=1}^p \alpha_j(t)Y_j \Big)Y_{\ell}&=0,\qquad \ell=1,\ldots,p.
 \end{align*} 
 By Proposition \ref{P:A7},
 $$\left\|h-\sum_{j=1}^p\alpha_jY_j\right\|_{\hdot}^2\leq C\Phi_{Q}\left(h-\sum_{j=1}^p \alpha_jY_j\right).$$
 Hence, by \eqref{A61},
 $$\int (\partial_tu)^2+\left\|h-\sum_{j=1}^p\alpha_jY_j\right\|_{\hdot}^2\leq C\|h\|^3_{\hdot}+C\delta^2(t).$$
Noting
\begin{multline*}
 \|h(t)\|^2_{\hdot}=\left\|\sum_{j=1}^p\alpha_jY_j+h-\sum_{j=1}^p\alpha_jY_j\right\|^2_{\hdot}
 \\
 \leq 2\left\| h-\sum_{j=1}^p \alpha_j Y_j\right\|^2_{\hdot}+2\left(\sum_{j=1}^p |\alpha_j|\left\|Y_j\right\|_{\hdot}\right)^2
 \leq 2\left\| h-\sum_{j=1}^p \alpha_j Y_j\right\|^2_{\hdot}+C\delta^2,
\end{multline*}
we obtain the left-hand inequality in \eqref{A60}.
\end{proof}
\begin{lemma}
 \label{L:A20}
 There exists $C>0$ such that 
 $$\forall t\in [0,T_0),\quad \|\vA'(t)\|\leq C\delta(t).$$
\end{lemma}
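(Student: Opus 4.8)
The plan is to differentiate the orthogonality conditions \eqref{A55} in time to extract an equation for $\vA'(t)$, and then to invert the resulting near-identity linear system. First I would recall that by \eqref{def_h} and \eqref{A55}, for each $j=1,\dots,m$,
$$\int \left(\theta_{\vA(t)}^{-1}(u(t))-Q\right)E_j\,dx=0,\qquad t\in[0,T_0).$$
Differentiating in $t$ (which is legitimate because $\vA\in C^1$ by Lemma \ref{L:A13}, $u\in C^1([0,T_+),H^{-1})$, and the $E_j$ are in $\SSS$, so the pairings make sense and depend smoothly on the parameters by Corollary \ref{C:C1function}), one gets for each $j$
$$\sum_{k=1}^{N'}\vA_k'(t)\int \left.\frac{\partial}{\partial A_k}\right|_{A=\vA(t)}\!\!\!\left(\theta_A^{-1}(u(t))\right)E_j\,dx + \left\langle \partial_t u(t),\left(\theta_{\vA(t)}^{-1}\right)^*E_j\right\rangle_{H^{-1},H^1}=0.$$
This is a linear system $\mathcal{M}(t)\,\vA'(t) = b(t)$ where $b_j(t)=-\langle \partial_tu(t),(\theta_{\vA(t)}^{-1})^*E_j\rangle$, so $|b(t)|\le C\|\partial_t u(t)\|_{L^2}\le C\delta(t)$ by Lemma \ref{L:A19}. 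The coefficient matrix $\mathcal{M}(t)$, evaluated at $\vA=0$, $u=Q$, is exactly the matrix $d_1\Phi(0,Q)$ of partial derivatives $\partial\Phi_j/\partial A_k(0,Q)$ computed explicitly in the proof of Lemma \ref{L:A13}; it is surjective from $\RR^{N'}$ onto $\RR^m$ by the nondegeneracy assumption \eqref{ND}. Since $\|\vA(t)\|\le Cr_0$ and $\|u(t)-Q\|_{\hdot}<r_0$ on $[0,T_0)$, continuity of $\mathcal{M}$ in $(\vA,u)$ (again via Corollary \ref{C:C1function}) gives that $\mathcal{M}(t)$ stays close to $d_1\Phi(0,Q)$, hence admits a right inverse bounded uniformly in $t$ (in the case $m<N'$ one solves only on the complement of the kernel, exactly as in the reduction $\tPhi$ used in Lemma \ref{L:A13}, where $\vA=L\circ\tPsi$ already lives in an $m$-dimensional subspace). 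Therefore $\|\vA'(t)\|\le C|b(t)|\le C\delta(t)$, which is the claim.

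The main obstacle is purely technical: justifying the differentiation under the integral sign and the smoothness/continuity of the maps $(\vA,f)\mapsto \langle f,(\theta_{\vA}^{-1})^*E_j\rangle$ and their $\vA$-derivatives, uniformly for $f$ in an $H^{-1}$-neighborhood of $Q$ and $\vA$ near $0$. This is precisely what Corollary \ref{C:C1function} (and the $C^\infty$ composition structure of point \eqref{I:compo} of Proposition \ref{P:A1}) is designed to provide, so once those are invoked the argument is a routine application of the inverse/implicit function machinery already set up in Lemma \ref{L:A13}. One should also take care that the constant $r_0$ is chosen small enough (depending only on $Q$) so that the perturbed matrix $\mathcal{M}(t)$ genuinely remains a small perturbation of $d_1\Phi(0,Q)$; this is consistent with the standing assumption $0<\delta_0\ll r_0\ll 1$ made at the start of \S\ref{SS:exp}.
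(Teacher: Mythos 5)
Your proposal is correct and is essentially the paper's argument: the paper simply writes $\vA'(t)=(d\Psi)(u(t))\,\frac{du}{dt}$ and invokes the uniform bound on $d\Psi$ obtained in the proof of Lemma \ref{L:A13} by differentiating the implicit relation $\tPhi(\tPsi(f),f)=0$ — which is exactly the linear system $\mathcal{M}(t)\vA'(t)=b(t)$ you set up, with the same resolution of the underdetermined case via the right inverse $L$ and the same bound $|b(t)|\le C\|\partial_tu(t)\|_{L^2}\le C\delta(t)$ from Lemma \ref{L:A19}.
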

\begin{proof}
 Note that $\vA(t)=\Psi(u(t))$, where $\Psi$ is a $C^1$ map from $H^{-1}$ to $\RR^{N'}$. Differentiating, we obtain
 $$\vA'(t)=(d\Psi)(u(t)) \frac{du}{dt},$$
 and thus, using the uniform bound of $d\Psi$ in the proof of Lemma \ref{L:A13},
 $$\|\vA'(t)\|\leq C\left\|\frac{du}{dt}\right\|_{H^{-1}}\leq C\left\|\frac{du}{dt}\right\|_{L^2}\leq C\delta(t),$$
 where the last inequality follows from Lemma \ref{L:A19}.
\end{proof}
\subsubsection{Reduction to an approximate finite-dimensional linear differential system}
\begin{lemma}
 \label{L:A21}
 Under the assumptions of Propostion \ref{P:00}, let $\alpha_j$, $\beta_j$ and $\delta$ be defined by \eqref{A56},\eqref{A57}. Then $\alpha_j,\beta_j\in C^1([0,T_0),\RR)$ and
 \begin{align}
  \label{A63}
 \left| \alpha'_j(t)-\beta_j(t)\right|&\leq C\delta^2(t)\\
  \label{A64}
  \left|\beta_j'(t)-\omega_j^2\alpha_j(t)\right|&\leq C\left( |\vA(t)|\delta(t)+\delta^2(t) \right).
 \end{align}
\end{lemma}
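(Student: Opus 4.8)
The plan is to differentiate $\alpha_j$ and $\beta_j$ in time, exploiting the modulation relations. It is convenient first to rewrite the definitions using the adjoint $\big(\theta_{\vA}^{-1}\big)^{*}$; setting $\psi_j(\vA):=\big(\theta_{\vA}^{-1}\big)^{*}(Y_j)$ and using \eqref{A31},
$$\alpha_j(t)=\la h(t),Y_j\ra=\la u(t),\psi_j(\vA(t))\ra-\la Q,Y_j\ra,\qquad \beta_j(t)=\la \partial_t u(t),\psi_j(\vA(t))\ra .$$
Since $Y_j\in\SSS$ (exponential decay, \S\ref{SSS:eigenfunctions}), Corollary \ref{C:C1function} shows that $\vA\mapsto\psi_j(\vA)$ is $C^{\infty}$ from a neighbourhood of $0$ in $\RR^{N'}$ into $\SSS$, with all derivatives bounded. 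Combined with $\vA\in C^1\big([0,T_0),\RR^{N'}\big)$ (Lemma \ref{L:A13}), the fact that $(u,\partial_tu)\in C^1$ with values in $\hdot\times H^{-1}$ (with $\partial_t^2u=\Delta u+|u|^{\frac4{N-2}}u$), and the constancy of $\la Q,Y_j\ra$, this already gives $\alpha_j,\beta_j\in C^1\big([0,T_0)\big)$, and it remains to compute the two derivatives.

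For \eqref{A63}: differentiating, $\alpha_j'(t)=\la\partial_tu,\psi_j(\vA)\ra+\sum_k\vA_k'\,\la u,\partial_{\vA_k}\psi_j(\vA)\ra=\beta_j(t)+\sum_k\vA_k'\,\la u,\partial_{\vA_k}\psi_j(\vA)\ra$. The key observation is that $\la\theta_{\vA}(Q),\psi_j(\vA)\ra=\la\theta_{\vA}^{-1}\theta_{\vA}(Q),Y_j\ra=\la Q,Y_j\ra$ is independent of $\vA$; differentiating in $\vA_k$ and using $\la\partial_{\vA_k}\theta_{\vA}(Q),\psi_j(\vA)\ra=\la\theta_{\vA}^{-1}\big(\partial_{\vA_k}\theta_{\vA}(Q)\big),Y_j\ra=0$ — which holds because $\theta_{\vA}^{-1}\big(\partial_{\vA_k}\theta_{\vA}(Q)\big)\in\tZZZ_Q\subset\ZZZ_Q$ by Proposition \ref{P:A1}\eqref{I:compo} and Lemma \ref{L:A12}, hence is orthogonal to the eigenfunction $Y_j$ (as $-\omega_j^2\neq0$) — we obtain $\la\theta_{\vA}(Q),\partial_{\vA_k}\psi_j(\vA)\ra=0$. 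Writing $u=\theta_{\vA}(Q+h)=\theta_{\vA}(Q)+\theta_{\vA}(h)$ and using that $\theta_{\vA}$ is an $\hdot$-isometry, $\big|\la u,\partial_{\vA_k}\psi_j(\vA)\ra\big|=\big|\la\theta_{\vA}(h),\partial_{\vA_k}\psi_j(\vA)\ra\big|\le C\|h\|_{\hdot}\le C\delta$ by Lemma \ref{L:A19}. Since $|\vA'|\le C\delta$ by Lemma \ref{L:A20}, \eqref{A63} follows.

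For \eqref{A64}: similarly $\beta_j'(t)=\la\partial_t^2u,\psi_j(\vA)\ra+\sum_k\vA_k'\,\la\partial_tu,\partial_{\vA_k}\psi_j(\vA)\ra$, and the last sum is $O(\delta^2)$ since $\|\partial_tu\|_{L^2}\le C\delta$ and $|\vA'|\le C\delta$. For the first term I would insert $\partial_t^2u=\Delta u+|u|^{\frac4{N-2}}u$ and use the conformal covariance of $f\mapsto\Delta f+|f|^{\frac4{N-2}}f$ under the maps $\varphi_{\vA}$ (which follows from the explicit formulas of \S\ref{SS:Kelvin_transfo}): with $\psi=\varphi_{\vA}^{-1}$, the change of variables $y=\varphi_{\vA}(x)$ in $\la\Delta u+|u|^{\frac4{N-2}}u,\psi_j(\vA)\ra$ gives
$$\la\partial_t^2u,\psi_j(\vA)\ra=\int_{\RR^N}|\det\psi'(y)|^{-\frac2N}\Big(\Delta v+|v|^{\frac4{N-2}}v\Big)(y)\,Y_j(y)\,dy,\qquad v:=\theta_{\vA}^{-1}(u)=Q+h .$$
Since $\Delta Q+|Q|^{\frac4{N-2}}Q=0$, one has $\Delta v+|v|^{\frac4{N-2}}v=-L_Qh+R_Q(h)$ with $\|R_Q(h)\|_{L^{\frac{2N}{N+2}}}\le C\|h\|_{\hdot}^2\le C\delta^2$ by \eqref{pointwise_RQ}. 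Splitting $|\det\psi'|^{-2/N}=1+\big(|\det\psi'|^{-2/N}-1\big)$: the term with ``$1$'' equals $\int(-L_Qh+R_Q(h))Y_j=\la h,-L_QY_j\ra+O(\delta^2)=\omega_j^2\alpha_j(t)+O(\delta^2)$; and since for $|\vA|$ small both $\big||\det\psi'(x)|^{-2/N}-1\big|$ and its gradient are $\le C|\vA|(1+|x|)^{K}$ for some fixed $K$, while $-L_Qh+R_Q(h)$ is $O(\delta)$ when tested against the exponentially decaying $Y_j$ (for the $\Delta h$ part, integrate by parts), the term with $\big(|\det\psi'|^{-2/N}-1\big)$ is $O(|\vA|\delta)$. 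This yields \eqref{A64}.

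The step I expect to be the main obstacle is this last estimate: because $\theta_{\vA}$ contains a Kelvin-type component, the Jacobian $|\det\varphi_{\vA}'|$ is not uniformly close to $1$ (it grows polynomially in $x$), so the Jacobian error must be absorbed using the exponential decay of $Y_j$ from \S\ref{SSS:eigenfunctions} — this is precisely why the bound in \eqref{A64} carries the extra factor $|\vA(t)|$, absent in \eqref{A63}. It is also essential to use the \emph{exact} conformal covariance to reduce $\Delta u+|u|^{\frac4{N-2}}u$ to $\Delta v+|v|^{\frac4{N-2}}v=-L_Qh+R_Q(h)$, which is $O(\delta)$ thanks to $\Delta Q+|Q|^{\frac4{N-2}}Q=0$, \emph{before} estimating the Jacobian discrepancy; a direct estimate keeping $u$ itself would only produce an $O(|\vA|)$ error, which would be useless.
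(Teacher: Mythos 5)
Your proof is correct and follows essentially the same route as the paper's: for \eqref{A63} you exploit the constancy of $\la\theta_{\vA}(Q),(\theta_{\vA}^{-1})^*Y_j\ra$ and the fact that the $\vA$-derivatives of $\theta_{\vA}(Q)$, pulled back by $\theta_{\vA}^{-1}$, land in $\tZZZ_Q\perp Y_j$ (the paper reaches the same cancellation by writing $(\theta_{\vA})^*\partial_t[(\theta_{\vA}^{-1})^*Y_j]$ as a combination of the generators of $\tZZZ_Q$); for \eqref{A64} you apply conformal covariance exactly as the paper does and split the Jacobian factor $|\det\psi'|^{-2/N}=1+O(|\vA|(1+|y|)^K)$, absorbing the polynomial growth with the exponential decay of $Y_j$. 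The only cosmetic overclaim is asserting $C^{\infty}$ regularity of $\vA\mapsto(\theta_{\vA}^{-1})^*Y_j$ with values in $\SSS$ — Corollary \ref{C:C1function} only states $C^1$ into $H^1$ — but this does not affect the argument since only $C^1$ and the pointwise bounds of Lemma \ref{L:C1} are used.
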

\begin{proof}
 We have
 \begin{equation*}
  \alpha_j(t)=-\int QY_j+\int u(t)\left(\theta_{\vA(t)}^{-1}\right)^*(Y_j)\text{ and }
  \beta_j(t)=\int \partial_tu(t)\left(\theta_{\vA(t)}^{-1}\right)^*(Y_j),
 \end{equation*}
and the fact that $\alpha_j$ and $\beta_j$ are $C^1$ follows from the fact that $u\in C^2([0,T_0),H^{-1}(\RR^N)$, $\vA\in C^1([0,T_0),\RR^{N'})$ and Corollary \ref{C:C1function} with $\psi=Y_j$. Differentiating under the integral defining $\alpha_j$, we obtain
\begin{equation}
 \label{A65}
 \alpha_j'(t)=\underbrace{\int \partial_tu(t)\left(\theta_{\vA(t)}^{-1}\right)^*(Y_j)}_{\beta_j(t)}+\int u(t)\frac{\partial}{\partial t}\left( \left(\theta_{\vA(t)}^{-1}\right)^*(Y_j) \right).
\end{equation}
We note that the second integral is equal to 
\begin{equation*}
 \int \theta^{-1}_{\vA(t)}\left(u(t)\right)\left(\theta_{\vA(t)}\right)^*\left[\frac{\partial}{\partial t}\left( \left(\theta_{\vA(t)}^{-1}\right)^*(Y_j) \right)\right].
\end{equation*}
We have
$$\left(\theta_{\vA(t)}\right)^*\left(\frac{\partial}{\partial t}\left( \left(\theta_{\vA(t)}^{-1}\right)^*(Y_j) \right)\right)=\frac{\partial}{\partial\tau}\left(\left(\theta_{\vA(t)}\right)^*\left(\theta^{-1}_{\vA(\tau)}\right)^*(Y_j)\right)_{\restriction \tau=t}=\frac{d}{d\tau}\left(\left(\theta_{B(\tau)}^{-1}\right)^*(Y_j)\right)_{\restriction \tau =t},$$
where, in view of point \eqref{I:compo} of Proposition \ref{P:A1}, $\tau\mapsto B(\tau)$ is a $C^1$ function such that $B(0)=0$. 

Using Corollary \ref{C:C1function}, we get that 
$$\left(\theta_{\vA(t)}\right)^*\left(\frac{\partial}{\partial t}\left( \left(\theta_{\vA(t)}^{-1}\right)^*(Y_j) \right)\right)$$
is a linear combination of terms of the form $T^* Y_j$, where $T$ is one of the transformations defining $\tZZZ$: $\partial_{x_j}$, $x_j\partial_{x_k}-x_k\partial_{x_j}$, $(2-N)x_j+|x|^2\partial_{x_j}-2x_jx\cdot\nabla$ and $\frac{N-2}{2}+x\cdot \nabla$, we deduce:
\begin{equation*}
 \int Q\left(\theta_{\vA(t)}\right)^*\left[\frac{\partial}{\partial t}\left( \left(\theta_{\vA(t)}^{-1}\right)^*(Y_j) \right)\right]=\sum_{k=1}^m \int \gamma_k(t)Z_kY_j=0,
\end{equation*} 
where for $k=1\ldots m$, $\gamma_k(t)\in \RR$. Using the definition of $h$, we get $\theta_{\vA(t)}^{-1}\left( u(t) \right)=h(t)+Q$ and thus
\begin{multline*}
\left|\int  u(t)\frac{\partial}{\partial t}\left( \left(\theta^{-1}_{\vA(t)}\right)^*(Y_j) \right)\right|=\left|\int h(t)\left(\theta_{\vA(t)}\right)^*\frac{\partial}{\partial t}\left( \left(\theta^{-1}_{\vA(t)}\right)^*(Y_j) \right)\right|\\
\leq C\|h(t)\|_{L^{\frac{2N}{N-2}}} \left\| \left(\theta_{\vA(t)}\right)^*\frac{\partial}{\partial t}\left( \left(\theta^{-1}_{\vA(t)}\right)^*(Y_j) \right)\right\|_{L^{\frac{2N}{N-2}}}\\
\leq C\delta(t) \left\|\frac{\partial}{\partial t} \left(\left(\theta^{-1}_{\vA(t)}\right)^*Y_j\right)\right\|_{L^{\frac{2N}{N-2}}}\leq C|A'(t)|\delta(t)\leq C\delta^2(t),
\end{multline*}
by Lemmas \ref{L:A19}, \ref{L:A20}, and Corollary \ref{C:C1function}. Hence \eqref{A63}.

We next prove \eqref{A64}. We have
\begin{equation}
 \label{A66} \beta_j'(t)=\int \partial_t^2u(t) \left(\theta^{-1}_{\vA(t)}\right)^*(Y_j)+\int \partial_t u(t) \frac{\partial}{\partial t}\left( \left(\theta^{-1}_{\vA(t)}\right)^*Y_j \right),
\end{equation} 
and 
\begin{equation}
 \label{A66'}
 \left|\int \partial_t u \frac{\partial}{\partial t}\left( \left(\theta^{-1}_{\vA(t)}\right)^*Y_j \right)\right|\leq C\|\partial_t u(t)\|_{L^2}\left\|\frac{\partial}{\partial t}\left( \left(\theta^{-1}_{\vA(t)}\right)^*Y_j \right)\right\|_{L^2}\leq C\|\partial_t u(t)\|_{L^2}|\vA'(t)|,
\end{equation}
by Corollary \ref{C:C1function}. By Lemma \ref{L:A19} and \ref{L:A20}, the right-hand term of \eqref{A66'} is bounded by $C\delta^2(t)$ for a constant $C>0$. Let us consider the first term in the right-hand side of \eqref{A66}:
\begin{equation*}
 \int \partial_t^2u \left(\theta^{-1}_{\vA}\right)^*(Y_j)=\int (\Delta u+|u|^{\frac{4}{N-2}}u)\left(\theta^{-1}_{\vA}\right)^*(Y_j).
\end{equation*} 
We fix $t$ and denote, to simplify notation  
$$\varphi(x)=\varphi_{A(t)}(x)=b(t)+\frac{e^{s(t)}P_{c(t)} (x-a(t)|x|^2)}{1-2\langle a(t),x\rangle+|a(t)|^2|x|^2}.$$
Recall that 
\begin{equation}
\label{det}
|\det \varphi'(x)|= e^{Ns}\left|\frac{x}{|x|} -|x|a\right|^{-2N}
\end{equation} 
and (see \eqref{theta_Theta})
$$\theta_{\vA}(f)(x)=|\det \varphi'(x)|^{\frac{N-2}{2N}} f(\varphi(x))=e^{\frac{(N-2)s}{2}} \left|\frac{x}{|x|}-a|x|\right|^{-(N-2)}f(\varphi(x)).$$
Using that, by the definition \eqref{def_h} of $h$,
$$u=\theta_{\vA}(Q+h)=|\det(\varphi'(x))|^{\frac{N-2}{2N}}(Q+h)(\varphi(x)),\quad \Delta u=|\det(\varphi'(x))|^{\frac{N+2}{2N}}(\Delta (Q+h))(\varphi(x)),$$
we obtain
\begin{multline*}
 \int \partial_t^2u\left(\theta_{\vA}^{-1}\right)^*Y_j=\int |\det\varphi'(x)|^{\frac{N+2}{2N}}(\Delta u+|u|^{\frac{4}{N-2}}u)(x)Y_j(\varphi(x))\,dx\\
 =\int |\det\varphi'(x)|^{\frac{N+2}{N}}\left(\Delta (Q+h)+|Q+h|^{\frac{4}{N-2}}(Q+h)\right)(\varphi(x))Y_j(\varphi(x))\,dx\\
=\int \left|\det\varphi'\left(\varphi^{-1}(y)\right)\right|^{\frac 2N}(-L_{Q}h+R_{Q}(h))(y)Y_j(y)\,dy,
 \end{multline*}
where $R_{Q}$ is defined in \eqref{defRQ0}. Hence
\begin{multline}
 \label{A67}
 \int \partial_t^2u\left(\theta_{\vA}^{-1}\right)^*Y_j\\
=-\int L_{Q}hY_j+\int \left(1-\left|\det\varphi'(\varphi^{-1}(y))\right|^{\frac 2N}\right)L_Q(h) Y_j+\int \left|\det\varphi'(\varphi^{-1}(y))\right|^{\frac 2N}R_Q(h)Y_j.
\end{multline} 
By \eqref{A39} and Lemma \ref{L:A19}, $\left\|R_{Q}(h)\right\|_{L^{\frac{2N}{N+2}}}\leq C\|h\|_{L^{\frac{2N}{N-2}}}^2\leq C\delta^2$.  
Note that $x=\varphi^{-1}(y)\iff \frac{x}{|x|^2}=a+e^s P_{-c}\, \frac{y-b}{|y-b|^2}$. Thus:
$$\frac{1}{|\varphi^{-1}(y)|}=\left|a+\frac{e^sP_{-c}(y-b)}{|y-b|^2}\right|$$
and 
$$ \left|\frac{\varphi^{-1}(y)}{|\varphi^{-1}(y)|^2} -a\right|=\left|\frac{e^sP_{-c}(y-b)}{|y-b|^2}\right|=\frac{e^s}{|y-b|}.$$
As a consequence (see \eqref{det})
\begin{multline*}
 \left|\det(\varphi'(\varphi^{-1}(y)))\right|^{\frac 2N}=e^{2s}\left|\frac{\varphi^{-1}(y)}{|\varphi^{-1}(y)|} -a\left|\varphi^{-1}(y)\right|\right|^{-4}=e^{2s}\frac{1}{|\varphi^{-1}(y)|^4}\left|\frac{\varphi^{-1}(y)}{\left|\varphi^{-1}(y)\right|^2}-a\right|^{-4}\\
 =e^{-2s} \left( |a|^2|y-b|^2+2\la a,e^sP_{-c}(y-b)\ra +e^{2s} \right)^2.
\end{multline*}
Let us denote by $g(\vA,y)$ the expression on the last line. Note that $g(0,y)=1$ for all $y$, and that $\|\nabla_{\vA} g\|\leq C_K(1+|y|^4)$ if $\vA$ stays in a bounded set $K$ of $\RR^{N'}$. Hence, if $|\vA|\leq 1$,
$$\left| \left|\det(\varphi'(\varphi^{-1}(y)))\right|^{\frac 2N}-1\right|\leq C|\vA|\left(1+|y|^4\right).$$
Similarly,
$$\nabla_y \left(\left|\det\left(\varphi'(\varphi^{-1}(y))\right)\right|^{\frac 2N}\right)\leq C|\vA|(1+|y|^3).$$
Going back to \eqref{A67}, we get
\begin{multline*}
 \left|\int \partial_t^2 u (\theta_{\vA}^{-1})^*(Y_j)-\omega_j^2\int hY_j\right|\\
\leq C|\vA| \int_{\RR^N} \left(|Q|^{\frac{4}{N-2}} |h \,Y_j|+|\nabla h|\left(|\nabla Y_j|+|Y_j|\right)\right) (1+|y|)^4
+C\left\| (1+|y|)^4Y_j\right\|_{L^{\frac{2N}{N-2}}}\|R_{Q}(h)\|_{L^{\frac{2N}{N+2}}}\\
\leq C\left(\delta(t)|\vA|+\delta^2(t)\right),
\end{multline*}
by Lemma \ref{L:A19} and the decay properties of $Y_j$. Combining with \eqref{A66} and \eqref{A66'}, we get \eqref{A64}.
\end{proof}

\subsubsection{Exponential decay for a linear differential system}
In this subsection, we consider approximate ordinary differential systems of the form \eqref{A63}, \eqref{A64}. 
\begin{lemma}
\label{L:A22}
Let $0<\omega_1<\ldots<\omega_p$ be real numbers. There exists $\eps_3>0$, $C_3>0$ (depending only on the $\omega_j$s) such that the following holds.
Let $T_0\in [0,+\infty]$ and, consider, for $j=1\ldots p$, $\alpha_j$, $\beta_j\in C^1([0,T_0),\RR)$. Let 
$$\gamma(t)=\sqrt{\sum_{j=1}^p \omega_j^2|\alpha_j(t)|^2+|\beta_j(t)|^2}$$ 
and assume
\begin{gather}
\label{A68}
 \|\gamma\|_{\infty}=\sup_{t\in [0,T_0)} \gamma(t)<\infty\\
\label{A69}
\forall j\in 1\ldots p,\quad \forall t\in[0,T_0), \quad |\alpha_j'(t)-\beta_j(t)|\leq \eps_3\gamma(t)\\
\label{A70}
\forall j\in 1\ldots p,\quad \forall t\in[0,T_0), \quad |\beta_j'(t)-\omega_j^2\alpha_j(t)|\leq \eps_3\gamma(t).
\end{gather}
Then
\begin{equation}
\label{A71}
 \int_0^{T_0}\gamma(t)\,dt\leq C_3\|\gamma\|_{\infty}.
\end{equation}
If moreover $T_0=+\infty$ then
\begin{equation}
 \label{A72}
\lim_{t\to +\infty}e^{\frac{\omega_1}{2}t}\gamma(t)=0.
\end{equation}
\end{lemma}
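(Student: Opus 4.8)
The plan is to diagonalize the approximate linear system. For $j=1,\dots,p$ I would set $v_j^{\pm}=\omega_j\alpha_j\pm\beta_j$; then \eqref{A69}--\eqref{A70} give $\bigl|(v_j^{\pm})'\mp\omega_j v_j^{\pm}\bigr|\le C\eps_3\gamma$ (with $C$ depending only on $\omega_p$), while $(v_j^+)^2+(v_j^-)^2=2(\omega_j^2\alpha_j^2+\beta_j^2)$, so $\gamma^2=\tfrac12\sum_j\bigl((v_j^+)^2+(v_j^-)^2\bigr)$ and $\gamma$ is comparable to $\sum_j(|v_j^+|+|v_j^-|)$. Since $\bigl|\tfrac{d}{dt}\gamma^2\bigr|\le C\gamma^2$, a Gronwall argument shows that if $\gamma$ vanishes at one point then $\gamma\equiv0$ and both conclusions are trivial; so I may assume $\gamma>0$ on $[0,T_0)$.

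For \eqref{A71} I would write the Duhamel formula for each mode. Integrating the equations for $v_j^{\pm}$ gives, for $t<T<T_0$,
\[
|v_j^-(t)|\le C\|\gamma\|_\infty e^{-\omega_j t}+C\eps_3\int_0^t e^{-\omega_j(t-s)}\gamma(s)\,ds,\qquad
|v_j^+(t)|\le C\|\gamma\|_\infty e^{-\omega_j(T-t)}+C\eps_3\int_t^{T}e^{-\omega_j(s-t)}\gamma(s)\,ds,
\]
the boundary term in the second estimate coming from $|v_j^+(T)|\le C\|\gamma\|_\infty$ (and disappearing when $T_0=+\infty$, on letting $T\to+\infty$). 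Summing over $j$, using $\omega_j\ge\omega_1$, and bounding the tail contribution $\int_T^{T_0}$ by $C\|\gamma\|_\infty e^{-\omega_1(T-t)}$, I get, for $t\in[0,T]$,
\[
\gamma(t)\le C\|\gamma\|_\infty\bigl(e^{-\omega_1 t}+e^{-\omega_1(T-t)}\bigr)+C\eps_3\int_0^{T}e^{-\omega_1|t-s|}\gamma(s)\,ds .
\]
Integrating in $t$ over $[0,T]$ and using $\int_{\RR}e^{-\omega_1|r|}\,dr=2/\omega_1$ yields $\int_0^T\gamma\le \frac{C}{\omega_1}\|\gamma\|_\infty+\frac{2C\eps_3}{\omega_1}\int_0^T\gamma$; choosing $\eps_3$ small enough that $2C\eps_3/\omega_1<1$ and letting $T\to T_0$ gives \eqref{A71} with $C_3$ depending only on the $\omega_j$'s.

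For \eqref{A72} I assume $T_0=+\infty$. By \eqref{A71}, $\gamma\in L^1$, hence $\gamma^2\le\|\gamma\|_\infty\gamma\in L^1$; since $\gamma^2$ is Lipschitz, this forces $\gamma(t)\to0$. Writing $U=\sum_j(v_j^+)^2$ and $V=\sum_j(v_j^-)^2$, so $\gamma^2=\tfrac12(U+V)$ and $|U-V|\le 2\|\gamma\|_\infty^2$, the mode equations give $\tfrac{d}{dt}(U-V)=2\sum_j\omega_j\bigl((v_j^+)^2+(v_j^-)^2\bigr)+O(\eps_3\gamma^2)\ge 2\omega_1\gamma^2\ge0$ once $\eps_3$ is small. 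Thus $U-V$ is nondecreasing and bounded, so it converges; since $\gamma\to0$ forces $U,V\to0$, the limit is $0$, whence $U\le V$ — i.e. $\gamma^2\le V$ — on all of $[0,+\infty)$. Then $V'=-2\sum_j\omega_j(v_j^-)^2+O(\eps_3\gamma\sqrt V)\le-(2\omega_1-C\eps_3)V$ using $\gamma\le\sqrt V$, so $V(t)\le V(0)e^{-(2\omega_1-C\eps_3)t}$ and $\gamma(t)\le\sqrt{V(t)}\le C\|\gamma\|_\infty e^{-(\omega_1-C\eps_3/2)t}$; for $\eps_3$ small this is $o\bigl(e^{-\omega_1 t/2}\bigr)$, which is \eqref{A72}.

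The main obstacle is \eqref{A71}: one cannot absorb the convolution term into the left-hand side of an inequality over the whole (possibly infinite, a priori non-integrable) interval, so the estimate must be run on $[0,T]$ with $T<T_0$ and then passed to the limit, which is what forces the bookkeeping of the boundary contributions at $T$ and at $T_0$. The secondary point to watch is that $\eps_3$ must be chosen small — depending only on the $\omega_j$'s — simultaneously for the absorption in \eqref{A71}, for the sign $\tfrac{d}{dt}(U-V)\ge0$, and for $\omega_1-C\eps_3/2>\omega_1/2$ in the last line.
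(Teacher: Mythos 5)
Your argument is correct, and while you use the same stable/unstable decomposition as the paper (your $v_j^{\pm}=\omega_j\alpha_j\pm\beta_j$ are, up to sign, the quantities whose squares the paper sums into $E_{\pm}$), your proof of \eqref{A71} goes by a genuinely different route. The paper runs a trapping argument: it shows the region $\{E_+>E_-\}$ is forward invariant, so $[0,T_0)$ splits at a single crossing time $\tau$ into a piece where $\gamma^2\leq E_-$ decays like $e^{-\frac{3}{2}\omega_1 t}$ and a piece where $E_+$ grows exponentially backward from any $T<T_0$, and then integrates these pointwise exponential bounds, yielding the explicit constant $C_3=\frac{8\sqrt 2}{3\omega_1}$. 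You instead write the Duhamel representation for each mode with the correct boundary condition (initial time for the stable modes, terminal time $T$ for the unstable ones), sum to get a self-bounding inequality with kernel $e^{-\omega_1|t-s|}$, integrate, and absorb the $O(\eps_3)$ convolution term — a Perron-type perturbation argument that is more mechanical and avoids the crossing-time analysis, at the cost of needing the finite-$T$ truncation to justify the absorption (which you correctly identify and handle). For \eqref{A72} the two proofs are closer: the paper rules out $E_+>E_-$ at any positive time because that would force $E_+$ to grow exponentially, contradicting \eqref{A68}; you instead use $\gamma\in L^1$ plus the Lipschitz bound on $\gamma^2$ to get $\gamma\to 0$, then monotonicity of $U-V$ to conclude $U\leq V$ everywhere — same conclusion, and both then close with the Gronwall decay of the stable energy. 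One cosmetic remark: your reference to a ``tail contribution $\int_T^{T_0}$'' is superfluous, since the Duhamel formula for $v_j^+$ with terminal time $T$ already carries only the boundary term $e^{-\omega_j(T-t)}|v_j^+(T)|\leq\sqrt2\,\|\gamma\|_\infty e^{-\omega_1(T-t)}$; the displayed inequality you actually use is the right one.
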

\begin{proof}
\EMPH{Step 1}
 We let
$$ E_{\pm}(t):=\sum_{j=1}^p \left(\beta_j(t)\pm \omega_j \alpha_j(t)\right)^2.$$
Note that
\begin{equation}
 \label{A73}
\gamma^2(t)=\frac 12\left( E_+(t)+E_-(t)\right).
\end{equation}
If $t\in [0,T_0)$,
$$E_{\pm}'(t)=2\sum_{j=1}^p (\beta_j'(t)\pm \omega_j\alpha_j'(t))(\beta_j(t)\pm \omega_j \alpha_j(t)).$$
By \eqref{A69} and \eqref{A70}:
\begin{multline*}
 \left|E_+'(t)-2\sum_{j=1}^p \omega_j (\beta_j(t)+\omega_j\alpha_j(t))^2\right|\leq 2\sum_{j=1}^p \left(\omega_j\left|\alpha_j'-\beta_j\right|+\omega_j|\beta_j'-\omega_j^2\alpha_j|\right)\,\left|\beta_j+\omega_j\alpha_j\right|\\
\leq 2\sqrt{\sum_{j=1}^p (\beta_j+\omega_j\alpha_j)^2\, \sum_{j=1}^p \left(\omega_j|\alpha_j'-\beta_j|+\omega_j|\beta_j'-\omega_j^2\alpha_j|\right)^2}\leq C\eps_3\sqrt{E_+(t)}\gamma(t),
\end{multline*}
 Chosing  $\eps_3$ small enough, we get
\begin{equation}
 \label{A74}
E_+'(t)\geq 2\omega_1 E_+(t)-\frac{\omega_1}{2}\sqrt{E_+(t)}\gamma(t)
\end{equation} 
and similarly
\begin{equation}
 \label{A75}
E_-'(t)\leq -2\omega_1 E_-(t)+\frac{\omega_1}{2} \sqrt{E_-(t)} \gamma(t).
\end{equation}
\EMPH{Step 2} We show that for all $\tau\in [0,T_0)$,
\begin{equation}
 \label{A76} E_+(\tau)>E_-(\tau)\Longrightarrow \forall t\in [\tau,T_0),\; E_+(t)>E_-(t).
\end{equation} 
We argue by contradiction, assuming that there exists $\tau\in [0,T_0)$ and $t\in (\tau,T_0)$ such that $E_+(\tau)>E_-(\tau)$ and $E_+(t)\leq E_-(t)$. Let
\begin{equation}
 \label{A77}
\sigma:=\inf\left\{ t\in [\tau,T_0)\text{ s.t. } E_+(t)\leq E_-(t)\right\}>\tau.
\end{equation} 
If $t\in [\tau,\sigma]$, then $E_+(t)\geq E_-(t)$, and thus, by \eqref{A73}, $E_-(t)\leq \gamma^2(t)\leq E_+(t)$. Combining with \eqref{A74} (respectively \eqref{A75}) we get 
\begin{equation}
 \label{A78}
E_+'(t)\geq \frac{3}{2}\omega_1 E_+(t)
\end{equation} 
and 
\begin{equation}
 \label{A79}
E_-'(t)\leq \frac{1}{2}\omega_1 E_+(t)
\end{equation} 
Thus, if $t\in [\tau,\sigma]$, $E_+'(t)\geq E_-'(t)$, which contradicts the facts that $E_+(\tau)>E_-(\tau)$ and $E_+(\sigma)\leq E_-(\sigma)$. Step 2 is complete.

\EMPH{Step 3} We show \eqref{A71}. 
By Step 1, there exists $\tau\in [0, T_0)$ such that:
\begin{equation}
\label{A80} \forall t\in (0,\tau), \; E_-(t)\geq E_+(t)\quad\text{and} \quad \forall t\in (\tau,T_0),\; E_-(t)<E_+(t).
\end{equation} 
We take $\tau=0$ if $\forall t\in [0,T_0)\; E_-(t)<E_+(t)$ and $\tau=T_0$ if $\forall t\in [0,T_0), E_-(t)\geq E_+(t)$.

\EMPH{Estimate on $[0,\tau)$} Assume $\tau>0$. Then 
\begin{equation}
 \label{A81}
\forall t\in [0,\tau),\quad E_+(t)\leq \gamma^2(t)\leq E_-(t).
\end{equation} 
Hence by \eqref{A75},
$E_-'(t)\leq -\frac{3\omega_1}{2} E_-(t)$ which yields, (using \eqref{A73} again to get the last inequality) 
\begin{equation}
 \label{A82}
\forall t\in [0,\tau), \quad \gamma^2(t)\leq E_-(t)\leq e^{-\frac{3}{2}\omega_1 t}E_-(0)\leq 2 e^{-\frac{3}{2}\omega_1 t}\|\gamma\|^2_{\infty}.
\end{equation} 
As a consequence
\begin{equation}
 \label{A83}
\int_0^{\tau} \gamma(t)dt\leq \sqrt{2}\|\gamma\|_{\infty} \int_0^{\tau} e^{-\frac{3\omega_1}{4}t} \,dt\leq \frac{4\sqrt{2}}{3\omega_1}\|\gamma\|_{\infty}.
\end{equation} 

\EMPH{Estimate on $[\tau,T_0)$}
Assume $\tau <T_0$. Then
\begin{equation}
 \label{A84} \forall t\in [\tau,T_0),\quad E_-(t)\leq \gamma^2(t)\leq E_+(t).
\end{equation} 
By \eqref{A74}, $E_+'(t)\geq \frac{3\omega_1}{2} E_+(t)$, which gives, fixing $T\in (\tau,T_0)$, 
\begin{equation}
 \label{A85}
\forall t\in [\tau,T],\quad E_+(T)\geq e^{\frac{3}{2}\omega_1 (T-t)}E_+(t)\geq e^{\frac 32\omega_1(T-t)}\gamma^2(t).
\end{equation} 
As a consequence 
\begin{equation*}
 \int_{\tau}^T \gamma(t)dt\leq \int_{\tau}^T \sqrt{E_+(t)} e^{\frac{3}{4}\omega_1(t-T)} dt
\leq \sqrt{2}\|\gamma\|_{\infty} \int_{-\infty}^{T} e^{\frac{3}{4}\omega_1 (t-T)}\,dt
\leq \frac{4\sqrt{2}}{3\omega_1}\|\gamma\|_{\infty}.
\end{equation*}
Letting $T\to T_0$ and combining with \eqref{A83}, we get \eqref{A71} with $C_3=\frac{8\sqrt{2}}{3\omega_1}$.

\EMPH{Step 4}  In this step, we assume $T_0=+\infty$ and prove \eqref{A72}. We first note:
\begin{equation}
 \label{A86} \forall t>0,\; E_-(t)\geq E_+(t).
\end{equation} 
(in other words, the parameter $\tau$ of Step 3 is equal to $+\infty$).
If not, by Step 2, there exists $\tau>0$ such that for all $t>\tau$, $E_+(t)>E_-(t)$. Then by \eqref{A74} and \eqref{A84}, 
$E_+'\leq \frac{3\omega_1}{2}E_+$ on $[\tau,+\infty)$ which implies
$$ \forall t>\tau,\quad E_+(t)\geq e^{\frac{3}{2}\omega_1 (t-\tau)}E_+(\tau).$$
Since $E_+(\tau)>0$, this is a contradiction with the fact that $\gamma$ is bounded. Hence \eqref{A86}. As a consequence of \eqref{A86}, the estimate \eqref{A82} is valid on $[0,+\infty)$ which concludes the proof.
\end{proof}
\subsubsection{End of the proof of the exponential convergence}
We are now ready to conclude the proof of Proposition \ref{P:expo}. Let $u$ be as in this proposition. We proceed in several steps.

\EMPH{Step 1. Closeness to the stationary solution}
Recall from \eqref{A54} the definition of  $T_0$. We show (assuming that $r_0$ and $\delta_0/r_0$ are small enough) that $T_0=T_+$, where by definition $T_+=T_+(u)$. By the definition of $\vA(t)$, and since the function $\Psi$ of Lemma \ref{L:A13} is Lipschitz-continuous, we have
 \begin{equation}
  \label{A105}
  \forall t\in [0,T_0), \quad \|\vA(t)\|\leq Cr_0.
 \end{equation} 
 Chosing $r_0$ small, we see that \eqref{A63} and \eqref{A64} imply that $\alpha_j$ and $\beta_j$ satisfy the assumptions of Lemma \ref{L:A22}. Thus, by \eqref{A71},
 \begin{equation}
  \label{A106}
  \int_0^{T_0}\delta(t)\,dt\leq C_3\sup_{t\in [0,T_0]} \left(\delta(t)+\|\partial_t u(t)\|_{L^2}\right)\leq C\delta_0
\end{equation} 
(the second bound follows from \eqref{A59} and \eqref{A60}). By Lemma \ref{L:A20}, since by the assumptions of Proposition \ref{P:00}, $\|u(0)-Q\|\leq \delta_0$ ,
\begin{equation}
 \label{A107}
 \forall t\in [0,T_0), \quad |\vA(t)|\leq |\vA(0)|+C\delta_0\leq C'\delta_0,
\end{equation} 
for some constant $C'>0$. Recalling \eqref{A59}:
\begin{equation}
 \label{A108} \forall t\in [0,T_0), \quad \left\|u(t)-\theta_{\vA(t)}Q\right\|_{\hdot}\leq C\delta_0,
\end{equation} 
and combining with \eqref{A107}, and \eqref{thetaA_Q} in Corollary \ref{C:A16}, we obtain 
\begin{equation}
 \label{A109}
 \forall t\in [0,T_0),\quad \|u(t)-Q\|_{\hdot}\leq C\delta_0.
\end{equation} 
Taking $\delta_0/r_0$ small enough, we deduce:
\begin{equation}
 \label{A110}
 \forall t\in  [0,T_0),\quad \|u(t)-Q\|_{\hdot} \leq r_0/2,
\end{equation} 
which, by the definition of $T_0$, implies $T_0=T_+$, concluding Step 1.

\EMPH{Step 2. Global existence} We next show $T_+=+\infty$.
 
 Assume by contradiction that $T_+$ is finite. Since by Step 1, $T_+=T_0$, we have $\|u(t)-Q\|_{\hdot}<r_0$ for all $t\in [0,T_+)$. This gives a contradiction by a standard local well-posedness/stability result around $Q$ if $r_0$ is small enough.
 
 \EMPH{Step 3. Convergence to a stationary solution}
 We conclude the proof of Proposition \ref{P:expo}, proving that 
 there exists $\vA_0\in \RR^{N'}$, close to $0$, and $\omega,C>0$ such that $S=\theta_{\vA_0}(Q)$ satisfies
 $$\forall t\geq 0,\quad \left\|u(t)-S\right\|_{\hdot}+\|\partial_tu(t)\|_{L^2}\leq Ce^{-\omega t}.$$
 Indeed, by Lemma \ref{L:A22},
 \begin{equation}
  \label{A111} 
  \lim_{t\to +\infty} e^{\omega_1 t/2}\delta(t)=0.
 \end{equation} 
 By Lemma \ref{L:A20}, $\vA(t)$ has a limit $\vA_0$ as $t\to+\infty$, and
 \begin{equation}
  \label{A112}
  \|\vA(t)-\vA_0\|\leq Ce^{-\omega_1 t/2}.
 \end{equation} 
 Let $S=\theta_{\vA_0}(Q)$. Then
 \begin{multline*}
\|\partial_tu(t)\|_{L^2}+  \|u(t)-S\|_{\hdot}\leq \|\partial_tu(t)\|_{L^2}+\|u(t)-\theta_{\vA(t)}(Q)\|_{\hdot}+\|\theta_{\vA(t)}(Q)-\theta_{\vA_0}(Q)\|_{\hdot}
  \\ \leq C\delta(t)+\left\|Q-\theta_{\vA_0}^{-1}\theta_{\vA(t)}Q\right\|_{\hdot}\leq Ce^{-\omega_1t/2},
 \end{multline*}
 where the bound by $C\delta(t)$ at the second line follows from Lemma \ref{L:A19}.\qed

\subsection{Expansion of the solution}
\label{SS:expansion}
We next conclude the proof of Proposition \ref{P:00}, showing:
\begin{prop}
\label{P:expansion}
 Let $u$ be a solution of \eqref{CP} such that $T_+(u)=+\infty$, and there exists $S\in \Sigma$, $C,\eps>0$ such that
 \begin{equation}
  \label{prox_exp_Q}
  \forall t\geq 0,\quad \|u(t)-S\|_{\hdot}+\|\partial_tu(t)\|_{L^2}\leq Ce^{-\eps t}.
 \end{equation} 
 Then $u$ satisfies the conclusion of Proposition \ref{P:00}.
\end{prop}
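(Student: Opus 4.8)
The plan is to adapt the modulation-theoretic argument of \cite{DuMe08}, where the analogous statement is proved for $S=W$, to the present setting in which $L_S$ may carry several negative eigenvalues and $S$ has a larger invariance group. In the situation where Proposition \ref{P:expansion} is applied (proof of Proposition \ref{P:00}), $S=\theta_A(Q)$ for some $Q$ satisfying \eqref{ND}, and since \eqref{ND} is preserved by $\MMM$ we may assume $S$ satisfies \eqref{ND}. First I would set $f=u-S$, so that $\partial_t^2f+L_Sf=R_S(f)$ with $R_S$ as in \eqref{defRQ0}, \eqref{pointwise_RQ}, $\|f(t)\|_{\hdot}+\|\partial_tf(t)\|_{L^2}\le Ce^{-\eps t}$ by \eqref{prox_exp_Q}, and $E[u]=E(S,0)$ (let $t\to\infty$ in \eqref{prox_exp_Q} and use conservation of energy). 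For $t$ large $\|f(t)\|_{\hdot}$ is small, so Lemma \ref{L:A13} applies: writing $A(t)=\Psi(u(t))$ and $h(t)=\theta_{A(t)}^{-1}(u(t))-S$, one has $h(t)\perp E_j$ for all $j$, $\|h(t)\|_{\hdot}\lesssim\|f(t)\|_{\hdot}$ (Lemma \ref{L:A18}), and, since $\Psi(S)=0$, $|A(t)|\lesssim\|f(t)\|_{\hdot}$. I will use repeatedly that the differential $d\Psi(S)$ \emph{annihilates} $\mathrm{span}(Y_1,\dots,Y_p)$: this follows from the implicit-function construction of $\Psi$ together with the relations $\int Y_jE_k=0$ of \eqref{A7}.

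The second step is a finite-dimensional reduction. Put $\alpha_j=\int hY_j$, $\beta_j=\int\partial_tu\,(\theta_{A}^{-1})^*Y_j$, $\delta=(\sum_j\alpha_j^2)^{1/2}$. By Lemmas \ref{L:A19}--\ref{L:A21}: $\|\partial_tu\|_{L^2}+\|h\|_{\hdot}\sim\delta$, $|A'|\lesssim\delta$, $|\alpha_j'-\beta_j|\lesssim\delta^2$ and $|\beta_j'-\omega_j^2\alpha_j|\lesssim|A|\delta+\delta^2$; in particular $A(t)\to A_\infty$ with $|A-A_\infty|\lesssim e^{-\eps t}$, and after replacing $S$ by $\theta_{A_\infty}(S)\in\Sigma$ (still of the form $\theta_{A'}(Q)$ with $A'$ small, still satisfying \eqref{ND}) and re-modulating we may assume $A(t)\to0$. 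Diagonalising the $2\times 2$ blocks by $\xi_j^\pm=\beta_j\pm\omega_j\alpha_j$, the unstable coordinates satisfy $(\xi_j^+)'=\omega_j\xi_j^++O(e^{-2\eps t})$, so boundedness forces $\xi_j^+(t)=-\int_t^{+\infty}e^{-\omega_j(s-t)}O(e^{-2\eps s})\,ds=O(e^{-2\eps t})$ — they are quadratically slaved; the stable ones satisfy $(\xi_j^-)'=-\omega_j\xi_j^-+O(e^{-2\eps t})$. A bootstrap — first upgrading $\eps$ to $\omega_1/2$ via Lemma \ref{L:A22}, then iterating the Duhamel formula and re-injecting the improved decay of $\delta$ into the $O(\delta^2)$ error — shows that either (i) $\|h(t)\|_{\hdot}+\|\partial_tu(t)\|_{L^2}$ decays faster than every exponential, or (ii) it decays exactly like $e^{-\omega t}$ for some $\omega\in\{\omega_1,\dots,\omega_p\}$, with $\alpha_j(t)=a_je^{-\omega t}+O(e^{-\omega^+t})$ and $\beta_j(t)=-\omega a_je^{-\omega t}+O(e^{-\omega^+t})$ for $\omega_j=\omega$ (faster for $\omega_j\neq\omega$), for some $\omega^+>\omega$ and with $Y:=\sum_{\omega_j=\omega}a_jY_j\neq0$ an eigenfunction of $L_S$ for $-\omega^2$. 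That the ``dispersive'' remainder $g=h-\sum_j\alpha_jY_j$ is genuinely lower order is seen by coercivity ($g\perp Y_j$, $g\perp E_k$, so $\|g\|_{\hdot}^2\lesssim\Phi_S(g)$ by Proposition \ref{P:A7}) combined with energy conservation: expanding $E(S+h,\partial_tu)=E(S,0)$ (the linear term vanishes as $S\in\Sigma$) and using $\omega_j^2\alpha_j^2-\beta_j^2=-\xi_j^+\xi_j^-$ gives $\Phi_S(g)+\tfrac12\|(\partial_tu)^\perp\|_{L^2}^2=-\tfrac12\sum_j\xi_j^+\xi_j^-+O(\delta^3)=o(\delta^2)$.

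In case (i) I would argue that $h\equiv0$ for $t$ large. The pair $(h,\partial_th)$ admits a Duhamel-type representation over $[t,+\infty)$ whose right-hand side is \emph{purely quadratic} in $h$ (the $R_S(h)$-term by \eqref{pointwise_RQ}, the modulation terms by $|A'|\lesssim\delta\lesssim\|h\|$); iterating this representation doubles the decay rate at each step, and since $h$ is small on a half-line this forces $h\equiv0$ there, hence $\partial_tu\equiv0$ (Lemma \ref{L:A19}), hence $u$ is stationary and $u\equiv S$. In case (ii) I would return to $f=u-S$, which solves $\partial_t^2f+L_Sf=R_S(f)$; since $L_SY=-\omega^2Y$, the function $r=f-e^{-\omega t}Y$ solves $\partial_t^2r+L_Sr=R_S(f)$ with $\|R_S(f)(t)\|_{L^{2N/(N+2)}}=O(e^{-2\eps t})$. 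The projections $r_j=\int rY_j$ solve $r_j''-\omega_j^2r_j=O(e^{-2\eps t})$, and since the leading $e^{-\omega t}$ term of $\langle u-S,Y_j\rangle=\langle\theta_A(h),Y_j\rangle+O(|A|)=\alpha_j+O(e^{-\omega^+t})$ exactly matches $e^{-\omega t}\langle Y,Y_j\rangle$ (using $|A(t)|=O(e^{-\omega^+t})$, itself a consequence of $d\Psi(S)Y_j=0$ through $|A'(t)|=|d\Psi(u(t))\partial_tu(t)|\lesssim e^{-\omega^+t}$), one gets $|r_j(t)|+|r_j'(t)|=O(e^{-\omega^+t})$. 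For the part of $(r,\partial_tr)$ orthogonal to the $Y_j$ I would use Proposition \ref{P:A7}, after correcting the non-orthogonality of $r$ to the $E_k$ (which is $O(e^{-\omega^+t})$ since $h\perp E_k$ and $|A|=O(e^{-\omega^+t})$), together with energy conservation, in which the $e^{-2\omega t}$ contributions cancel because $\Phi_S(Y)=-\tfrac{\omega^2}{2}\|Y\|_{L^2}^2$; this yields $\tfrac12\|\partial_tr\|_{L^2}^2+\Phi_S(r)=O(e^{-2\omega^+t})$ and hence $\|r(t)\|_{\hdot}+\|\partial_tr(t)\|_{L^2}=O(e^{-\omega^+t})$, which is exactly \eqref{eq012}.

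I expect the real work to be, on the one hand, the bootstrap of the second step — upgrading the a priori rate $\eps$, controlling the differentiated error terms when passing from the first-order system $(\alpha_j,\beta_j)$ to the second-order scalar equations, and isolating the leading exponential while keeping every modulation term ($O(|A|\delta)$, $O(\delta^2)$, $O(\delta^3)$) strictly subordinate to it — and, on the other hand, the iteration of the third step showing that faster-than-exponential decay of $h$ forces $u\equiv S$. The rest (the modulation set-up, the diagonalisation of the $2\times 2$ blocks, and the coercivity estimates) is routine once Lemmas \ref{L:A13}--\ref{L:A22} and Proposition \ref{P:A7} are available.
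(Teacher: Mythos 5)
Your route diverges from the paper's at the very first step: you re-modulate around $S$ via Lemma \ref{L:A13}, which forces you to assume the nondegeneracy \eqref{ND} for $S$, whereas the paper's proof of Proposition \ref{P:expansion} never modulates. It works directly with $f=u-S$, sets $\sigma_i=\int Y_i f$, $\rho_j=\int E_j f$, $g=f-\sum\sigma_iY_i-\sum\rho_jZ_j$, and runs the ODE analysis on the $\sigma_i$ alone, controlling $g$ and the $\rho_j$ by energy conservation plus the coercivity of Proposition \ref{P:A7}. The point is that the hypothesis \eqref{prox_exp_Q} already pins down the limiting stationary solution, so no gauge needs to be fixed; this is why the paper's Remark states that \eqref{ND} is not needed here. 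Your version proves a statement that is weaker than Proposition \ref{P:expansion} as stated (though sufficient for Proposition \ref{P:00}), and it imports several avoidable technicalities: the passage back from the modulated $h$ to $f=u-S$ in your case (ii) needs $|A(t)|=O(e^{-\omega^+t})$, which as you set it up requires H\"older continuity of $d\Psi$ — Lemma \ref{L:A13} only gives $\Psi$ Lipschitz and $C^1$ — while the paper gets the analogous bound $|\rho_j(t)|\le Ce^{-\omega^+t}$ for free from the energy identity \eqref{E12}.

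The genuine gap is in your case (i). You claim $(h,\partial_th)$ admits a Duhamel representation over $[t,+\infty)$ with a purely quadratic right-hand side, and that iterating it doubles the decay rate and forces $h\equiv0$. Such a representation requires that the only solution of the homogeneous linearized equation $\partial_t^2w+L_Sw=0$ that decays to zero in $\hdot\times L^2$ faster than every exponential be $w\equiv0$; this is a nontrivial linear rigidity statement (the flow conserves energy on the continuous spectrum, the kernel $\ZZZ_S$ produces non-decaying and linearly growing modes, and $e^{-\omega_jt}Y_j$ are decaying homogeneous solutions), and nothing in the paper's toolbox delivers it. Moreover, even granting the representation, the constants produced at each doubling step must be tracked, since "decays faster than every exponential" does not by itself imply vanishing. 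The paper circumvents all of this with Claim \ref{C:uniq_expo}: iterated long-time perturbation theory \emph{backward} in time shows that decay at a single explicit rate $\nu(S)$ (with $e^{-\nu}M<1$, $M$ the perturbation constant) already forces $u\equiv S$. Your case (i) should be replaced by an appeal to that claim (or to an argument of that type); as written, it does not close.
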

\begin{remark}
In Proposition \ref{P:expansion}, we do not need to assume that $S$ satisfies the nondegeneracy assumption \eqref{ND}.
 \end{remark}
\begin{proof}
 \EMPH{Step 1}
 
 Let $Y_1,\ldots,Y_p$, $\omega_1,\ldots,\omega_p$, $Z_1,\ldots,Z_m$, $E_1,\ldots,E_m$ be defined in Section \S \ref{SS:coercivity} (with $Q=S$). Let
 \begin{equation*}
  \sigma_i(t)=\int Y_i(u-S),\quad \rho_j(t)=\int E_j (u-S),\quad i=1\ldots p,\; j=1,\ldots m.
 \end{equation*}
Define:
\begin{equation}
\label{exp_u}
 h=u-S,\quad g=u-S-\sum_{i=1}^p \sigma_i Y_i-\sum_{j=1}^m \rho_j Z_j,
\end{equation} 
and note that
\begin{equation}
 \label{ortho_g}
 \forall t\geq 0,\quad \int g Y_i=\int g E_j=0,\quad i=1\ldots p,\; j=1,\ldots m.
\end{equation} 
By energy conservation, $E(u,\partial_t u)=E(S,0)$. In view of expansion \eqref{exp_u}, we obtain
$$ \left|\frac{1}{2}\|\partial_t u\|^2_{L^2}+\Phi_S(g)-\frac{1}{2}\sum_{i=1}^p \omega_i^2\sigma_i^2\right|\leq C\|h\|^3_{L^{\frac{2N}{N-2}}},$$
(see \eqref{A60'}, \eqref{A61} for a similar argument) 
and thus, by \eqref{ortho_g} and Proposition \ref{P:A7},
\begin{equation}
\label{E1}
\|\partial_t u\|^2_{L^2} -\sum_{i=1}^p \omega_i^2\sigma_i^2+\frac{1}{C}\|g\|_{\hdot}^2\leq C\|h\|^3_{L^{\frac{2N}{N-2}}},
\end{equation} 
We let 
$$\omega=\sup\left\{a>0,\; \lim_{t\to +\infty} e^{ta}\left(\|\partial_tu(t)\|_{L^2}+\|h(t)\|_{\hdot}\right)=0\right\}\in [0,\infty].$$
By assumption \eqref{prox_exp_Q}, $\omega >0$. 
We note that
\begin{equation}
 \label{E1'}
 \omega=\sup\left\{a>0,\; \lim_{t\to +\infty} e^{ta}\sum_{j=1}^p|\sigma_j(t)|=0\right\}.
\end{equation} 
Indeed, let us 	temporarily denote by $\tilde{\omega}$ the right-hand side of \eqref{E1'}. Clearly, $\omega\leq \tilde{\omega}$. Let $a<\tilde{\omega}$. Then 
$$ \sum_{j=1}^p |\sigma_j(t)|\leq Ce^{-at}.$$
By \eqref{E1} and the definition of $\omega$,
$$ \|\partial_t u(t)\|_{L^2}+\|g(t)\|_{L^2}\leq Ce^{-at}+Ce^{-\frac{5}{4}\omega t}.$$
As a consequence, $|\rho'_j(t)|=\left|\int E_j \partial_tu(t)\right|\leq Ce^{-at}+Ce^{-\frac{5}{4}\omega t}$. Integrating, we get
$|\rho_j(t)|\leq C\left(e^{-at}+e^{-\frac{5}{4}\omega t}\right)$. Combining these estimates with the expansions \eqref{exp_u} we deduce $a<\omega$. Since $a$ is arbitrarily close to $\tilde{\omega}$, we deduce $\tilde{\omega}\leq \omega$, concluding the proof that $\omega=\tilde{\omega}$.

In the sequel, if $\omega$ is finite, we will denote by $\omega^-$ a positive number such that $\omega^-<\omega$, arbitrarily close to $\omega$ and that may change from line to line. 
If $\omega=\infty$, $\omega^-$ is a large positive constant that may change from line to line.

\EMPH{Step 2}

Let $j\in \{1,\ldots,p\}$. We prove that there exists $C>0$ such that
\begin{equation}
 \label{E2}
 \forall t\geq 0,\quad |\sigma_j(t)|+|\sigma_j'(t)|\leq C\max\left( e^{-\omega_jt},e^{-2\omega^-t}\right).
\end{equation} 
Furthermore, if $2\omega>\omega_j$, there exists $S_j\in \RR$, $C>0$ such that
\begin{equation}
 \label{E3}
 \forall t\geq 0,\quad \left|\sigma_j(t)-S_je^{-\omega_jt}\right|+\left|\sigma_j'(t)+\omega_j S_je^{-\omega_jt}\right|\leq Ce^{-2\omega^-t}.
\end{equation} 
Indeed, let $\sigma_{j,\pm}(t)=\sigma_j'(t)\pm \omega_j\sigma_j(t)$. Using 
$$\sigma_j''(t)=\int Y_j\partial_t^2u=-\int Y_jL_S(h)-\int Y_jR_S(h),$$
we get
\begin{equation*}
\left|\sigma_j''(t)-\omega_j^2\sigma_j(t)\right|\leq C\|h\|^2_{L^{\frac{2N}{N-2}}}\leq Ce^{-2\omega^-t}.
\end{equation*} 
Thus 
\begin{equation}
 \label{E3'}
 \left|\sigma_{j,\pm}'(t)\mp \omega_j \sigma_{j,\pm}(t)\right|\leq Ce^{-2\omega^-t}.
\end{equation} 
Let us prove \eqref{E2}. We have
$$\left|\frac{d}{dt}\left(e^{-\omega_j t}\sigma_{j,+}(t)\right)\right|\leq Ce^{-(2\omega^-+\omega_j)t}.$$
Integrating between $t$ and $+\infty$, we get
\begin{equation}
 \label{E4}
 \left|\sigma_{j,+}(t)\right|\leq Ce^{-2\omega^-t}.
\end{equation} 
Similarly
\begin{equation}
 \label{E5}
 \left|\frac{d}{dt}\left( e^{\omega_jt}\sigma_{j,-}(t) \right)\right|\leq Ce^{(\omega_j-2\omega^-)t}
\end{equation} 
Integrating between $0$ and $t$, we obtain
\begin{equation}
 \label{E5'}
 \left|\sigma_{j,-}(t)\right|\leq Ce^{-\omega_jt}+Ce^{-2\omega^-t}.
\end{equation}
Combining \eqref{E4} and \eqref{E5'}, we obtain \eqref{E2}.

Next, we assume $2\omega>\omega_j$ and prove \eqref{E3}. We can take $\omega^-<\omega$ so that $2\omega^->\omega_j$. By \eqref{E5}, $e^{\omega_jt}\sigma_{j,-}(t)$ has a limit $\ell_j$ as $t\to +\infty$, and
\begin{equation}
\label{E6}
\left|\sigma_{j,-}(t)-e^{-\omega_jt}\ell_j\right|\leq Ce^{-2\omega^-t}.
\end{equation} 
Combining \eqref{E4} and \eqref{E6}, we get \eqref{E3} with $S_j=-\frac{1}{2\omega_j}\ell_j$. Step 2 is complete.

\EMPH{Step 3} In this step, we assume that 
\begin{equation}
\label{Assu_Step2}
\forall j\in\{1,\ldots,p\},\quad\omega_j\neq \omega,  
\end{equation} 
and we prove that $u\equiv S$. We will need the following Claim, whose proof is postponed to the appendix.
\begin{claim}
 \label{C:uniq_expo}
 Let $S\in \Sigma$. There exists $\nu=\nu(S)>0$ such that for all solutions $u$ of \eqref{CP} such that $T_+(u)=+\infty$ and 
 \begin{equation}
 \label{uniq_expo}
 \lim_{t\to +\infty} e^{\nu t}\left(\|u(t)-S\|_{\hdot}+\|\partial_tu(t)\|_{L^2}\right)=0,  
 \end{equation} 
 we have $u\equiv S$.
\end{claim}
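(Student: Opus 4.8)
The plan is to argue by contradiction: assuming $u\not\equiv S$, equivalently $h\egaldef u-S\not\equiv 0$, I will show that $h$ vanishes identically on some half-line $[T_0,+\infty)$, and then conclude that $u\equiv S$ on its whole maximal interval by uniqueness for the Cauchy problem \eqref{CP} (applied to the solution $u$ and the stationary solution $S$, which carry the same data $(S,0)$ at $t=T_0$). The constant $\nu=\nu(S)$ will simply be chosen strictly larger than $\omega_p$, the largest of the numbers $\omega_1\le\dots\le\omega_p$ attached to the negative eigenvalues of $L_S$; so I may fix once and for all some $\mu$ with $\omega_p<\mu<\nu$.

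First I would introduce, for $t$ large enough that $\|h(t)\|_{\hdot}$ is as small as needed, the (purely linear-algebraic) decomposition $h=\sum_{j=1}^p\sigma_jY_j+\sum_{k=1}^m\rho_kZ_k+g$ with $\sigma_j=\int hY_j$, $\rho_k=\int hE_k$ and $\int gY_j=\int gE_k=0$, exactly as in Step~1 of the proof of Proposition~\ref{P:expansion}. Writing $m(t)=\|h(t)\|_{\hdot}+\|\partial_th(t)\|_{L^2}$, the key estimates I would establish are: (i) from energy conservation $E(u,\partial_tu)=E(S,0)$, the identity $\Phi_S(h)=-\tfrac12\sum_j\omega_j^2\sigma_j^2+\Phi_S(g)$ (the cross terms vanish by \eqref{A6} and $g\perp Y_j$), and the coercivity Proposition~\ref{P:A7} (applicable since $g\perp Y_j$, $g\perp E_k$), that $\|g\|_{\hdot}+\|\partial_th\|_{L^2}\lesssim\sum_j|\sigma_j|+m^{3/2}$ and $m(t)^2\lesssim\sum_j\sigma_j(t)^2+\sum_k\rho_k(t)^2$; (ii) from $\partial_t^2h+L_Sh=R_S(h)$, with $\|R_S(h)\|_{L^{2N/(N+2)}}\lesssim\|h\|_{\hdot}^2$ by \eqref{pointwise_RQ}, the approximate ODEs $|\sigma_j''-\omega_j^2\sigma_j|\lesssim m^2$ and $|\rho_k''|\lesssim\|g\|_{\hdot}+m^2\lesssim\sum_j|\sigma_j|+m^{3/2}$, where for $\rho_k$ the relations \eqref{A6}, \eqref{A7} and $L_SZ_k=0$ are used precisely to kill the would-be linear terms $\int(L_SE_k)Y_j$ and $\int(L_SE_k)Z_l$. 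Since $m(t)\to0$ and $\nu>\omega_j$, integrating these ODEs from $+\infty$ forces the homogeneous modes $e^{\pm\omega_jt}$ of $\sigma_j$ to be absent, giving $|\sigma_j(T)|\le C e^{-\omega_jT}\int_T^{+\infty}e^{\omega_js}m(s)^2\,ds$, while $\rho_k,\rho_k'\to0$ at $+\infty$ yields $|\rho_k(T)|\le C\int_T^{+\infty}(s-T)\big(\sum_l|\sigma_l(s)|+m(s)^{3/2}\big)\,ds$. Combined with (i) this closes into a single integral inequality for $m$:
\begin{gather*}
m(T)\le C\sum_{j=1}^p e^{-\omega_jT}\int_T^{+\infty}e^{\omega_js}m(s)^2\,ds+C\sum_{k=1}^m|\rho_k(T)|,\\
|\rho_k(T)|\le C\int_T^{+\infty}(s-T)\Big(\sum_{l=1}^p e^{-\omega_ls}\int_s^{+\infty}e^{\omega_l\tau}m(\tau)^2\,d\tau+m(s)^{3/2}\Big)\,ds,\qquad T\ge T_0 .
\end{gather*}

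To conclude I would observe that, because $m(t)\to0$, for $T_0$ large the right-hand side above defines a \emph{strict contraction} of the weighted space $\{\,m:[T_0,+\infty)\to[0,+\infty)\ :\ \|m\|_w\egaldef\sup_{T\ge T_0}e^{\mu T}m(T)<\infty\,\}$: using $m(s)^2\le\varepsilon\,m(s)$ and $m(s)^{3/2}\le\varepsilon^{1/2}m(s)$ with $\varepsilon=\sup_{[T_0,+\infty)}m$ small, and that each kernel $e^{-\omega_jT}\int_T^{+\infty}e^{\omega_js}(\cdot)\,ds$, $\int_T^{+\infty}(s-T)(\cdot)\,ds$ maps $e^{-\mu\,\cdot}$ to a bounded multiple of $e^{-\mu\,\cdot}$ exactly because $\mu>\omega_p\ge\omega_j$, one gets $\|(\text{RHS})\|_w\le k\|m\|_w$ with $k=k(\varepsilon)<1$. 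The hypothesis $e^{\nu t}m(t)\to0$ with $\nu>\mu$ guarantees $\|m\|_w<\infty$, so the inequality applied to $m$ itself gives $(1-k)\|m\|_w\le0$, hence $m\equiv0$ on $[T_0,+\infty)$, i.e.\ $u\equiv S$ there, and then everywhere.

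The main obstacle is item (ii), specifically the bookkeeping of the null directions: since elements of $\ZZZ_S$ need not lie in $L^2$ one cannot diagonalize, and one must verify by hand (via $E_k\in C^\infty_0$ and the biorthogonality \eqref{A7}) that $\rho_k$ really behaves like a mode of ``eigenvalue $0$'', i.e.\ that $|\rho_k''|$ is controlled only by the coercive error $\|g\|_{\hdot}$ and a quadratic term — this is exactly what keeps the resulting integral operator bounded, with a $(\tau-T)$-kernel rather than an exponentially growing one. Everything else (the energy expansion, the coercivity step, the elementary ODE integrations and the contraction) is then routine.
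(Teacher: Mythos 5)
Your argument is correct, but it takes a genuinely different route from the paper's. The paper's proof (Appendix B) is a short soft argument: long-time perturbation theory around the global solution $S$ gives a uniform Lipschitz estimate $\sup_{t\in[-1,1]}\|(v(t)-S,\partial_tv(t))\|_{\hdot\times L^2}\leq M\|(v_0-S,v_1)\|_{\hdot\times L^2}$ for data in a fixed neighborhood; iterating this $T$ times (propagating backward from $t=T$ to $t=0$) and choosing any $\nu$ with $Me^{-\nu}<1$, the super-exponential smallness at $t=T$ beats the geometric loss $M^T$, forcing $(u_0,u_1)=(S,0)$ and hence $u\equiv S$ by uniqueness. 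No spectral information about $L_S$ enters; $\nu$ is tied only to the Cauchy-theory constant $M$. Your proof instead re-runs a refinement of the modulation analysis behind Proposition \ref{P:expansion} (energy expansion, coercivity via Proposition \ref{P:A7}, approximate second-order ODEs for $\sigma_j$ and $\rho_k$) and closes with a contraction in a weighted-in-time norm, yielding the spectrally explicit threshold $\nu>\omega_p$. This is arguably sharper and more informative about the mechanism, at the cost of considerably more bookkeeping; the key point, which you identify correctly, is that $\rho_k''$ is controlled linearly by $\|g\|_{\hdot}$ (thanks to \eqref{A6}, \eqref{A7} and $L_SZ_k=0$) rather than by $\rho_k$ itself, so the null directions contribute only a polynomially weighted kernel, and the hypothesis $e^{\nu t}m(t)\to 0$ with $\nu>\mu>\omega_p$ places $m$ in the weighted space where the integral operator contracts once $T_0$ is large. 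The only minor gap to make explicit is the $C^2$ regularity of $\sigma_j,\rho_k$, which follows from $u\in C^2([0,T_+),H^{-1})$, $Y_j\in H^2$ and $E_k\in C_0^\infty$, as used implicitly elsewhere in Section \ref{S:modulation}. In short, the paper's proof is shorter and more robust (no spectral hypotheses whatsoever), while yours buys the explicit spectral threshold.
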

By Claim \ref{C:uniq_expo}, it is sufficient to show $\omega=\infty$. We prove this by contradiction, assuming that $\omega<\infty$. We will prove that there exists $\omega^+>\omega$ such that, for all $j\in \{1,\ldots,p\}$,
\begin{equation}
 \label{E7}
 \quad |\sigma_j(t)|+|\sigma_j'(t)|\leq Ce^{-\omega^+t},
\end{equation} 
contradicting the definition \eqref{E1'} of $\omega$. 

Let $j\in \{1,\ldots,p\}$. We distinguish two cases.

If $\omega<\omega_j$, then \eqref{E7} follows from \eqref{E2}, taking $\omega<\omega^+<\max(\omega_j,2\omega^-)$.

Assume now $\omega\geq \omega_j$. In this case, \eqref{E3} holds. Furthermore, by assumption \eqref{Assu_Step2}, $\omega>\omega_j$, and thus
$$\lim_{t\to\infty}e^{\omega_jt}\left(|\sigma_j(t)|+|\sigma_j'(t)|\right)=0.$$
As a consequence, the real number $S_j$ in \eqref{E3} must be $0$, wich proves again \eqref{E7}, concluding Step 2.

\EMPH{Step 4} In this step, we assume that there exists $k\in \{1,\ldots,p\}$ such that $\omega=\omega_k$. We define the following subsets $J_0,J_+$ and $J_-$ of $\{1,\ldots,p\}$:
$$J_0=\Big\{j,\; \omega_j=\omega\Big\},\quad J_-=\Big\{j,\; \omega_j<\omega\Big\},\quad J_+=\Big\{j,\; \omega_j>\omega\Big\}.$$
We first prove that there exists $\omega^+>\omega$ such that
\begin{equation}
 \label{E9}
 j\in J_+\cup J_-\Longrightarrow \exists C>0,\; \forall t\geq 0,\; \left|\sigma_j(t)\right|+\left|\sigma_j'(t)\right|\leq Ce^{-\omega_+t}.
\end{equation} 
If $j\in J_+$, then \eqref{E9} follows from \eqref{E2}. If $j\in J_-$, then \eqref{E3} holds. By the definitions of $J_-$ and $\omega$, we must have $S_j=0$ in \eqref{E3}, and \eqref{E9} follows again.

We next notice that if $j\in J_0$, then \eqref{E3} holds for some $S_j\in \RR$. Furthermore, in view of \eqref{E9} and the definition \eqref{E1'} of $\omega$, there exists $j\in J_0$ such that $S_j\neq 0$.  We let
$$ Y(x)=\sum_{j\in J_0}S_jY_j,$$
and note that $Y\neq 0$ and $L_QY=-\omega^2Y$. By \eqref{E3}, \eqref{E9} and the considerations above,
\begin{equation}
 \label{E9'}
\sum_{j=1}^p \left|{\sigma'_j(t)}^2-\omega_j^2\sigma_j^2(t)\right|\leq Ce^{-2\omega^+t}
\end{equation} 
and
\begin{equation}
 \label{E10}
 \forall t\geq 0,\quad \left\|\sum_{j=1}^p \sigma_j(t)Y_j-e^{-\omega t}Y\right\|_{\hdot}+\left\|\sum_{j=1}^p \sigma_j'(t)Y_j +\omega e^{-\omega t}Y\right\|_{L^2} \leq Ce^{-\omega^+t}.
\end{equation} 
It remains to prove:
\begin{equation}
 \label{E10'}
 \forall t\geq 0,\quad \|g\|_{\hdot}+\sum_{j=1}^m |\rho_j(t)|+\left\|\partial_t u(t)-\sum_{j=1}^p \sigma_j'(t)Y_j\right\|_{L^2}\leq Ce^{-\omega^+t}.
\end{equation}
Note that $\partial_tu=\partial_th=\sum_{i=1}^p \sigma_i'Y_i+\sum_{j=1}^m\rho'_jZ_j+\partial_tg$, and, from \eqref{ortho_g} and the orthogonality properties of the functions $Y_i$ and $Z_j$,
$$\forall i\in \{1,\ldots, p\}, \quad \int Y_i\left(\sum_{j=1}^m\rho'_jZ_j+\partial_tg\right)=0.$$
As a consequence,
\begin{equation}
 \label{E11}
 \left\|\partial_t u\right\|_{L^2}^2 =\sum_{i=1}^p {\sigma_i'}^2+\left\|\sum_{j=1}^m\rho_j'Z_j+\partial_t g\right\|^2_{L^2}.
\end{equation} 
By \eqref{E1}, 
$$\left\|\sum_{j=1}^m \rho'_jZ_j +\partial_t g\right\|_{L^2}^2+\sum_{i=1}^p{\sigma_i'}^2-\sum_{i=1}^p \omega_i^2\sigma_i^2+\frac{1}{C}\|g\|^2_{\hdot}\leq Ce^{-3\omega^-t}.$$
Combining with \eqref{E9'}, we deduce
\begin{equation}
\label{E12}
\|g\|^2_{\hdot}+\left\|\sum_{j=1}^m \rho'_jZ_j+\partial_tg\right\|_{L^2}^2\leq e^{-2\omega^+t}.
\end{equation} 
Since for $k=1\ldots m$, $\rho'_k=\int E_k(\sum_{j=1}^m\rho'_jZ_j+\partial_tg)$, we obtain $|\rho'_k|\leq Ce^{-\omega^+t}$, and thus
\begin{equation}
 \label{E13}
 \left|\rho_k(t)\right|\leq Ce^{-\omega^+t}.
\end{equation} 
Combining \eqref{E11}, \eqref{E12} and \eqref{E13} we get \eqref{E10'}, which concludes the proof of Proposition \ref{P:expansion}.
\end{proof}
\section{Lorentz transformation}
\label{S:Lorentz}
In this section we prove that the Lorentz transform of a solution of \eqref{CP} with the compactness property is well-defined and is a solution of \eqref{CP} with this property. We consider without loss of generality a Lorentz transformation with a parameter $\vell$ which is parallel to $\ve_1=(1,0,\ldots,0)$. We first need some notations. Let $\ell\in (-1,+1)$. If $(s,y)\in \RR\times \RR^N$ we define $(t,x)$ by
\begin{equation}
 \label{def_phil}
(t,x)=\phi_{\ell}(s,y)=\left(\frac{s+\ell y_1}{\sqrt{1-\ell^2}},\frac{y_1+\ell s}{\sqrt{1-\ell^2}},y'\right),
 \end{equation} 
 where $x'=(x_2,\ldots,x_N)$, $y'=(y_2,\ldots,y_N)$. Thus $(s,y)=\phi_{\ell}^{-1}(t,x)=\left(\frac{t-\ell x_1}{\sqrt{1-\ell^2}},\frac{x_1-\ell t}{\sqrt{1-\ell^2}},x'\right)$.
 We recall that the Lorentz transform of a global solution $u$ is defined by
\begin{equation}
 \label{def_ul}
 u_{\ell}(t,x)=u\left(\frac{t-\ell x_1}{\sqrt{1-\ell^2}},\frac{x_1-\ell t}{\sqrt{1-\ell^2}},x'\right)=u\left(\phi_{\ell}^{-1}(t,x)\right).
\end{equation} 
 When $u$ is global in time, $u_{\ell}$ is well-defined (say, as an element $L_{\loc}^{\frac{2(N+1)}{N-2}}(\RR^{N+1})$) and, at least in the distributional sense, a solution of \eqref{CP}. In this section we prove that $u_{\ell}$ is indeed a solution of \eqref{CP} in the usual sense:
\begin{lemma}
\label{L:Lorentz_global}
 Let $u$ be a global, finite-energy solution of \eqref{CP}, and $\ell\in (-1,+1)$. Then $u_{\ell}$ is a global, finite-energy solution of \eqref{CP}.
\end{lemma}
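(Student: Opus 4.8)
The plan is to establish that $u_\ell$ satisfies the hypotheses of one of the sufficient conditions for being a solution in the sense of Definition \ref{D:solution}, namely Lemma \ref{L:sol_distrib} (or Claim \ref{C:L5L10}). The natural strategy is to work on bounded space-time slabs: fix $T>0$ and show that $u_\ell$ restricted to $\{|t|<T\}$ is a solution of \eqref{CP} on an open time interval containing $0$, then patch these together using uniqueness of the Cauchy problem and the blow-up criterion \eqref{Bup_criterion} to conclude $u_\ell$ is global.

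First I would record the change-of-variables facts. Since $\phi_\ell$ is a linear bijection of $\RR^{N+1}$ with Jacobian $1$, and since the d'Alembertian $\partial_t^2-\Delta$ is invariant under $\phi_\ell$, the function $u_\ell$ satisfies $(\partial_t^2-\Delta)u_\ell=|u_\ell|^{\frac{4}{N-2}}u_\ell$ in $\DDD'(\RR^{N+1})$ — this follows from Remark \ref{R:distrib} for $u$ together with the chain rule for distributions, and is the ``at least in the distributional sense'' remark already made in the text. Next I would check the regularity hypotheses of Lemma \ref{L:sol_distrib} on a slab $\{|t|<T\}$: one needs $u_\ell\in L^{\frac{N+2}{N-2}}_{\loc}(I,L^{\frac{2(N+2)}{N-2}}(\RR^N))$ and $(u_\ell,\partial_t u_\ell)\in C^0(I,\hdot\times L^2)$, and the compatibility of the initial data. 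The space $L^{\frac{2(N+1)}{N-2}}(\RR^{N+1})$ is Lorentz-invariant (as emphasized before Definition \ref{D:solution}): since $u$ is global and finite-energy, $u\in L^{\frac{2(N+1)}{N-2}}_{\loc}(\RR,L^{\frac{2(N+1)}{N-2}}(\RR^N))$, and on any bounded slab this gives $u\in L^{\frac{2(N+1)}{N-2}}$ of the slab; the image of a bounded $(t,x)$-slab under $\phi_\ell^{-1}$ is contained in a bounded $(s,y)$-slab, so $u_\ell\in L^{\frac{2(N+1)}{N-2}}$ of the slab, hence (by interpolation with the energy bound, as in the Hölder--Sobolev estimate displayed after \eqref{Bup_criterion}) also in $L^{\frac{N+2}{N-2}}_{\loc}L^{\frac{2(N+2)}{N-2}}$ on the slab.

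The main obstacle is the continuity statement $(u_\ell,\partial_t u_\ell)\in C^0(I,\hdot\times L^2)$ together with finiteness of the energy at each fixed time slice $t$ — this is exactly where the ``Lorentz transform mixes space and time'' difficulty bites, since a fixed-time slice $\{t=t_0\}$ for $u_\ell$ corresponds to a tilted hyperplane in the $(s,y)$ variables for $u$. I would handle this by approximation: take smooth compactly supported data for $u$ (whose solutions are classical by Remark \ref{R:distrib}), for which the Lorentz transform is manifestly a classical solution with all the required regularity on slabs, and for which the energy of the slice is finite and depends continuously on $t$. Then use the Strichartz-based stability theory — Claim \ref{C:solution_sequence} — to pass to the limit: approximate $(u_0,u_1)$ in $\hdot\times L^2$, obtain solutions $u^{(k)}$ converging to $u$ uniformly in $\hdot\times L^2$ on compact time intervals and bounded in $L^{\frac{2(N+1)}{N-2}}$ on slabs, apply the Lorentz transform, verify the slabs are mapped into slabs so the $L^{\frac{2(N+1)}{N-2}}$ bounds and the uniform $\hdot\times L^2$ convergence are preserved (here one uses that $\phi_\ell$ is bi-Lipschitz with bounded Jacobian and that the energy density transforms with bounded multipliers under the linear map $\phi_\ell$), and conclude via Claim \ref{C:solution_sequence} that $u_\ell$ is a solution on each slab. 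Finally, local uniqueness and the blow-up criterion \eqref{Bup_criterion} force $T_\pm(u_\ell)=\pm\infty$, i.e. $u_\ell$ is global; finiteness of the energy is then automatic and conservation of energy identifies it. Care is needed in the bookkeeping of which space-time regions map to which — that is the routine-but-delicate part I would spell out — but no new analytic input beyond the Cauchy theory of Section \ref{S:prelim} is required.
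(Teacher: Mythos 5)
Your overall architecture (verify the hypotheses of Lemma \ref{L:sol_distrib} for $u_\ell$ and conclude) matches the paper's, but the way you propose to verify them contains a genuine gap. The assertion that ``the image of a bounded $(t,x)$-slab under $\phi_\ell^{-1}$ is contained in a bounded $(s,y)$-slab'' is false: since $s=\frac{t-\ell x_1}{\sqrt{1-\ell^2}}$, the preimage of $\{|t|<T\}$ is the tilted slab $\{(s,y):|s+\ell y_1|<T\sqrt{1-\ell^2}\}$, which contains points with $|s|$ arbitrarily large (at large $|y_1|$). Consequently, knowing $u\in L^{\frac{2(N+1)}{N-2}}$ of bounded $(s,y)$-slabs tells you nothing about $u_\ell$ on a bounded $(t,x)$-slab; you would need control of $u$ over a time-unbounded region, i.e.\ essentially a global space-time bound, which a general global finite-energy solution does not possess (only scattering solutions do). The same defect undermines your approximation step: the convergence $u^{(k)}\to u$ furnished by perturbation theory is uniform only on compact $s$-intervals, while a fixed-$t$ slice for the Lorentz transforms is a tilted hyperplane meeting arbitrarily large $|s|$; so neither the uniform $\hdot\times L^2$ convergence of $u^{(k)}_\ell$ nor uniform $L^{\frac{2(N+1)}{N-2}}$ bounds on $(t,x)$-slabs follow, and Claim \ref{C:solution_sequence} cannot be invoked. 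Note also that Claim \ref{C:linear_Lorentz} requires the forcing to lie in $L^1(\RR,L^2)$ \emph{globally} in time, which $|u|^{\frac{4}{N-2}}u$ need not.

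The missing idea is to localize in \emph{space} rather than in time. The paper first proves the lemma for globally scattering solutions, for which $|u|^{\frac{4}{N-2}}u\in L^1(\RR,L^2)$ and $u\in L^{\frac{2(N+1)}{N-2}}(\RR^{N+1})$ globally, so that Claim \ref{C:linear_Lorentz} and the Lorentz invariance of $L^{\frac{2(N+1)}{N-2}}(\RR^{N+1})$ control the transformed solution on all of space-time. For a general global solution one then covers $\RR^{N+1}$ by regions on which $u$ coincides, by finite speed of propagation, with a small-data (hence scattering) solution: truncating the data outside a large ball handles an exterior region $|x|\ge|t|+B$, and truncating near each point handles a small light cone around each $(T,X)$. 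The identity $|x|^2-t^2=|y|^2-s^2$ shows that such exterior regions and light cones are mapped into regions of the same type, which is what makes the comparison survive the Lorentz transformation. Your proposal contains no substitute for this reduction to scattering solutions, and without it the argument does not close.
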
 
The conclusion of Lemma \ref{L:Lorentz_global} seems to be folklore knowledge and has been used in the literature before without any proof that we were aware of. We provide a proof here to close this apparent gap.

 Next consider a solution $u=u(s,y)$ of \eqref{CP} with the compactness property,
 and $(s^-,s^+)$ its maximal interval of existence. Recall that $s^+=+\infty$ or $s^-=-\infty$. If $s^+<\infty$, then $s^-=-\infty$ and by \cite[Lemma 4.8]{KeMe08}, there exists $y^+\in \RR^N$ such that $u$ is supported in the cone
 $$ \left\{(s,y)\in (-\infty,s^+)\times \RR^N \text{ s.t. } \left|y-y^+\right|\leq \left|s-s^+\right|\right|\}.$$
 In this case we call $y^+$ the \emph{blow-up point} of $u$ for positive time, and let
 $$ (t^+,x^+)=\phi_{\ell}(s^+,y^+)$$
 If $t^-$ is finite, we define similarly $y^-$, the blow-up point of $u$ for negative times, and let
 $$ (t^-,x^-)=\phi_{\ell}(s^-,y^-).$$
 If $s^{\pm}=\pm \infty$ we let $t^{\pm}=\pm \infty$. If the solution $u$ is not global in time, we extend it to $\RR$ as a function $\unu$ letting:
 \begin{equation*}
 \begin{cases}
 \unu(s,y)=0  &\text{ if } s\notin \Imax(u)\\
 \unu(s,y)=u(s,y)  &\text{ if } s\in \Imax(u).
 \end{cases}
\end{equation*} 
The main result of this section is the following proposition:
 \begin{prop}
\label{P:lorentz}
Let $u$ be a solution of \eqref{CP} with the compactness property, $s^{\pm}$, $y^{\pm}$, $t^{\pm}$, $x^{\pm}$ and $\unu$ be as above. Let 
 \begin{equation}
 \label{def_ul_bis}
 u_{\ell}(t,x)=\unu\left(\frac{t-\ell x_1}{\sqrt{1-\ell^2}},\frac{x_1-\ell t}{\sqrt{1-\ell^2}},x'\right),\quad t\in (t^-,t^+),\; x=(x_1,x')\in \RR\times \RR^{N-1}. 
 \end{equation} 
 Then $u_{\ell}$ is a solution of \eqref{CP} with the compactness property, with maximal interval of existence $(t^-,t^+)$.
\end{prop}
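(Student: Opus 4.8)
The plan is to treat first the case where $u$ is global, and then to reduce the general case, using finite speed of propagation and the cone support of solutions with the compactness property, to a statement that can be checked near the (at most one) blow-up point. If $T_\pm(u)=\pm\infty$, then $\unu=u$, $t^\pm=\pm\infty$, $u_\ell$ is the ordinary Lorentz transform of $u$, which is a global finite-energy solution of \eqref{CP} by Lemma~\ref{L:Lorentz_global}, and it remains only to produce parameters $\Lambda(t),X(t)$ witnessing the compactness property (handled below). So assume now, say, $t^+<\infty$; then $s^+<\infty$, $s^-=t^-=-\infty$, and $u$ is supported in the backward light cone with vertex $(s^+,y^+)$. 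Since $\partial_t^2-\Delta$ and light cones are invariant under $\phi_\ell$ and $\det\phi_\ell'\equiv1$, the function $u_\ell$ is supported in the backward light cone with vertex $(t^+,x^+)$. The elementary but crucial geometric fact is that for every $t_1<t^+$ there is $c=c(t_1)>0$ with
\begin{equation*}
\phi_\ell^{-1}\big(\{(t,x):\ t\le t_1\}\big)\cap\big\{(s,y):\ s\le s^+-|y-y^+|\big\}\subset\{s\le s^+-c\},
\end{equation*}
and moreover, for $t_1'<t_1<t^+$, the $\phi_\ell$-pullback of the slab $\{t_1'\le t\le t_1\}$ intersected with this cone is a \emph{compact} subset of $(-\infty,s^+)\times\RR^N$, i.e.\ of the domain where $u$ is a genuine finite-energy solution.

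Granting this, $u$ solves $\partial_t^2u-\Delta u=|u|^{\frac{4}{N-2}}u$ in $\DDD'$ on that compact region (Remark~\ref{R:distrib}) and has finite $L^{\frac{2(N+1)}{N-2}}$ norm there. Changing variables by $\phi_\ell$ (Jacobian $1$, and $\partial_t^2-\Delta$ covariant), and using that $u_\ell\equiv0$ on the complement of the support cone inside $(t^-,t^+)\times\RR^N$, one obtains that $u_\ell$ satisfies $\partial_t^2u_\ell-\Delta u_\ell=|u_\ell|^{\frac{4}{N-2}}u_\ell$ in $\DDD'\big((t^-,t^+)\times\RR^N\big)$ and that $u_\ell\in L^{\frac{2(N+1)}{N-2}}_{\loc}$ on $(t^-,t^+)\times\RR^N$; the remaining space-time integrability required in Lemma~\ref{L:sol_distrib} (namely $u_\ell\in L^{\frac{N+2}{N-2}}_{\loc}$ into $L^{\frac{2(N+2)}{N-2}}$) is obtained by the same Strichartz-based argument as in the proof of Lemma~\ref{L:Lorentz_global}, localized to the slabs $\phi_\ell^{-1}\big((t_1',t_1)\times\RR^N\big)$. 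Once the continuity of $t\mapsto(u_\ell(t),\partial_tu_\ell(t))$ into $\hdot\times L^2$ is established, Lemma~\ref{L:sol_distrib} yields that $u_\ell$ is a solution of \eqref{CP} on $(t^-,t^+)$.

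For that continuity and for the compactness property of $u_\ell$ I would argue as follows. Each slice $(u_\ell(t),\partial_tu_\ell(t))$ lies in $\hdot\times L^2$ with bounds locally uniform in $t$ (finiteness of the energy flux of $u$ across a spacelike slice), and the energy–momentum of $u_\ell$ is a fixed linear combination of the conserved $E[u]$ and $P[u]$ (energy–momentum transforms as a vector under $\phi_\ell$), hence is constant in $t$; combined with weak continuity (from the distributional equation and the local $L^\infty_t(\hdot\times L^2)$ bound) and with the compactness property of $u$ — which forbids loss of mass by concentration or escape — this upgrades to strong continuity into $\hdot\times L^2$. To transfer the compactness property, one relates the time slice $\{t\}$ of $u_\ell$ to a tilted spacelike slice of $u$: writing $v_s$ for the rescaled solution $\lambda(s)^{\frac{N}{2}-1}u\big(s+\lambda(s)\cdot,\ y(s)+\lambda(s)\cdot\big)$, the family $\{v_s\}_s$ is precompact in $C^0_{\loc}$ into $\hdot\times L^2$ (compactness property plus local well-posedness on a time interval uniform over the precompact data set $\{v_s(0)\}_s$), and a fixed spacelike slice of a precompact family of finite-energy solutions — equivalently, a Lorentz transform followed by evaluation at $t=0$, an operation continuous on solutions by the local theory — yields a precompact subset of $\hdot\times L^2$. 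Choosing $\Lambda(t)\asymp\lambda(s_0(t))$ and $X(t)$ so that the rescaled slice of $u_\ell$ becomes such a fixed spacelike slice of $v_{s_0(t)}$ supplies the parameters; here one uses the standard a priori regularity of $\lambda$ to know that $\lambda$ varies by a bounded factor over the portion of the slice carrying the mass at scale $\Lambda(t)$.

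Finally, $(t^-,t^+)$ is exactly the maximal interval of $u_\ell$: by the blow-up criterion \eqref{Bup_criterion} and the Lorentz invariance of $\|\cdot\|_{L^{\frac{2(N+1)}{N-2}}(\RR^{N+1})}$, for $t_1<t^+$ the $\phi_\ell$-pullback of $(t_1,t^+)\times\RR^N$ meets the support cone of $u$ in a region containing $\{(s,y):\ s_2<s<s^+,\ s\le s^+-|y-y^+|\}$ for a suitable $s_2<s^+$, on which $\|u\|_{L^{\frac{2(N+1)}{N-2}}}=+\infty$ (otherwise $u$ would scatter, contradicting the compactness property), whence $\|u_\ell\|_{L^{\frac{2(N+1)}{N-2}}((t_1,t^+)\times\RR^N)}=+\infty$ and $T_+(u_\ell)=t^+$; symmetrically $T_-(u_\ell)=t^-$. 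The main obstacle I anticipate is precisely the non-global case near $(t^+,x^+)$: one cannot present $u_\ell$ there as the Lorentz transform of a \emph{global} solution (the singularity is intrinsic), so Lemma~\ref{L:Lorentz_global} does not apply directly, and both the strong continuity of $u_\ell$ into $\hdot\times L^2$ and the transfer of the compactness property must be carried out by hand, genuinely using the precompactness of the trajectory of $u$ together with the cone support rather than any global Strichartz estimate.
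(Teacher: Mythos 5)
Your high-level plan matches the paper's (global case via Lemma~\ref{L:Lorentz_global}; non-global case via the support cone and local comparisons; compactness via tilted slices of the precompact trajectory), and you correctly identify the non-global case near the singularity as the crux. But two of your key steps, as written, do not work.

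First, the upgrade to strong continuity of $t\mapsto(u_\ell(t),\partial_t u_\ell(t))$ in $\hdot\times L^2$. You propose: local uniform bounds, plus constancy of the transformed energy--momentum, plus weak continuity, plus ``the compactness property forbids concentration.'' Constancy of $E$ and $P$ is not constancy of the $\hdot\times L^2$ norm (the nonlinear term has the wrong sign), so the standard ``weak convergence plus norm convergence'' upgrade is unavailable; and ``no concentration'' for $u$ is a statement about the slices $\{s=\mathrm{const}\}$, whereas you need it on the tilted slices $\{t=\mathrm{const}\}$ --- transferring it is essentially the proposition itself, so the argument is circular. The paper instead covers each slice $\{t=T\}$ by finitely many regions (a neighborhood of spatial infinity and small light cones) on which $u_\ell$ coincides with the Lorentz transform of a \emph{globally defined scattering} solution obtained by truncating data of $u$; strong continuity then comes from the Strichartz-based Claim~\ref{C:linear_Lorentz}. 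The technical point your ``compact slab'' framing glosses over is the comparison across the blow-up time $s=s^+$: one must show that the truncated scattering solution agrees with $\unu$ (i.e.\ vanishes) also for $s\geq s^+$ inside the comparison cone, and that $(s^+,y^+)$ avoids that cone; this is exactly Claim~\ref{C:FSP} and it is needed, not optional.

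Second, the transfer of the compactness property. You assert that ``a fixed spacelike slice of a precompact family of finite-energy solutions yields a precompact subset,'' but this fails if the limit profile $v$ of the modulated sequence $v_{s_n}$ is a non-global solution whose Lorentz transform is \emph{not defined at} $t=0$ (the tilted slice can hit the singularity of $v$). Ruling this out is the hardest step: the paper re-chooses the translation parameter $y_1(s)$ (Claim~\ref{C:choice_modulation}) so that the gradient mass of each profile is split between $\{z_1>0\}$ and $\{z_1<0\}$ in proportions between $1/3$ and $2/3$; this forces the blow-up point of $v$ to satisfy $|Y_1^+|<S^+$, hence $T_+(v_\ell)=\frac{S^++\ell Y_1^+}{\sqrt{1-\ell^2}}>0$. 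Your remark about ``a priori regularity of $\lambda$'' does not address this. Relatedly, you use a reparametrization $s_0(t)$ with $t=\frac{s_0+\ell y_1(s_0)}{\sqrt{1-\ell^2}}$ without proving it exists: the given modulation parameters need not be continuous in $s$, so one must first construct a continuous $y_1(s)$ (again Claim~\ref{C:choice_modulation}), show $y(s)\to y^+$ as $s\to s^+$, and apply the intermediate value theorem. These three ingredients --- Claim~\ref{C:FSP}, the mass-splitting normalization of $y_1(s)$, and the continuity/surjectivity of $s\mapsto\frac{s+\ell y_1(s)}{\sqrt{1-\ell^2}}$ --- are the actual content of the proof and are absent from your proposal.
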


\begin{remark}
 If $t^{+}$ is finite, then $x^+$ is the blow-up point of $u_{\ell}$ for positive time. A similar statement holds for negative times.
\end{remark}

\begin{remark}
In this paper, we will only need to apply the Lorentz transformation to solutions of \eqref{CP} with the compactness property. Let us mention that is always possible, adapting the argument of this section, to define the Lorentz transform of a solution which is global in at least one time direction.
\end{remark}
\begin{remark}
In Lemma \ref{L:Lorentz_global} and in Proposition \ref{P:lorentz}, as well as in all this paper, a \emph{solution} of \eqref{CP} is a solution of \eqref{CP} in the sense of Definition \ref{D:solution}.
\end{remark}

\subsection{Lorentz transform of global solutions}
In this subsection we prove Lemma \ref{L:Lorentz_global}. We start with the easier case of scattering solutions.
\begin{lemma}
\label{L:scattering_Lorentz}
 Let $u$ be a global solution of \eqref{CP} scattering in both time directions. Let $u_{\ell}$ be defined by \eqref{def_ul}. Then $u_{\ell}$ is a global solution of \eqref{CP}, scattering in both times directions.
\end{lemma}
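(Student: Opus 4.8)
The plan is to realize $u_\ell$ as a limit, in $C^0(\RR,\hdot\times L^2)$, of the Lorentz transforms of smooth solutions and then to quote Claim \ref{C:solution_sequence}. Write $p=\frac{N+2}{N-2}$. First I would record the elementary facts about the map $\phi_\ell$ of \eqref{def_phil}: it is a linear bijection of $\RR^{N+1}$ with $\det\phi_\ell=1$ and $\phi_\ell^{-1}=\phi_{-\ell}$, it commutes with $\partial_t^2-\Delta$ on smooth functions and trivially with $v\mapsto|v|^{p-1}v$, and — crucially because $|\ell|<1$ — the image under $\phi_\ell$ of a solid light-cone region $\{|y-y_0|\le R+|s-s_0|\}$ meets each time slice $\{t=t_0\}$ in a bounded set (a short computation isolating $x_1$ and $x'$). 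Since $u$ scatters in both time directions, the scattering characterization of Section \ref{S:prelim} gives that $M:=\|u\|_{L^{\frac{2(N+1)}{N-2}}(\RR^{N+1})}+\|u\|_{L^{\frac{N+2}{N-2}}(\RR,L^{\frac{2(N+2)}{N-2}}(\RR^N))}+\sup_s\|(u,\partial_su)(s)\|_{\hdot\times L^2}$ is finite, and since $\phi_\ell$ preserves Lebesgue measure, $\|u_\ell\|_{L^{\frac{2(N+1)}{N-2}}(\RR^{N+1})}=\|u\|_{L^{\frac{2(N+1)}{N-2}}(\RR^{N+1})}\le M$.

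Next I would produce the approximating solutions: choose $(u_{0,k},u_{1,k})\in C_0^\infty(\RR^N)^2$ with $(u_{0,k},u_{1,k})\to(u_0,u_1)$ in $\hdot\times L^2$. For $k$ large, long-time perturbation theory around $u$ makes $u_k$ global, scattering in both directions, with $\|u_k\|_{L^{\frac{2(N+1)}{N-2}}(\RR^{N+1})}\le M+1$ and $\kappa_k:=\sup_s\|(u-u_k)(s)\|_{\hdot\times L^2}+\|u-u_k\|_{L^{\frac{N+2}{N-2}}(\RR,L^{\frac{2(N+2)}{N-2}}(\RR^N))}\to0$. By Remark \ref{R:distrib} and finite speed of propagation, $u_k\in C^\infty(\RR^{N+1})$ and $u_k(t,\cdot)\in C_0^\infty(\RR^N)$ for each $t$; by the support fact from the previous step, $u_{k,\ell}(t_0,\cdot)\in C_0^\infty(\RR^N)$ for each $t_0$. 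By Lorentz covariance $u_{k,\ell}$ solves \eqref{CP} classically, so by Lemma \ref{L:sol_distrib} it is a solution in the sense of Definition \ref{D:solution}; it is global and $\|u_{k,\ell}\|_{L^{\frac{2(N+1)}{N-2}}(\RR^{N+1})}=\|u_k\|_{L^{\frac{2(N+1)}{N-2}}(\RR^{N+1})}\le M+1$.

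The core step — and the one I expect to be the main obstacle — is a uniform energy estimate: there is $C_\ell$, depending only on $\ell$, with $\sup_t\|(u_{j,\ell}-u_{k,\ell})(t)\|_{\hdot\times L^2}\le C_\ell(\kappa_j+\kappa_k)$. Set $w=u_j-u_k$, smooth and compactly supported in space on each slice, solving $(\partial_t^2-\Delta)w=F$ with $F=|u_j|^{p-1}u_j-|u_k|^{p-1}u_k$; Hölder gives $\|F\|_{L^1(\RR,L^2(\RR^N))}\lesssim\big(\|u_j\|+\|u_k\|\big)^{\frac{4}{N-2}}\|w\|\lesssim\kappa_j+\kappa_k$, where the norms on the right are the $L^{\frac{N+2}{N-2}}(\RR,L^{\frac{2(N+2)}{N-2}}(\RR^N))$ norms, and also $\sup_s\|\nabla_{s,y}w(s)\|_{L^2}\lesssim\kappa_j+\kappa_k$. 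For fixed $t_0$, the slice $\phi_\ell^{-1}(\{t=t_0\})$ is the spacelike hyperplane $\Pi_{t_0}=\{(s,y):s+\ell y_1=t_0\sqrt{1-\ell^2}\}$ with future unit normal $n$, and the tensorial transformation law for the wave stress–energy tensor $T_{\mu\nu}[w]=\partial_\mu w\,\partial_\nu w-\tfrac12 g_{\mu\nu}\,\partial_\alpha w\,\partial^\alpha w$ (Minkowski $g$) yields $\|(w_\ell,\partial_tw_\ell)(t_0)\|_{\hdot\times L^2}^2=c_\ell\int_{\Pi_{t_0}}T[w](n,n)\,d\mu$ for a positive constant $c_\ell$, with $0\le T[w](n,n)\le C_\ell(|\partial_sw|^2+|\nabla_yw|^2)$ pointwise. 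I would then apply the divergence theorem to $T_{\mu\nu}[w]$ — which satisfies $\partial^\mu T_{\mu\nu}[w]=F\,\partial_\nu w$ — in the bounded region between $\Pi_{t_0}$ and the flat slices $\{s=\pm R\}$: the lateral contributions vanish since $w$ vanishes there, all boundary flux terms are nonnegative by the dominant energy condition, and letting $R\to\infty$, using that $w(\pm R)$ converges in $\hdot\times L^2$ to a free solution of conserved energy $\lesssim\kappa_j+\kappa_k$, gives $\int_{\Pi_{t_0}}T[w](n,n)\,d\mu\lesssim_\ell(\kappa_j+\kappa_k)^2+\|F\|_{L^1(\RR,L^2)}\sup_s\|\nabla_{s,y}w(s)\|_{L^2}\lesssim_\ell(\kappa_j+\kappa_k)^2$. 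This is the only place space and time are genuinely mixed, and where the \emph{scattering} hypothesis is essential — both to control $F$ globally in space-time and to evaluate the flux through $\{s=\pm R\}$; an alternative would treat the linear part by the elementary Lorentz-invariance of finite energy for free waves and the Duhamel part through the (Lorentz-invariant) retarded fundamental solution, but one still needs an energy argument to recover the $\hdot\times L^2$ norm on the tilted slice.

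Finally I would conclude. By the uniform estimate, $(u_{k,\ell},\partial_tu_{k,\ell})$ is Cauchy in $C^0(\RR,\hdot\times L^2)$; its limit is $(u_\ell,\partial_tu_\ell)$ because $u_{k,\ell}=u_k\circ\phi_\ell^{-1}\to u\circ\phi_\ell^{-1}=u_\ell$ in $L^1_{\loc}(\RR^{N+1})$ (change of variables together with $u_k\to u$ in $L^1_{\loc}$). Hence $(u_\ell,\partial_tu_\ell)\in C^0(\RR,\hdot\times L^2)$, $u_\ell\in L^{\frac{2(N+1)}{N-2}}(I\times\RR^N)$ for every bounded $I$, $\sup_t\|(u_\ell-u_{k,\ell})(t)\|_{\hdot\times L^2}\to0$, and $\sup_k\|u_{k,\ell}\|_{L^{\frac{2(N+1)}{N-2}}(I\times\RR^N)}\le M+1$, so Claim \ref{C:solution_sequence} makes $u_\ell$ a solution of \eqref{CP} on every bounded interval (after a time translation). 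Since $\|u_\ell\|_{L^{\frac{2(N+1)}{N-2}}(\RR^{N+1})}<\infty$, the blow-up criterion \eqref{Bup_criterion} forces $u_\ell$ to be global, and the scattering characterization shows it scatters in both time directions.
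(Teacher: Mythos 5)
Your proposal is correct and follows the same overall skeleton as the paper's proof: approximate $u$ by solutions $u_k$ with $C_0^\infty$ data, check that the $u_{k,\ell}$ are solutions (classical, compactly supported in space on each slice because $|\ell|<1$), establish convergence of $(u_{k,\ell},\partial_t u_{k,\ell})$ in $C^0(\RR,\hdot\times L^2)$ together with a uniform $L^{\frac{2(N+1)}{N-2}}(\RR^{N+1})$ bound, and conclude via Claim~\ref{C:solution_sequence} and the blow-up/scattering criteria. The one genuine difference is in how the key uniform bound in $C^0(\RR,\hdot\times L^2)$ is obtained. The paper simply invokes Claim~\ref{C:linear_Lorentz}, cited from \cite[Lemma 2.2 and Remark 2.3]{KeMe08}, applying it twice: once to the Duhamel representation of $u$ with source $|u|^{\frac{4}{N-2}}u\in L^1_tL^2_x$ to get $(u_\ell,\partial_tu_\ell)\in C^0(\RR,\hdot\times L^2)$, and once to the Duhamel representation of $u^k-u$ to get convergence. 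You instead prove the Cauchy estimate $\sup_t\|(u_{j,\ell}-u_{k,\ell})(t)\|_{\hdot\times L^2}\lesssim_\ell \kappa_j+\kappa_k$ directly, via the stress--energy tensor and the divergence theorem applied between a flat slice $\{s=\text{const}\}$ and the tilted spacelike hyperplane $\Pi_{t_0}$. This is a perfectly legitimate alternative; in fact the flux identity you sketch \emph{is} the mechanism behind the cited lemma, so you are re-deriving rather than replacing it. The trade-off is self-containedness versus brevity. The flux step is the place that needs care: with the future-pointing Minkowski unit normal $\nu=(1,-\ell,0,\dots,0)/\sqrt{1-\ell^2}$ one does have the pointwise identity $T[w](\nu,\nu)=\tfrac12\big(|\partial_t w_\ell|^2+|\nabla_x w_\ell|^2\big)$ along $\Pi_{t_0}$, and the divergence theorem applied (within the support cone, so all regions are bounded) to $X^\mu=T^{\mu\nu}[w]\nu_\nu$ between $\Pi_{t_0}$ and a single flat slice $\{s=\pm R\}$ with $R$ large gives $\int_{\Pi_{t_0}}T(\nu,\nu)\le \int_{\{s=\pm R\}}T(\nu,\partial_s)+\|F\|_{L^1L^2}\sup_s\|\nu\cdot\partial w(s)\|_{L^2}\lesssim_\ell(\kappa_j+\kappa_k)^2$; you do not actually need to send $R\to\infty$ nor to invoke the scattering limit of $w$ here, since $\sup_s\|(w,\partial_s w)(s)\|_{\hdot\times L^2}\lesssim\kappa_j+\kappa_k$ already controls the flat-slice flux, while scattering enters only through the global $L^{\frac{N+2}{N-2}}L^{\frac{2(N+2)}{N-2}}$ bounds needed for $\|F\|_{L^1L^2}$. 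Your sketch says ``all boundary flux terms are nonnegative,'' which is not quite the way to phrase it (one boundary term is what you want to estimate, the other is controlled by the flat-slice energy); as written this would need the small clean-up just described, but the underlying argument is sound.
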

We first recall from \cite[Lemma 2.2 and Remark 2.3]{KeMe08} the following claim:
\begin{claim}
\label{C:linear_Lorentz}
Let  $h\in L^1(\RR,L^2(\RR^N))$, $(w_0,w_1)\in \hdot\times L^2$, $\ell\in (-1,+1)$ and 
\begin{equation}
\label{LCP}
w(t)=\cos(t\sqrt{-\Delta})w_0+\frac{\sin(t\sqrt{-\Delta})}{\sqrt{-\Delta}}w_1+\int_0^t\frac{\sin\left( (t-s)\sqrt{-\Delta} \right)}{\sqrt{-\Delta}}h(s)\,ds,\quad t\in \RR.
\end{equation}
Then $(w_{\ell},\partial_tw_{\ell})\in C^0\left(\RR,\hdot\times L^2\right)$ and there is a constant $C_{\ell}$ (depending only on $\ell$) such that
\begin{equation*}
 \sup_{t} \left\|(w_{\ell}(t),\partial_tw_{\ell}(t)\right\|_{\hdot\times L^2}\leq C_{\ell}\left( \|(w_0,w_1)\|_{\hdot\times L^2}+\|h\|_{L^1(\RR,L^2)} \right).
\end{equation*} 
\end{claim}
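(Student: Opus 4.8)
The plan is to reduce everything to the free wave equation by absorbing the inhomogeneous term, and then to exploit the conformal/Lorentz invariance of the free wave propagator at the level of energy, using the explicit change of variables $\phi_\ell$. First I would dispose of the Duhamel term: writing $h$ as a limit of functions in $C_0^\infty(\RR\times\RR^N)$ in the $L^1(\RR,L^2)$ norm, it suffices by density and the $L^1L^2$-Strichartz bound (the trivial energy estimate) to treat $h\in C_0^\infty$, and in fact, by Duhamel's principle, to prove the estimate for a \emph{free} solution with general Cauchy data prescribed at an arbitrary time $s_0$, since
$$w(t)=\int_0^t \frac{\sin((t-s)\sqrt{-\Delta})}{\sqrt{-\Delta}}h(s)\,ds$$
is a superposition of free evolutions. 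Concretely, I would use that for fixed $s$ the map $t\mapsto \frac{\sin((t-s)\sqrt{-\Delta})}{\sqrt{-\Delta}}h(s)$ is a finite-energy free solution with energy controlled by $\|h(s)\|_{L^2}$, that these free solutions all become, under $\phi_\ell$, finite-energy solutions (by the homogeneous case), and that Lorentz transformation commutes with the $t$-integral. So the whole statement follows from the homogeneous case: \emph{if $v$ solves $\partial_t^2 v-\Delta v=0$ with $(v,\partial_t v)\in C^0(\RR,\hdot\times L^2)$, then $v_\ell$ is again such a solution, with $\sup_t\|(v_\ell,\partial_t v_\ell)(t)\|_{\hdot\times L^2}\le C_\ell \|(v_0,v_1)\|_{\hdot\times L^2}$.}

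For the homogeneous case I would argue as follows. Since $v_\ell=v\circ\phi_\ell^{-1}$ and $\phi_\ell$ is an affine (hence smooth) map preserving the d'Alembertian, $v_\ell$ solves the free wave equation in $\DDD'(\RR^{N+1})$; the regularity $(v_\ell,\partial_t v_\ell)\in C^0(\RR,\hdot\times L^2)$ will come from the energy bound together with the continuity in $t$ of $v$. The energy bound itself is the heart of the matter: I would fix a time slice $\{t=t_0\}$ in the target, which $\phi_\ell^{-1}$ maps to a spacelike hyperplane $\Pi$ in the source variables $(s,y)$, and compare $\int_{\{t=t_0\}}(|\nabla_x v_\ell|^2+|\partial_t v_\ell|^2)\,dx$ with the flux of the energy-momentum tensor of $v$ through $\Pi$. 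Because the stress-energy tensor $T_{\mu\nu}$ of a free wave is divergence-free, this flux equals the flux through $\{s=0\}$ (the two hyperplanes together with a lateral piece at spatial infinity bound a region, and the lateral contribution vanishes by finite-energy decay / a density argument with compactly supported smooth data), and the flux through $\{s=0\}$ is a fixed linear combination — with coefficients depending only on $\ell$ through the boost matrix — of $\int |\nabla_y v_0|^2$, $\int v_1^2$, and the momentum density $\int v_1\partial_{y_1}v_0$. All three are bounded by $\|(v_0,v_1)\|_{\hdot\times L^2}^2$, which gives the claim with an explicit $C_\ell$; uniformity in $t_0$ is automatic since the computation is translation invariant in $t_0$.

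I expect the main obstacle to be making the divergence-theorem / flux comparison rigorous at the level of merely finite-energy data, rather than $C_0^\infty$ data. The clean route is: prove everything first for $(v_0,v_1)\in C_0^\infty(\RR^N)$, where $v$ is smooth with finite speed of propagation so all boundary integrals are genuine and the lateral term is literally zero; obtain the bound $\sup_t\|(v_\ell,\partial_t v_\ell)(t)\|_{\hdot\times L^2}\le C_\ell\|(v_0,v_1)\|_{\hdot\times L^2}$ with $C_\ell$ independent of the data; and then pass to general data by density, using that $v\mapsto v_\ell$ is linear and that both the free flow and the restriction to a time slice are continuous in the energy norm — this is exactly the structure of Claim \ref{C:solution_sequence} transplanted to the linear setting. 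One should double-check that $\phi_\ell$ indeed preserves the flat Lorentzian metric (a one-line matrix computation: the boost in the $(s,y_1)$-plane is an isometry of $\mathrm{diag}(-1,1,\dots,1)$), so that $T_{\mu\nu}$ transforms tensorially and the flux identity holds; granting that, the remaining steps are routine.
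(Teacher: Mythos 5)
First, a remark on the comparison: the paper does not prove this claim at all --- it is quoted directly from Kenig--Merle \cite[Lemma 2.2 and Remark 2.3]{KeMe08}. Your treatment of the homogeneous part is the standard argument behind that lemma and is correct: the energy of $v_\ell$ on $\{t=t_0\}$ is the flux of the stress-energy tensor of $v$, contracted with the boosted Killing field, through the spacelike hyperplane $\phi_\ell^{-1}(\{t=t_0\})=\{s+\ell y_1=t_0\sqrt{1-\ell^2}\}$; by the divergence theorem this equals the flux through $\{s=0\}$, a linear combination of $\int|\nabla v_0|^2$, $\int v_1^2$ and $\int v_1\partial_{y_1}v_0$ with coefficients depending only on $\ell$, and $|\ell|<1$ makes it comparable to the energy. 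Doing this first for $C_0^\infty$ data and concluding by linearity and density is fine.

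The gap is in your reduction of the Duhamel term: ``Lorentz transformation commutes with the $t$-integral'' is false as stated. Writing $w(t,x)=\int_0^t v^{(s)}(t,x)\,ds$ with $v^{(s)}$ the free wave with data $(0,h(s))$ at time $s$, the change of variables gives
\[
w_\ell(t,x)=\int_0^{\tau(t,x)} v^{(s)}\bigl(\phi_\ell^{-1}(t,x)\bigr)\,ds,\qquad \tau(t,x)=\frac{t-\ell x_1}{\sqrt{1-\ell^2}},
\]
so the upper limit becomes $x$-dependent and the right-hand side is \emph{not} $\int_0^t v^{(s)}_\ell(t,x)\,ds$. The reduction can be repaired: since $v^{(s)}(s,\cdot)=0$ (only its time derivative is nonzero at time $s$), differentiating the variable limit produces no boundary term, hence $\nabla_{t,x}w_\ell(t,x)=\int_{\RR}\indic_{\{s\ \text{between}\ 0\ \text{and}\ \tau(t,x)\}}\,\nabla_{t,x}v^{(s)}_\ell(t,x)\,ds$, and Minkowski's inequality plus the homogeneous bound for each $v^{(s)}_\ell$ (translated to initial time $s$) gives the stated estimate with $\|h\|_{L^1(\RR,L^2)}$ on the right. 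Alternatively --- and this is the route of \cite{KeMe08} --- one avoids the superposition altogether and runs the divergence-theorem identity directly for the inhomogeneous equation: the source contributes $\iint h\,\partial_s w$ over the space-time region between $\{s=0\}$ and the tilted hyperplane, bounded by $\|h\|_{L^1(\RR,L^2)}\sup_s\|\partial_s w(s)\|_{L^2}$, and this supremum is controlled by the ordinary (untilted) energy inequality. Either repair closes the argument; as written, the commutation step is the one place where your proof does not go through.
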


\begin{proof}[Proof of Lemma \ref{L:scattering_Lorentz}]
 Since $u$ scatters in both time directions, we get 
 $$ \left\||u|^{\frac{4}{N-2}}u\right\|_{L^1_tL^2_x}=\|u\|^{\frac{N+2}{N-2}}_{L^{\frac{N+2}{N-2}}_tL^{\frac{2(N+2)}{N-2}}_x}<\infty.$$
 By Claim \ref{C:linear_Lorentz}, $(u_{\ell},\partial_tu_{\ell}) \in C^0\left(\RR,\hdot\times L^2\right)$. Furthermore, since $u\in L^{\frac{2(N+1)}{N-2}}(\RR^{N+1})$, $u_{\ell}\in L^{\frac{2(N+1)}{N-2}}(\RR^{N+1})$. If $(u_0,u_1)\in (C^{\infty}_0(\RR^N))^2$, it is easy to see that $(u_{\ell}(0),\partial_tu_{\ell}(0))\in(C^{\infty}_0(\RR^N))^2$ and that $u_{\ell}$ satisfies \eqref{CP} in the classical sense, so that $u_{\ell}$ is a solution of \eqref{CP} (in the sense of Definition \ref{D:solution}). In the general case, we will use Claim \ref{C:solution_sequence} to prove that $u_{\ell}$ is a solution.
 
 Let $(u_{0}^k,u_{1}^k)\in (C^{\infty}_0(\RR^N))^2$ such that
 \begin{equation}
  \label{approx_u}
  \lim_{k\to\infty} \left\|\left( u_0^k,u_1^k \right)-\left( u_0,u_1 \right)\right\|_{\hdot\times L^2}=0.
 \end{equation} 
 Let $u^k$ be the solution of \eqref{CP} with initial data $\left( u_0^k,u_1^k \right)$. Then by long-time perturbation theory (see \cite{KeMe08}), $u^k$ is global for large $k$ and 
 \begin{equation}
  \label{lim_Strichartz}
  \lim_{k\to\infty} \left\|u^k-u\right\|_{L^{\frac{2(N+1)}{N-2}}(\RR^{N+1})\cap L^{\frac{N+2}{N-2}}\big(\RR,L^{\frac{2(N+2)}{N-2}}(\RR^N)\big)}=0.
 \end{equation} 
 Since 
 \begin{multline*}
u^k-u=\cos\left( \sqrt{-\Delta}t \right)(u^k_0-u_0)+\frac{\sin\left( \sqrt{-\Delta}t \right)}{\sqrt{-\Delta}}(u^k_1-u_1)\\
+\int_0^t \frac{\sin\left( (s-t)\sqrt{-\Delta} \right)}{\sqrt{-\Delta}}\left(|u^k|^{\frac{4}{N-2}}u^k(s)-|u|^{\frac{4}{N-2}}u(s)\right)\,ds,  
 \end{multline*}
we deduce from \eqref{lim_Strichartz} and Claim \ref{C:linear_Lorentz}
 \begin{equation}
  \label{lim_energy}
  \sup_{t\in \RR} \left\|\left( u_{\ell}^k-u_{\ell},\partial_t u_{\ell}^k-\partial_tu_{\ell}\right)(t) \right\|_{\hdot\times L^2}\underset{k\to\infty}{\longrightarrow}0.
 \end{equation} 
 Since $u^k_{\ell}$ is a solution of \eqref{CP} in the sense of Definition \ref{D:solution}, and 
 $$\left\|u^k_{\ell}\right\|_{L^{\frac{2(N+1)}{N-2}}(\RR^{N+1})}=\left\|u^k\right\|_{L^{\frac{2(N+1)}{N-2}}(\RR^{N+1})}$$
 is, by \eqref{lim_Strichartz}, uniformly bounded, we get by Claim \ref{C:solution_sequence} that $u_{\ell}$ is a solution of \eqref{CP}, concluding the proof.
\end{proof}

We next prove Lemma \ref{L:Lorentz_global}.

Note that $u_{\ell}$ is well-defined, as an element of $L^{\frac{2(N+1)}{N-2}}_{\loc}(\RR^{N+1})$. 
 
 We denote by $(s,y)$ the time and space variables for $u$ and $(t,x)$ the time and spaces variable for $u_{\ell}$. This variables are related by \eqref{def_phil}. We note that
 \begin{equation}
  \label{B0}
  |x|^2-t^2=|y|^2-s^2
 \end{equation} 
 and
 \begin{equation}
  \label{B1}
 |s|+|y|\leq c_{\ell} (|t|+|x|),\quad |t|+|x|\leq c_{\ell} (|s|+|y|),\text{ where }c_{\ell}=\sqrt{\frac{1+|\ell|}{1-|\ell|}}.
 \end{equation} 
 
 \EMPH{Step 1. Estimate at infinity}
We prove that there exists $B>0$ and a scattering solution $v$ of \eqref{CP} such that 
 \begin{equation}
 \label{B1'}
 |x|\geq |t|+B\Longrightarrow u_{\ell}(t,x)=v(t,x)\text{ a.e.}
 \end{equation} 
 Let $A>0$ be a large constant. Denote by $\chi_A(y)=\chi\left( \frac{y}{A} \right)$, where
 $$\chi\in C^{\infty}(\RR^N),\quad \chi(y)=1\text{ if }|y|\geq 1,\quad \chi(y)=0\text{ if }|y|\leq \frac{1}{2}.$$
 Let 
 \begin{equation*}
  (\tu_0,\tu_1)=(\chi_Au_0,\chi_Au_1).
 \end{equation*} 
 Let $\delta_0>0$ be the small constant given by the small data theory. We choose $A$ large, so that $\|(\tu_0,\tu_1)\|_{\hdot\times L^2}<\delta_0$. Let $\tu$ be the solution of \eqref{CP} with initial data $(\tu_0,\tu_1)$ at $s=0$. By the small data theory, $\tu$ is a scattering solution of \eqref{CP}. By Lemma \ref{L:scattering_Lorentz}, the Lorentz transform $\tu_{\ell}$ of $\tu$ is a scattering solution of \eqref{CP}. Furthermore, by finite speed of propagation,
 \begin{equation}
  \label{B2} \tu(s,y)=u(s,y)\text{ i.e. }\tilde{u}_{\ell}(t,x)=u_{\ell}(t,x)\text{ for almost all }(s,y)\text{ s.t. }|y|\geq A+|s|.
 \end{equation} 
 We claim
 \begin{equation}
  \label{B3} |x|\geq c_{\ell}A+|t|\Longrightarrow |y|\geq A+|s|.
 \end{equation} 
 Indeed, by \eqref{B0}, \eqref{B1},
 $$|y|-|s|=\frac{|y|^2-s^2}{|y|+|s|}=\frac{|x|^2-t^2}{|y|+|s|}\geq \frac{|x|^2-t^2}{c_{\ell}(|x|+|t|)}=\frac{|x|-|t|}{c_{\ell}},$$
 and \eqref{B3} follows. The desired conclusion \eqref{B1'}, with $v=\tilde{u}_{\ell}$ and $B=c_{\ell}A$, follows from \eqref{B2} and \eqref{B3}. Note that \eqref{B1'} implies
 \begin{equation}
  \label{step1_L5L10}
  \int_{\RR}\left(\int_{|x|\geq |t|+B} \left|u_{\ell}(t,x)\right|^{\frac{2(N+2)}{N-2}}\,dx\right)^{\frac{1}{2}}\,dt<\infty.
 \end{equation} 
 
 \EMPH{Step 2. Local estimate} Let $(T,X)\in \RR\times \RR^N$. We show that there exists $\eps>0$ and a scattering solution $v$ of \eqref{CP} such that
 \begin{equation}
  \label{goal_step2} 
  |x-X|\leq \eps-|t-T|\Longrightarrow u_{\ell}(t,x)=v(t,x).
 \end{equation} 
 Indeed, let  $(S,Y)=\Phi_{\ell}^{-1}(T,X)$. Let $\psi\in C_0^{\infty}(\RR^N)$ such that $\psi(y)=1$ if $|y|\leq 1$ and $\psi(y)=0$ if $|y|\geq 2$. Let $(\tu_0,\tu_1)=\psi\left( \frac{y-Y}{\eta} \right)\left(u(S,y),\partial_tu(S,y)\right)$. Choose $\eta>0$ small, so that $\left\|(\tu_0,\tu_1)\right\|_{\hdot\times L^2}\leq \delta_0$ ($\delta_0$ is again given by the small data theory). Let $\tu$ be the solution of \eqref{CP} with data:
 $$ (\tu(S),\partial_t\tu(S))=(\tu_0,\tu_1).$$
 Then $\tu$ is globally defined and scatters. By Lemma \ref{L:scattering_Lorentz}, its Lorentz transform $\tu_{\ell}$ is a scattering solution of \eqref{CP}. Note that by finite speed of propagation,
 \begin{equation}
  \label{B2'}
  |y-Y|\leq \eta-|s-S|\Longrightarrow u_{\ell}(t,x)=\tu_{\ell}(t,x).
 \end{equation} 
 Furthermore, by \eqref{B1}, $|y-Y|+|s-S|\leq c_{\ell}\left(|x-X|+|t-T|\right)$, and thus
 \begin{equation}
  \label{B3'} |x-X|\leq \frac{\eta}{c_{\ell}}-|t-T|\Longrightarrow |y-Y|\leq \eta-|s-S|.
 \end{equation} 
 The desired conclusion \eqref{goal_step2} follows from \eqref{B2'} and \eqref{B3'} with $v=\tu_{\ell}$ and $\eps=\eta/c_{\ell}$.
 Again, \eqref{goal_step2} implies
 \begin{equation}
  \label{step2_L5L10}
  \int_{T-\eps}^{T+\eps}\left(\int_{|x-X|\leq \eps-|t-T|} \left|u_{\ell}(t,x)\right|^{\frac{2(N+2)}{N-2}}\,dx\right)^{\frac{1}{2}}\,dt<\infty.
 \end{equation}

 \EMPH{Step 3. End of the proof} 
Combining Step 1 and 2, we get that $(u_{\ell},\partial_t u_{\ell})\in C^0(\RR,\hdot\times L^2)$. By \eqref{step1_L5L10} and \eqref{step2_L5L10}, $u_{\ell}\in L^{\frac{N+2}{N-2}}_{\loc}\left(\RR,L^{\frac{2(N+2)}{N-2}}(\RR^N)\right)$. Furthermore, by Steps 1 and 2 and Remark \ref{R:distrib}, $u_{\ell}$ satisfies $\partial_t^2u_{\ell}-\Delta u_{\ell}=u_{\ell}^5$ in the distributional sense, which (in view of Lemma \ref{L:sol_distrib}) yields the result.
\qed
\subsection{Lorentz transform of nonglobal solutions with the compactness property}
Recall that a solution with the compactness property is global in at least one time direction. We treat the case of solutions that are global backward in time, an analoguous result holds for solutions that are global forward in time.
\begin{lemma}
\label{L:Lorentz_nonglobal}
 Let $u$ be a solution with the compactness property. Assume that the maximal interval of existence of $u$ is of the form $(-\infty,s^+)$ with $s^+\in \RR$. Let $y^+,t^+,x^+,\unu$ be as in the introduction of Section \ref{S:Lorentz}. Define $u_{\ell}$ by \eqref{def_ul_bis}. Then $u_{\ell}$ is a solution of \eqref{CP}, with maximal interval of existence $(-\infty,t^+)$, and such that
 $$\supp u_{\ell}\subset \Big\{(t,x)\in \RR^{N+1}\text{ s.t. } t<t^+ \text{ and } \left|x-x^+\right|\leq \left|t-t^+\right|\Big\}.$$
\end{lemma}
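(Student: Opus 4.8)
The strategy is to run the same three-step argument as in the proof of Lemma \ref{L:Lorentz_global}, but now on the support cone of $u$ rather than on all of $\RR^{N+1}$, and then to localize near the tip $(t^+,x^+)$ to identify the maximal interval of existence and establish the support property. First I would record the elementary change-of-variables facts: if $(s,y)=\phi_\ell^{-1}(t,x)$ then by \eqref{B0}--\eqref{B1} we have $|y-y^+|-|s-s^+|$ comparable to $|x-x^+|-|t-t^+|$ (up to the constant $c_\ell$), after shifting the origin to $(s^+,y^+)$, because $\phi_\ell$ is affine and maps the light cone through $(s^+,y^+)$ to the light cone through $(t^+,x^+)$. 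In particular $\phi_\ell$ maps the backward solid cone $\{|y-y^+|\le |s-s^+|,\ s<s^+\}$ exactly onto $\{|x-x^+|\le|t-t^+|,\ t<t^+\}$. Since $u$ (hence $\unu$) is supported in the former cone by \cite[Lemma 4.8]{KeMe08}, the function $u_\ell$ defined by \eqref{def_ul_bis} is supported in the latter cone, which is the support statement; it also shows $u_\ell$ is well-defined as an element of $L^{\frac{2(N+1)}{N-2}}_{\loc}$ on $\RR\times\RR^N$ minus the point $(t^+,x^+)$, and is genuinely zero outside the cone.

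Next I would prove that $u_\ell$ is a solution of \eqref{CP} on $(-\infty,t^+)$ by the covering/perturbation method of Lemma \ref{L:Lorentz_global}. Fix any $(T,X)$ with $T<t^+$; set $(S,Y)=\phi_\ell^{-1}(T,X)$, so $S<s^+$. Truncate the data $(u(S),\partial_t u(S))$ by a smooth cutoff supported in a ball $B(Y,\eta)$; choosing $\eta$ small enough that the truncated data has $\hdot\times L^2$ norm below the small-data threshold $\delta_0$, the resulting solution $\tu$ is global and scattering (note $S$ is in the interior of $I_{\max}(u)$, so $u(S)$ makes sense). By Lemma \ref{L:scattering_Lorentz}, $\tu_\ell$ is a global scattering solution, and by finite speed of propagation combined with the estimate $|y-Y|+|s-S|\le c_\ell(|x-X|+|t-T|)$ from \eqref{B1}, $u_\ell=\tu_\ell$ on the backward-forward cone $\{|x-X|\le \eta/c_\ell-|t-T|\}$. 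This gives the two conclusions $(u_\ell,\partial_t u_\ell)\in C^0((-\infty,t^+),\hdot\times L^2)$ and $u_\ell\in L^{\frac{N+2}{N-2}}_{\loc}((-\infty,t^+),L^{\frac{2(N+2)}{N-2}})$ locally around $(T,X)$, and since $\tu_\ell$ solves \eqref{CP} in the distributional sense (Remark \ref{R:distrib}), so does $u_\ell$ near $(T,X)$. Covering $(-\infty,t^+)\times\RR^N$ by such neighborhoods together with the exterior region handled exactly as in Step 1 of the proof of Lemma \ref{L:Lorentz_global} (using a cutoff at large $|y|$ to get a small-data scattering solution agreeing with $u$ there), and then invoking Lemma \ref{L:sol_distrib}, we conclude that $u_\ell$ is a solution of \eqref{CP} on $(-\infty,t^+)$.

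Finally I would check that $(-\infty,t^+)$ is exactly the maximal interval of existence. That $T_-(u_\ell)=-\infty$ is immediate since $u_\ell$ is defined and solves \eqref{CP} on all of $(-\infty,t^+)$. For $T_+(u_\ell)=t^+$: first, $u_\ell$ cannot extend past $t^+$ as a finite-energy solution because, by the support property and Corollary-type lower bounds, $u_\ell$ concentrates at the point $(t^+,x^+)$ — more precisely, if $u_\ell$ extended to a solution on a neighborhood of $t^+$ it would be continuous with values in $\hdot\times L^2$ near $t=t^+$, but $u(s)\not\to 0$ in $\hdot$ as $s\to s^+$ (otherwise $u$ would be global), and pulling this back through $\phi_\ell$ forces a genuine singularity at $(t^+,x^+)$; alternatively one argues via the blow-up criterion \eqref{Bup_criterion} that $\|u_\ell\|_{L^{\frac{2(N+1)}{N-2}}((0,t^+)\times\RR^N)}=\infty$ since this norm is $\phi_\ell$-invariant on the cone and $\|u\|_{L^{\frac{2(N+1)}{N-2}}((0,s^+)\times\RR^N)}=\infty$. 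I expect the main obstacle to be precisely this last point: carefully justifying that the blow-up at $s^+$ transfers to blow-up at $t^+$ under the Lorentz map, i.e. that the $L^{\frac{2(N+1)}{N-2}}$ norm on the truncated backward cone is preserved (up to the Jacobian, which is bounded above and below on bounded pieces of the cone away from the tip but not globally), so one must localize the divergent Strichartz norm of $u$ near the tip of the cone and track it through $\phi_\ell$; this is where one uses that the divergence must come from a neighborhood of $(s^+,y^+)$, which in turn follows from finite speed of propagation and the fact that $u$ restricted to the exterior of any smaller backward cone with the same tip is a scattering solution with finite global Strichartz norm.
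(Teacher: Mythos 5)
Your overall strategy (support property from the light-cone invariance of $\phi_\ell$, then local small-data scattering solutions plus Lemma \ref{L:scattering_Lorentz} and Lemma \ref{L:sol_distrib}) is the same as the paper's, but there are two genuine gaps.

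First, the assertion ``set $(S,Y)=\phi_\ell^{-1}(T,X)$, so $S<s^+$'' is false: since $S=\frac{T-\ell X_1}{\sqrt{1-\ell^2}}$, one can have $T<t^+$ and yet $S\geq s^+$ (take $\ell>0$ and $X_1$ very negative). At such points $u(S)$ is simply undefined and your truncation argument cannot start. The paper handles this by a case analysis: when $S>s^+$, or $S=s^+$ with $Y\neq y^+$, the point $(S,Y)$ lies outside $\supp \unu$, so $u_\ell$ vanishes identically near $(T,X)$ and there is nothing to prove; only when $S<s^+$ does one run the cutoff argument. Moreover, even in that case ``finite speed of propagation'' alone does not suffice: the cone $D(S,Y,\eta)$ on which you want $\tu=\unu$ may extend past the blow-up time $s=s^+$, where $u$ is no longer defined. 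One must show that $(s^+,y^+)\notin D$ (using that the $L^{\frac{2(N+1)}{N-2}}$ norm of $u$ diverges on the backward cone from the blow-up point while that of $\tu$ is finite) and then propagate the identity $\tu=\unu=0$ forward from the slice $s=s^+$; this is exactly the content of Claim \ref{C:FSP}, which you omit.

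Second, your argument that $T_+(u_\ell)=t^+$ is not complete, and you yourself flag the obstruction (the Jacobian of $\phi_\ell$ is not bounded on the full truncated cone, so the $L^{\frac{2(N+1)}{N-2}}$ norm is not simply transported). The paper's argument is much more direct and uses only the support inclusion you already established: for $t<t^+$ the data $(u_\ell(t),\partial_t u_\ell(t))$ is supported in the ball $\{|x-x^+|\leq|t-t^+|\}$, which shrinks to the point $x^+$ as $t\to t^+$; if $u_\ell$ extended as a solution past $t^+$, continuity in $\hdot\times L^2$ would force $(u_\ell(t^+),\partial_t u_\ell(t^+))=(0,0)$, hence $u_\ell\equiv 0$ by uniqueness, contradicting $u\not\equiv 0$. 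I recommend replacing your Strichartz-transport argument by this one.
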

The proof is very close to the proof of Lemma \ref{L:Lorentz_global} and we will only sketch it. To deal with the fact that the solution is not global in time, we will need the following technical claim:
\begin{claim}
 \label{C:FSP}
 Let $u$ be as in Lemma \ref{L:Lorentz_nonglobal}. Let $\tu$ be a globally defined, scattering solution of \eqref{CP}. Let $S\in (-\infty,s^+)$.
 \begin{enumerate}
  \item \label{I:FSPloc}Let $Y\in \RR^N$ and assume 
 $$ (\tu,\partial_t\tu)(S,y)=(u,\partial_tu)(S,y)\text{ if }|y-Y|<\eps.$$
 Let $D=D(S,Y,\eps)=\left\{(s,y)\in \RR\times \RR^N\text{ s.t. } |y-Y|< \eps-|s-S|\right\}$. Then $(s^+,y^+)\notin D$ and
 \begin{equation}
 \label{equal}
 (s,y)\in D\Longrightarrow \tu(s,y)=\unu(s,y). 
 \end{equation} 
\item \label{I:FSPinfinity} Let $A>0$ and assume
 $$ (\tu,\partial_t\tu)(S,y)=(u,\partial_tu)(S,y)\text{ if }|y|>A.$$
 Let $D'=D'(S,A)=\left\{(s,y)\in \RR\times \RR^N\text{ s.t. } |y|>A+|s-S|\right\}$. Then $(s^+,y^+)\notin D'$ and
 \begin{equation}
 \label{equal'}
 (s,y)\in D'\Longrightarrow \tu(s,y)=\unu(s,y). 
 \end{equation} 
 \end{enumerate}
 \end{claim}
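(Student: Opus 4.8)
The plan is to use finite speed of propagation together with the support properties of $u$ coming from the compactness property, taking care that the interior solution $u$ is only defined on $(-\infty,s^+)$ while $\tu$ is global. I will prove part \eqref{I:FSPloc} in detail and indicate how \eqref{I:FSPinfinity} is obtained by the same argument with a different choice of comparison domain. First, recall from \cite[Lemma 4.8]{KeMe08} that since $s^+<\infty$, the solution $u$ is supported in the backward light cone with vertex $(s^+,y^+)$, that is $u(s,y)=0$ whenever $|y-y^+|>|s-s^+|$ and $s<s^+$; consequently $\unu$ (the extension by $0$) is supported in $\{(s,y):|y-y^+|\le|s-s^+|\}$ on all of $\RR^{N+1}$.

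Now fix $S\in(-\infty,s^+)$, $Y$, $\eps$ as in the statement, and set $D=D(S,Y,\eps)$. First I would show $(s^+,y^+)\notin D$. Suppose for contradiction that $(s^+,y^+)\in D$, i.e. $|y^+-Y|<\eps-|s^+-S|$; in particular $|y^+-Y|<\eps$. Since $\tu$ and $u$ agree, together with their time derivatives, on $\{|y-Y|<\eps\}$ at time $S$, finite speed of propagation for \eqref{CP} (applied on the time interval between $S$ and $s^+$, which lies inside $\Imax(u)$) gives $\tu(s,y)=u(s,y)$ for $(s,y)\in D$ with $s<s^+$. But then, letting $s\uparrow s^+$ along the segment $y\equiv y^+$ (which stays in $D$ by the strict inequality), the energy of $u$ on a fixed small ball around $y^+$ would be bounded by the energy of the global solution $\tu$, contradicting the blow-up criterion \eqref{Bup_criterion} at $s^+$ (equivalently, contradicting that $(s^+,y^+)$ is the blow-up point: by \cite[Lemma 4.8]{KeMe08} the $\hdot\times L^2$ norm of $u$ does not concentrate away from $y^+$, while it must concentrate at $y^+$). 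Hence $(s^+,y^+)\notin D$.

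Given that $(s^+,y^+)\notin D$, the closed segment from $(S,Y)$ to any $(s,y)\in D$ together with the finite-speed-of-propagation cone stays away from the blow-up point, so the region $D\cap\{s<s^+\}$ is contained in $\Imax(u)\times\RR^N$ and finite speed of propagation directly yields $\tu=u=\unu$ there. It remains to handle the part of $D$ with $s\ge s^+$ (possible only if $s^+<S+\eps$, i.e. the cone $D$ pokes past the blow-up time). On this part I claim $\unu=0$: indeed $\unu\equiv 0$ for $s\ge s^+$ by definition, so I only need $\tu=0$ there as well. For this, observe that on the boundary time slice $s=s^+$ the portion of $D$ consists of points $y$ with $|y-Y|<\eps-|s^+-S|$, and every such $y$ satisfies $|y-y^+|\ge \ldots$ — more precisely, since $(s^+,y^+)\notin D$ we have $|y^+-Y|\ge\eps-|s^+-S|$, so the ball $\{|y-Y|<\eps-|s^+-S|\}$ and the point $y^+$ are positioned so that $\tu(s^+,\cdot)=\unu(s^+,\cdot)=u(s^+\!\!-,\cdot)$ vanishes on a neighborhood of this ball (using the support property of $u$ near $s^+$ and continuity of $\tu$ at $s^+$); another application of finite speed of propagation forward from $s^+$ then forces $\tu=0$ on $D\cap\{s\ge s^+\}$, giving \eqref{equal} on all of $D$.

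The main obstacle I anticipate is precisely this matching across the blow-up time $s=s^+$: one must combine the support/concentration information from \cite[Lemma 4.8]{KeMe08} with continuity of the global solution $\tu$ in $\hdot\times L^2$ up to and past $t=s^+$, and check that the geometry of the truncated cone $D$ is compatible with the backward light cone of $u$. Part \eqref{I:FSPinfinity} is entirely analogous: one replaces $D$ by the exterior region $D'=D'(S,A)$, uses that $u$ (hence $\unu$) is supported in a bounded-in-space neighborhood of the cone of $y^+$ so that, for $A$ as given, $D'$ eventually lies outside the support of $\unu$, and the same finite-speed-of-propagation comparison with the scattering solution $\tu$ gives $(s^+,y^+)\notin D'$ and $\tu=\unu$ on $D'$.
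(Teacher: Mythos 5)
Your overall architecture matches the paper's: finite speed of propagation gives $\tu=\unu$ on $D\cap\{s<s^+\}$, and once $(s^+,y^+)\notin D$ is known, a continuity argument on the slice $s=s^+$ followed by finite speed of propagation forward from $s^+$ handles $D\cap\{s\ge s^+\}$. The gap is in the pivotal step, ruling out $(s^+,y^+)\in D$. The contradiction you invoke --- ``the energy of $u$ on a fixed small ball around $y^+$ would be bounded by the energy of the global solution $\tu$, contradicting the blow-up criterion \eqref{Bup_criterion}'' --- is not a contradiction: \eqref{Bup_criterion} concerns the space-time norm $L^{\frac{2(N+1)}{N-2}}$, not the energy, and a solution with the compactness property that blows up in finite time has \emph{uniformly bounded} $\hdot\times L^2$ norm (this norm is scale-invariant and bounded on the compact set $\overline K$); this is exactly type II blow-up, so boundedness of the energy near $y^+$ excludes nothing. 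The parenthetical appeal to ``concentration at $y^+$'' points in the right direction but is not an argument as written.

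The paper closes this step by a Strichartz-norm comparison: since $\tu$ scatters, $\|\tu\|_{L^{\frac{2(N+1)}{N-2}}(D)}<\infty$, hence $\|u\|_{L^{\frac{2(N+1)}{N-2}}(D\cap\{s<s^+\})}<\infty$, whereas the blow-up criterion forces this norm to be infinite on the backward light cone $\{|y-y^+|<|s-s^+|\}$, which is contained in $D$ near the vertex whenever $(s^+,y^+)\in D$. Your concentration idea can be repaired, but it must be run quantitatively and the conclusion is that the energy tends to zero, not that it is bounded: if $(s^+,y^+)\in D$ then $\{|y-y^+|\le s^+-s\}\subset\{|y-Y|<\eps-(s-S)\}$ for $s<s^+$, so $u(s)$ is supported exactly where it coincides with $\tu(s)$, whence $\|(u(s),\partial_su(s))\|_{\hdot\times L^2}=\|(\tu(s),\partial_s\tu(s))\|_{\hdot\times L^2(\{|y-y^+|\le s^+-s\})}\to 0$ as $s\to s^+$ by continuity of $(\tu,\partial_t\tu)$ in $\hdot\times L^2$; small-data theory then continues $u$ past $s^+$, the desired contradiction. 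Two smaller inaccuracies: $\tu(s^+,\cdot)$ can only be shown to vanish on the open ball $\{|y-Y|<\eps-|s^+-S|\}$ itself (the time-$s^+$ slice of $D$), not ``on a neighborhood of'' it, since you have no information outside $D$; and in part (b) the exclusion $(s^+,y^+)\notin D'$ needs the same kind of argument as in part (a), not merely the support property of $\unu$.
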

\begin{proof}
We prove only \eqref{I:FSPloc}. The proof of \eqref{I:FSPinfinity} is very similar.

If $s^+>S+\eps$, then $D\subset (-\infty,s^+)\times \RR^N$ and $u=\unu$ on $D$. The conclusion \eqref{equal} follows immediately by finite speed of propagation.
 
 Assume $S<s^+\leq S+\eps$. By finite speed of propagation, 
 \begin{equation}
 \label{equal1}
 \Big((s,y)\in D\text{ and }s<s^+\Big)\Longrightarrow \tu(s,y)=u(s,y)=\unu(s,y). 
 \end{equation} 
 Since $\tu$ scatters, we have $\|\tu\|_{L^{\frac{2(N+1)}{N-2}}(D)}<\infty$ and thus by \eqref{equal1} $\|u\|_{L^{\frac{2(N+1)}{N-2}}(D\cap \{s<s^+\})}<\infty$. By the finite-time blow-up criterion, $\|u\|_{L^{\frac{2(N+1)}{N-2}}(|y-y^+|<|s-s^+|)}=+\infty$. Thus $(s^+,y^+)\notin D$. We deduce that 
 $$ \forall \varphi\in C^{\infty}_0(D),\quad (\varphi \unu,\varphi \partial_t \unu)\in C^0(\RR,\hdot\times L^2).$$
 By \eqref{equal1} and a continuity argument,
 \begin{equation}
 \label{equal2}
 \forall y\in \RR^N,\quad 
 (s^+,y)\in D\Longrightarrow \tu(s^+,y)=\unu(s^+,y)=0. 
 \end{equation} 
 By finite speed of propagation, $\tu(s,y)=0$ if $(s,y)\in D$ and $s\geq s^+$. The proof is complete.
\end{proof}

\begin{proof}[Proof of Lemma \ref{L:Lorentz_nonglobal}]
 \EMPH{Step 1}
 We first notice that
\begin{equation}
 \label{suppul}
\end{equation} 
 $$\supp u_{\ell}\subset \left\{ (t,x)\in (-\infty,t^+)\times \RR^N\text{ s.t. } |x-x^+|\leq |t-t^+|\right\}.$$
 Indeed 
 $$\supp \unu\subset \left\{ (s,y)\in \RR\times \RR^N\text{ s.t. } |y-y^+|\leq |s-s^+|\right\}.$$
 By \eqref{B0},
$$ |x-x^+|\leq |t-t^+|\iff |y-y^+|\leq |s-s^+|,$$
and the claim follows from the definition of $u_{\ell}$.

\EMPH{Step 2} Let $T<t^+$, $X\in \RR^N$. We prove that there exists $\eps>0$ with $\eps<t^+-T$, and a scattering solution $v$ of \eqref{CP} such that 
$$|x-X|<\eps-|t-T|\Longrightarrow u_{\ell}(t,x)=v(t,x).$$
As in Step 2 in the proof of Lemma \ref{L:Lorentz_global}, we let $(S,Y)=\phi_{\ell}^{-1}(T,X)$. We distinguish three cases.

\EMPH{Case 1: $S>s^+$} In this case $(S,Y)$ is not in the support of $\underline{u}$, or equivalently $(T,X)$ is not in the support of $u_{\ell}$. Thus $u_{\ell}=0$ in a neighborhood of $(T,X)$ and the conclusion of Step 2 is obvious.

\EMPH{Case 2: $S=s^+$} We cannot have $Y=y^+$ (which would imply $T=t^+$, contradicting our assumptions). Thus again $(S,Y)$ is not in the support of $\unu$, which implies that $(T,X)$ is not in the support of $u_{\ell}$. Again, the conclusion of Step 2 is obvious.

\EMPH{Case 3: $S<s^+$} Then $(\unu(S),\partial_t \unu(S))=(u(S),\partial_t u(S))\in \hdot\times L^2$. The same arguments as in Step 2 of the proof of Lemma \ref{L:Lorentz_global} (using Claim \ref{C:FSP}) yields the desired conclusion. We omit the details.

\EMPH{Step 3. Conclusion of the proof} By Step $1$, for all $t$ in $(-\infty,t^+)$, $(u_{\ell}(t),\partial_tu_{\ell}(t))$ is compactly supported. By Step 2, 
$$(u_{\ell},\partial_t u_{\ell})\in C^0((-\infty,t^+),\hdot\times L^2)\text{ and }u_{\ell}\in L^{\frac{N+2}{N-2}}_{\loc}\left((-\infty,t^+),L^{\frac{2(N+2)}{N-2}}(\RR^N)\right).$$
Again (using Remark \ref{R:distrib}), $u_{\ell}$ satisfies $\partial_t^2u_{\ell}-\Delta u_{\ell}=|u_{\ell}|^{\frac{4}{N-2}}u_{\ell}$ in the distributional sense on $(-\infty,t^+)$. Thus by Lemma \ref{L:sol_distrib}, $u_{\ell}$ is a solution of \eqref{CP} on $(-\infty,t^+)$. The fact that $(-\infty,t^+)$ is the maximal time of existence of $u_{\ell}$ follows from the inclusion \eqref{suppul} of Step 1.
\end{proof}

\subsection{Continuity of the Lorentz transformation in the energy space}
We next prove the following continuity fact:
\begin{lemma}
\label{L:Lorentz_continuity}
 Let $\{u_n\}_n$ be a sequence of non-zero solutions of \eqref{CP}. Let $I_{\max}(u_n)$ be the maximal interval of existence of $u_n$ and assume $0\in I_{\max}(u_n)$. Assume furthermore the following uniform compactness property: there exist $\mu_n(s)$ and $y_n(s)$ (defined for $n\in \NN$, $s\in I_{\max}(u_n)$), such that the set
 $$K=\left\{\left( \mu_n^{N/2-1}(s) u_n\left(s,\mu_n(s)\cdot+y_n(s)\right), \mu_n^{N/2}(s) \partial_su_n\left(s,\mu_n(s)\cdot+y_n(s)\right)\right),\; n\in \NN,\; s\in I_{\max}(u_n)\right\}$$
 has compact closure in $\hdot\times L^2$. 
 Let $(u_{0n},u_{1n})=(u_n(0),\partial_t u_n(0))$. Assume that there exists $(u_0,u_1)\in \hdot\times L^2$ such that
 \begin{equation}
 \label{CV_un}
 \lim_{n\to\infty} \left\| (u_{0n},u_{1n})-(u_0,u_1)\right\|_{\hdot\times L^2}=0. 
 \end{equation} 
 Let $u$ be the solution of \eqref{CP} such that $(u,\partial_tu)(0)=(u_0,u_1)$. Then
 \begin{enumerate}
  \item \label{I:compact} $u$ has the compactness property.
  \item \label{I:Lorentz_continuity} If $\ell\in (-1,+1)$ and $t\in I_{\max}(u_{\ell})$, then $t\in I_{\max}(u_{n\ell})$ for large $n$ and
$$\lim_{n\to\infty} \left\| (u_{n\ell},\partial_t u_{n\ell})(t)-(u_{\ell},\partial_t u_{\ell})(t)\right\|_{\hdot\times L^2}=0.$$
 \end{enumerate}
\end{lemma}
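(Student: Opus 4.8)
The plan is to prove \eqref{I:compact} first, then bootstrap it to \eqref{I:Lorentz_continuity}.

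For \eqref{I:compact}, I would argue that the uniform compactness of the family $\{u_n\}$ passes to the limit. Fix $s\in I_{\max}(u)$. By continuous dependence of the flow on the initial data in the energy space (long-time perturbation theory, as developed in \cite{KeMe08}), since $(u_{0n},u_{1n})\to(u_0,u_1)$ in $\hdot\times L^2$ and $s\in I_{\max}(u)$, for $n$ large we have $s\in I_{\max}(u_n)$ and $(u_n,\partial_su_n)(s)\to(u,\partial_su)(s)$ in $\hdot\times L^2$. Writing $v_n(s)$ for the renormalized data $\big(\mu_n^{N/2-1}(s)u_n(s,\mu_n(s)\cdot+y_n(s)),\mu_n^{N/2}(s)\partial_su_n(s,\mu_n(s)\cdot+y_n(s))\big)\in K$, the closure $\overline{K}$ is compact, so along a subsequence $v_n(s)\to v_\infty(s)$ in $\hdot\times L^2$ and $v_\infty(s)\in\overline{K}$. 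Since $(u_n,\partial_su_n)(s)$ converges, the only way $v_n(s)$ can also converge is if the modulation parameters $\mu_n(s),y_n(s)$ converge (up to the obvious scaling freedom: one uses that $u(s)\neq0$, which holds because $u$ is a nonzero solution by \eqref{CV_un} and the fact that the $u_n$ are nonzero with uniformly bounded-below energy, so $E[u]>0$), to some $\mu(s)>0$, $y(s)\in\RR^N$; then $\big(\mu^{N/2-1}(s)u(s,\mu(s)\cdot+y(s)),\mu^{N/2}(s)\partial_su(s,\mu(s)\cdot+y(s))\big)=v_\infty(s)\in\overline K$. Hence the renormalized trajectory of $u$ lies in the compact set $\overline K$, which is exactly the compactness property; the measurability/continuity of $s\mapsto(\mu(s),y(s))$ can be arranged by the standard implicit-function-theorem construction of modulation parameters near a compact set of profiles (cf.\ the use of Lemma~\ref{L:A13} elsewhere, or a direct argument). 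One subtlety: if $I_{\max}(u)$ is not all of $\RR$ one must handle the blow-up point, but this is built into Definition~\ref{D:compactness}.

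For \eqref{I:Lorentz_continuity}, by Proposition~\ref{P:lorentz} applied to $u$ (which has the compactness property by \eqref{I:compact}) and to each $u_n$, the transforms $u_\ell$, $u_{n\ell}$ are well-defined solutions with maximal intervals $(t^-,t^+)$, $(t_n^-,t_n^+)$ determined by the blow-up points and the map $\phi_\ell$. Fix $t\in I_{\max}(u_\ell)$. The point $(t,x)$ corresponds under $\phi_\ell^{-1}$ to a compact slab of $(s,y)$-space; more precisely, the backward light cone from $(t,x)$, intersected with the strip $\{$time $\le t\}$, maps under $\phi_\ell^{-1}$ into a region $D$ of $\RR^{N+1}$ that, by finite speed of propagation and the support properties from Lemma~\ref{L:Lorentz_nonglobal}, meets $\supp\unu$ in a set on which $u$ is determined by its data on a fixed compact time-slice $\{s=S\}$ with $S<s^+$. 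On that slice $(u_n,\partial_su_n)(S)\to(u,\partial_su)(S)$ in $\hdot\times L^2$ by continuous dependence (using $S\in I_{\max}(u_n)$ for large $n$). Then I would run exactly the argument of Step~1--3 in the proof of Lemma~\ref{L:Lorentz_global}/\ref{L:Lorentz_nonglobal}: decompose $u$ and $u_n$ near the relevant slice into a small-data scattering piece plus a localized remainder, apply the continuity of the Lorentz transform on scattering solutions (Lemma~\ref{L:scattering_Lorentz} together with Claim~\ref{C:linear_Lorentz}, which gives uniform $\hdot\times L^2$ control of the Lorentz transform in terms of the data and the $L^1L^2$ norm of the forcing), and use Claim~\ref{C:solution_sequence} / long-time perturbation theory in the $(t,x)$ variables. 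The $L^{\frac{2(N+1)}{N-2}}$ norms are Lorentz-invariant, so the uniform space-time bounds needed to invoke perturbation theory for $u_{n\ell}$ follow from the corresponding bounds for $u_n$, which in turn follow from \eqref{CV_un} and the local theory. This yields $t\in I_{\max}(u_{n\ell})$ for large $n$ and $(u_{n\ell},\partial_tu_{n\ell})(t)\to(u_\ell,\partial_tu_\ell)(t)$ in $\hdot\times L^2$.

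The main obstacle I expect is the first part: extracting convergence of the modulation parameters $\mu_n(s),y_n(s)$ from the convergence of the un-renormalized solutions, and doing so with enough regularity in $s$ to produce honest modulation functions $\mu(\cdot),y(\cdot)$ for the limit $u$. The cleanest route is probably not to track the $u_n$'s parameters at all, but to observe directly that for each fixed $s$ the renormalized $(u(s),\partial_su(s))$ lies in $\overline K$ (by the limiting argument above), i.e.\ the \emph{set} $\{(u(s),\partial_su(s)):s\}$ has, after scaling and translation, closure contained in the compact set $\overline K$; this is precisely what is needed, and the existence of measurable $\mu(s),y(s)$ realizing it is then a standard selection argument. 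One must also be careful that $u\not\equiv0$ so that scaling is not degenerate, and handle the finite-time blow-up case using the support/blow-up-point structure recorded in Section~\ref{S:Lorentz}.
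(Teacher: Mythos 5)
Your argument for part \eqref{I:compact} is essentially the paper's: fix $s\in I_{\max}(u)$, use continuous dependence to get $(u_n(s),\partial_su_n(s))\to(u(s),\partial_su(s))$, observe that the renormalized data lie in the compact set $\overline K$ with $(0,0)\notin\overline K$, deduce that $|y_n(s)|+\mu_n(s)+1/\mu_n(s)$ is bounded, extract a convergent subsequence of parameters and pass to the limit. Your worry about regularity of $s\mapsto(\mu(s),y(s))$ is unnecessary: Definition \ref{D:compactness} asks only for existence of the parameters at each time, so placing the renormalized trajectory of $u$ inside the fixed compact $\overline K$ for each $s$ already finishes the proof, exactly as in the paper.

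Part \eqref{I:Lorentz_continuity} has a genuine gap. You assert at the end that ``this yields $t\in I_{\max}(u_{n\ell})$ for large $n$,'' but nothing in your argument establishes it, and it is the one place where real work is needed. If $u_n$ blows up at $(s_n^+,y_n^+)$, then $T_+(u_{n\ell})=(s_n^++\ell y_{n,1}^+)/\sqrt{1-\ell^2}$, and a priori $y_n^+$ could drift so that these times do not stay above any fixed $T\in I_{\max}(u_\ell)$. The paper rules this out by noting that $\supp(u_{0n},u_{1n})\subset\{|y-y_n^+|\leq s_n^+\}$ and that $(u_{0n},u_{1n})$ converges to a nonzero limit, which forces $|y_n^+|\leq M+s_n^+$ uniformly and hence $T_+(u_{n\ell})\geq -|\ell|M/\sqrt{1-\ell^2}$; only then can one pick a common time $T$, prove convergence there, and propagate to arbitrary $t$ by perturbation theory. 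A second, smaller inaccuracy: the data of $u_\ell$ on a time slice $\{t=T\}$ is \emph{not} determined by the data of $u$ on a single compact slice $\{s=S\}$ --- the slice $\{t=T\}$ is a tilted hyperplane in $(s,y)$ meeting every level of $s$ --- which is precisely why the paper splits the analysis into an estimate at spatial infinity (a small-data cutoff agreeing with $u$ on $\{|y|\geq A+|s|\}$) plus local estimates near each $(T,X)$, the latter requiring a separate treatment (the paper's Claim \ref{C:continuity}) of points whose preimage $(S,Y)=\phi_\ell^{-1}(T,X)$ has $S\notin I_{\max}(u)$. Your appeal to ``running Steps 1--3 of Lemma \ref{L:Lorentz_global}'' points at the right machinery, but without the uniform control of the blow-up points and the case analysis at the slice level the proof does not close.
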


\begin{proof}[Proof of Lemma \ref{L:Lorentz_continuity}]
 
\EMPH{Proof of point \eqref{I:compact}}

It is classical. We give a proof for the sake of completeness. We fix $s\in I_{\max}(u)$. 

By \eqref{CV_un} and the continuity of the flow of \eqref{CP}, $s\in I_{\max}(u_n)$ for large $n$ and
\begin{equation}
 \label{CV_uns}
 \lim_{n\to\infty}\left\|\left(u_n(s),\partial_su_n(s))-(u(s),\partial_su(s)\right)\right\|_{\hdot\times L^2}=0.
\end{equation} 
Let 
\begin{equation}
\label{modulated_n}
v_{0n}(y)=\mu_n^{N/2-1}(s)u_n\left(s,\mu_n(s)y+y_n(s)\right),\quad v_{1n}(y)=\mu_n^{N/2}(s)\partial_su_n\left(s,\mu_n(s)y+y_n(s)\right).
\end{equation} 
Note that $(v_{0n},v_{1n})\in K$ for all $n$. Since $\overline{K}$ is compact and the $u_n$'s are not identically zero so that $(0,0)\notin \overline{K}$, we obtain in view of \eqref{CV_uns} that there exists a constant $C_0(s)$ such that
$$\forall n,\quad |y_n(s)|+\mu_n(s)+1/\mu_n(s)\leq C_0(s).$$
We can extract subsequences, so that $(y_n(s),\mu_n(s))$ converges, as $n\to\infty$ to some $(y(s),\mu(s))\in \RR^N\times (0,+\infty)$. Passing to the limit in \eqref{modulated_n}, we deduce, in view of \eqref{CV_uns} 
$$\left(\mu^{N/2-1}(s)u\left(s,\mu(s)\cdot+y(s)\right),\mu^{N/2}(s)\partial_su\left(s,\mu(s)\cdot +y(s)\right)\right)\in \overline{K},$$
concluding the proof.

\EMPH{Proof of Point \eqref{I:Lorentz_continuity}}
We first prove:
\begin{claim}
\label{C:continuity}
Let $\{u_n\}_n$, $u$, be as in Lemma \ref{L:Lorentz_continuity}. Assume furthermore
$$I_{\max}(u)=(-\infty,s^+),\quad s^+\in (0,+\infty).$$
Let $y^+$ be the blow-up point of $u$ at time $s^+$.
Let $s\in I_{\max}(u)$, and $\eps>0$. Then if $u_n$ is global for large $n$,
$$ \lim_{n\to\infty} \int_{|y-y^+|>|s-s^+|+\eps} |\nabla u_n(s)|^2+(\partial_t u_n(s))^2\,dy=0.$$
If $u_n$ is not global for large $n$ and $y_n^{\pm}$ is the blow-up point of $u_n$ at time $s_n^{\pm}$, then $(s_n^{\pm},y_n^{\pm})\notin \{|y-y^+|>|s-s^+|+\eps\}$ and
$$ \lim_{n\to\infty} \int_{|y-y^+|>|s-s^+|+\eps} |\nabla \unu_n(s)|^2+(\partial_t \unu_n(s))^2\,dy=0.$$
\end{claim}
 \begin{proof}
 Let $\eta$ such that $0<\eta\leq \eps$ and $s<s^+-\eta$.
 Let $s_0=s^+-\eta$. Let 
 $(\tu_{n0},\tu_{n1})=\chi\left(\frac{y-y^+}{2\eta}\right)(u_n(s_0),\partial_tu_n(s_0))$, where $\chi\in C^{\infty}(\RR^N)$, $\chi(y)=1$ if $|y|\geq 1$, $\chi(y)=0$ if $|y|\leq 1/2$. Since $(u(s_0,y),\partial_tu(s_0,y))=0$ if $|y-y^+|\geq \eta$, we have
 \begin{equation*}
 \lim_{n\to\infty} \|(\tu_{n0},\tu_{n1})\|_{\hdot\times L^2}=0.
 \end{equation*} 
 and thus
 \begin{equation}
 \label{lim_tu}
 \lim_{n\to\infty} \|(\tu_{n}(s),\partial_s\tu_{n}(s))\|_{\hdot\times L^2}=0.
 \end{equation} 
  By Claim \ref{C:FSP} (or simply finite speed of propagation if $u_n$ is global), 
 $$ |y-y^+|\geq 2\eta+|s-s_0|\Longrightarrow \tu_n(s,y)=\unu_n(s,y).$$
 Since $s<s_0$, we have $2\eta+|s-s_0|=2\eta+s_0-s=\eta+|s^+-s|$, and the conclusion of the claim follows from \eqref{lim_tu}.
 \end{proof}

We can assume, without loss of generality, that  we are in one of the following three cases: $u_n$ is global for large $n$; $I_{\max}(u_n)$ is of the form $(-\infty,s^+_n)$, $s_n^+\in\RR$ for large $n$; or $I_{\max}(u_n)$ is of the form $(s^-_n,+\infty)$, $s_n^-\in\RR$ for large $n$.

\EMPH{Step 1} We prove that there exists $T\in I_{\max}(u_{\ell})$ such that $T\in I_{\max}(u_{n\ell})$ for large $n$. 

If $(u_n)_n$ is global for large $n$, then $u_{n\ell}$ is global for large $n$ and the result is obvious.

Assume that $I_{\max}(u_n)$ is of the form $(-\infty,s_n^+)$ for large $n$, and denote by $y_n^+$ the blow-up point of $u_n$ at $s=s_n^+$. Then 
$$I_{\max}(u_{n\ell})=\left(-\infty,t^+_n\right),\quad t^+_n=\frac{s_n^++\ell y_{n1}^+}{\sqrt{1-\ell^2}}.$$
Since $u_n$ has the compactness property, we deduce
$$\supp \left(u_{0n},u_{1n}\right)\subset \left\{|y-y_n^+|\leq s_n^+\right\}\subset\left\{|y_n^+|-s_n^+\leq |y|\right\}.$$
Since $(u_{0n},u_{1n})$ converges in $\hdot\times L^2$ to $(u_0,u_1)\neq (0,0)$, we deduce that there exists a constant $M>0$ such that
\begin{equation}
 \label{B5}
 \forall n, \quad |y_n^+|\leq M+s_n^+.
\end{equation} 
As a consequence
\begin{equation}
 \label{B6}
 t_n^+=\frac{s_n^+ +\ell y_{n1}^+}{\sqrt{1-\ell^2}}\geq \frac{(1-|\ell|)s_n^+-|\ell|M}{\sqrt{1-\ell^2}}\geq -\frac{|\ell|M}{\sqrt{1-\ell^2}}.
\end{equation} 
If the domain of existence of $u$ is of the form $(-\infty,s^+)$ with $s^+<\infty$, then the domain of existence of $u_{\ell}$ is of the form $(-\infty,t^+)$ and any $T<\min\left(t^+,-\frac{|\ell|M}{\sqrt{1-\ell^2}}\right)$ satisfies the desired property. If $u$ is global, we can take any $T<-\frac{|\ell|M}{\sqrt{1-\ell^2}}$. Finally, if the domain of existence of $u$ is of the form $(s^-,+\infty)$, then by standard long-time perturbation $s_n^+\to +\infty$,  and thus by \eqref{B6}, $t_n^+\to +\infty$. Any $T>t^-$ satisfies the conclusion of Step 1.

If the maximal domain of existence of $u_n$ is of the form $(s_n^-,+\infty)$ for large $n$, the proof is identical and we omit it.

\EMPH{Step 2} We prove that there exists $B>0$ such that
$$ \lim_{n\to\infty} \int_{|x|\geq B}\left|\nabla u_{n\ell}(T,x)-\nabla u_{\ell}(T,x)\right|^2+\left( \partial_tu_{n\ell}(T,x)-\partial_tu_{\ell}(T,x)\right)^2 \,dx=0. $$
Let $\chi\in C_0^{\infty}(\RR^N)$, $\chi(y)=1$ for $|y|\geq 1$, $\chi(y)=0$ for $|y|\leq 1/2$. Let
\begin{equation*}
 (\tu_{0n},\tu_{1n})(y)=\chi\left( \frac{y}{A} \right)(u_{0n},u_{1n})(y),\quad (\tu_{0},\tu_{1})(y)=\chi\left( \frac{y}{A} \right)(u_{0},u_{1})(y).
\end{equation*}
We choose $A$ large, so that 
$$\|(\tu_0,\tu_1)\|_{\hdot\times L^2}\leq \frac{\delta_0}{2},$$
where again $\delta_0$ is given by the small data theory. Then for large $n$,
$$\|(\tu_{0n},\tu_{1n})\|_{\hdot\times L^2}\leq \delta_0,$$
and $\tu_n$ is global. By Claim \ref{C:FSP}, 
$$|y|\geq A+|s|\Longrightarrow \unu_n(s,y)=\tu_n(s,y),$$
and, if $u_n$ is not globally defined forward in time (respectively backward in time), $|y_n^{+}|< A+|s_n^{+}|$ (respectively $|y_n^{-}|< A+|s_n^{-}|$). Similarly, $\unu=\tu$ for $|y|\geq A+|s|$ and, if $u$ is not globally defined forward in time (respectively backward in time), $|y^{+}|< A+|s^{+}|$ (respectively $|y^{-}|< A+|s^{-}|$). As a consequence (for large $n$),
\begin{equation}
 \label{B7}
 |x|\geq c_{\ell}A+|t|\Longrightarrow \tu_{n\ell}(t,x)=u_{n\ell}(t,x),\quad \tu_{\ell}(t,x)=u_{\ell}(t,x).
 \end{equation}
 By the small data theory,
 $$\lim_{n\to\infty}\left\|\tu_n-\tu\right\|_{L^{\frac{N+2}{N-2}}L^{\frac{2(N+2)}{N-2}}}=0.$$
Since 
$$\partial_t^2(\tu_n-\tu)-\Delta(\tu_n-\tu)=|\tu_n|^{\frac{4}{N-2}}\tu_n-|\tu|^{\frac{4}{N-2}}\tu,$$
and 
$$\left\||\tu_n|^{\frac{4}{N-2}}\tu_n-|\tu|^{\frac{4}{N-2}}\tu\right\|_{L^1L^2}\leq C\|\tu_n-\tu\|_{L^{\frac{N+2}{N-2}}L^{\frac{2(N+2)}{N-2}}}\left(\left\|\tu_n\right\|^{\frac{4}{N-2}}_{L^{\frac{N+2}{N-2}}L^{\frac{2(N+2)}{N-2}}}+\left\|\tu\right\|^{\frac{4}{N-2}}_{L^{\frac{N+2}{N-2}}L^{\frac{2(N+2)}{N-2}}}\right),$$
goes to $0$ as $n\to\infty$,
we obtain by Claim \ref{C:linear_Lorentz}
\begin{equation}
 \label{B8}
 \sup_{t\in \RR}\left\|\left(\tu_{n\ell}(t)-\tu_{\ell}(t),\partial_t \tu_{n\ell}(t)-\partial_t\tu_{\ell}(t)\right)\right\|_{\hdot\times L^2}\underset{n\to\infty}{\longrightarrow}0.
\end{equation} 
The conclusion of Step 2 follows from \eqref{B7} and \eqref{B8}.
 
\EMPH{Step 3} Let $X\in \RR^N$. We show that there exists $\eta>0$ such that
\begin{equation*}
 \lim_{n\to\infty} \int_{|x-X|<\eta} \left|\nabla u_{n\ell}(T,x)-\nabla u_{\ell}(T,x)\right|^2+\left|\partial_t u_{n\ell}(T,x)-\partial_t u_{\ell}(T,x)\right|^2\,dx =0.
\end{equation*} 
We let as usual $(S,Y)=\phi_{\ell}^{-1}(T,X)$. We distinguish two cases

\EMPH{Case 1: $S\in I_{\max}(u)$} In this case, $S\in I_{\max}(u_n)$ for large $n$. We let $\psi\in C_0^{\infty}(\RR^N)$ such that $\psi(y)=1$ if $|y|\leq 1$ and $\psi(y)=0$ if $|y|\geq 2$. Define:
\begin{equation*}
 (\tu_{0n},\tu_{1n})(y)=\psi\left( \frac{y-Y}{\eps} \right)(u_{0n},u_{1n})(y),\quad (\tu_{0},\tu_{1})(y)=\psi\left( \frac{y-Y}{\eps} \right)(u_{0},u_{1})(y),
\end{equation*}
and choose $\eps>0$ so that $\|(\tu_0,\tu_1)\|_{\hdot\times L^2}\leq \delta_0/2$. As in Step 2, we get that $\tu$ is global and scattering, that $\tu_n$ is global and scattering for large $n$ and (using Claim \ref{C:FSP}) that
$$ \left|x-X\right|\leq \frac{\eps}{c_{\ell}}-|t-T|\Longrightarrow \tu_{n\ell}(t,x)=u_{n\ell}(t,x)\text{ and } \tu_{\ell}(t,x)=u_{\ell}(t,x).$$
Using as in Step 2 Claim \ref{C:linear_Lorentz}, we get 
\begin{equation*}
 \sup_{t\in \RR}\left\|\left(\tu_{n\ell}(t)-\tu_{\ell}(t),\partial_t \tu_{n\ell}(t)-\partial_t\tu_{\ell}(t)\right)\right\|_{\hdot\times L^2}\underset{n\to\infty}{\longrightarrow}0.
\end{equation*} 
and the conclusion follows.

\EMPH{Case 2: $S\notin I_{\max}(u)$} We assume to fix ideas 
$$I_{\max}(U)=(-\infty,s^+),\quad s^+\in\RR,\; S\geq s^+.$$
Using that $T<t^+$, we get $\frac{S+\ell Y_1}{\sqrt{1-\ell^2}}<\frac{s^++\ell y_1^+}{\sqrt{1-\ell^2}}$ and thus $S-s^+<\ell\left|Y_1-y_1^+\right|$. As a consequence, since $S\geq s^+$,
$$ |Y-y^+|>|S-s^+|.$$
This implies $(\unu,\partial_t\unu)=(0,0)$ close to $(S,Y)$. Since by Claim \ref{C:continuity} $(u_n,\partial_tu_n)\to 0$ close to $(S,Y)$, locally in $\hdot\times L^2$, the result follows.

\EMPH{Step 4. End of the proof} By Steps 2 and 3, 
$$\lim_{n\to \infty}\left\|(u_{n\ell}(T),\partial_tu_{n\ell}(T))-(u_{\ell}(T),\partial_tu_{\ell}(T))\right\|_{\hdot\times L^2}=0.$$
The conclusion of Lemma \ref{L:Lorentz_continuity} follows from global in time perturbation theory with initial time $t=T$. 
\end{proof}

\subsection{Preservation of the compactness property by Lorentz transformation}
In view of Lemmas \ref{L:Lorentz_global}, \ref{L:Lorentz_nonglobal}, the proof of Proposition \ref{P:lorentz} will be complete once we have proved:
\begin{lemma}
 \label{L:Lorentz_compactness}
 Let $u$ be a non-zero solution of \eqref{CP} with the compactness property. Then $u_{\ell}$ has the compactness property. 
\end{lemma}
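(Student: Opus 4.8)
The plan is to deduce the compactness property of $u_\ell$ from that of $u$ by combining the continuity statement of Lemma~\ref{L:Lorentz_continuity} with an explicit commutation identity between the Lorentz transform and the scaling/translation symmetries of \eqref{CP}. The point is that the Lorentz transform of a rescaled--translated copy of $u$ is again a rescaled--translated copy of $u_\ell$, so that a single limiting object extracted from the compact set $\overline{K}$ controls $u_\ell$ along any sequence of times.

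\textbf{Step 1 (commutation identity).} First I would record the following: if $w$ is any solution of \eqref{CP} with the compactness property, $\lambda>0$, $\sigma\in I_{\max}(w)$, $a\in\RR^N$, and $v(s,y)=\lambda^{\frac N2-1}w(\sigma+\lambda s,a+\lambda y)$, then
\[
v_\ell(t,x)=\lambda^{\frac N2-1}\,w_\ell(\lambda t+\hat t,\lambda x+\hat x),\qquad (\hat t,\hat x)=\phi_\ell(\sigma,a),
\]
and correspondingly $\partial_t v_\ell(t,x)=\lambda^{\frac N2}\,\partial_t w_\ell(\lambda t+\hat t,\lambda x+\hat x)$. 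This is obtained by composing three elementary rules, each a direct computation from \eqref{def_phil} and \eqref{def_ul_bis}: pure scaling commutes with $w\mapsto w_\ell$; the time translation $w\mapsto w(\cdot+\sigma,\cdot)$ becomes the space--time translation of $w_\ell$ by $\phi_\ell(\sigma,0)$; and the space translation $w\mapsto w(\cdot,\cdot+a)$ becomes the space--time translation of $w_\ell$ by $\phi_\ell(0,a)$; since $\phi_\ell$ is linear the two translations add up to a shift by $\phi_\ell(\sigma,a)$.

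\textbf{Step 2 (adapted times).} Let $\lambda(\cdot),x(\cdot)$ be modulation parameters for $u$, chosen continuous (this is possible by a standard argument using precompactness of the trajectory). As usual, to prove that $u_\ell$ has the compactness property it suffices to treat, up to extraction, an arbitrary sequence of times; sequences staying in a compact subinterval of $I_{\max}(u_\ell)$ are handled directly by continuity of the flow, so assume $t_n\to T_+(u_\ell)$ (the case $t_n\to T_-(u_\ell)$ is symmetric). Consider $g(s):=\phi_\ell(s,x(s))_1=\tfrac{s+\ell x_1(s)}{\sqrt{1-\ell^2}}$, which is continuous on $I_{\max}(u)=(s^-,s^+)$. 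Using the known behaviour of the center $x(s)$ for compactness-property solutions --- namely $x(s)\to y^+$ as $s\to s^+$ when $s^+<\infty$ (finite speed of propagation, cf.\ \cite[Lemma 4.8]{KeMe08} together with the scale bound $\lambda(s)\gtrsim s^+-s$), and the subluminal asymptotics $x(s)/s\to$ a vector of norm $<1$ in the global case (from \cite{DuKeMe13Pa}) --- one gets $g(s)\to T_+(u_\ell)$ as $s\to s^+$. Since $g$ is continuous, takes values $<T_+(u_\ell)$, and tends to $T_+(u_\ell)$, there is for each large $n$ a $\sigma_n\in(s^-,s^+)$ with $g(\sigma_n)=t_n$, and necessarily $\sigma_n\to s^+$. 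Put $\lambda_n=\lambda(\sigma_n)$, $x_n=x(\sigma_n)$, so that $\phi_\ell(\sigma_n,x_n)=(t_n,\hat x_n)$ for some $\hat x_n\in\RR^N$.

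\textbf{Step 3 (passing to the limit and conclusion).} Set $v_n(s,y)=\lambda_n^{\frac N2-1}u(\sigma_n+\lambda_n s,x_n+\lambda_n y)$. The $v_n$ are solutions of \eqref{CP} satisfying the uniform compactness hypothesis of Lemma~\ref{L:Lorentz_continuity} (with modulation $\mu_n(s)=\lambda(\sigma_n+\lambda_n s)/\lambda_n$, $y_n(s)=(x(\sigma_n+\lambda_n s)-x_n)/\lambda_n$), and $(v_n(0),\partial_s v_n(0))\in\overline K$. Extracting a subsequence, $(v_n(0),\partial_s v_n(0))\to(v_0,v_1)$ in $\hdot\times L^2$; let $v$ be the corresponding solution. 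By Lemma~\ref{L:Lorentz_continuity}\eqref{I:compact}, $v$ has the compactness property, and $0\in I_{\max}(v_\ell)$, so Lemma~\ref{L:Lorentz_continuity}\eqref{I:Lorentz_continuity} applied at $t=0$ gives $(v_{n\ell}(0),\partial_t v_{n\ell}(0))\to(v_\ell(0),\partial_t v_\ell(0))$ in $\hdot\times L^2$. By the identity of Step 1 (with $w=u$, $\sigma=\sigma_n$, $a=x_n$, $\lambda=\lambda_n$) evaluated at $t=0$,
\[
\bigl(v_{n\ell}(0,x),\partial_t v_{n\ell}(0,x)\bigr)=\bigl(\lambda_n^{\frac N2-1}u_\ell(t_n,\lambda_n x+\hat x_n),\,\lambda_n^{\frac N2}\partial_t u_\ell(t_n,\lambda_n x+\hat x_n)\bigr),
\]
so the modulated trajectory of $u_\ell$ along $\{t_n\}$, with parameters $\Lambda(t_n)=\lambda_n$ and $X(t_n)=\hat x_n$, converges in $\hdot\times L^2$. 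Since $\{t_n\}$ was arbitrary, the standard sequential characterization of precompactness (as in the proof of Lemma~\ref{L:Lorentz_continuity}\eqref{I:compact}) yields that $u_\ell$ has the compactness property.

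\textbf{Main obstacle.} The crux is Step 2: lining up the Lorentz-transformed times so that the rescaling point $\phi_\ell(\sigma_n,x(\sigma_n))$ has time-coordinate exactly $t_n$. This uses both the continuity of the modulation parameters of $u$ and the qualitative behaviour of the center $x(s)$ near the maximal time, which rely on finite speed of propagation in the blow-up case and on properties of compactness-property solutions from \cite{DuKeMe13Pa} in the global case. (If one only disposes of a measurable modulation, one instead extracts a further subsequence so that $r_n:=(t_n-g(\sigma_n))/\lambda_n\to r_*$, and must then rule out $r_*$ lying on the boundary of $I_{\max}(v_\ell)$ before concluding as above with $v_\ell(r_*)$ in place of $v_\ell(0)$.) Step 1, although only a bookkeeping computation, must be carried out with care because the Lorentz transform genuinely mixes the space and time variables.
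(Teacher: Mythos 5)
Your overall architecture coincides with the paper's: match the Lorentz-transformed times by the intermediate value theorem applied to $s\mapsto\frac{s+\ell x_1(s)}{\sqrt{1-\ell^2}}$, pass to a limit profile $v$ along the modulated sequence, and transfer convergence to $u_\ell$ via Lemma \ref{L:Lorentz_continuity} together with the commutation identity. However, there is a genuine gap at the single assertion on which everything hinges: in Step 3 you state ``$0\in I_{\max}(v_\ell)$'' with no justification. This is false for a generic choice of continuous modulation parameters. If $v$ is not global, say $I_{\max}(v)=(-\infty,S^+)$ with blow-up point $Y^+$, then $T_+(v_\ell)=\frac{S^++\ell Y_1^+}{\sqrt{1-\ell^2}}$, and nothing in your setup prevents $Y_1^+\leq -S^+$ (for $\ell>0$), in which case $T_+(v_\ell)\leq 0$ and Lemma \ref{L:Lorentz_continuity}\eqref{I:Lorentz_continuity} cannot be applied at $t=0$. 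The compactness property is invariant under replacing $x(s)$ by $x(s)+c\,\lambda(s)$ for any fixed $c\in\RR^N$, so the limit profile $v$ — and hence its blow-up point — can be translated arbitrarily far relative to $S^+$ by a bad (but perfectly continuous) choice of center; your parenthetical remark about ruling out $r_*$ on the boundary acknowledges a cousin of this issue but supplies no mechanism to resolve it.

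The paper closes exactly this gap by a specific normalization of the translation parameter (Claim \ref{C:choice_modulation}, point \eqref{I:modul1}): $y_1(s)$ is chosen so that the renormalized profile always carries between one third and two thirds of its space-time gradient energy on each side of the hyperplane $\{z_1=0\}$. This property passes to the limit (inequalities \eqref{B10}--\eqref{B11}), and combined with the support constraint $\supp(v_0,v_1)\subset\{|y-Y^+|\leq S^+\}$ it forces $|Y_1^+|<S^+$ strictly, whence $T_+(v_\ell)>0$. (The same normalization, being continuous in $s$, also feeds your Step 2.) To repair your argument you must either import this normalization or prove by some other means that the first coordinate of the blow-up point of every limit profile extracted from $\overline K$ is strictly less than $S^+$ in absolute value; without that, the application of Lemma \ref{L:Lorentz_continuity} at $t=0$ is not licensed.
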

We will need the following Claim, proved in Appendix \ref{A:modulation}.
\begin{claim}
 \label{C:choice_modulation}
 Let $u$ be a non-zero solution of \eqref{CP} with the compactness property. Let $(s^-,s^+)=I_{\max}(u)$. Then there exist $\mu(s)>0$, $y(s)\in \RR^N$ defined for $s\in (s^-,s^+)$ such that
 \begin{enumerate}
  \item \label{I:modul1} if $s^-<s<s^+$,
  $$\frac{1}{3}\int |\nabla_{s,y}u(s,y)|^2\,dy\leq \int_{y_1(s)}^{+\infty} \int_{\RR^{N-1}}|\nabla_{s,y} u(s,y)|^2\,dy'\,dy_1\leq \frac{2}{3}  \int |\nabla_{s,y}u(s,y)|^2\,dy.$$
  \item \label{I:modul2}$\ds s\mapsto y_1(s)$ is continuous on $(s^-,s^+)$.
  \item \label{I:modul3}$\ds K=\left\{\left( \mu^{N/2-1}(s)u(s,\mu(s)\cdot+y(s)),\mu^{N/2}(s)\partial_su(s,\mu(s)\cdot+y(s))\right),\; s\in (s^-,s^+) \right\}$ 
  has compact closure in $\hdot\times L^2$.
 \end{enumerate}
\end{claim}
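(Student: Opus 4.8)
The plan is: (i) build the balancing center $y_1(s)$ by a \emph{barycentric} selection (rather than by a sharp median, which need not be continuous), (ii) keep the scale and the remaining center coordinates equal to the ones already furnished by the compactness property, and (iii) transfer the compactness to the new frame by a soft argument, using that the balance condition confines the discrepancy between the two frames to a bounded set.

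\emph{Setup.} Write $m(s,y)=|\nabla_{s,y}u(s,y)|^2=|\partial_su(s,y)|^2+|\nabla_yu(s,y)|^2$, so that $M(s):=\int_{\RR^N}m(s,y)\,dy=\|(u(s),\partial_su(s))\|_{\hdot\times L^2}^2$. Since $(u,\partial_su)\in C^0\big((s^-,s^+),\hdot\times L^2\big)$, the map $s\mapsto m(s)$ is continuous from $(s^-,s^+)$ into $L^1(\RR^N)$; and since $u\not\equiv0$, uniqueness for \eqref{CP} gives $(u(s),\partial_su(s))\neq(0,0)$, hence $M(s)>0$ for all $s$. Put $G(s,a)=\int_{\{y_1>a\}}m(s,y)\,dy$. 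Then $G$ is jointly continuous (continuous in $s$ uniformly in $a$ because $|G(s,a)-G(s_0,a)|\le\|m(s)-m(s_0)\|_{L^1}$, and continuous in $a$ by dominated convergence), $G(s,\cdot)$ is non-increasing with $G(s,-\infty)=M(s)$, $G(s,+\infty)=0$, and $M$ is continuous and positive. For any compact $J\Subset(s^-,s^+)$ the set $\{m(s):s\in J\}$ is compact in $L^1(\RR^N)$ and $c_J:=\inf_JM>0$; by the resulting uniform tightness there is $R_J$ with $G(s,a)<c_J/3$ for $a\ge R_J$ and $G(s,a)>2M(s)/3$ for $a\le -R_J$, uniformly in $s\in J$. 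Hence $I(s):=\{a:\,G(s,a)\in[M(s)/3,2M(s)/3]\}$ is a nonempty compact interval, contained in $[-R_J,R_J]$ for $s\in J$.

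\emph{Construction of $y_1$ (items \ref{I:modul1}--\ref{I:modul2}).} Fix once and for all a continuous $\phi:[0,1]\to[0,\infty)$ with $\operatorname{supp}\phi\subset[1/3,2/3]$ and $\phi>0$ on $(1/3,2/3)$, and set
\[
y_1(s)=\frac{\int_\RR a\,\phi\!\big(G(s,a)/M(s)\big)\,da}{\int_\RR\phi\!\big(G(s,a)/M(s)\big)\,da}.
\]
The integrand is supported in $I(s)$, so both integrals are finite; the denominator is positive since $\{a:G(s,a)/M(s)\in(1/3,2/3)\}$ is a nonempty open interval (were it empty, its two endpoints would have to satisfy $G(s,\cdot)/M(s)=2/3$ and $=1/3$ simultaneously, impossible). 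Thus $y_1(s)$ is well defined, and, being a barycenter of points of the closed interval $I(s)$, it lies in $I(s)$, i.e. $G(s,y_1(s))\in[M(s)/3,2M(s)/3]$ — which is item \ref{I:modul1}. Since $(s,a)\mapsto\phi(G(s,a)/M(s))$ is jointly continuous and, on $J$, dominated by $\|\phi\|_\infty\mathbf 1_{[-R_J,R_J]}$, dominated convergence makes numerator and denominator continuous on $J$; as $J$ is arbitrary, $y_1\in C^0((s^-,s^+))$, which is item \ref{I:modul2}.

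\emph{Transfer of compactness (item \ref{I:modul3}).} Let $\lambda(\cdot),x(\cdot)$ witness the compactness property of $u$, so $K_0=\{(v_0(s,\cdot),v_1(s,\cdot)):s\}$ has compact closure, with $v_0(s,z)=\lambda^{N/2-1}(s)u(s,\lambda(s)z+x(s))$, $v_1(s,z)=\lambda^{N/2}(s)\partial_su(s,\lambda(s)z+x(s))$. Take $\mu(s)=\lambda(s)$, $y(s)=(y_1(s),x_2(s),\dots,x_N(s))$, and $b(s)=(y_1(s)-x_1(s))/\lambda(s)$. Then the element of $K$ at time $s$ equals $\big(v_0(s,\cdot+(b(s),0,\dots,0)),v_1(s,\cdot+(b(s),0,\dots,0))\big)$, and the change of variables $y=\lambda(s)z+x(s)$ turns item \ref{I:modul1} into $\int_{\{z_1>b(s)\}}\big(|v_1(s,z)|^2+|\nabla_zv_0(s,z)|^2\big)\,dz\in[M(s)/3,2M(s)/3]$, where $M(s)=\|(v_0(s),v_1(s))\|_{\hdot\times L^2}^2$ by scale/translation invariance. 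If $K$ were not precompact, choose $s_n$ with this element having no convergent subsequence, and extract so that $(v_0(s_n),v_1(s_n))\to(\phi_0,\phi_1)$ in $\hdot\times L^2$. If $(\phi_0,\phi_1)=0$, the elements have norm $\to0$ and hence converge. Otherwise $|v_1(s_n)|^2+|\nabla v_0(s_n)|^2\to|\phi_1|^2+|\nabla\phi_0|^2$ in $L^1$ and $M(s_n)\to\|(\phi_0,\phi_1)\|^2>0$, so the displayed balance forces $\{b(s_n)\}$ to stay bounded (if $b(s_n)\to+\infty$ the left side $\to0$; if $b(s_n)\to-\infty$ it $\to\|(\phi_0,\phi_1)\|^2$; both contradict $[M(s_n)/3,2M(s_n)/3]$); extracting $b(s_n)\to b_\infty$ and using strong continuity of translations, the elements converge to $\big(\phi_0(\cdot+(b_\infty,0,\dots,0)),\phi_1(\cdot+(b_\infty,0,\dots,0))\big)$. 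Either way we contradict the choice of $s_n$, so $K$ has compact closure. Equivalently, $K$ is contained in the continuous image of the compact set $\overline{K_0}\times[-R,R]$ under the translation action.

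\emph{Main obstacle.} The only genuinely delicate point is making \ref{I:modul1} and \ref{I:modul2} hold simultaneously: any center pinned to the exact half-mass level of $G(s,\cdot)$ may jump when $G(s,\cdot)$ has a plateau (an ``empty energy slab''), so one must exploit the slack in $[1/3,2/3]$, which the barycenter over the admissible band does automatically. Everything else — continuity of $s\mapsto|\partial_su(s)|^2+|\nabla u(s)|^2$ in $L^1$, uniform tightness on compact time intervals, and the soft deduction of \ref{I:modul3} — is routine.
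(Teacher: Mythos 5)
Your proof is correct, and its overall architecture matches the paper's: modify only the first translation coordinate, keep the scale and the other coordinates from the given compactness parameters, exploit the slack in the band $[\tfrac13,\tfrac23]$ to obtain a \emph{continuous} selection, and transfer compactness by showing the relative shift $b(s)$ between the two frames cannot escape to infinity. The one genuine difference is the selection device for $y_1(s)$: the paper adds the strictly positive Gaussian term $\tfrac{1}{3\sqrt{\pi}}e^{-y_1^2}$ to the marginal of the normalized energy density, so the resulting distribution function $F_s$ is \emph{strictly} decreasing from $\tfrac43$ to $0$ and the level set $F_s=\tfrac23$ is a single point (continuity then follows from a short direct estimate, and the Gaussian contributes at most $\tfrac13$, giving the $[\tfrac13,\tfrac23]$ bounds); you instead take the barycenter of the admissible band $I(s)$ weighted by $\phi(G(s,a)/M(s))$, which handles plateaus equally well and whose continuity you get from dominated convergence plus the uniform tightness on compact time intervals. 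Both devices are valid; the paper's is marginally slicker in that continuity and item (a) come out of a single monotone equation, while yours makes the ``plateau'' obstruction and its resolution more explicit. For item (c) the paper proves the uniform bound $|y_1(s)-\tilde y_1(s)|\le C\mu(s)$ by contradiction and deduces precompactness, which is the same content as your sequential argument (your closing ``equivalently'' sentence about $\overline{K_0}\times[-R,R]$ presumes a uniform bound on $b(s)$, which your sequential argument only establishes along subsequences unless one also notes $(0,0)\notin\overline{K_0}$; but this sentence is redundant, as the sequential argument already suffices).
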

\begin{remark}
 Of course in the setting of Claim \ref{C:choice_modulation} we could also choose $y_2(s)$, $y_3(s)$ and $\mu(s)$ continuous, but we do not need this fact in the sequel.
\end{remark}
\begin{proof}[Proof of Lemma \ref{L:Lorentz_compactness}]
 We let $y(s)$, $\mu(s)$ and $K$ be as in Claim \ref{C:choice_modulation}. Let $(t^-,t^+)$ be the maximal interval of existence of $u_{\ell}$. 
 
 \EMPH{Step 1}
 Let $t\in (t^-,t^+)$. We show that there exists $s=s(t)\in (s^-,s^+)$ such that $t=\frac{s+\ell y_1(s)}{\sqrt{1-\ell^2}}$. 
Let us mention that $s(t)$ is not always unique.

 Let $f:s\mapsto \frac{s+\ell y_1(s)}{\sqrt{1-\ell^2}}$. Then $f$ is continuous on $(s^-,s^+)$.  We distinguish two cases.
 
 \EMPH{Case 1} $u$ is global. Then by finite speed of propagation, 
 $$\lim_{R\to\infty}\limsup_{t\to +\infty}\int_{|x|\geq t+R} |\nabla u(t,x)|^2+(\partial_tu(t,x))^2\,dx=0,$$
 which implies
 $|y(s)|\leq |s|+M$ for a large constant $M$. Thus 
 $$\lim_{s\to \pm\infty} \frac{s+\ell y_1(s)}{\sqrt{1-\ell^2}}=\pm\infty$$
 and the result follows by the intermediate value theorem.
 
 \EMPH{Case 2} $u$ is not global, say $s^+<\infty$. 
The maximal interval of existence of $u_{\ell}$ is $(-\infty,t^+)$, where $t^+=\frac{s^++\ell y_1^+}{\sqrt{1-\ell^2}}$.  
 As before, we have:
 $$\lim_{s\to -\infty} \frac{s+\ell y_1(s)}{\sqrt{1-\ell^2}}=-\infty$$
 Let $y^+$ be the blow-up point of $u$ at $s=s^+$. By \cite[Proof of Lemma 4.8]{KeMe08}, $y(s)$ is bounded as $s\to s^+$. Let $\{s_n\}_n$ be a sequence in $(s^-,s^+)$ that converges to $s^+$, and such that $\{y(s_n)\}_n$ converges in $\RR^N$. Then (see again \cite{KeMe08}) $\mu(s_n)\to 0$ as $n\to \infty$ and (extracting subsequences if necessary),
 $$\left(\mu^{N/2-1}(s_n)u(s_n,\mu(s_n)\cdot+y(s_n)),\mu^{N/2}(s_n)\partial_su(s_n,\mu(s_n)\cdot+y(s_n))\right)$$
 converges to a non-zero function as $n\to\infty$. Since the preceding function is supported in $\{y\in \RR^N\text{ s.t. }|\mu(s_n)y+y(s_n)-y^+|\leq |s_n-s^+|\}$, we get that $y(s_n)\to y^+$ as $n\to \infty$, and thus (since $\{s_n\}_n$ is an arbitrary sequence that converges to $s^+$), 
 $$\lim_{s\to s^+} y(s)=y^+.$$
 Thus
 $$ \lim_{s\to s^+}\frac{s+\ell y_1(s)}{\sqrt{1-\ell^2}}=t^+,$$
 and the statement follows again from the intermediate value theorem. Step 1 is complete.

\EMPH{Step 2} We let, for $t\in I_{\max}(u)$, 
$$\lambda(t)=\mu(s(t)),\quad x(t)=\frac{y(s(t))+\ell s(t)\ve_1}{\sqrt{1-\ell^2}}.$$
Let 
$$ K_{\ell}=\left\{\lambda^{N/2-1}(t)u_{\ell}(t,\lambda(t)\cdot+x(t)),\lambda^{N/2}(t)\partial_tu_{\ell}(t,\lambda(t)\cdot+x(t)),\quad t\in I_{\max}(u_{\ell})\right\}.$$
The aim of steps 2 and 3 is to show that $K_{\ell}$ has compact closure in $\hdot\times L^2$.

Let $\{t_n\}_n$ be a sequence in $I_{\max}(u_{\ell})$, $s_n=s(t_n)\in I_{\max}(u)$. Let
$$v_n(\tau,z)=\mu^{N/2-1}(s_n)u(s_n+\mu(s_n)\tau,y(s_n)+\mu(s_n)z).$$
Then (extracting subsequences if necessary), there exists $(v_0,v_1)\in \hdot\times L^2$ such that
\begin{equation}
 \label{B9}
 \lim_{n\to\infty}\left\|(v_n(0),\partial_tv_n(0))-(v_0,v_1)\right\|_{\hdot\times L^2}=0.
\end{equation} 
We let $v$ be the solution of \eqref{CP} with initial data $(v_0,v_1)$. Recall from Lemma \ref{L:Lorentz_continuity} that $v$ has the compactness property. In this step we prove that $0\in I_{\max}(v_{\ell})$. If $v$ is global, then $v_{\ell}$ is global and the result follows. We assume $v$ is not global. To fix ideas, we assume that 
$$I_{\max}(v)=(-\infty,S^+),\quad S^+>0.$$
We denote by $Y^+$ the blow-up point for $v$ at $s=S^+$. We have
\begin{equation}
 \label{B12}
 \supp (v_0,v_1)\subset\{|y-Y^+|\leq S^+\}.
\end{equation} 
By the choice of $y_1(s)$ in Claim \ref{C:choice_modulation},
\begin{multline*}
\int_{z_1\geq 0} |\nabla_{\tau,z}v_n(0,z)|^2\,dz=\int_{y_1\geq y_1(s_n)} |\nabla_{s,y}u(s_n,y)|^2\,dy\\
\geq \frac{1}{3} \int |\nabla_{s,y}u(s_n,y)|^2\,dy=\frac{1}{3}\int |\nabla_{\tau,z}v_n(0,z)|^2\,dz. 
\end{multline*}
Letting $n\to\infty$, we obtain
\begin{equation}
 \label{B10}
 \int_{z_1\geq 0} |\nabla_{\tau,z}v(0,z)|^2\,dz\geq \frac{1}{3}\int |\nabla_{\tau,z}v(0,z)|^2\,dz.
\end{equation} 
and similarly
\begin{equation}
 \label{B11}
 \int_{z_1\leq 0} |\nabla_{\tau,z}v(0,z)|^2\,dz\geq \frac{1}{3}\int |\nabla_{\tau,z}v(0,z)|^2\,dz.
\end{equation} 
We prove by contradiction $|Y_1^+|<S^+$. If for example $Y_1^+\leq -S^+$, then $y_1>0\Longrightarrow y_1-Y^+_1>S^+$ and \eqref{B12} implies
$$ \int_{y_1>0} |\nabla_{s,y} v(0,y)|^2\,dy=0,$$
which contradicts \eqref{B10}, since $v$ is not identically $0$, proving $Y_1^+> -S^+$. Similarly (using \eqref{B11}), we get $Y_1^+< S^+$. Recalling that 
$$T^+(v_{\ell})=\frac{S^++\ell Y_1^+}{\sqrt{1-\ell^2}},$$
we get $T^+(v_{\ell})>0$, which concludes Step 2.

\EMPH{Step 3}
By Step 2, $0\in I_{\max}(v_{\ell})$. By Lemma \ref{L:Lorentz_continuity}, $0\in I_{\max}(v_{n\ell})$ for large $n$ and 
\begin{equation}
\label{B13}
\lim_{n\to\infty} \left\|(v_{n\ell},\partial_tv_{n\ell})(0)-(v_{\ell},\partial_tv_{\ell})(0)\right\|_{\hdot\times L^2}=0.
 \end{equation} 
Moreover, letting $t_n=\frac{s_n+\ell y_1(s_n)}{\sqrt{1-\ell^2}}$, we have $x(t_n)=\frac{y(s_n)+\ell s_n\ve_1}{\sqrt{1-\ell^2}}$ and thus
\begin{multline*}
\lambda^{N/2-1}(t_n)u_{\ell}(t_n,\lambda(t_n)x+x(t_n))\\=\lambda^{N/2-1}(t_n)\unu\left( \frac{t_n-\ell(\lambda(t_n)x_1+x_1(t_n))}{\sqrt{1-\ell^2}},\frac{\lambda(t_n)x_1+x_1(t_n)-\ell t_n}{\sqrt{1-\ell^2}},\lambda(t_n)x'+x'(t_n) \right)\\
=\mu^{N/2-1}(s_n)\unu\left( s_n-\frac{\mu(s_n)\ell x_1}{\sqrt{1-\ell^2}},y_1(s_n)+\frac{\mu(s_n)x_1}{\sqrt{1-\ell^2}},\mu(s_n)x'+y'(s_n) \right)=v_{n\ell}(0,x)
\end{multline*}
and similarly,
\begin{equation*}
\lambda^{N/2}(t_n)\partial_t u_{\ell}\left(t_n,\lambda_n(t_n)x+x(t_n)\right)=\partial_tv_{n\ell}(0,x).
\end{equation*}
Combining with \eqref{B13}, we get that 
$$\left(\lambda^{N/2-1}(t_n)u_{\ell}(t_n,\lambda(t_n)x+x(t_n)),\lambda^{N/2}(t_n)\partial_t u_{\ell}\left(t_n,\lambda_n(t_n)x+x(t_n)\right)\right)$$ 
converges in $\hdot\times L^2$ as $n\to \infty$. The proof is complete.
\end{proof}
\begin{remark}
\label{R:Lorentz}
 From the proof of Lemma \ref{L:Lorentz_compactness}, we see that if $(u(s_n),\partial_tu(s_n))$ converges, up to scaling and space translation, to $(v_0,v_1)\in \hdot\times L^2$, then $(u_{\ell}(t_n),\partial_tu_{\ell}(t_n))$ converges (again up to scaling and space translation) to $(v_{\ell}(0),\partial_tv_{\ell}(0))$, where $v$ is the solution of \eqref{CP} with initial data $(v_0,v_1)$, and the sequence $(t_n)_n$ can be taken as in Step 1 of the proof of Lemma \ref{L:Lorentz_compactness}.
\end{remark}

\appendix

\section{Estimates on modulated functions}
As in Section \ref{S:stationary}, we denote by $A=(s,a,b,c)$ an element of $\RR^{N'}$, where $s\in \RR$, $a,b$ are in $\RR^{N}$ and $c$ in $\RR^{\frac{N(N-1)}{2}}$. Recall from \eqref{Pc} the definition of $P_c$.
\begin{lemma}
 \label{L:C1}
 Let $\psi\in \SSS(\RR^N)$ and $q>0$. Then the function
  $$ F:(x,A)\in \RR^N\times \RR^{N'}\longmapsto \left(\frac{e^{s/2}}{\left|\frac{x}{|x|}-|x|a\right|}\right)^q \psi\left( b+\frac{e^sP_c(x-|x|^2a)}{\left|\frac{x}{|x|}-|x|a\right|^2} \right)$$
can be extended to a $C^{\infty}$ function on $\RR^{N+N'}$. If $K$ is a compact subset of $\RR^{N'}$, there exists a constant $C_K>0$ such that
 \begin{multline}
 \label{C1}
 \forall x\in \RR^N,\; \forall A\in K,\quad |F(x,A)|+|\nabla_A F(x,A)|+|x\nabla_xF(x,A)|\\
 +|x\nabla_x\nabla_A F(x,A)|+|\nabla_A^2F(x,A)|\leq \frac{C_K}{(1+|x|)^q}.
 \end{multline}
 Furthermore
 \begin{align}
  \label{deriv_s}
  \frac{\partial F}{\partial s}(x,0)&=\frac{q}{2}\psi(x)+x\cdot\nabla \psi(x)\\
  \label{deriv_a}
  \frac{\partial F}{\partial a_j}(x,0)&=q x_j \psi(x)-|x|^2\frac{\partial f}{\partial x_j}(x)+2x_jx\cdot\nabla \psi(x)\\
  \label{deriv_b}
  \frac{\partial F}{\partial b_j}(x,0)&=\partial_{x_j}\psi(x)\\
  \label{deriv_c}
\frac{\partial F}{\partial c_j}(x,0)&= (x_{\ell}\partial_{x_k}-x_k\partial_{x_{\ell}})\psi(x),\quad \zeta(k,\ell)=j,
\end{align}
where $\zeta$ is introduced before \eqref{Pc}.
\end{lemma}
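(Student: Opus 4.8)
The whole argument rests on the observation that the quantities occurring in $F$ are built out of polynomials. Indeed
$$\left|\frac{x}{|x|}-|x|a\right|^2=1-2\langle a,x\rangle+|a|^2|x|^2\newcommandegal\rho_a(x),$$
which is a polynomial in $(x,a)$, satisfies $\rho_a(x)=|x-|x|^2a|^2/|x|^2\ge 0$ for $x\ne 0$, $\rho_0\equiv 1$, and vanishes precisely when $x\ne 0$ and $x=|x|^2a$, i.e.\ at the single point $x=a/|a|^2$ (for $a\ne 0$). With $\varphi_A(x)=b+e^sP_c(x-|x|^2a)\,\rho_a(x)^{-1}$ the map of \eqref{def_vA}, and recalling that $c\mapsto P_c$ is entire, on the open set $U\egaldef\{(x,A):\rho_a(x)>0\}$ we have
$$F(x,A)=e^{qs/2}\,\rho_a(x)^{-q/2}\,\psi(\varphi_A(x))\in C^\infty(U),$$
and $\RR^N\times\{0\}\subset U$, with $\rho_0\equiv 1$, $\varphi_0=\mathrm{id}$, $F(\cdot,0)=\psi$.

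The only set where $F$ is not manifestly smooth is $Z\egaldef\RR^{N+N'}\setminus U=\{(a/|a|^2,s,a,b,c):a\ne 0\}$, a smooth submanifold of codimension $N\ge 3$, hence of Lebesgue measure zero. On $U$ and for $x\ne 0$ one has $|x-|x|^2a|=|x|\sqrt{\rho_a(x)}$ and $P_c$ is an isometry, so
$$|\varphi_A(x)-b|=\frac{e^s|x|}{\sqrt{\rho_a(x)}}.$$
Near a point of $Z$ the factors $|x|,e^s$ are bounded away from $0$ while $\rho_a(x)\to 0$, so $|\varphi_A(x)|\to\infty$; since $\psi$ and all its derivatives are rapidly decreasing, each $(\partial^{\beta}\psi)(\varphi_A(x))$ decays faster than every power of $\rho_a(x)$. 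Combined with the structural form of the derivatives of $F$ established in the next paragraph, this yields, near $Z$ and on $U$, bounds $|\partial^{\gamma}_{(x,A)}F|\le C_{\gamma,M}\,\rho_a(x)^{M}$ for all $M$. Letting $(x,A)\to Z$ shows that $F$ and all its partial derivatives (computed on $U$) extend continuously by $0$ across $Z$. By the standard removable-singularity fact (a continuous function that is $C^\infty$ off a closed null set and whose partial derivatives extend continuously is $C^\infty$), $F\in C^\infty(\RR^{N+N'})$, all of its derivatives vanishing on $Z$.

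The structural statement, proved by induction on $|\alpha|$, is that on $U$
$$\partial^{\alpha}_{(x,A)}F=\sum_{\text{finite}}c_\nu(x,A)\,P_\nu(x,a)\,\rho_a(x)^{-q/2-j_\nu}\,(\partial^{\beta_\nu}\psi)(\varphi_A(x)),$$
where $c_\nu$ is a product of factors of the form $e^{\pm qs/2}$ and of entries of $P_c$ and of $c$-derivatives thereof (all bounded on compacts of $\RR^{N'}$), $P_\nu$ is a polynomial in $(x,a)$, $j_\nu\ge 0$, $|\beta_\nu|\le|\alpha|$, and $\deg_xP_\nu\le 2j_\nu-k$, where $k$ is the number of $x$-derivatives in $\alpha$. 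The induction uses that $\deg_x\partial_{x_i}\rho_a\le 1$, $\deg_x\partial_{a_i}\rho_a\le 2$, and that $\partial_{x_i}\varphi_A$ and $\partial_{A_l}\varphi_A$ are sums of terms $\rho_a^{-j'}Q(x)$ with $\deg_xQ\le 2j'-1$, resp.\ $\le 2j'$; one then checks that the quantity $\deg_xP_\nu-2j_\nu$ drops by at least $1$ under each $\partial_{x_i}$ and is non-increasing under each of $\partial_s,\partial_{a_i},\partial_{b_i},\partial_{c_i}$. To deduce \eqref{C1}: for $A$ in a compact $K$ and $x\ne 0$, the displayed identity above gives $\rho_a(x)^{-1/2}\le c_K\,|x|^{-1}(1+|\varphi_A(x)|)$; inserting this together with $|(\partial^{\beta_\nu}\psi)(y)|\le C_{\beta_\nu,M}(1+|y|)^{-M}$ and $|P_\nu|\le C_K(1+|x|)^{\deg_xP_\nu}$, and choosing $M=q+2\max_\nu j_\nu$, gives $|\partial^{\alpha}F(x,A)|\le C_{K,\alpha}(1+|x|)^{-q-k}$ for $|x|\ge 1$; for $|x|\le 1$ the same bound follows from the (now established) global smoothness of $F$, $\overline{B^N(1)}\times K$ being compact and $(1+|x|)^{-q}$ bounded below there. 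Applying this with $\alpha$ equal to $0$, to a first or second $A$-derivative ($k=0$), and to a single $x$-derivative alone or together with a single $A$-derivative ($k=1$, then multiplied by $|x|$), yields \eqref{C1}.

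Finally, \eqref{deriv_s}--\eqref{deriv_c} follow by differentiating the closed form of $F$ at $A=0$ (a point of $U$): using $\partial_{a_i}\rho_a(x)|_{a=0}=-2x_i$, $\partial_s\varphi_A(x)|_{0}=x$, $\partial_{a_i}\varphi_A(x)|_{0}=-|x|^2e_i+2x_ix$ ($e_1,\dots,e_N$ the standard basis), $\partial_{b_i}\varphi_A(x)|_{0}=e_i$, and $\partial_{c_i}\varphi_A(x)|_{0}=x_\ell e_k-x_ke_\ell$ with $\zeta(k,\ell)=i$ (the differential of the matrix exponential at $0$ being the identity), the chain rule gives the four formulas. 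The main technical point is the bookkeeping in the structural induction — in particular the fact that each $x$-derivative genuinely produces one extra power of decay, i.e.\ $\deg_xP_\nu-2j_\nu$ strictly drops, which is exactly what makes the terms $x\nabla_xF$ and $x\nabla_x\nabla_AF$ of \eqref{C1} work — together with the (routine but slightly delicate) removable-singularity argument across $Z$.
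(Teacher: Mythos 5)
Your proof is correct in substance and proves everything the lemma asserts, but the route to the $C^\infty$ extension is genuinely different from the paper's. The paper's device is the substitution $y=\frac{x}{|x|^2}-a$: it writes $F(x,A)=|x|^{-q}G\bigl(\frac{x}{|x|^2}-a,A\bigr)$ with $G(y,A)=e^{qs/2}|y|^{-q}\psi\bigl(b+P_ce^sy/|y|^2\bigr)$, $G(0,A)=0$, and the whole singularity analysis collapses to showing that $G$ is smooth and vanishes to infinite order at the single hyperplane $y=0$ — which is immediate from the Schwartz decay of $\psi$, since $|b+P_ce^sy/|y|^2|\gtrsim 1/|y|$ for small $y$. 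The decay bound \eqref{C1} for $|x|\ge 1$ then falls out for free, because $y=\frac{x}{|x|^2}-a$ stays in a compact set and one just takes the sup of $G$ there. You instead work in the original variables, identify the zero set $Z$ of $\rho_a(x)=1-2\langle a,x\rangle+|a|^2|x|^2$ as a codimension-$N$ submanifold, show all derivatives of $F$ tend to $0$ at $Z$, and remove the singularity; your bound for \eqref{C1} then comes from the identity $\rho_a(x)^{-1/2}=e^{-s}|x|^{-1}|\varphi_A(x)-b|$ traded against the decay of $\partial^\beta\psi$. What your version buys is a fully explicit structural formula for $\partial^\alpha F$ with the bookkeeping $\deg_xP_\nu-2j_\nu\le -k$, which makes the extra factor of decay behind the $x\nabla_xF$ and $x\nabla_x\nabla_AF$ terms of \eqref{C1} completely transparent — a point the paper dispatches with ``the proof of the bounds on the derivatives is similar and we omit it.'' What the paper's version buys is that no removable-singularity argument is needed at all.

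One caveat: the ``standard removable-singularity fact'' as you state it — continuous, $C^\infty$ off a closed null set, derivatives extend continuously, hence $C^\infty$ — is false in that generality (the Cantor function is continuous, smooth off a closed null set, with derivative extending continuously by $0$, yet is not $C^1$). Your argument is nonetheless sound here because $Z$ is a smooth submanifold of codimension $N\ge 3$: almost every coordinate line misses $Z$, so the fundamental theorem of calculus with the continuously extended derivative holds by a density/continuity argument, and one can induct on the order of differentiation. You should either state the fact with this codimension-$\ge 2$ hypothesis or simply adopt the paper's substitution, which sidesteps the issue.
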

\begin{proof}
 Note that $\left|\frac{x}{|x|} -|x|a\right|^2=1-2\la a,x\ra +|a|^2|x|^2$, which is $>0$ if $a\neq x/|x|^2$. Thus $F$ can be extended to a smooth function on the open set 
 $$\left\{(x,A)\in \RR^N\times \RR^{N'}\text{ s.t. } x=0\text{ or }a\neq \frac{x}{|x|^2}\right\}.$$
 Next, we notice
 \begin{equation}
  \label{C6}
  F(x,A)=\frac{1}{|x|^q}G\left( \frac{x}{|x|^2}-a,A \right),
 \end{equation} 
 where 
 $$ G(y,A)=\frac{e^{q\,s/2}}{|y|^q} f\left( b+\frac{P_ce^sy}{|y|^2} \right),\quad G(0,A)=0.$$
Obviously, $G$ is smooth away from $y=0$. We prove that $G\in C^{\infty}(\RR^N\times \RR^{N'})$. Let us fix a large $M>0$. Let $\eps_K>0$ be a small constant, depending on $K$, to be specified. Using that $f\in \SSS(\RR^N)$, we get, if $A\in K$, $0<|y|<\eps_K$,
 $$|G(y,A)|\leq \frac{C_{K,M}}{|y|^q\,\left|b+\frac{P_c e^s y}{|y|^2}\right|^{q+M}} \leq \frac{C_{K,M}}{|y|^q\,\left(\frac{e^s |y|}{|y|^2} -|b|\right)^{q+M}}\leq \frac{C_{K,M}}{|y|^q} |y|^{q+M}=C_K|y|^{M}.$$
 As a consequence, $G$ is continuous also at $y=0$. Bounding similarly the derivatives of $G$, we deduce that $G$ is smooth and vanishes at infinite order at $y=0$. Going back to \eqref{C6}, we deduce that $F$ is smooth.

We next show the bound on $F$ in \eqref{C1}. Since $F$ is continous, it is bounded in $B^{N}(1)\times K$. To get a bound for $|x|\geq 1$, we use \eqref{C6}. Let $y=\frac{x}{|x|^2}-a$. If $|x|\geq 1$ and $A\in K$, then $|y|\leq C_K'=1+\max_{A\in K} |a|$. Let
$$M_K'=\sup_{\substack{|y|\leq C_K'\\ A\in K}} \left|G(y,A) \right|<\infty.$$
 Then:
$$\left|F(x,A)\right|\leq \frac{M'_k}{|x|^q},\quad |x|\geq 1,\; A\in K,$$
which completes the proof of the bound on $F$ in \eqref{C1}. The proof of the bounds on the derivatives of  $F$ in \eqref{C1} is similar and we omit it. 

Finally, \eqref{deriv_s},\eqref{deriv_a},\eqref{deriv_b} and \eqref{deriv_c} follow from explicit computations.
\end{proof}
As an immediate consequence of Lemma \ref{L:C1}, recalling
$$  \left(\theta^{-1}_{\vA}\right)^*(f)(x)=e^{\frac{(N+2)s}{2}} \left|\frac{x}{|x|} -|x|a\right|^{-(N+2)}f\left(b+\frac{e^sP_c(x-|x|^2a)}{1-2\langle a,x\rangle +|a|^2|x|^2}\right), \quad f\in \hdot,
$$
we obtain:
\begin{corol}
\label{C:C1function}
 There exists $\eps>0$ with the following property. Let $\psi\in \SSS(\RR^N)$. Then $A\mapsto (\theta^{-1}_A)^*\psi$ is a $C^1$ function from $B^{N'}(\eps)$ to $H^1(\RR^N)$. Its derivatives at $A=0$ are given by \eqref{deriv_s}, \eqref{deriv_a}, \eqref{deriv_b}, \eqref{deriv_c} with $q=N+2$.
\end{corol}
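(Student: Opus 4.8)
The plan is to recognize $(\theta_A^{-1})^*\psi$ as the function $F(\cdot,A)$ of Lemma \ref{L:C1} with exponent $q=N+2$. Indeed, comparing \eqref{A31} with the definition of $F$ in Lemma \ref{L:C1}, and using that $\left|\tfrac{x}{|x|}-|x|a\right|^2=1-2\langle a,x\rangle+|a|^2|x|^2$, one has $(\theta_A^{-1})^*(\psi)(x)=F(x,A)$ with $q=N+2$. Hence it suffices to prove that $G\colon A\mapsto F(\cdot,A)$ is a $C^1$ map from some ball $B^{N'}(\eps)$ (taken small enough that $\theta_A^{-1}$ is defined, cf.\ Proposition \ref{P:A1}\eqref{I:compo}) into $H^1(\RR^N)$, and then to read off the derivatives at $A=0$ from \eqref{deriv_s}--\eqref{deriv_c}. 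Since all the bounds in \eqref{C1} are uniform on compact subsets of $\RR^{N'}$, the size of $\eps$ plays no role.

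First I would check that $G$ and each map $A\mapsto\partial_{A_j}F(\cdot,A)$ are well defined and continuous with values in $H^1(\RR^N)$. By Lemma \ref{L:C1}, $F\in C^\infty(\RR^{N+N'})$ and, for $A$ in a fixed compact $K\subset\RR^{N'}$, the bounds \eqref{C1} (with $q=N+2$) give
\[|F(x,A)|^2+|\nabla_xF(x,A)|^2+|\partial_{A_j}F(x,A)|^2+|\nabla_x\partial_{A_j}F(x,A)|^2\leq C_K\,\Psi(x),\]
where $\Psi(x)=\mathbf{1}_{|x|\leq1}+(1+|x|)^{-2(N+3)}\mathbf{1}_{|x|\geq1}$: for $|x|\geq1$ we use $|F|+|\partial_{A_j}F|\leq C_K(1+|x|)^{-(N+2)}$ together with $|x\nabla_xF|+|x\nabla_x\partial_{A_j}F|\leq C_K(1+|x|)^{-(N+2)}$, while near the origin the continuous functions $|\nabla_xF|^2$ and $|\nabla_x\partial_{A_j}F|^2$ are locally bounded. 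Since $\Psi\in L^1(\RR^N)$, this shows $F(\cdot,A),\,\partial_{A_j}F(\cdot,A)\in H^1(\RR^N)$. Moreover, if $A_n\to A$ in $K$, then $F(x,A_n)\to F(x,A)$, $\nabla_xF(x,A_n)\to\nabla_xF(x,A)$, and likewise for the $A_j$-derivatives, pointwise by smoothness of $F$; so dominated convergence yields $F(\cdot,A_n)\to F(\cdot,A)$ and $\partial_{A_j}F(\cdot,A_n)\to\partial_{A_j}F(\cdot,A)$ in $H^1(\RR^N)$. Thus $G$ and the maps $A\mapsto\partial_{A_j}F(\cdot,A)$ are continuous from $B^{N'}(\eps)$ to $H^1(\RR^N)$.

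Next I would identify these continuous maps with the partial derivatives of $G$. Fixing $A\in B^{N'}(\eps)$ and letting $e_j$ be the $j$-th basis vector of $\RR^{N'}$, the fundamental theorem of calculus in the $A$-variable gives, pointwise in $x$ (and after applying $\nabla_x$, since $F\in C^\infty$),
\[\frac{F(x,A+\tau e_j)-F(x,A)}{\tau}-\partial_{A_j}F(x,A)=\int_0^1\big(\partial_{A_j}F(x,A+t\tau e_j)-\partial_{A_j}F(x,A)\big)\,dt.\]
Taking the $H^1$ norm and using Minkowski's integral inequality in $H^1(\RR^N)$,
\[\left\|\frac{F(\cdot,A+\tau e_j)-F(\cdot,A)}{\tau}-\partial_{A_j}F(\cdot,A)\right\|_{H^1}\leq\sup_{0\leq t\leq1}\big\|\partial_{A_j}F(\cdot,A+t\tau e_j)-\partial_{A_j}F(\cdot,A)\big\|_{H^1},\]
and the right-hand side tends to $0$ as $\tau\to0$ by the continuity established above. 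Hence $G$ admits at every point the partial derivatives $\partial_{A_j}G=\partial_{A_j}F$, which are continuous $B^{N'}(\eps)\to H^1(\RR^N)$; by the standard criterion (a map between Banach spaces whose partial derivatives exist and are continuous is $C^1$), $G$ is $C^1$. Finally, specializing \eqref{deriv_s}, \eqref{deriv_a}, \eqref{deriv_b}, \eqref{deriv_c} to $q=N+2$ gives precisely the claimed formulas for $\partial_{A_j}F(\cdot,0)$.

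I do not expect a genuine difficulty here: the whole content sits in Lemma \ref{L:C1}, and the only point needing some care is to extract from \eqref{C1} an $A$-uniform $L^1$ dominating function for $|F|^2$, $|\nabla_xF|^2$ and their $A_j$-derivatives, so that dominated convergence (for the continuity statements) and Minkowski's integral inequality (for the difference quotients) can be applied.
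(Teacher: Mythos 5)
Your proposal is correct and follows exactly the route the paper intends: the paper derives Corollary \ref{C:C1function} as an immediate consequence of Lemma \ref{L:C1} via the identification $(\theta_A^{-1})^*\psi = F(\cdot,A)$ with $q=N+2$, leaving the routine verification (integrable dominating function from \eqref{C1}, dominated convergence, difference quotients) implicit. You have simply filled in those details correctly.
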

We finally prove the following estimate:
\begin{lemma}
 \label{L:C2}
 Let $f\in \SSS$ and $p\geq 1$. There exists $C,\eps>0$ such that for all $A\in \RR^{N'}$,
 $$ |A|<\eps\Longrightarrow \left\| f-\left(\theta_A^{-1}\right)^* f\right\|_{L^p}\leq C|A|.$$
\end{lemma}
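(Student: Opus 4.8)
The plan is to write the difference $f-\left(\theta_A^{-1}\right)^*f$ as an integral of a derivative along the segment $t\mapsto tA$, $t\in[0,1]$, in parameter space, and then to estimate pointwise using the bound on $\nabla_A F$ from Lemma \ref{L:C1}.

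First I would identify the object with the function $F$ of Lemma \ref{L:C1}. Comparing the explicit formula \eqref{A31} for $\left(\theta_A^{-1}\right)^*$ with the definition of $F$ in Lemma \ref{L:C1}, and using that $1-2\langle a,x\rangle+|a|^2|x|^2=\big|\frac{x}{|x|}-|x|a\big|^2$, one sees that for $\psi=f$ and $q=N+2$ one has $\left(\theta_A^{-1}\right)^*f(x)=F(x,A)$ for all $x\in\RR^N$ and all $A$ (small). Since $\theta_0=\mathrm{id}$ we also have $F(x,0)=f(x)$. By Lemma \ref{L:C1}, $F$ is $C^\infty$ on $\RR^{N+N'}$, and, with $K=\overline{B^{N'}(\eps)}$,
\begin{equation*}
 |\nabla_A F(x,A)|\le \frac{C_K}{(1+|x|)^{N+2}},\qquad A\in K.
\end{equation*}

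Next, for $|A|<\eps$ I would use the fundamental theorem of calculus in $t$ together with the chain rule (differentiation under the integral sign being trivially justified by the smoothness of $F$):
\begin{equation*}
 f(x)-\left(\theta_A^{-1}\right)^*f(x)=F(x,0)-F(x,A)=-\int_0^1 \frac{d}{dt}F(x,tA)\,dt=-\int_0^1 \nabla_A F(x,tA)\cdot A\,dt .
\end{equation*}
Since $tA\in K$ for all $t\in[0,1]$, this gives the pointwise bound
\begin{equation*}
 \left|f(x)-\left(\theta_A^{-1}\right)^*f(x)\right|\le |A|\int_0^1 |\nabla_A F(x,tA)|\,dt\le \frac{C_K\,|A|}{(1+|x|)^{N+2}} .
\end{equation*}

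Finally I would take the $L^p$ norm in $x$. Because $(N+2)p\ge N+2>N$ for every $p\ge 1$, the function $x\mapsto (1+|x|)^{-(N+2)}$ lies in $L^p(\RR^N)$, so
\begin{equation*}
 \left\|f-\left(\theta_A^{-1}\right)^*f\right\|_{L^p}\le C_K\,\big\|(1+|\cdot|)^{-(N+2)}\big\|_{L^p}\,|A|=C\,|A|,
\end{equation*}
which is the claimed estimate. There is no real difficulty in this argument; the only points needing care are the identification $\left(\theta_A^{-1}\right)^*f(x)=F(x,A)$ (i.e.\ checking that the two denominators coincide) and invoking Lemma \ref{L:C1} with $\psi=f$, $q=N+2$ — both of which are routine.
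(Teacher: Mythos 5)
Your proposal is correct and follows essentially the same route as the paper: the paper's (very terse) proof also deduces the pointwise bound $\bigl|f(y)-(\theta_A^{-1})^*f(y)\bigr|\le C|A|/(1+|y|)^{N+2}$ from the $\nabla_A F$ estimate of Lemma \ref{L:C1} and then integrates in $y$. You have merely made explicit the identification $F(x,A)=(\theta_A^{-1})^*f(x)$ with $q=N+2$ and the fundamental-theorem-of-calculus step that the paper leaves implicit.
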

\begin{proof}
 Indeed by the bound on $\nabla_A F$ in Lemma \ref{L:C1}, if $|A|\leq \eps$,
 $$ \left| \left(f-\left(\theta_A^{-1}\right)^* f\right)(y)\right|\leq \frac{C}{1+|y|^{N+2}}|A|,$$
 and the Lemma follows, integrating with respect to $y$.
\end{proof}
\section{Nonexistence of solutions converging fast to a stationary solution}
In this appendix we prove Claim \ref{C:uniq_expo}. By standard long-time perturbation theory, there exists $\eps_0>0$, $M>0$ such that, for all solution $v$ of \eqref{CP}, with initial data $(v_0,v_1)$  such that
$$\|v_0-S\|_{\hdot}+\|v_1\|_{L^2}=\eps<\eps_0,$$
we have
$$ [-1,+1]\subset (T_-(v),T_+(v))\text{ and } \sup_{t\in [-1,+1]} \left(\|v(t)-S\|_{\hdot}+\|\partial_tv(t)\|_{L^2}\right)\leq M\eps.$$
By induction, we deduce that for all integers $T\geq 1$, if $v$ satisfies
$$\|v_0-S\|_{\hdot}+\|v_1\|_{L^2}=\eps<\frac{\eps_0}{M^T},$$
then 
$$ [-T,+T]\subset (T_-(v),T_+(v))\text{ and } \sup_{t\in [-T,+T]} \left(\|v(t)-S\|_{\hdot}+\|\partial_tv(t)\|_{L^2}\right)\leq M^T\eps.$$
Let $\nu>0$ such that $e^{-\nu}M<1$. Let $u$ be a solution of \eqref{CP} such that \eqref{uniq_expo} holds. Then for any large integer $T$
$$\|u(T)-S\|_{\hdot}+\|\partial_tu(T)\|_{L^2}\leq Ce^{-\nu T}<\frac{\eps_0}{M^T}.$$
As a consequence,
$$\sup_{t\in [0,T]}\|u(t)-S\|_{\hdot}+\|\partial_t u(t)\|_{L^2}<Ce^{-\nu T}M^T\underset{T\to +\infty}{\longrightarrow}0.$$
We deduce that $(u_0,u_1)=(S,0)$ and thus by uniqueness that $u\equiv S$.
\section{Choice of the translation parameter}
\label{A:modulation}
In this appendix we prove Claim \ref{C:choice_modulation}. Let $\mu(s)>0$, $\ty(s)\in \RR^N$ such that
$$\tK=\left\{\left( \mu^{N/2-1}(s)u(s,\mu(s)\cdot+\ty(s)),\mu^{N/2}(s)\partial_su(s,\mu(s)\cdot+\ty(s))\right),\; s\in (s^-,s^+) \right\}$$
has compact closure in $\hdot\times L^2$. Let 
$$\Phi(s,y)=\frac{|\nabla_{s,y}u(s,y)|^2}{\int |\nabla_{s,z}u(s,z)|^2\,dz}.$$
Note that $\Phi$ is well-defined (since $u\not\equiv 0$), nonnegative, that $s\mapsto \Phi(s,\cdot)$ is continuous from $(s^-,s^+)$ to $L^1(\RR^N)$ and that $\int\Phi(s,y)\,dy=1$ for all $s$.  If $s\in(s^-,s^+)$, the function
$$F_s: Y_1\mapsto \int_{Y_1}^{+\infty} \left(\int_{\RR^{N-1}}\Phi(s,y)\,dy'+\frac{1}{3\sqrt{\pi}}e^{-y_1^2}\right)\,dy_1$$
is strictly decreasing and satisfies
$$ \lim_{Y_1\to -\infty} F_s(Y_1)=\frac{4}{3},\quad \lim_{Y_1\to+\infty} F_s(Y_1)=0.$$
We let $y_1(s)$ be the unique element of $\RR$ such that $F_s(y_1(s))=\frac 23$. We define 
$$y(s)=(y_1(s),\ty_2(s),\ty_3(s)).$$
Let us prove that $y,\mu$ satisfy points \eqref{I:modul1}, \eqref{I:modul2} and \eqref{I:modul3}.

\EMPH{Proof of \eqref{I:modul1}}
$$\frac{2}{3}=\int_{y_1(s)}^{+\infty} \int_{\RR^{N-1}} \Phi(s,y)\,dy'\,dy_1+\frac{1}{3\sqrt{\pi}}\int_{y_1(s)}^{+\infty}e^{-y_1^2}\,dy_1\leq \int_{y_1(s)}^{+\infty} \int_{\RR^{N-1}} \Phi(s,y)\,dy'\,dy_1+\frac{1}{3}.$$
Thus
$$\frac{2}{3}-\frac{1}{3}\leq \int_{y_1(s)}\int_{\RR^{N-1}}\Phi(s,y)\,dy'\,dy_1\leq \frac 23$$ 
and \eqref{I:modul1} follows.

\EMPH{Proof of \eqref{I:modul2}}

Let $s\in(s^-,s^+)$ and $\{s_n\}_n$ be a sequence in $(s^-,s^+)$ converging to $s$. We have
\begin{multline*}
\int_{y_1(s)}^{+\infty} \int_{\RR^{N-1}} \Phi(s,y)\,dy'\,dy_1+\frac{1}{3\sqrt{\pi}}\int_{y_1(s)}^{+\infty}e^{-y_1^2}\,dy_1=\frac{2}{3}\\
=\int_{y_1(s_n)}^{+\infty} \int_{\RR^{N-1}} \Phi(s_n,y)\,dy'\,dy_1+\frac{1}{3\sqrt{\pi}}\int_{y_1(s_n)}^{+\infty}e^{-y_1^2}\,dy_1
\end{multline*}
Thus
\begin{equation*}
 0=\int_{y_1(s)}^{y_1(s_n)} \left(\int_{\RR^{N-1}}\Phi(s,y)dy'+\frac{1}{3\sqrt{\pi}}e^{-y_1^2}\right)\,dy_1-\int_{y_1(s_n)}^{+\infty}\int_{\RR^{N-1}} \left(\Phi(s_n,y)-\Phi(s,y)\right)\,dy'\,dy_1.
\end{equation*}
Hence (since $\Phi(s,y)\geq 0$),
$$\left|\int_{y_1(s)}^{y_1(s_n)}e^{-y_1^2}\,dy_1\right|\leq 3\sqrt{\pi} \int |\Phi(s_n,y)-\Phi(s,y)|\,dy\underset{n\to\infty}{\longrightarrow}0,$$
which shows that $y_1(s_n)\to y_1(s)$ as $n\to \infty$, concluding the proof of the continuity of $s\mapsto y_1(s)$.

\EMPH{Proof of \eqref{I:modul3}}
We prove that there exists a constant $C>0$ such that 
\begin{equation}
\label{modul3_interm}
\forall s\in (s^-,s^+),\quad \ty_1(s)-C\mu(s)\leq y_1(s)\leq \ty_1(s)+C\mu(s).
\end{equation} 
If not, we can find a sequence $\{s_n\}_n$ in $(s^-,s^+)$ such that, for example
$$ \forall n,\quad y_1(s_n)>\ty_1(s_n)+n\mu(s_n).$$
We have
\begin{multline*}
 \int_{y_1\geq n} \mu^{N}(s_n)\left|\nabla_{s,y}u(s_n,\mu(s_n)y+\ty(s_n))\right|^2\,dy=\int_{z_1\geq \ty_1(s_n)+n\mu(s_n)} \left|\nabla_{s,y}u(s_n,z)\right|^2\,dz\\
 \geq \int_{z_1\geq y_1(s_n)}|\nabla_{s,y}u(s_n,z)|^2\,dz\geq \frac{1}{3}\int_{\RR^N}|\nabla_{s,y}u(s_n,z)|^2\,dz,
\end{multline*}
which gives a contradiction, since by the compactness of the closure of $\tK$,  the first term in the preceding inequalities goes to $0$ as $n\to\infty$. 

The compactness of $\overline{K}$ follows easily from \eqref{modul3_interm} and the compactness of $\overline{\tK}$. We omit the proof.

\section{Some space-time estimates}
\label{A:space-time}
In this appendix we prove Claim \ref{C:estimates}. By Proposition \ref{P:A1}, \eqref{I:harmonic}
\begin{multline*}
 \left\|S\chi_{r_0,t_0}\right\|_{L^{\frac{N+2}{N-2}}L^{\frac{2(N+2)}{N-2}}}\leq C\left(\int_{-\infty}^{+\infty} \left(\int_{|x|\geq r_0+|t-t_0|} \frac{1}{|x|^{2(N+2)}} \,dx\right)^{1/2}\,dt\right)^{\frac{N-2}{N+2}}\\
\leq C\left(\int \frac{1}{(r_0+|t-t_0|)^{\frac{N}{2}+2}}\,dt\right)^{\frac{N-2}{N+2}}\leq \frac{C}{r_0^{N/2-1}},
\end{multline*}
which yields the first inequality of the Claim.

By Lemma \ref{L:estim_Y}, 
\begin{multline*}
 \left\|e^{-\omega t}Y \chi_{r_0,t_0}\right\|_{L^{\frac{N+2}{N-2}}L^{\frac{2(N+2)}{N-2}}}= C\left(\int_{-\infty}^{+\infty} \left(\int_{|x|\geq r_0+|t-t_0|} |Y|^{\frac{2(N+2)}{N-2}}\,dx\right)^{\frac{1}{2}} e^{-\frac{N+2}{N-2}\omega t}\,dt\right)^{\frac{N-2}{N+2}}\\
\leq C\left(\int_{-\infty}^{+\infty} \frac{1}{(|t-t_0|+r_0)^{q_N/2}}\left(e^{- \frac{2(N+2)}{N-2}\omega(r_0+|t-t_0|)}\right)^{\frac{1}{2}} e^{-\frac{N+2}{N-2}\omega t}\,dt\right)^{\frac{N-2}{N+2}}.
\end{multline*}
We note that $e^{-\frac{N+2}{N-2}\omega|t-t_0|- \frac{N+2}{N-2}\omega t}\leq e^{-\frac{N+2}{N-2}\omega t_0}$. Hence (using that $q_N>2$)
\begin{equation*}
 \left\|e^{-\omega t}Y \chi_{r_0,t_0}\right\|_{L^{\frac{N+2}{N-2}}L^{\frac{2(N+2)}{N-2}}}\leq Ce^{-\omega (r_0+t_0)}\left(\int_{-\infty}^{+\infty} \frac{dt}{(r_0+|t-t_0|)^{q_N/2}}\right)^{\frac{N-2}{N+2}}\leq C e^{-\omega(r_0+t_0)},
\end{equation*} 
which yields the second inequality of the Claim.
\bibliographystyle{acm}
\bibliography{toto}

\begin{thebibliography}{10}

\bibitem{Agmon82BO}
{\sc Agmon, S.}
\newblock {\em Lectures on exponential decay of solutions of second-order
  elliptic equations: bounds on eigenfunctions of {$N$}-body {S}chr\"odinger
  operators}, vol.~29 of {\em Mathematical Notes}.
\newblock Princeton University Press, Princeton, NJ, 1982.

\bibitem{BaGe99}
{\sc Bahouri, H., and G{\'e}rard, P.}
\newblock High frequency approximation of solutions to critical nonlinear wave
  equations.
\newblock {\em Amer. J. Math. 121}, 1 (1999), 131--175.

\bibitem{BaSh98}
{\sc Bahouri, H., and Shatah, J.}
\newblock Decay estimates for the critical semilinear wave equation.
\newblock {\em Ann. Inst. H. Poincar\'e Anal. Non Lin\'eaire 15}, 6 (1998),
  783--789.

\bibitem{BuCzLiPaZh13}
{\sc Bulut, A., Czubak, M., Li, D., Pavlovi{\'c}, N., and Zhang, X.}
\newblock Stability and unconditional uniqueness of solutions for energy
  critical wave equations in high dimensions.
\newblock {\em Comm. Partial Differential Equations 38}, 4 (2013), 575--607.

\bibitem{Cote13P}
{\sc C\^ote, R.}
\newblock Soliton resolution for equivariant wave maps to the sphere, 2013.
\newblock Preprint arXiv:1305.5325.

\bibitem{CoKeLaSc12Pa}
{\sc C\^ote, R., Kenig, C.~E., Lawrie, A., and Schlag, W.}
\newblock Characterization of large energy solutions of the equivariant wave
  map problem: I, 2012.
\newblock Preprint arXiv:1209.3682. To be published in Geom. Funct. Anal.

\bibitem{CoKeLaSc12Pb}
{\sc C\^ote, R., Kenig, C.~E., Lawrie, A., and Schlag, W.}
\newblock Characterization of large energy solutions of the equivariant wave
  map problem: {II}, 2012.
\newblock Preprint arXiv:1209.3684.

\bibitem{Davies95BO}
{\sc Davies, E.~B.}
\newblock {\em Spectral theory and differential operators}, vol.~42 of {\em
  Cambridge Studies in Advanced Mathematics}.
\newblock Cambridge University Press, Cambridge, 1995.

\bibitem{delPinoPC}
{\sc del Pino, M.}
\newblock Personal communication.

\bibitem{dPMPP11}
{\sc del Pino, M., Musso, M., Pacard, F., and Pistoia, A.}
\newblock Large energy entire solutions for the {Y}amabe equation.
\newblock {\em J. Differential Equations 251}, 9 (2011), 2568--2597.

\bibitem{dPMPP13}
{\sc del Pino, M., Musso, M., Pacard, F., and Pistoia, A.}
\newblock Torus action on {$S^n$} and sign-changing solutions for conformally
  invariant equations.
\newblock {\em Ann. Sc. Norm. Super. Pisa Cl. Sci. (5) 12}, 1 (2013), 209--237.

\bibitem{Ding86}
{\sc Ding, W.~Y.}
\newblock On a conformally invariant elliptic equation on {${\bf R}^n$}.
\newblock {\em Comm. Math. Phys. 107}, 2 (1986), 331--335.

\bibitem{DoKr13}
{\sc Donninger, R., and Krieger, J.}
\newblock Nonscattering solutions and blowup at infinity for the critical wave
  equation.
\newblock {\em Math. Ann. 357}, 1 (2013), 89--163.

\bibitem{DuKeMe11a}
{\sc Duyckaerts, T., Kenig, C., and Merle, F.}
\newblock Universality of blow-up profile for small radial type {II} blow-up
  solutions of the energy-critical wave equation.
\newblock {\em J. Eur. Math. Soc. (JEMS) 13}, 3 (2011), 533--599.

\bibitem{DuKeMe12b}
{\sc Duyckaerts, T., Kenig, C., and Merle, F.}
\newblock Profiles of bounded radial solutions of the focusing, energy-critical
  wave equation.
\newblock {\em Geom. Funct. Anal. 22}, 3 (2012), 639--698.

\bibitem{DuKeMe12c}
{\sc Duyckaerts, T., Kenig, C., and Merle, F.}
\newblock Scattering for radial, bounded solutions of focusing supercritical
  wave equations.
\newblock {\em IMRN\/} (2012).

\bibitem{DuKeMe12}
{\sc Duyckaerts, T., Kenig, C., and Merle, F.}
\newblock Universality of the blow-up profile for small type {II} blow-up
  solutions of the energy-critical wave equation: the nonradial case.
\newblock {\em J. Eur. Math. Soc. (JEMS) 14}, 5 (2012), 1389--1454.

\bibitem{DuKeMe13}
{\sc Duyckaerts, T., Kenig, C., and Merle, F.}
\newblock Classification of radial solutions of the focusing, energy-critical
  wave equation.
\newblock {\em Cambridge Journal of Mathematics 1}, 1 (2013), 75--144.

\bibitem{DuKeMe13Pa}
{\sc Duyckaerts, T., Kenig, C., and Merle, F.}
\newblock Profiles for bounded solutions of dispersive equations, with
  applications to energy-critical wave and {S}chr\"odinger equations.
\newblock Preprint arXiv:1311.0665, 2013.

\bibitem{DuMe08}
{\sc Duyckaerts, T., and Merle, F.}
\newblock Dynamics of threshold solutions for energy-critical wave equation.
\newblock {\em Int. Math. Res. Pap. IMRP\/} (2008), Art ID rpn002, 67.

\bibitem{DuMe09b}
{\sc Duyckaerts, T., and Merle, F.}
\newblock Scattering norm estimate near the threshold for energy-critical
  focusing semilinear wave equation.
\newblock {\em Indiana Univ. Math. J. 58}, 4 (2009), 1971--2001.

\bibitem{GiVe95}
{\sc Ginibre, J., and Velo, G.}
\newblock Generalized {S}trichartz inequalities for the wave equation.
\newblock {\em J. Funct. Anal. 133}, 1 (1995), 50--68.

\bibitem{GlMe95P}
{\sc Glangetas, L., and Merle, F.}
\newblock A geometrical approach of existence of blow up solutions in ${H}^1$
  for nonlinear {S}chr\"odinger equation.
\newblock Preprint Rep. No. R95031, Laboratoire d'Analyse Num\'erique, Univ.
  Pierre and Marie Curie, 1995.

\bibitem{Grillakis90b}
{\sc Grillakis, M.~G.}
\newblock Regularity and asymptotic behaviour of the wave equation with a
  critical nonlinearity.
\newblock {\em Ann. of Math. (2) 132}, 3 (1990), 485--509.

\bibitem{Grillakis92}
{\sc Grillakis, M.~G.}
\newblock Regularity for the wave equation with a critical nonlinearity.
\newblock {\em Comm. Pure Appl. Math. 45}, 6 (1992), 749--774.

\bibitem{HiRa12}
{\sc Hillairet, M., and Rapha{\"e}l, P.}
\newblock Smooth type {II} blow-up solutions to the four-dimensional
  energy-critical wave equation.
\newblock {\em Anal. PDE 5}, 4 (2012), 777--829.

\bibitem{Kapitanski94}
{\sc Kapitanski, L.}
\newblock Global and unique weak solutions of nonlinear wave equations.
\newblock {\em Math. Res. Lett. 1}, 2 (1994), 211--223.

\bibitem{KeLaSc13P}
{\sc Kenig, C.~E., Lawrie, A., and Schlag, W.}
\newblock Relaxation of wave maps exterior to a ball to harmonic maps for all
  data, 2013.
\newblock Preprint arXiv 1301.0817.

\bibitem{KeMe06}
{\sc Kenig, C.~E., and Merle, F.}
\newblock Global well-posedness, scattering and blow-up for the
  energy-critical, focusing, non-linear {S}chr\"odinger equation in the radial
  case.
\newblock {\em Invent. Math. 166}, 3 (2006), 645--675.

\bibitem{KeMe08}
{\sc Kenig, C.~E., and Merle, F.}
\newblock Global well-posedness, scattering and blow-up for the energy-critical
  focusing non-linear wave equation.
\newblock {\em Acta Math. 201}, 2 (2008), 147--212.

\bibitem{Keraani06}
{\sc Keraani, S.}
\newblock On the blow up phenomenon of the critical nonlinear {S}chr\"odinger
  equation.
\newblock {\em J. Funct. Anal. 235}, 1 (2006), 171--192.

\bibitem{KrNaSc13P}
{\sc Krieger, J., Nakanishi, K., and Schlag, W.}
\newblock Center-stable manifold of the ground state in the energy space for
  the critical wave equation, 2013.
\newblock Preprint arXiv:1303.2400.

\bibitem{KrNaSc13a}
{\sc Krieger, J., Nakanishi, K., and Schlag, W.}
\newblock Global dynamics away from the ground state for the energy-critical
  nonlinear wave equation.
\newblock {\em Amer. J. Math. 135}, 4 (2013), 935--965.

\bibitem{KrNaSc13b}
{\sc Krieger, J., Nakanishi, K., and Schlag, W.}
\newblock Global dynamics of the nonradial energy-critical wave equation above
  the ground state energy.
\newblock {\em Discrete Contin. Dyn. Syst. 33}, 6 (2013), 2423--2450.

\bibitem{KrSc07}
{\sc Krieger, J., and Schlag, W.}
\newblock On the focusing critical semi-linear wave equation.
\newblock {\em Amer. J. Math. 129}, 3 (2007), 843--913.

\bibitem{KrSc12P}
{\sc Krieger, J., and Schlag, W.}
\newblock Full range of blow up exponents for the quintic wave equation in
  three dimensions.
\newblock Preprint arXiv:1212.3795.

\bibitem{KrScTa08}
{\sc Krieger, J., Schlag, W., and Tataru, D.}
\newblock Renormalization and blow up for charge one equivariant critical wave
  maps.
\newblock {\em Invent. Math. 171}, 3 (2008), 543--615.

\bibitem{Meshkov89}
{\sc Meshkov, V.~Z.}
\newblock Weighted differential inequalities and their application for
  estimates of the decrease at infinity of the solutions of second-order
  elliptic equations.
\newblock {\em Trudy Mat. Inst. Steklov. 190\/} (1989), 139--158.
\newblock Translated in Proc. Steklov Inst. Math. {{\bf{1}}992}, no. 1,
  145--166, Theory of functions (Russian) (Amberd, 1987).

\bibitem{MuWe14}
{\sc Musso, M., and Wei, J.}
\newblock Nondegeneracy of nodal solutions to {Y}amabe problem, 2014.
\newblock Preprint.

\bibitem{Nakanishi99b}
{\sc Nakanishi, K.}
\newblock Scattering theory for the nonlinear {K}lein-{G}ordon equation with
  {S}obolev critical power.
\newblock {\em Internat. Math. Res. Notices}, 1 (1999), 31--60.

\bibitem{ShSt93}
{\sc Shatah, J., and Struwe, M.}
\newblock Regularity results for nonlinear wave equations.
\newblock {\em Ann. of Math. (2) 138}, 3 (1993), 503--518.

\bibitem{ShSt94}
{\sc Shatah, J., and Struwe, M.}
\newblock Well-posedness in the energy space for semilinear wave equations with
  critical growth.
\newblock {\em Internat. Math. Res. Notices}, 7 (1994), 303ff., approx.\ 7 pp.\
  (electronic).

\bibitem{Shen12P}
{\sc Shen, R.}
\newblock On the energy subcritical, non-linear wave equation with radial data
  for $p\in (3,5)$, 2012.
\newblock Preprint arXiv:1208.2108.

\bibitem{Tao05NY}
{\sc Tao, T.}
\newblock Global well-posedness and scattering for the higher-dimensional
  energy-critical nonlinear {S}chr\"odinger equation for radial data.
\newblock {\em New York J. Math. 11\/} (2005), 57--80 (electronic).

\bibitem{TaViZh08}
{\sc Tao, T., Visan, M., and Zhang, X.}
\newblock Minimal-mass blowup solutions of the mass-critical {NLS}.
\newblock {\em Forum Math. 20}, 5 (2008), 881--919.

\bibitem{Trudinger68}
{\sc Trudinger, N.~S.}
\newblock Remarks concerning the conformal deformation of {R}iemannian
  structures on compact manifolds.
\newblock {\em Ann. Scuola Norm. Sup. Pisa (3) 22\/} (1968), 265--274.

\end{thebibliography}

\end{document}